\definecolor{ao}{rgb}{0.0, 0.5, 0.0}
\newcommand{\vertex}{\includegraphics[width=0.2in]{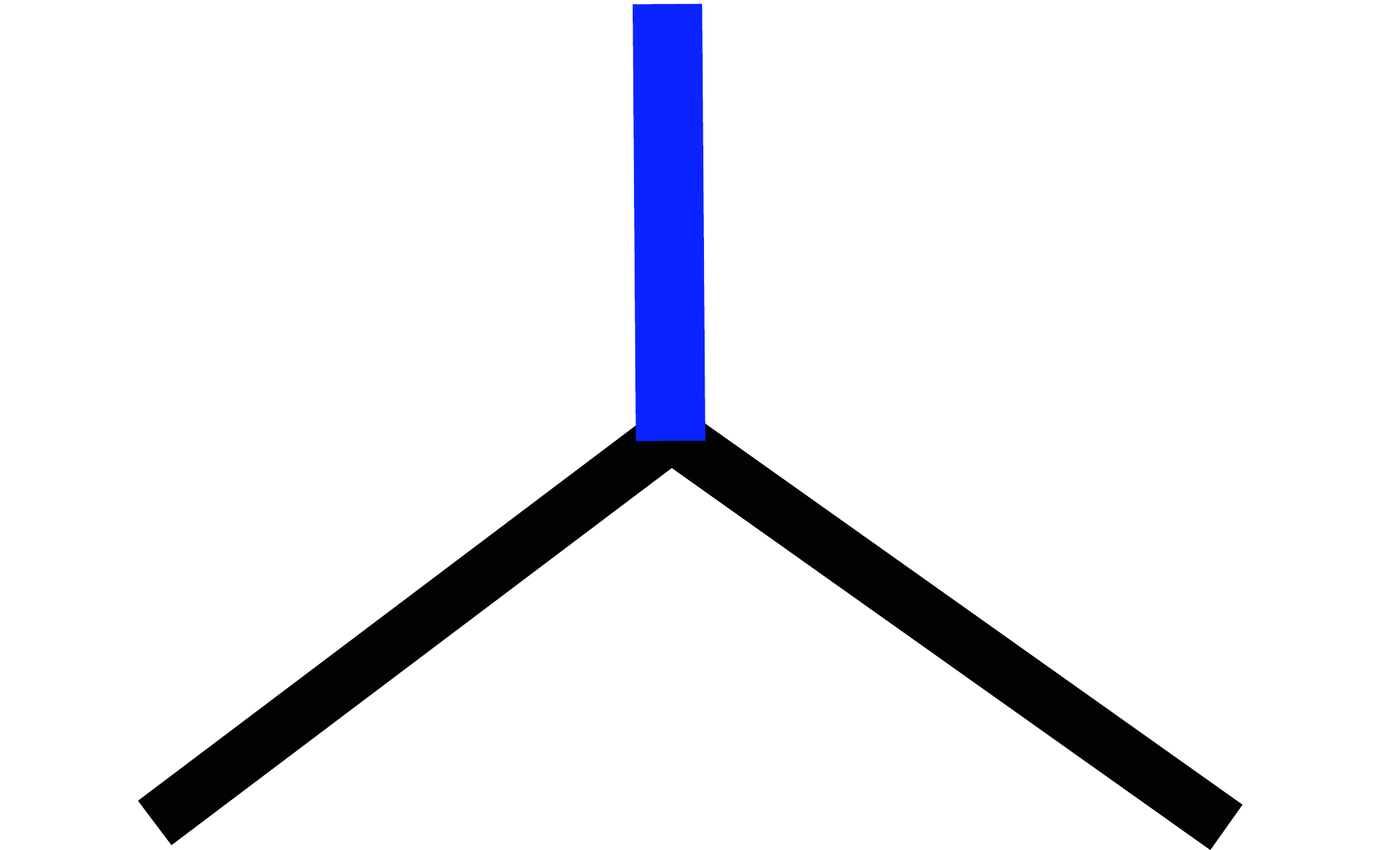}}
\newcommand{\chord}{\includegraphics[height=.1 in, width=0.1in]{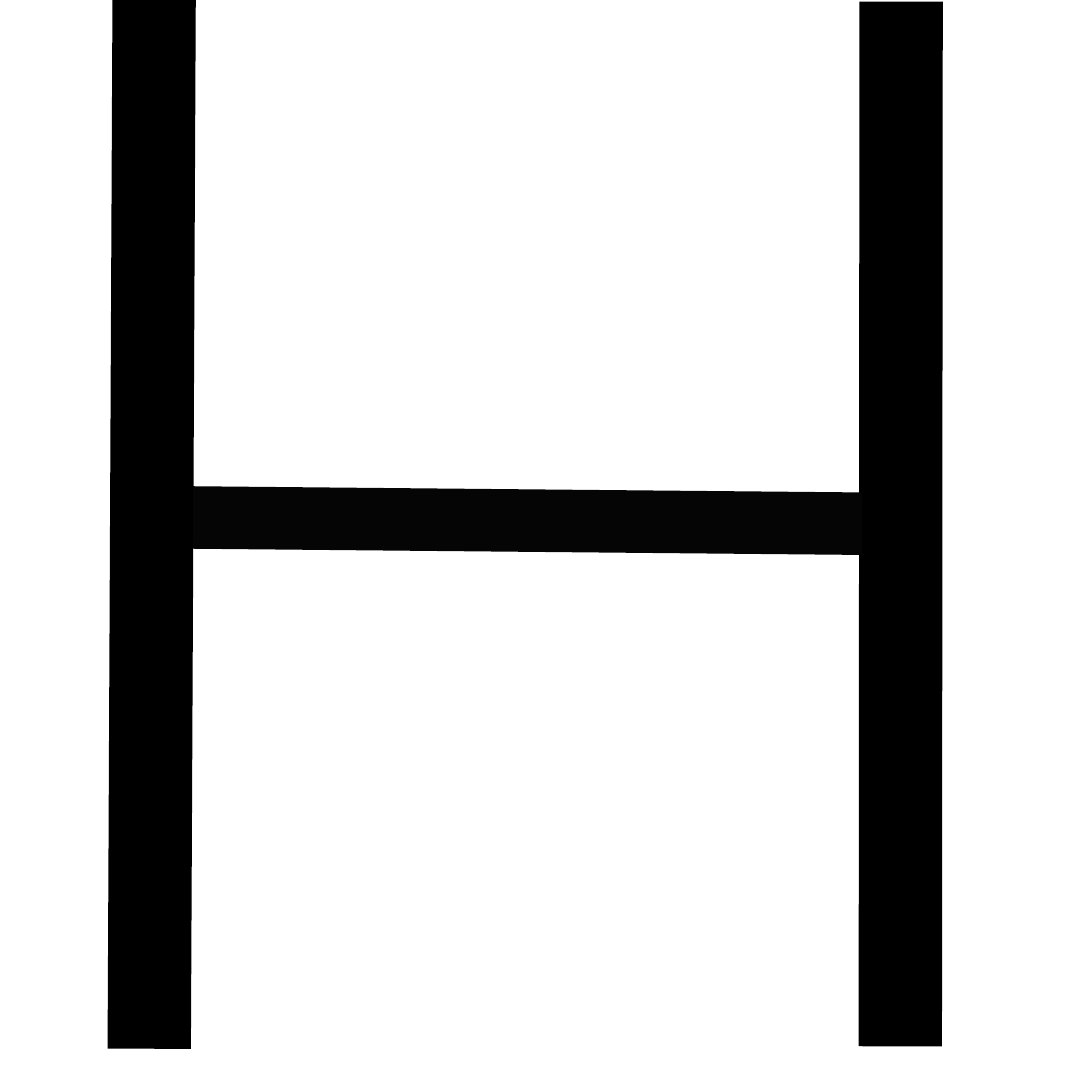}}
\newcommand{\larrow}{\includegraphics[height=.1 in, width=0.1in]{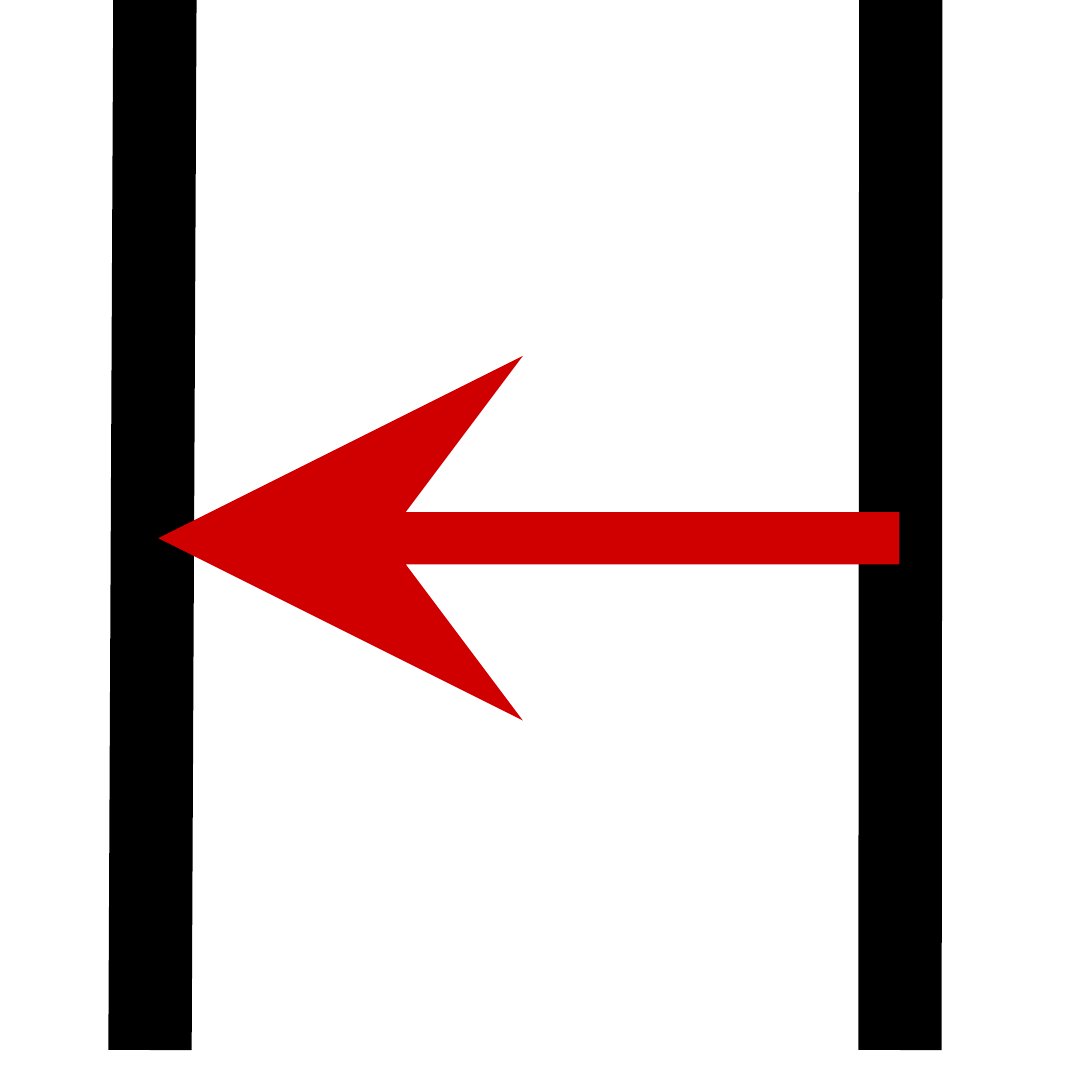}}
\newcommand{\rarrow}{\includegraphics[height=.1 in, width=0.1in]{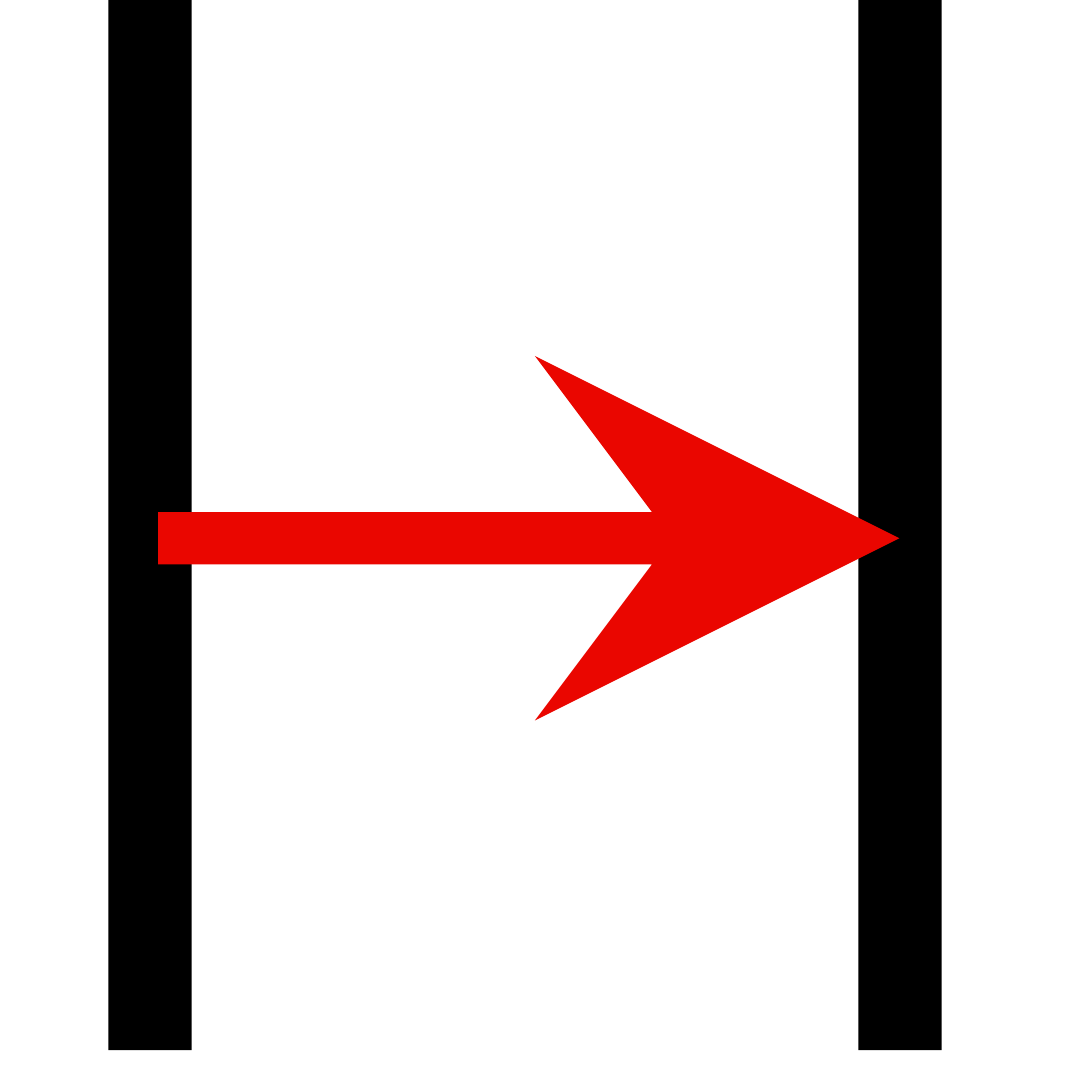}}
\newcommand{\twist}{\includegraphics[height=0.1 in, width=0.15in]{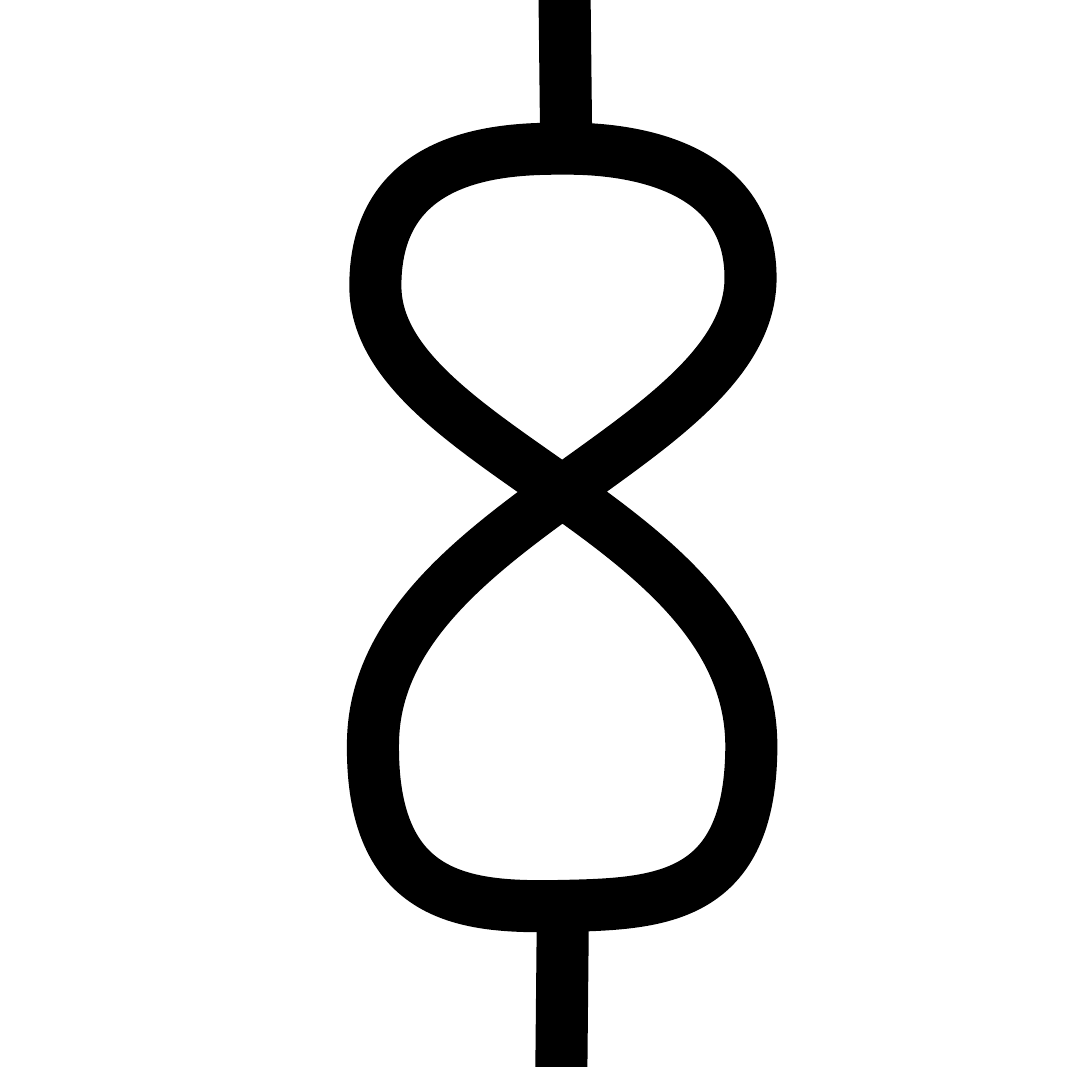}}
\newcommand{\bubble}{\includegraphics[height=0.1 in, width=0.15in]{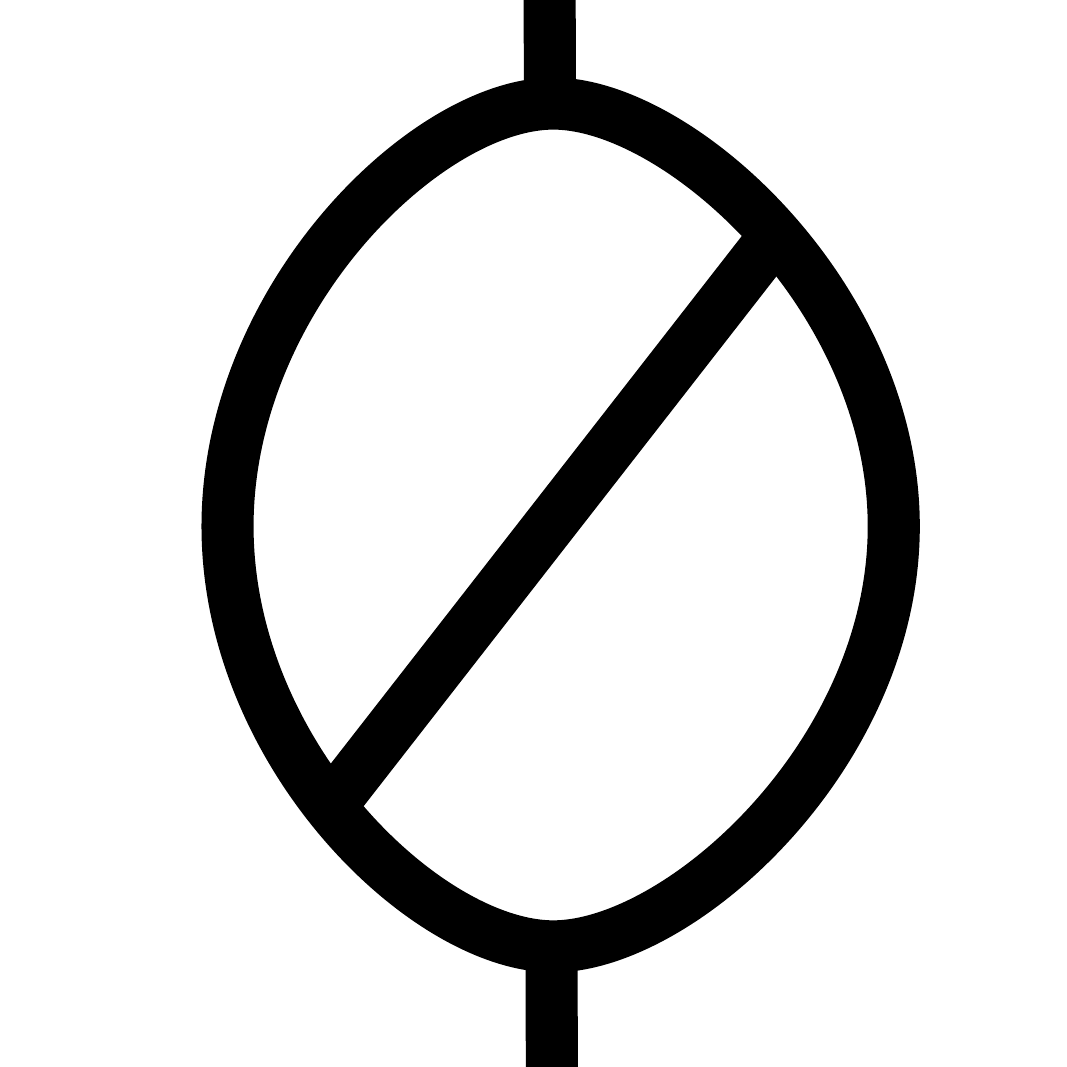}}
\newcommand{\Z}{\mathbb{Z}}
\newcommand{\V}{\mathsf{V}}
\newcommand{\W}{\mathsf{W}}
\newcommand{\WD}{\mathfrak{D}}
\newcommand{\trace}{\textrm{tr}}
\newcommand{\calI}{\mathcal{I}}
\newcommand{\calL}{\mathcal{L}}
\newcommand{\calS}{\mathcal{S}}
\newcommand{\id}{\operatorname{id}}
\newcommand{\Id}{\operatorname{Id}}
\newcommand{\Hom}{\operatorname{Hom}}
\newcommand{\Aut}{\operatorname{Aut}}
\newcommand{\TAut}{\operatorname{TAut}}
\newcommand{\ad}{\operatorname{Ad}}
\newcounter{dummy} \numberwithin{dummy}{section}
 \newtheorem{theorem}[dummy]{Theorem}
\newtheorem{thm}[dummy]{Theorem}
\newtheorem{prop}[dummy]{Proposition}
\newtheorem{lemma}[dummy]{Lemma}
\newtheorem*{thm*}{Theorem}
\newtheorem*{prop*}{Proposition}
\theoremstyle{definition}
\newtheorem{definition}[dummy]{Definition}
\newtheorem{example}[dummy]{Example}
\newtheorem{remark}[dummy]{Remark}
\newtheorem{notation}[dummy]{Notation}
\numberwithin{equation}{section}
\newcommand{\gt}{\mathsf{GT}}
\newcommand{\grt}{\mathsf{GRT}}
\newcommand{\kv}{\mathsf{KV}}
\newcommand{\krv}{\mathsf{KRV}}
\newcommand{\EB}{\mathsf{E}}
\newcommand{\PaCD}{\mathsf{PaCD}}
\newcommand{\PaB}{\mathsf{PaB}}
\newcommand{\lie}{\widehat{\mathfrak{lie}}}
\newcommand{\tder}{\mathfrak{tder}}
\newcommand{\cyc}{\mathrm{cyc}}
\newcommand{\ass}{\mathfrak{ass}}
\newcommand{\wf}{\mathsf{wF}}
\newcommand{\hatwf}{\widehat{\wf}}
\newcommand{\arrows}{\mathsf{A}}
\newcommand{\J}{\mathsf{J}}
\newcommand{\A}{\arrows}
\newcommand{\gr}{\textrm{gr}}
\title{A topological characterisation of the Kashiwara--Vergne groups}
\author[Z. Dancso]{Zsuzsanna Dancso}
\address{School of Mathematics and Statistics\\ The University of Sydney\\ Sydney, NSW, Australia}
\email{zsuzsanna.dancso@sydney.edu.au}
\author[I. Halacheva]{Iva Halacheva}
\address{Department of Mathematics \\ Northeastern University \\ Boston, Massachusetts, USA}
\email{i.halacheva@northeastern.edu}
\author[M. Robertson]{Marcy Robertson}
\address{School of Mathematics and Statistics \\ The University of Melbourne \\ Melbourne, Victoria, Australia}
\email{marcy.robertson@unimelb.edu.au}
\date{\today}
\begin{document}
%\lipsum
\maketitle\bibliographystyle{amsalpha}
	
\begin{abstract} 
In \cite{BND:WKO2} Bar-Natan and the first author show that solutions to the Kashiwara--Vergne equations are in bijection with certain knot invariants: {\em homomorphic expansions} of {\em welded foams}. 
Welded foams are a class of knotted tubes in $\mathbb{R}^4$, which can be finitely presented algebraically as a {\em circuit algebra}, or equivalently, a {\em wheeled prop}. In this paper we describe the Kashiwara-Vergne groups $\kv$ and $\krv$ -- the symmetry groups of Kashiwara-Vergne solutions -- as automorphisms of the completed circuit algebras of welded foams, and their associated graded circuit algebras of arrow diagrams, respectively. Finally, we provide a description of the graded Grothendieck-Teichm\"uller group $\grt_1$ as automorphisms of arrow diagrams. 
	
\end{abstract}

\tableofcontents

\section{Introduction} 
Universal finite type invariants, or \emph{homomorphic expansions}, are powerful tools in the study of knots and three-manifolds. Given a class of knotted objects $\mathcal{K}$, an expansion $Z:\mathcal{K}\rightarrow \arrows$ takes values in the associated graded space of $\mathcal{K}$ with respect to a {\em Vassiliev filtration}, and satisfies the {\em universality property} that the associated graded map of $Z$ is the identity map of $\arrows$. An expansion is {\em homomorphic} if it respects any operations defined on $\mathcal K$, such as braid or tangle composition, knot connected sum or cabling. The space $\arrows$ is usually combinatorially described as Jacobi or Feynman diagrams. The most famous class of examples is the Kontsevich integral of knots \cite{MR1237836}, and its many variants \cite{BN-Intro}, \cite{BN_Survey_Knot_Invariants}, \cite{LM96}, \cite{LMO98}.

In practice, producing homomorphic expansions is often difficult, and in many cases (e.g. for tangles) they don't exist. When $\mathcal K$ can be finitely presented as an algebraic structure (e.g. parenthesised braids viewed as a category), the Vassiliev filtration coincides with the $I$-adic filtration by powers of the augmentation ideal, and finding a homomorphic expansion is equivalent to solving a set of equations in the associated graded space $\arrows$. In this vein, Bar-Natan  \cite{BNGT} showed that there is a bijection between homomorphic expansions for parenthesised braids and {\em Drinfeld associators}: objects in quantum algebra defined as the set of solutions to the {\em pentagon} and {\em hexagon} equations.  The existence of homomorphic expansions for parenthesised braids can thus be inferred from the existence of Drinfeld associators. 

In \cite{Drin90}, Drinfeld introduced two groups,  the Grothendieck--Teichm\"uller group $\gt$ and its graded version $\grt_1$, which act freely and transitively on the set of associators. Bar-Natan's homomorphic expansions induce isomorphisms between the prounipotent completion of parenthesised braids, $\widehat{\PaB}$, and their associated graded parenthesised chord diagrams, $\PaCD$.  As such, Bar-Natan \cite{BNGT} showed that it is natural to identify these symmetry groups with automorphisms of the source and target so that \[\gt\cong \Aut(\widehat{\PaB}) \quad \text{and} \quad \grt_1 \cong \Aut(\PaCD),\] which act freely and transitively on homomorphic expansions by pre- and post-composition respectively. Later, several authors noticed that the collections of parenthesised braids and parenthesised chord diagrams could be concisely described as operads, and that homomorphic expansions, and therefore Drinfeld associators, can be identified with the set of operad isomorphisms $Z:\widehat{\PaB}\rightarrow \PaCD$ (\cite{tamarkin1998proof}, \cite{FresseVol1}).

\medskip 
A higher-dimensional version of Bar-Natan's story arises in the Lie theoretic context of the Kashiwara--Vergne problem. Informally, the Kashiwara--Vergne conjecture asks when, in the non-commutative setting, one can simplify the exponentiation rule \[e^xe^y=e^{\mathfrak{bch}(x,y)}.\] Here, we write \[\mathfrak{bch}(x,y)=x+y+\frac{1}{2}[x,y] +\frac{1}{12}[x,[x,y]]+\dots\] for the Baker--Campbell--Hausdorff series. While the Baker--Campbell--Hausdorff series provides a formula for $e^xe^y$ as an infinite Lie series, this is not always useful in applications. The original form of the Kashiwara--Vergne (KV) conjecture \cite{KV78} asks if the Baker-Campbell-Hausdorff series can be expressed in terms of a convergent power series and adjoint endomorphisms.  A solution to the KV conjecture can be formulated as an automorphism $F$ of the (degree completed) free Lie algebra $\lie_2$ with generators $x$ and $y$, such that $F$ satisfies the {\em first KV equation} \[F(e^x e^y)=e^{x+y}, \]  as well as several other properties which we omit here (complete details in Section~\ref{sec: KRV}). 

The existence of general solutions to the KV conjecture was shown in 2006 \cite{AM06} and a deep relationship between the KV conjecture and Drinfeld associators was established in \cite{AT12, ATE10}.  As in the case of Drinfeld associators, there exist symmetry groups, called the Kashiwara--Vergne groups and denoted $\kv$ and $\krv$, which act freely and transitively on the set of solutions to the KV equations.

As with Drinfeld associators, the set of KV solutions has a topological description. \emph{Welded foams}, heretofore called $w$-foams, are a class of knotted surfaces in $\mathbb{R}^4$ \cite{Satoh}. They have much in common with parenthesised braids, but live in a higher dimension. In a series of papers \cite{BND:WKO1, BND:WKO2}, Bar-Natan and the first author show that homomorphic expansions of $w$-foams are in bijection with the solutions to the KV conjecture~\cite[Theorem 4.9]{BND:WKO2}. The existence of a homomorphic expansion for $w$-foams can therefore be deduced from the existence of solutions to the KV conjecture. In this paper, we build on this topological interpretation of the KV conjecture to give a simultaneously topological and ``operadic'' interpretation of the Kashiwara--Vergne groups. 

A key feature of $w$-foams is that they are finitely presented as a \emph{circuit algebra} with additional cabling-type operations called {\em unzips}. Circuit algebras, reviewed in Section~\ref{sec: ca}, are a generalisation of Jones's planar algebras \cite{Jones:PA}, in which one drops the planarity condition on ``connection diagrams''. We describe circuit algebras as algebras over the coloured operad of \emph{wiring diagrams} in Definition~\ref{def:CA}.   Alternatively, in \cite[Theorem 5.5]{DHR1} we showed that circuit algebras are equivalent to a type of tensor category called a {\em wheeled prop}. The preliminary sections of this paper describe the circuit algebra of $w$-foams, denoted $\wf$, as well as its associated graded circuit algebra $\arrows$, where homomorphic expansions take values. $\arrows$ admits a combinatorial description in terms of \emph{arrow diagrams} (Definition~\ref{def: arrow diagram circuit alg}): an oriented version of the better known space of chord diagrams. The first main theorem of this paper is the following (Theorem~\ref{thm: aut(A)=KRV}):

\begin{thm*}
There is an isomorphism of groups $\Aut_{v}(\arrows)\cong \krv$. 
\end{thm*}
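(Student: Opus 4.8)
The plan is to realise both $\krv$ and $\Aut_v(\arrows)$ as the symmetry group of a single torsor, namely the set $X$ of homomorphic expansions $Z\colon\wf\to\arrows$. By \cite[Theorem 4.9]{BND:WKO2} the set $X$ is in bijection with the set of Kashiwara--Vergne solutions, and $\krv$ acts freely and transitively on the latter; transporting this action along the bijection gives a free and transitive action of $\krv$ on $X$. In parallel, I would let $\Aut_v(\arrows)$ act on $X$ by post-composition, $\alpha\cdot Z=\alpha\circ Z$, and prove that this action is also free and transitive. Fixing a base expansion $Z_0\in X$ then yields bijections $\krv\xrightarrow{\sim}X\xleftarrow{\sim}\Aut_v(\arrows)$, and the induced bijection $\krv\to\Aut_v(\arrows)$ is the candidate isomorphism. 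Mirroring Bar-Natan's identification $\grt_1\cong\Aut(\PaCD)$, the group-homomorphism property will follow once the $\krv$-action, transported through the expansion, is shown to be exactly post-composition by the corresponding diagram automorphism.

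First I would check that post-composition is a well-defined action. A homomorphic expansion is a filtered isomorphism of circuit algebras whose associated graded map is the identity on $\arrows$. If $\alpha\in\Aut_v(\arrows)$ is a degree-preserving circuit algebra automorphism that restricts to the identity in the lowest filtration degree — this being the content of the subscript $v$ — then $\alpha\circ Z$ is again a filtered circuit algebra isomorphism inducing the identity on $\gr$, and hence lies in $X$. This step uses only that $\arrows$, as the associated graded of $\wf$, carries a grading for which ``degree-preserving automorphism'' is meaningful, and that circuit algebra morphisms compose compatibly with all the structure maps, including the unzip operations.

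Next I would establish freeness and transitivity. Freeness is immediate: since every $Z\in X$ is invertible, $\alpha\circ Z=Z$ forces $\alpha=\id$. For transitivity, given $Z_1,Z_2\in X$, I set $\alpha=Z_2\circ Z_1^{-1}$; this is a degree-preserving circuit algebra automorphism of $\arrows$ because both $Z_i$ induce the identity on $\gr$, and it satisfies the $v$-condition because the lowest-degree behaviour of $Z_1$ and $Z_2$ agrees on the distinguished generators. Thus $\alpha\in\Aut_v(\arrows)$ and $\alpha\cdot Z_1=Z_2$. Combined with the first paragraph, this pins down the bijection $\krv\to\Aut_v(\arrows)$: injectivity follows from freeness of the $\krv$-action and surjectivity from transitivity of $\krv$ together with freeness of the $\Aut_v(\arrows)$-action.

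The hard part will be verifying that the $\krv$-action transported to $X$ really does agree with post-composition by a uniquely determined element of $\Aut_v(\arrows)$ — that is, matching the defining data on the two sides. Concretely, $\krv$ is cut out by a tangentiality condition together with the divergence (second KV) condition on automorphisms of the relevant free structure, whereas $\Aut_v(\arrows)$ is defined diagrammatically, so the obstacle is the dictionary translating orientation reversals, strand operations, and unzips of arrow diagrams into the language of tangential derivations and divergence. I would resolve this using the explicit presentation of $\arrows$ by generators and relations (Definition~\ref{def: arrow diagram circuit alg}) to identify the relevant graded pieces with $\tder$ and $\sder$, under which the $v$-condition becomes precisely tangentiality together with the divergence constraint. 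With this identification the two free and transitive actions coincide, and the bijection constructed above is therefore an isomorphism of groups.
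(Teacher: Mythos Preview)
Your torsor strategy has the right spirit but contains several genuine gaps and one substantive misreading.

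\textbf{The definition of $\Aut_v(\arrows)$.} You describe the subscript $v$ as meaning ``restricts to the identity in the lowest filtration degree''. That is not what the paper defines: $\Aut_v(\arrows)$ consists of those $G\in\Aut(\arrows)$ (Definition~\ref{def:aut}) which are \emph{$v$-small} in the sense of Definition~\ref{def:vsmallG}, namely the $\mathfrak{a}_2$-component of $\log\Upsilon(G(\vertex))$ vanishes. This is a condition on the value of $G$ at the vertex generator, not a filtration condition. Consequently your transitivity step, where you form $\alpha=Z_2\circ Z_1^{-1}$ and claim $\alpha\in\Aut_v(\arrows)$ ``because the lowest-degree behaviour of $Z_1$ and $Z_2$ agrees'', does not verify the correct hypothesis.

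\textbf{Expansions are not invertible.} A homomorphic expansion is a map $Z\colon\wf\to\arrows$; it is not an isomorphism and has no inverse. What is invertible is the induced map $\widehat{Z}\colon\hatwf\to\arrows$ on the completion (Theorem~\ref{thm: expansions are isomorphisms}). So writing $Z_2\circ Z_1^{-1}$ is not meaningful as stated; you must pass to $\widehat{Z_2}\circ\widehat{Z_1}^{-1}$ and then argue that this automorphism of $\arrows$ lies in $\Aut_v(\arrows)$, which is the content of Proposition~\ref{prop:AutvAndExpansions} and relies on the equational characterisation in Proposition~\ref{prop:SimplifiedEqns}.

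\textbf{The torsor argument yields only a bijection.} Two free transitive left actions on the same set $X$ give, upon choosing a basepoint, a bijection of groups---but not a group homomorphism unless the actions commute or one is shown to factor through the other. You acknowledge this as ``the hard part'' and propose to resolve it by translating the diagrammatic conditions into tangentiality and divergence conditions. That translation \emph{is} the entire proof: the paper carries it out explicitly by defining $\overline{\Theta}(G)=(e^n,2\gamma)$ where $\Upsilon(G(\vertex))=e^we^n$ and $\kappa(\log G(\upcap))=\gamma$, and then verifying directly that equations \eqref{R4'}, \eqref{U'}, \eqref{C'} for $G$ are equivalent to the two defining equations of $\krv$ for $(e^n,2\gamma)$. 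This produces a \emph{canonical}, basepoint-independent isomorphism $\Theta\colon\Aut_v(\arrows)\to\krv$; the compatibility with the torsor actions is then a separate corollary (the Proposition following Theorem~\ref{thm: aut(A)=KRV}), not the mechanism of proof. Your outline inverts this logic and leaves the substantive computation undone.
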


In order to develop the corresponding theorem for the group $\kv$, we construct a prounipotent completion of the circuit algebra of $w$-foams. We show explicitly that completion intertwines with the circuit algebra operations, as well as unzips and the further auxiliary operations. In this way, we avoid a full development of the rational homotopy theory of circuit algebras which arises in the operadic approach to the Grothendieck-Teichm\"uller group (e.g. Fresse~\cite{FresseVol2}).  The decision to avoid rational homotopy theory for circuit algebras is due to the fact that $w$-foams have a richer structure than that of just a circuit algebra: the most notable additional structure is embodied by the unzip operations. Full details are in Sections~\ref{sec:wf} and \ref{sec:completion}. 

After taking the prounipotent completion of $w$-foams, we show that homomorphic expansions correspond to isomorphisms of circuit algebras (Theorem~\ref{thm: expansions are isomorphisms}).
\begin{thm*}
For any homomorphic expansion $Z:\wf\xrightarrow{}\A$, the induced map $\widehat{Z}: \hatwf \to \A$ is an isomorphism of filtered, complete circuit algebras. 
\end{thm*}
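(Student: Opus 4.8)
The plan is to deduce the statement from the general principle that a filtered morphism between complete filtered objects which induces an isomorphism on associated graded is itself an isomorphism, and then to check that this isomorphism is compatible with the circuit algebra structure. First I would unpack what it means for $Z$ to be a homomorphic expansion: by definition $Z$ respects the filtration by powers of the augmentation ideal, so that $Z(F^n\wf)\subseteq F^n\A$, and its associated graded map is the universality condition $\gr Z=\id_\A$. Since $\A$ is complete with respect to its grading filtration, $Z$ sends the inverse system $\{\wf/F^n\wf\}$ compatibly to $\{\A/F^n\A\}$ and hence factors through the completion, producing the map $\widehat{Z}\colon\hatwf\to\A$ claimed in the statement.

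The core of the argument is to show $\widehat{Z}$ is bijective. Because passing to the completion does not change the associated graded, $\gr\hatwf\cong\gr\wf\cong\A$, and $\gr\widehat{Z}=\gr Z=\id_\A$ is an isomorphism. I would then run the usual successive-approximation argument. For injectivity, if $0\ne a\in\hatwf$ then by separatedness of the filtration there is a unique $n$ with $a\in F^n\setminus F^{n+1}$; the class of $a$ in $\gr_n$ is nonzero, and since $\gr_n\widehat{Z}$ is injective the leading term of $\widehat{Z}(a)$ is nonzero, so $\widehat{Z}(a)\ne 0$. For surjectivity, given $b\in\A$ I would build a preimage degree by degree: use surjectivity of $\gr_n\widehat{Z}$ to correct the error $b-\widehat{Z}(a_{n-1})$ at each filtration level, producing a Cauchy sequence $\{a_n\}$ whose limit exists by completeness of $\hatwf$ and maps to $b$.

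It remains to upgrade this linear isomorphism to an isomorphism of complete circuit algebras. Since $Z$ is homomorphic and completion is functorial with respect to the wiring-diagram operad action and the unzip operations (established in the sections on completion), $\widehat{Z}$ is a morphism of circuit algebras. A bijective morphism of algebras over the coloured operad of wiring diagrams automatically has an inverse that is again such a morphism: for a wiring diagram $w$ and elements $b_i=\widehat{Z}(a_i)$ one computes $\widehat{Z}^{-1}(w(b_1,\dots,b_k))=w(\widehat{Z}^{-1}(b_1),\dots,\widehat{Z}^{-1}(b_k))$ directly from the morphism property, and likewise for unzips. Hence $\widehat{Z}$ is an isomorphism of filtered, complete circuit algebras.

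The main obstacle I anticipate is not the linear-algebraic core, which is routine, but the bookkeeping needed to guarantee that the hypotheses of that core actually hold in this setting: namely that the augmentation-ideal filtration on $\wf$ is exhaustive and separated, that the completion $\hatwf$ genuinely carries a complete circuit algebra structure with operations continuous for the filtration, and that the successive-approximation limits remain inside $\hatwf$ and are independent of the choices made. These compatibilities between completion, the circuit algebra operations, and the unzips are exactly the technical content deferred to the completion section, and invoking them is what makes the short argument above legitimate.
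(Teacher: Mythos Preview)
Your proposal is correct and follows essentially the same approach as the paper: both arguments use that $\gr\widehat{Z}=\id_\A$ together with completeness and separatedness of $\hatwf$ to deduce that $\widehat{Z}$ is an isomorphism. The paper packages this slightly differently---it constructs an explicit inverse $\Psi:\A\to\hatwf$ via the universal property of the inverse limit (using maps $\psi_m:\A\twoheadrightarrow\A_m\cong\calI^m/\calI^{m+1}\hookrightarrow\mathbb{Q}[\wf]/\calI^{m+1}$) rather than invoking the abstract complete-filtration lemma---but the mathematical content is the same.
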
  This is a new contribution to the literature on homomorphic expansions of $w$-foams, where completions so far have not been discussed. Current work-in-progress by Bar-Natan~\cite{BN:Complete} further explores the interactions between prounipotent completion and homomorphic expansions for groups. A direct consequence of this result is that we can identify KV solutions with isomorphisms of circuit algebras.  Another consequence of Theorem~\ref{thm: expansions are isomorphisms} is that we can describe the symmetry group $\kv$ as automorphisms of (completed) $w$-foams by combining a (non-canonical) isomorphism $\widehat{Z}$ and the isomorphism $\Aut_v(\arrows)\cong \krv$ in Theorem~\ref{thm: ZGZ is in KV}:
\begin{thm*} 
There is an isomorphism of groups $\Aut_{v}(\hatwf)\cong \kv$. 
\end{thm*}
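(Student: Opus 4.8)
The plan is to transport the already-established isomorphism $\Aut_v(\arrows)\cong\krv$ across the (non-canonical) isomorphism $\widehat{Z}\colon\hatwf\to\A$ provided by the previous theorem, and to identify the resulting conjugated subgroup of $\Aut_v(\hatwf)$ with $\kv$. Concretely, a homomorphic expansion $Z$ induces an isomorphism $\widehat{Z}$ of complete, filtered circuit algebras, and conjugation $\phi\mapsto \widehat{Z}^{-1}\circ\phi\circ\widehat{Z}$ is a group isomorphism $\Aut_v(\A)\xrightarrow{\sim}\Aut_v(\hatwf)$ — here one must check that this conjugation preserves the relevant subgroup $\Aut_v$, i.e.\ that it respects the unzip operations and the choice of vertex-fixing (the subscript $v$) conditions, which follows because $\widehat{Z}$ intertwines all the circuit algebra structure including unzips. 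Composing with $\Aut_v(\arrows)\cong\krv$ would then give a candidate isomorphism $\kv\cong\Aut_v(\hatwf)$.

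First I would recall from the source side that $\kv$ and $\krv$ are related by the same associated-graded mechanism: $\krv$ is (by the first main theorem) the automorphisms of the graded object $\arrows$, while $\kv$ acts on the completed (ungraded) object. The free and transitive actions of $\kv$ and $\krv$ on the set of KV solutions, together with the bijection between KV solutions and homomorphic expansions $Z\colon\wf\to\A$ from \cite[Theorem 4.9]{BND:WKO2}, give the dictionary I need: fixing one expansion $Z$ trivialises the torsor, and the two symmetry groups act by pre- and post-composition respectively. I would make this precise by showing that pre-composition $g\mapsto \widehat{Z}\circ g$ by an element $g\in\Aut_v(\hatwf)$ sends the expansion $\widehat{Z}$ to another homomorphic expansion, and that every homomorphic expansion arises this way (using that $\widehat{Z}$ is an isomorphism, so $g=\widehat{Z}^{-1}\circ\widehat{Z}'$ recovers the automorphism from any second expansion $\widehat{Z}'$). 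This realises $\Aut_v(\hatwf)$ as the symmetry group acting by pre-composition, which on the KV side is exactly $\kv$.

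The main obstacle I anticipate is the \emph{compatibility of the group laws and the action conventions}, rather than the mere existence of a bijection of sets. One must verify that conjugation by $\widehat{Z}$ translates the post-composition action of $\krv$ on $\A$ into the correct pre-composition action of $\kv$ on $\hatwf$, matching the direction conventions in \cite{AT12} for the $\kv$/$\krv$ relationship; a sign or side error here would produce an anti-isomorphism rather than an isomorphism. Equivalently, since $\kv$ and $\krv$ are themselves related by the tangential-derivation exponential / associated-graded correspondence, I would need to confirm that the isomorphism $\widehat{Z}$ is compatible with this correspondence, so that the square relating $\Aut_v(\hatwf)$, $\Aut_v(\A)$, $\kv$, and $\krv$ commutes. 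The second, more technical obstacle is checking that $\Aut_v(\hatwf)$ is genuinely well-defined — that automorphisms of the completed circuit algebra fixing the vertex generators and respecting unzips form a group independent of the chosen presentation — but this should follow from the structural results of Sections~\ref{sec:wf} and \ref{sec:completion} establishing that completion intertwines with all the operations.

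Finally, to assemble the isomorphism I would record it as the composite
\[
\Aut_v(\hatwf)\xrightarrow{\ \phi\mapsto \widehat{Z}\phi\widehat{Z}^{-1}\ }\Aut_v(\A)\xrightarrow{\ \cong\ }\krv\xrightarrow{\ \cong\ }\kv,
\]
where the last isomorphism is the graded/ungraded identification of the two KV groups, and then check that the composite is independent of the choice of $Z$ up to the expected inner-automorphism ambiguity. The non-canonicity of $\widehat{Z}$ means the isomorphism itself is not canonical, but since any two choices of $Z$ differ by an element of the torsor, the induced isomorphisms differ by conjugation, so the abstract isomorphism type $\Aut_v(\hatwf)\cong\kv$ is well-defined, which is all the statement asserts.
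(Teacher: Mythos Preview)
Your approach is essentially the paper's: define $\Psi_Z$ as the composite of conjugation by $\widehat{Z}$, the isomorphism $\Theta\colon\Aut_v(\arrows)\to\krv$, and an isomorphism $\krv\to\kv$, then verify compatibility with the torsor actions. The paper carries out exactly this plan, including the explicit check (your ``main obstacle'') that the action of $\Aut_v(\hatwf)$ on expansions by pre-composition with the inverse matches the $\kv$-action on $\text{SolKV}$.

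Two points where your description drifts from the paper and could mislead you if you tried to fill in details. First, the isomorphism $\krv\cong\kv$ is \emph{not} an ``associated-graded'' or ``graded/ungraded'' identification: both groups are subgroups of $\TAut_2$, and the isomorphism is $T_{F}\colon a\mapsto FaF^{-1}$ for a chosen $F\in\text{SolKV}$. Crucially, the paper uses $T_{F_Z}$ with the \emph{same} $Z$ that supplies $\widehat{Z}$; this coherent choice is what makes the square with the two torsor actions commute, and it is verified by an explicit computation of $(Z'\circ\widehat{Z}^{-1}G^{-1}\widehat{Z})(\vertex)$. Second, the subscript $v$ does not mean ``vertex-fixing'': in the paper $\Aut_v(\hatwf)$ is \emph{defined} as those circuit algebra automorphisms of $\hatwf$ which preserve the set of $v$-small group-like homomorphic expansions under pre-composition (the $v$-small condition concerns the vanishing of the $\mathfrak{a}_2$-component of $\log\Upsilon(V)$). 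With that definition, the fact that conjugation by $\widehat{Z}$ lands in $\Aut_v(\arrows)$ is immediate from Proposition~\ref{prop:AutvAndExpansions}, rather than something requiring a separate argument about ``respecting unzips and vertex-fixing.''
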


These results complete the $4$-dimensional topological interpretation of the Kashiwara--Vergne conjecture initiated by Bar-Natan and the first author. At the same time, combining these results with the wheeled prop description of circuit algebras \cite{DHR1}, we have given an ``operadic'' interpretation of the Kashiwara-Vergne groups. Namely, solutions of the Kashiwara--Vergne conjecture give rise to isomorphisms of completed wheeled props, and the symmetry groups $\kv$ an $\krv$ are identified with automorphism groups of completed wheeled props. 

\medskip 

The parallels between Drinfeld associators and Kashiwara--Vergne solutions are more than a coincidence.  In a series of breakthrough articles Alekseev, Enriquez and Torossian \cite{AT12, ATE10} show that each Drinfeld associator $\Phi$ (\cite{Drin90, Drin89}) gives rise to a KV solution $F_{\Phi}$. Conversely, a KV solution $F$ gives rise to a ``KV associator'' $\Phi_F$.  The main distinction is that KV associators live in a different space than Drinfeld associators: KV associators are automorphisms of free Lie algebras and Drinfeld associators are automorphisms of Lie algebras of infinitesimal braids.

This close relationship extends to a relationship between the graded Grothendieck--Teichm\"uller group $\grt_1$ and the graded Kashiwara--Vergne group $\krv$.  Alekseev and Torossian construct a group homomorphism $\varrho:\grt_1\rightarrow\krv$ and conjecture that $\krv \cong \grt_1\times \mathbb{Q}$ \cite[Remark 9.14]{AT12}.  It is natural to wish to interpret this relationship in the topological and operadic context.  In Section~\ref{sec:krvgrt}, we describe the relationship between the operad of parenthesised chord diagrams $\PaCD$ and the circuit algebra of arrow diagrams $\arrows$.  Because the algebraic structures are different-- $\PaCD$ is an operad and $\arrows$ a circuit algebra-- the relationship is subtle. Nonetheless, we construct an image of $\PaCD$ in $\arrows$, and exhibit $\grt_1$ as automorphisms of arrow diagrams in Theorem~\ref{thm: GRT as Aut(A)}. 

\begin{remark}
The topological interpretation of the analogous map $\varrho:\gt_1\rightarrow \kv$ which maps the prounipotent radical of the Grothendieck--Teichm\"uller group into the Kashiwara--Vergne group $\kv$ (\cite{ATE10}) is not included in this paper, but is the topic of separate, future work. 
\end{remark}

\subsection*{Acknowledgements} 
The authors would like to thank Dror Bar-Natan, who has been generous with his insights throughout the writing of this paper, and contributed important ideas to several proofs.
We also thank Anton Alekseev, Tamara Hogan, Arun Ram, and Chris Rogers for their interest, suggestions and helpful mathematical discussions. The first and third author would like to thank the Mathematical Sciences Research Institute (MSRI) for their support via the 2020 program "Higher Categories and Categorification" and the Sydney Mathematical Research Institute (SMRI) for providing visitor funding to the third author, and a constructive working environment where much of this paper was written.

\section{Preliminaries} 

\subsection{Circuit algebras}\label{sec: ca}
A \emph{circuit algebra} is an algebraic structure analogous to Jones's planar algebras ~\cite{Jones:PA} used to describe virtual and welded tangles in low-dimensional topology. Informally, an oriented circuit algebra is a bi-graded sequence of sets or vector spaces, together with a family of operations parametrised by \emph{wiring diagrams} $\WD$. More formally, the collection of all wiring diagrams forms a coloured operad and circuit algebras are algebras over this operad. Here we present the basic details needed in this paper, and refer the reader to \cite{DHR1} for a complete introduction and an alternate, equivalent, description of circuit algebras as wheeled props. 

\medskip

Throughout this section, let $\calI$ denote a countable alphabet, the set of labels. 
\begin{definition}\label{def:WD}  
An \emph{oriented wiring diagram} is a triple $\WD=(\calL,M,f)$ consisting of:
\begin{enumerate}
\item 

%A {\em disc with $r$ holes}, $$D_0 \setminus (\mathring{D}_1 \sqcup \mathring{D}_2 \sqcup  \hdots \sqcup \mathring{D}_r),$$ obtained by removing $r$ disjoint numbered open discs with disjoint boundaries from the interior of a bigger disc. The boundaries of the removed discs are called {\em input circles}, while the boundary of the big disc is numbered zero and called the {\em output circle}. Moreover, each disc has a set of distinguished points on its boundary, which we call \emph{labels} : 

A set of sets of labels
$$\mathcal{L}=\{L_0^{-}, L_0^{+}; L_1^{-},L_1^{+},\hdots,L_r^{-},L_r^{+}\}\subseteq \calI.$$   The elements of the set $L^{-}_i$ are referred to as the $i$th set of \emph{outgoing labels} and the elements of $L^{+}_i$ are the $i$th set of \emph{incoming labels}.  The sets $L_0^{-}$ and $L_{0}^{+}$ play a distinguished role: their elements are called the \emph{output labels} of the diagram $\WD$, while $L_i^+$ and $L_i^-$ for $i>0$ are called {\em input labels}.  We write $L_{i}^{\pm}$ to mean  ``$L_{i}^{-}$ and $L_{i}^{+}$, respectively'' and we may refer to the set $L_i^{\pm}$ as the \emph{$i$th label set} of $\WD$. 
			
\item An abstract, oriented, compact $1$-manifold $M$ with boundary, $\partial M$, regarded up to orientation-preserving homeomorphism. We write $\partial M^{-}$ for the set of {\em beginning} boundary points of $M$, and  $\partial M^{+}$ for the set of {\em ending}  boundary points, so $\partial M = \partial M^{-} \sqcup \partial M^{+}$.
			
\item Set bijections\footnote{If the sets $\{L_i^{\pm}\}$ are not pairwise disjoint, replace the unions $(\cup_{i=0}^r L_i^{-})$ and  $(\cup_{i=0}^r L_i^{+})$ by the set of triples $\{(a,i,\pm)\, | \, a\in L_i^{\pm}, 0\leq i \leq r\}$.} \label{fn:Triples}
$$\partial M^{-} \xrightarrow{f} \cup^r_{i=0} {L^{-}_i} \quad \textrm{and}\quad  \partial M^{+} \xrightarrow{f} \cup^r_{i=0} {L^{+}_i}.$$
\end{enumerate}
\end{definition}

Wiring diagrams have a convenient pictorial representation as tangle diagrams, as in Figure~\ref{fig:WiringDiagramOri}. In this representation, we arrange the elements of $\calL$ on the boundary of a disc with $r$ disjoint holes $$D_0 \setminus (\mathring{D}_1 \sqcup \mathring{D}_2 \sqcup  \hdots \sqcup \mathring{D}_r).$$ The set $L_0^{\pm}$ gives a set of distinguished points on the boundary of the big disc $D_0$ and each $L_i^{\pm}$ represents a set of distinguished points on the boundary of the $i$th interior disc.  The abstract $1$-manifold $M$ is pictured as an immersed $1$-manifold, pairing up the input and output labels from $\calL$.  While this pictorial representation is convenient, and we will use it throughout the paper, we emphasise that wiring diagrams are \emph{combinatorial}, not topological objects. In particular, the chosen immersion of the $1$-manifold $M$ depicted in Figure~\ref{fig:WiringDiagramOri} is not part of the wiring diagram data. See~\cite[Proposition 2.3]{DHR1} for more details.  

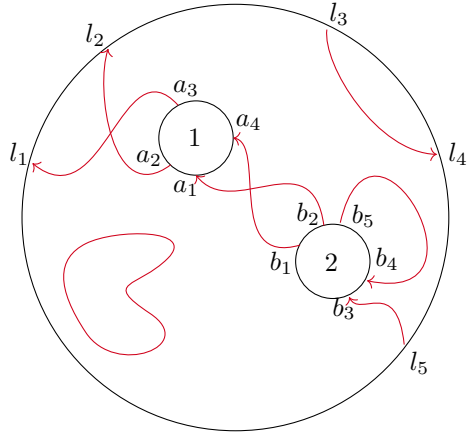
\begin{figure}[h]
	\begin{tikzpicture}[x=0.75pt,y=0.75pt,yscale=-.75,xscale=.75]
%uncomment if require: \path (0,300); %set diagram left start at 0, and has height of 300

%Shape: Circle [id:dp08114666936077986] 
\draw   (100,151.5) .. controls (100,72.25) and (164.25,8) .. (243.5,8) .. controls (322.75,8) and (387,72.25) .. (387,151.5) .. controls (387,230.75) and (322.75,295) .. (243.5,295) .. controls (164.25,295) and (100,230.75) .. (100,151.5) -- cycle ;
%Shape: Circle [id:dp3018966726773302] 
\draw   (192,98) .. controls (192,84.19) and (203.19,73) .. (217,73) .. controls (230.81,73) and (242,84.19) .. (242,98) .. controls (242,111.81) and (230.81,123) .. (217,123) .. controls (203.19,123) and (192,111.81) .. (192,98) -- cycle ;
%Shape: Circle [id:dp7635819285420324] 
\draw   (284,181) .. controls (284,167.19) and (295.19,156) .. (309,156) .. controls (322.81,156) and (334,167.19) .. (334,181) .. controls (334,194.81) and (322.81,206) .. (309,206) .. controls (295.19,206) and (284,194.81) .. (284,181) -- cycle ;
%Shape: Polygon Curved [id:ds6942844679477602] 
\draw  [color={rgb, 255:red, 208; green, 2; blue, 27 }  ,draw opacity=1 ] (143,169) .. controls (163,159) and (218,159) .. (198,179) .. controls (178,199) and (153,194) .. (186,216) .. controls (219,238) and (163,259) .. (143,229) .. controls (123,199) and (123,179) .. (143,169) -- cycle ;
%Curve Lines [id:da023816454609398408] 
\draw[<-] [color={rgb, 255:red, 208; green, 2; blue, 27 }  ,draw opacity=1 ]   (158,38) .. controls (149,64) and (160,146) .. (200,116) ;
%Curve Lines [id:da7605141839847878] 
\draw[<-] [color={rgb, 255:red, 208; green, 2; blue, 27 }  ,draw opacity=1 ]   (107,115) .. controls (155,154) and (167,34) .. (205,76) ;
%Curve Lines [id:da2716800190085915] 
\draw[->] [color={rgb, 255:red, 208; green, 2; blue, 27 }  ,draw opacity=1 ]   (303,157) .. controls (295,98) and (242,155) .. (217,123) ;
%Curve Lines [id:da8509725805481669] 
\draw[->] [color={rgb, 255:red, 208; green, 2; blue, 27 }  ,draw opacity=1 ]   (287,170) .. controls (236,185) and (265,101) .. (242,98) ;
%Curve Lines [id:da5277894367776661] 
\draw [->][color={rgb, 255:red, 208; green, 2; blue, 27 }  ,draw opacity=1 ]   (357,237) .. controls (352,190) and (327,220) .. (320,205) ;
%Curve Lines [id:da8958496818095283] 
\draw [<-] [color={rgb, 255:red, 208; green, 2; blue, 27 }  ,draw opacity=1 ]   (332,194) .. controls (422,213) and (336,60) .. (314,155) ;
%Curve Lines [id:da11109887017205144] 
\draw [->][color={rgb, 255:red, 208; green, 2; blue, 27 }  ,draw opacity=1 ]   (305,25) .. controls (302,49) and (339,108) .. (379,109) ;

% Text Node
\draw (90,99.4) node [anchor=north west][inner sep=0.75pt]    {$l_{1}$};
% Text Node
\draw (141,19.4) node [anchor=north west][inner sep=0.75pt]    {$l_{2}$};
% Text Node
\draw (175,105) node [anchor=north west][inner sep=0.75pt]    {$a_{2}$};
% Text Node
\draw (200,58) node [anchor=north west][inner sep=0.75pt]    {$a_{3}$};
% Text Node
\draw (242,80.4) node [anchor=north west][inner sep=0.75pt]    {$a_{4}$};
% Text Node
\draw (200,125) node [anchor=north west][inner sep=0.75pt]    {$a_{1}$};
% Text Node
\draw (265,173) node [anchor=north west][inner sep=0.75pt]    {$b_{1}$};
% Text Node
\draw (283,140) node [anchor=north west][inner sep=0.75pt]    {$b_{2}$};
% Text Node
\draw (320,141.4) node [anchor=north west][inner sep=0.75pt]    {$b_{5}$};
% Text Node
\draw (336,170) node [anchor=north west][inner sep=0.75pt]    {$b_{4}$};
% Text Node
\draw (307,203.4) node [anchor=north west][inner sep=0.75pt]    {$b_{3}$};
% Text Node
\draw (305,5.4) node [anchor=north west][inner sep=0.75pt]    {$l_{3}$};
% Text Node
\draw (385,99.4) node [anchor=north west][inner sep=0.75pt]    {$l_{4}$};
% Text Node
\draw (359,240.4) node [anchor=north west][inner sep=0.75pt]    {$l_{5}$};
% Text Node
\draw (210,90.4) node [anchor=north west][inner sep=0.75pt]    {$1$};
% Text Node
\draw (302,174.4) node [anchor=north west][inner sep=0.75pt]    {$2$};

\end{tikzpicture}
	\caption{An example of an oriented wiring diagram. The labels sets are $L_0^{-}=\{l_1, l_2,l_4\}$, $L_0^{+}=\{l_3,l_5\}$, $L_1^-=\{a_2, a_3\}$, $L_1^+=\{a_1,a_4\}$, $L_2^-=\{b_1,b_2,b_5\}$, $L_2^+=\{b_3,b_4\}$. The manifold, drawn in red lines, is a disjoint union of seven oriented intervals and one oriented circle. We don't draw arrows on circles, since they are abstract, not embedded: there is only one homeomorphism type of an oriented circle.}\label{fig:WiringDiagramOri}
	\end{figure}

Wiring diagrams assemble into a \emph{coloured operad}. We briefly recall that for a fixed set of colours, $\mathfrak{C}$, a \emph{$\mathfrak{C}$-coloured operad} $\mathsf{P}=\{\mathsf{P}(c_0; c_1,\ldots,c_r)\}$ consists of a collection of sets $\mathsf{P}(c_0;c_1,\ldots,c_r)$: one for each sequence $c_0; c_1,\ldots,c_r$ of colours in $\mathfrak{C}$. This is equipped with an $\calS_r$--action permuting $c_1,\ldots,c_r$, together with an equivariant, associative and unital family of partial compositions
\[\begin{tikzcd}\circ_i:\mathsf{P}(c_0;c_1,\ldots,c_r)\times\mathsf{P}(d_0;d_1,\ldots,d_s)\arrow[r]& \mathsf{P}(c_0;c_1,\ldots,c_{i-1}, d_1,\ldots, d_s, c_{i+1},\ldots, c_r) \end{tikzcd},\] whenever $d_0=c_i$. For full details see \cite[Definition 1.1]{bm_resolutions}. 

%We will say that a wiring diagram $\WD$ is of \emph{type} $(L^{\pm}_0; L^{\pm}_1,\ldots,L^{\pm}_r )$ if $\WD$ can be represented as a disc with $r$ holes $D_0 \setminus (\mathring{D}_1 \sqcup \mathring{D}_2 \sqcup  \hdots \sqcup \mathring{D}_r),$ where along each boundary circle there is an equidistant sequence of marked points, labeled by the sequence of inputs and outputs $L^{\pm}_i$. 

We denote the set of all wiring diagrams of type $(L^{\pm}_0; L^{\pm}_1,\ldots,L^{\pm}_r)$ by $\mathsf{WD}(L^{\pm}_0;L^{\pm}_1,\ldots,L^{\pm}_r)$. The set $\mathsf{WD}(L^{\pm}_0;L^{\pm}_1,\ldots,L^{\pm}_r)$ has a natural action by $\calS_r$ permuting the input label sets, so that for any $\sigma\in\calS_r$ \[\begin{tikzcd} \mathsf{WD}(L^{\pm}_0;L^{\pm}_1,\ldots,L^{\pm}_r)\arrow[r, "\sigma^{-1}"] & \mathsf{WD}(L^{\pm}_0;L^{\pm}_{\sigma(1)},\ldots,L^{\pm}_{\sigma(r)}). \end{tikzcd}\]

\begin{definition}\label{def: WD}
The collection $\mathsf{WD} =\{\mathsf{WD}(L^{\pm}_0; L^{\pm}_1,\ldots,L^{\pm}_r)\}$ forms a discrete coloured operad called the \emph{operad of oriented wiring diagrams} with partial compositions as follows. For $\WD=(\calL,M,f)$ and $\WD'=(\calL',N,g)$, if $L_i^{-}=L_0'^{+}$ and  $L_i^{+}=L_0'^{-}$ then $\WD\circ_i \WD'$ is defined by the label set $\{L_1^\pm,...,L_{i-1}^\pm, L_1'^{\pm},...,L_s'^{\pm},L_{i+1}^\pm...L_r^\pm\}$, the manifold $M\sqcup N/ \sim $ obtained from $M$ and $N$ by gluing along the boundary identification $L_i^{-}=L_0'^{+}$ and  $L_i^{+}=L_0'^{-}$, and the set bijection $f \sqcup g /\sim $ induced by $f$ and $f'$ in the natural way.

\end{definition} 

Pictorially, wiring diagram composition shrinks the wiring diagram $\WD'$ and glues it into the $i$th input circle of $\WD$ in such a way that the labels match and the boundary points of the $1$-manifolds are identified: see Figure~\ref{fig:WiringDiagramComp}. Composition in the operad $\mathsf{WD}$ is similar to the operad of planar tangles in \cite{Jones:PA} and \cite[Definition  2.1]{HPT16}. 

\begin{figure}[h]
	\includegraphics[height=.3\linewidth]{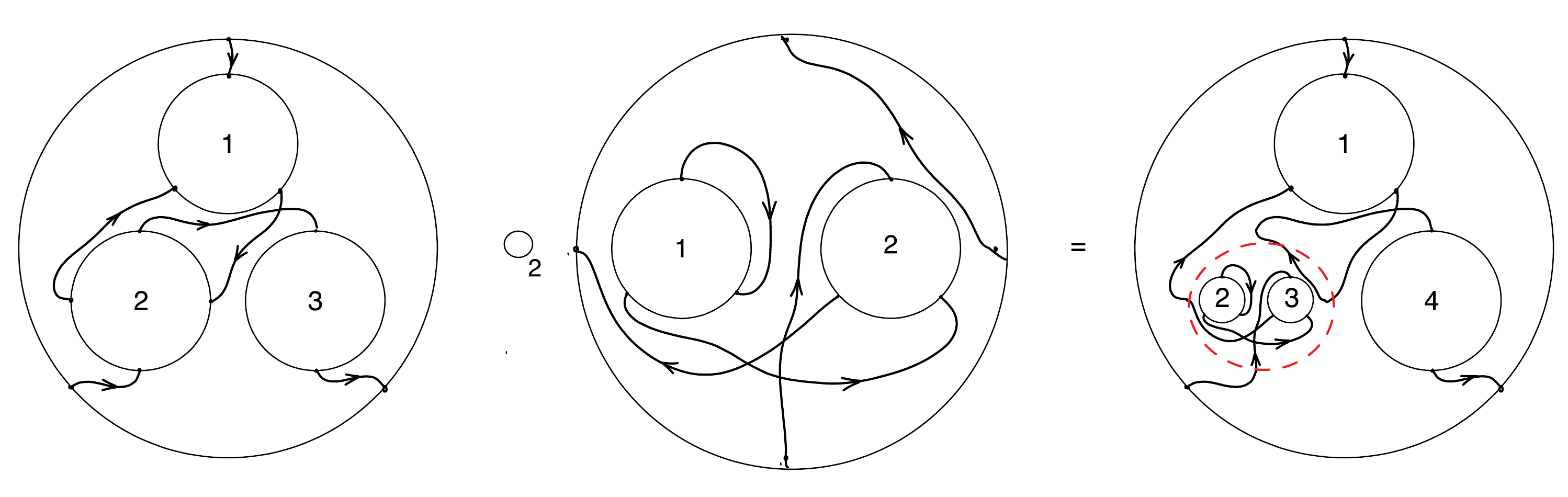}
	\caption{An example of the partial composition of oriented wiring diagrams.}\label{fig:WiringDiagramComp}
	\end{figure}

An \emph{oriented circuit algebra} is an algebra over the operad of wiring diagrams. %As the operad of wiring diagrams is a coloured operad in the category of sets we can define oriented circuit algebras in any closed, symmetric monoidal category $\mathcal{E}=(\mathcal{E}, \otimes, \mathbf{1})$ equipped with a strong monoidal functor from $\mathbf{Set}\rightarrow \mathcal{E}$ (\cite[Ch 3]{FresseModule}). 
Unwinding the definition of an algebra over a coloured operad (Definition 1.2 \cite{bm_resolutions}), we obtain the following definition:
	
\begin{definition}\label{def:CA}
An \emph{oriented circuit algebra} in sets  is a collection of sets $\V=\{\V[L^{-};L^{+}]\}$, where $L^{-}, L^{+}$ run over all pairs of label sets in $\calI$, together with a family of multiplication functions parametrised by oriented wiring diagrams. Namely, for each wiring diagram $\WD=(\mathcal{L},M,f)$, there is a corresponding function 
		$$F_\WD:\V[L^{-}_1;L^{+}_1] \times \hdots \times \V[L^{-}_r;L^{+}_{r}] \rightarrow \V[L^{+}_0;L^{-}_0].$$   This data must satisfy the following axioms: 
		
\begin{enumerate}

\item The assignment $\WD \mapsto F_\WD$ is compatible with wiring diagram composition in the following sense.  Let \[\WD=(\{L^{\pm}_0,\hdots,L^{\pm}_r\},M,f), \quad \WD'=(\{L'^{\pm}_0,\hdots,L'^{\pm}_s\},N,g)\] be two wiring diagrams composable as $\WD\circ_i \WD'$. Then the map corresponding to the composition $\WD\circ_i \WD'$ is 
			\[ F_{\WD\circ_{i}\WD'}= F_\WD\circ (\Id\times \dots \times \Id \times F_{\WD'}\times \Id \times \dots\times \Id), \]
			where $F_{\WD'}$ is inserted in the $i$th component.
			\item There is an action of the symmetric groups on wiring diagrams which permutes the input sets (that is, the input indices $i=1,...,r$). The maps $F_\WD$ are equivariant in the following sense. Let $\WD=(\{L^{\pm}_0, L^{\pm}_1,\hdots,L^{\pm}_{r}\},M,f)$ be a wiring diagram, $\sigma\in \mathcal S_r$, and let
			$\sigma \WD= (\{L^{\pm}_0, L^{\pm}_{\sigma^{-1}(1)},\hdots,L^{\pm}_{\sigma^{-1}(r)}\},M, f)$  be the wiring diagram $\WD$ with the input sets reordered; note that the output set $L^{\pm}_0$ is fixed.
			Then
			$F_{\sigma  \WD}= F_\WD\circ \sigma^{-1}$, where $\sigma^{-1}$ acts on  
			$\V[L^{-}_{\sigma(1)};L^{+}_{\sigma(1)}] \times \hdots \times \V[L^{-}_{\sigma(r)};L^{+}_{\sigma(r)}]$ by permuting the factors. 
	\end{enumerate}
\end{definition}

\begin{definition}A \emph{homomorphism} of circuit algebras $\Phi:\mathsf{V}\rightarrow\mathsf{W}$ is a family of maps 
	$\{\Phi_{L^{-};L^+}:\mathsf{V}[L^-;L^+]\rightarrow \mathsf{W}[L^-;L^+]\}_{L^{-}, L^{+} \subseteq \calI}$
	which commutes with the action of wiring diagrams. That is, for any wiring diagram $\WD=(\mathcal{L}, M, f)$ we have a commutative diagram:
	\[\xymatrixcolsep{5pc}\xymatrix{
	\V[L^{-}_1;L^{+}_1] \times \hdots \times \V[L^{-}_r;L^{+}_{r}] \ar[r]^-{(F_\V)_\WD} \ar[d]_{\Phi_{L_1^-;L_1^+}\times ... \times \Phi_{L_r^-;L_r^+} }& \V[L^{+}_0;L^{-}_0] \ar[d]^{\Phi_{L_0^+;L_0^-}} \\
	\W[L^{-}_1;L^{+}_1] \times \hdots \times \W[L^{-}_r;L^{+}_{r}] \ar[r]^-{(F_\W)_\WD} & \W[L^{+}_0;L^{-}_0]
	}
	\]
%More concisely put, a morphism of circuit algebras is a map of algebras over the operad of wiring diagrams. 
The category of all circuit algebras is denoted $\mathsf{CA}$. 
\end{definition}

\begin{remark}
One can define a circuit algebra in any closed, symmetric monoidal category $\mathcal{E}=(\mathcal{E},\otimes, 1)$. %In the case that our symmetric monoidal category $\mathcal{E}=(\mathcal{E}, \otimes, \mathbf{1})$ is equipped with a strong monoidal functor from $\mathbf{Set}\rightarrow \mathcal{E}$ then a circuit algebra in $\mathcal{E}$ is simply an algebra over the operad $\mathsf{WD}$ (\cite[Ch 3]{FresseModule}). 
In Section~\ref{sec:completion} and in \cite{DHR1} we consider circuit algebras in $\mathbb{Q}$-vector spaces. In this case, the action maps $$F_D:\V[L^{-}_1;L^{+}_1] \otimes \hdots \otimes \V[L^{-}_r;L^{+}_{r}] \rightarrow \V[L^{+}_0;L^{-}_0]$$ are linear maps of $\mathbb{Q}$-vector spaces. %To do this, we will linearly extend the operad $\mathsf{WD}$ so that $\mathbb{Q}[\mathsf{WD}](s_0; s_1,\ldots, s_r)$ is be the $\mathbb{Q}$-vector space spanned by the wiring diagrams of type $(s_0;s_1,\ldots,s_r)$. In this case, a circuit algebra in $\mathbb{Q}$-vector spaces is a collection of vector spaces $\mathsf{V}=\{\mathsf{V}[L_i^{+},L_i^{-}]\}$ together with an action by the operad $\mathbb{Q}[\mathsf{WD}]$. 

\end{remark} 
		
\begin{remark}\label{rmk:presentation}
The main examples of circuit algebras we use in this paper will be defined using presentation notation $\mathsf{CA}\left< g_1,\ldots,g_n\mid r_1,\ldots, r_m\right>$ where the $g_i$ are generators and the $r_j$ are relations. The generators are elements living in a specified $\mathsf{V}[L^-; L^+]$ and the relations generate circuit algebra ideals by which one quotients the free circuit algebra generated by $g_1,\ldots, g_n$.
\end{remark}
 
\begin{notation}\label{notation: wiring diagram composition}
We write $\WD(p_1,\ldots, p_r)$ to refer to the image of a sequence of elements $p_1,\ldots, p_r$ under the composition map $$F_{\WD}:\V[L^{-}_1;L^{+}_1] \times \hdots \times \V[L^{-}_r;L^{+}_{r}] \rightarrow \V[L^{+}_0;L^{-}_0]$$ for a fixed wiring diagram $\WD$ (e.g. Example~\ref{example: w-tangle}). 
\end{notation}

\begin{example}\label{ex: symmetric group}
Any free circuit algebra always contains all of the wiring diagrams with no input label sets. In other words, the arity $0$ operations of the operad of wiring diagrams $\mathsf{WD}(L^\pm_0;-)$ are elements in every free circuit algebra. 
 
We single out the subset $\mathsf{WD}(n)\subseteq \mathsf{WD}(L^\pm_0;-)$ consisting of those wiring diagrams with no input discs and $L_{0}^{+}=L_{0}^{-}=\{1,2,...,n\}$. Elements of the set $\mathsf{WD}(n)$ are elements of the symmetric group on $n$ letters $\calS_n$ together with a nonnegative integer (the number of circle components of $M$). This gives a bijection of sets \[\begin{tikzcd} \mathsf{WD}(n)\arrow[r]& \arrow[l]\calS_n \times \mathbb{Z}_{\geq0}.\end{tikzcd}\] 
\end{example} 

\begin{remark} \label{rmk: CA are wheeled props} 
Circuit algebras are equivalent to a type of rigid tensor category called a wheeled prop \cite[Theorem 5.5]{DHR1}. One can interpret the sets $\mathsf{V}[L_i^-;L_i^+]$ as morphisms from objects $L_i^-$ to objects $L_i^+$. In this interpretation, the  generators $g_i$ are generating morphisms of this tensor category and the $1$-manifold depicts a chosen composition of these morphisms. We have chosen to write this paper using the circuit algebra language as our main examples, defined in the next sections, are easier to grapple with in this combinatorial/topological interpretation rather than in their categorical form. 
\end{remark}

\subsection{The circuit algebra of $w$-foams}\label{sec:wf}
Circuit algebras provide a combinatorial model for the main topological object of this paper, $w$-\emph{foams} (Definition~\ref{def: w-foams}). Topologically, $w$-foams are a class of tangled tubes with singular vertices in $\mathbb{R}^4$ equipped with a {\em ribbon filling}. Their simpler cousins, welded tangles, or \emph{$w$-tangles}, are described by a Reidemeister theory obtained by generalising the classical Reidemeister theory of tangles (\cite[Section 5]{BN05}) by replacing the planar algebra structure of classical tangles with a circuit algebra structure, and imposing an additional relation called ``Overcrossings Commute'' (Figure~\ref{fig:Reidemeister}). 
The topological description of $w$-tangles and $w$-foams plays a minimal role in this paper, but we recommend  \cite[Section 3.2]{DHR1}, \cite[Sections 3.4 and and 4.1]{BND:WKO2}, and \cite{Satoh} for the reader interested in the topological background.  

%In this section we first introduce the circuit algebra of $w$-tangles. The circuit algebra structure presents  the Reidemeister theory for $w$-tangles in the sense that any operation in the circuit algebra is a $w$-tangle (Example~\ref{example: w-tangle}). The extension to $w$-foams is topologically the addition of certain half-crossings and, algebraically, a form of parenthesization of $w$-tangles (Remark~\ref{rmk:parenthesization}).  \mrnote{put something here about how we introduce skeleton}
%by first generalising classical tangles to virtual tangles by introducing an extra \textit{virtual} crossing $\virtualcrossing$, with corresponding virtual Reidemeister moves, and then projecting by an additional relation called ``Overcrossings Commute''. There is a conjectural topological interpretation of $w$-tangles as ribbon knotted tubes in $\R^4$. Indeed, Satoh defines a tubing map from the former to the latter, whose injectivity is still an open problem \cite{Satoh}. The Reidemeister theory of $w$-tangles is most efficiently captured through the structure of a circuit algebra, hence they were a motivating example in establishing the general theory of circuit algebras.

\begin{definition}\label{def:w-tangles}
The circuit algebra of \emph{$w$-tangles} is given by the presentation \[\mathsf{wT}= \textsf{CA}\left<\overcrossing, \undercrossing \mid R1^s, R2, R3, OC\right>\] where the relations $R1^s$, $R2$, $R3$ and $OC$ are pictured in Figure~\ref{fig:Reidemeister}. 
\end{definition} 

The circuit algebra $\mathsf{wT}$ is a discrete circuit algebra, where elements of the set $\mathsf{wT}[L^{-}_i;L^{+}_i]$ are $w$-tangles with $|L^{-}_i|=|L^{+}_i|$ (open) strands. 

\begin{remark} While this is not crucial for this paper, we mention that
topologically, under Satoh's tubing map \cite{Satoh}, the positive crossing $\overcrossing$ represents an interaction between two oriented tubes in $\mathbb{R}^4$, which can be described as a ``movie'' in $\mathbb R^3$ in which two horizontal circles switch places by one flying through the other. The circle flying through is represented by the under-strand. See \cite[Section 3.4]{BND:WKO2} for further details on the topology.
\end{remark}

\begin{figure}[h]
\centering
\begin{subfigure}{.5\textwidth}
  \centering
\includegraphics[height=4cm]{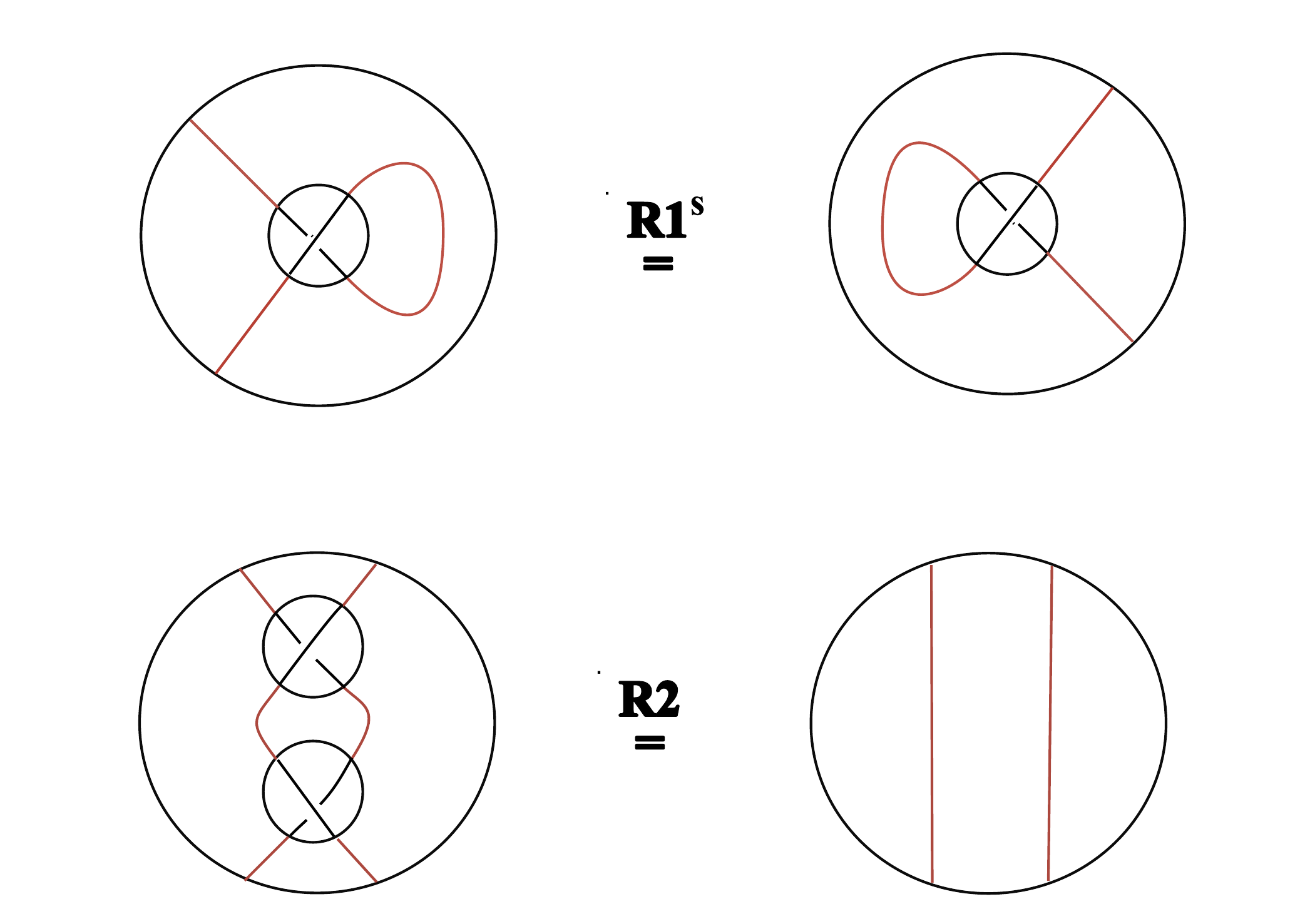}
\end{subfigure}%
\begin{subfigure}{.5\textwidth}
\centering
\includegraphics[height=4cm]{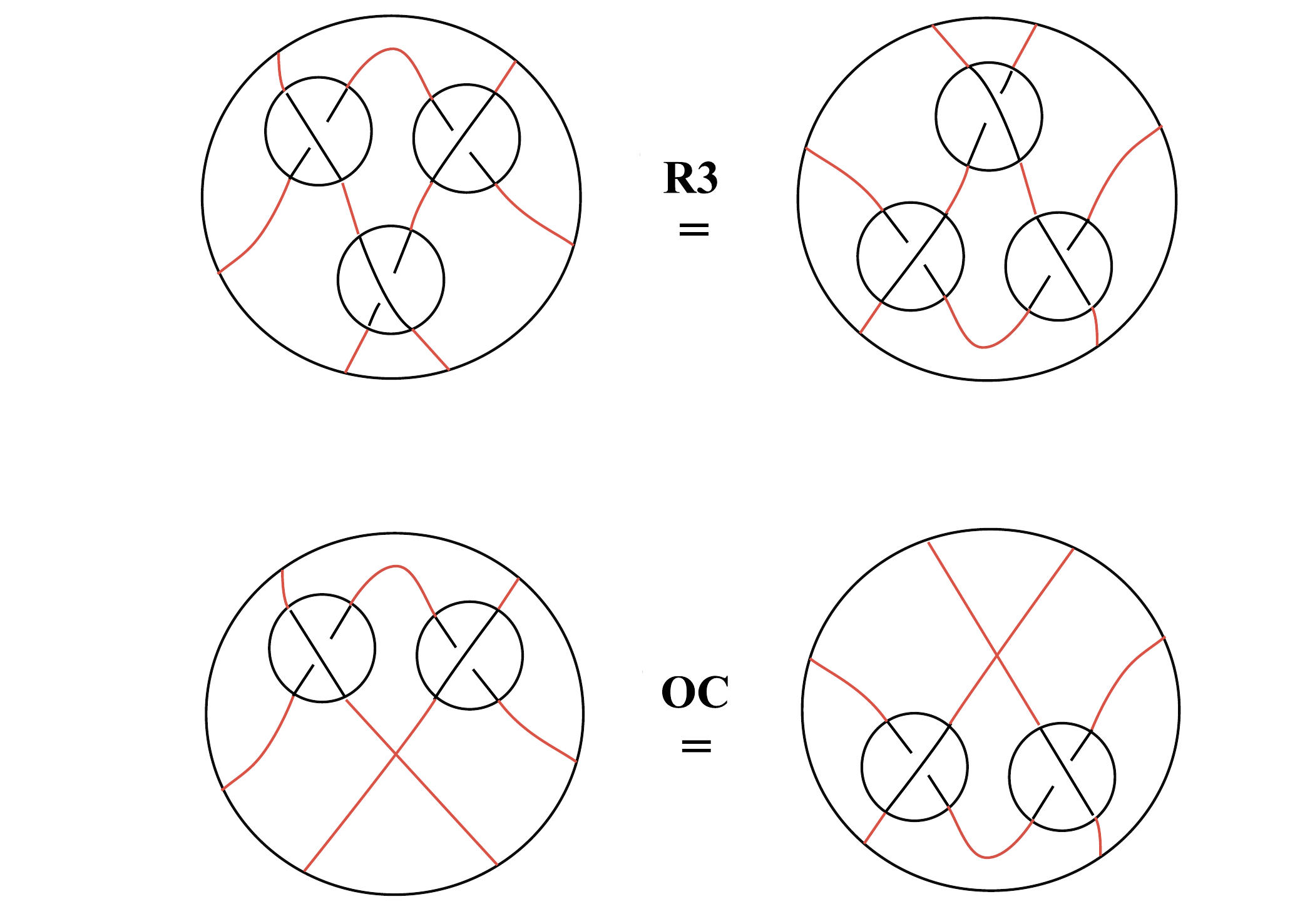}
\end{subfigure}
\caption{The classical Reidemeister moves and ``Overcrossings Commute'', presented as circuit algebra relations between the crossings (generators). The relations are imposed in all possible (consistent) strand orientations.}\label{fig:Reidemeister}
\end{figure}

\begin{example}\label{example: w-tangle}
Figure~\ref{tangle} shows a $w$-tangle with three strands as an element of the circuit algebra $\mathsf{wT}$.  This tangle is given by a wiring diagram composition of the generators $\undercrossing$ and $\overcrossing$, where the abstract $1$-manifold of the wiring diagram $\WD$ is depicted in red. Explicitly, this composition is 
\[\begin{tikzcd} \WD: \mathsf{wT}\times \mathsf{wT} \arrow[r] & \mathsf{wT} \\
(\undercrossing, \overcrossing) \arrow[r, mapsto] & \WD(\undercrossing, \overcrossing).
\end{tikzcd}\]

\begin{figure}[h!]
\includegraphics[scale=.15]{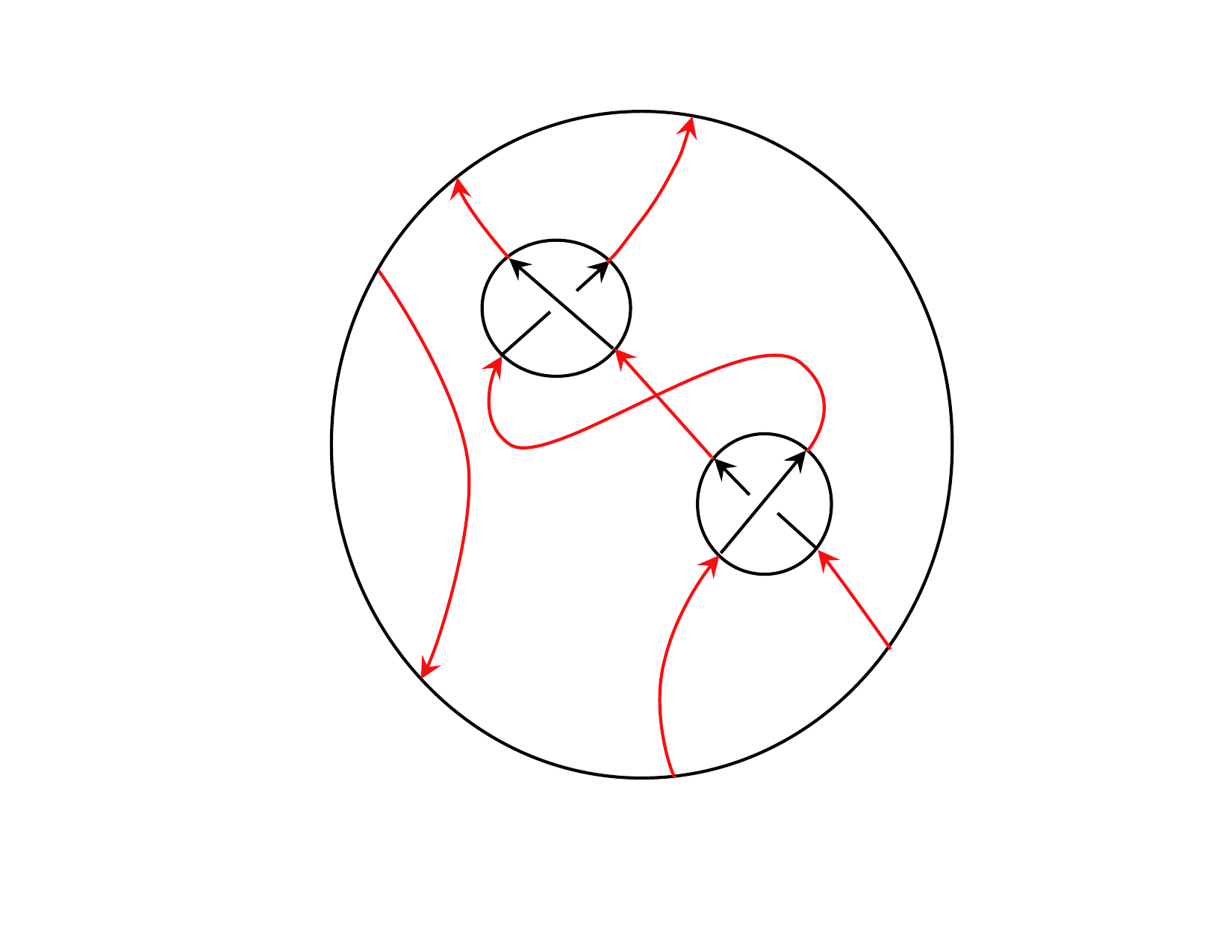}
\caption{An example of a $w$-tangle presented as an element $\WD(\undercrossing, \overcrossing)$ of the circuit algebra $\mathsf{wT}$.}\label{tangle}
\end{figure}
\end{example} 

We note that the additional crossing $\virtualcrossing$ in Example~\ref{example: w-tangle} arises only as a feature of the wiring diagram $\WD$. Topologically, this represents two tubes passing around one another in $\mathbb{R}^{4}$ without interaction (i.e. with disjoint fillings). Such crossings are often called \emph{virtual crossings} in the literature (\cite{BND:WKO2},\cite{Kup},\cite{DK:Virtual}, and more) but we emphasise that these crossings are not generators of the circuit algebra. We make this more precise in the following example. 

\begin{example}\label{ex: w-tangle skeleton}
The set of all $w$-tangles with $n$ strands (and possibly some circle components), $\mathsf{wT}[L_0^{-};L_0^{+}]$ where $|L^{-}_{0}|=|L^{+}_{0}|=n$, is fibered over the symmetric group. Recall from Example~\ref{ex: symmetric group} that the wiring diagrams $\mathsf{WD}(n)$ which have no input discs are in bijection with $\calS_n\times \mathbb{Z}_{\geq 0}$. There is a natural projection map \[\begin{tikzcd}\pi: \mathsf{wT}[L_0^{-};L_0^{+}]\arrow[r]& \mathsf{WD}(n)\end{tikzcd}\] which sends the generators $\overcrossing$ and $\undercrossing$ to the transposition $\virtualcrossing$ in $\mathsf{WD}(2)$. As we can identify $\mathsf{WD}(n)$ with the set of elements of $\calS_n\times \mathbb{Z}_{\geq 0}$, we say that every $w$-tangle has an underlying permutation, and write $\pi^{-1}(\sigma \times \mathbb{Z}_{\geq 0})$ for all $w$-tangles whose underlying permutation is $\sigma\in\calS_n$. 
\end{example} 

The circuit algebra of $w$-\emph{foams} is an extension of $w$-tangles in which we add \emph{foamed vertices}  and \emph{capped strands} to $w$-tangles.  
Topologically, a capped strand is the closure of an oriented tube by gluing a $2$-disc to one end. In the Reidemeister theory, we denote capped strands by $\upcap$. 

A foamed vertex, diagrammatically, is a trivalent vertex with a total ordering of the incident edges, denoted $\vertex$. The first edge in the total ordering is the top (blue) edge, and edges are ordered counterclockwise. Topologically, a vertex is a singular surface in $\mathbb R^4$, which is easiest to visualise as a movie in $\mathbb R^3$ in which a circle flies inside another, they merge, and become a single circle. The ``merged'' circle corresponds to the top edge. Those interested in the topological details may read \cite[Section 4.1.1]{BND:WKO2} for details. We allow all possible orientations of foamed vertices. The symbol $\vertex$ stands for the vertex all of whose edges are oriented upwards, and all seven other vertices can be obtained from this via orientation switch operations (as in Definition~\ref{def:w-foam operations} below). For more detail see \cite[Figure 16]{BND:WKO2}.
\begin{definition}\label{def: w-foams}
The circuit algebra of \emph{$w$-foams} is given by\footnote{To generate strictly as a circuit algebra, technically all orientations of caps and vertices are needed. To generate as a circuit algebra with auxiliary operations, which includes orientation switches, the upward oriented cap and vertex suffice.} the presentation 
\[ \wf =\mathsf{CA}\left< \overcrossing, \undercrossing, \vertex, \upcap \mid R1^s, R2, R3, R4, OC, CP\right>.\]
Moreover, $\wf$ is equipped with the following auxiliary operations:  orientation switches $S_e, A_e$, unzips $u_e$, and strand deletion $d_e$ described in Definition~\ref{def:w-foam operations} below.
\end{definition}

\begin{figure}[h]
\centering
\begin{subfigure}{.5\textwidth}
  \centering
\includegraphics[height=4cm]{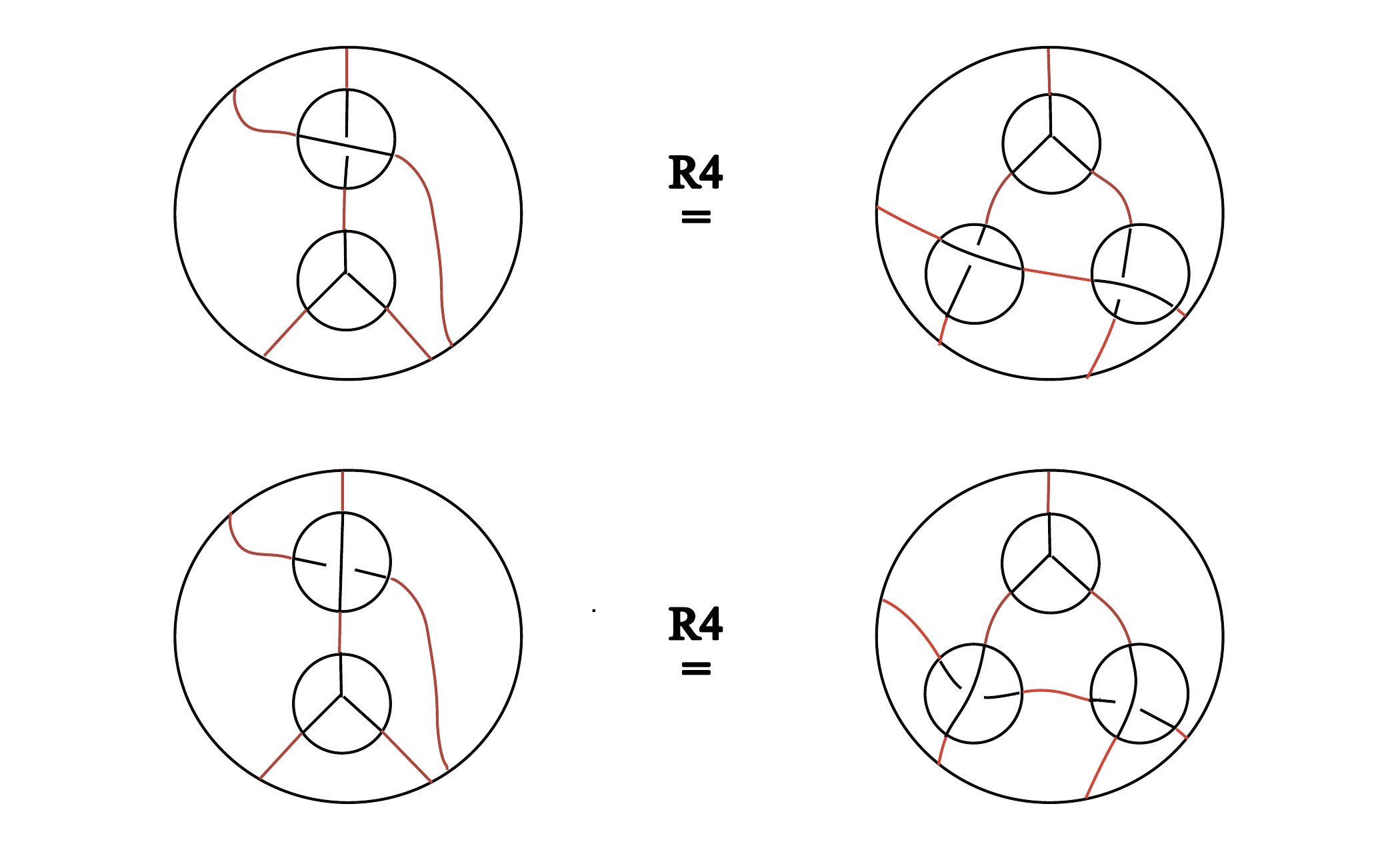}
\end{subfigure}%
\begin{subfigure}{.5\textwidth}
\centering
\includegraphics[height=4cm]{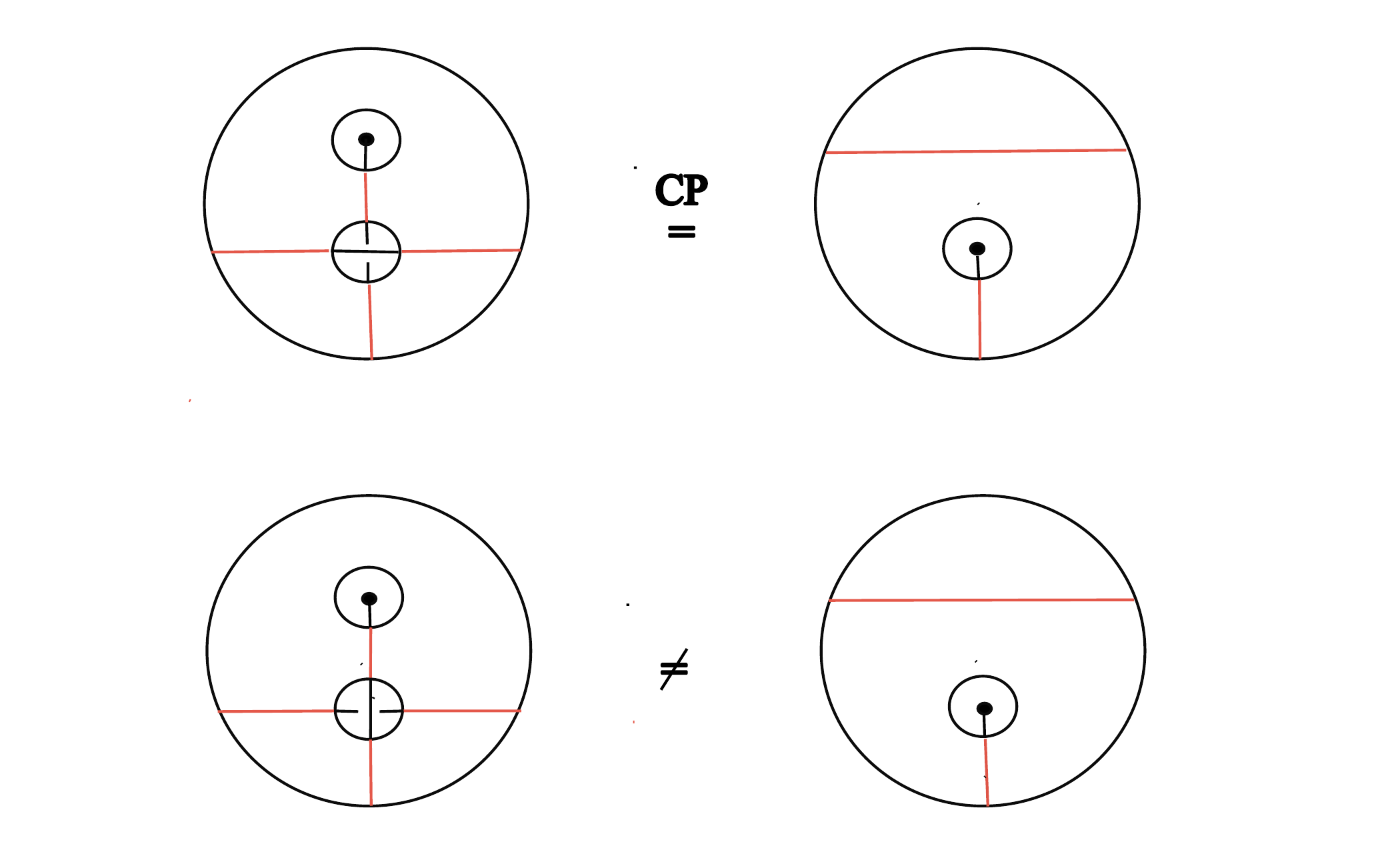}
\end{subfigure}
\caption{The additional relations on $\wf$; note the asymmetry of the CP relation.}\label{fig:R4 and CP}
\end{figure} 

As with $w$-tangles, $\wf$ is a discrete circuit algebra where elements of the set $\mathsf{\wf}[L^{-}_i;L^{+}_i]$ are $w$-foams. Notice that in $\wf$ the cardinality of $L^{-}_i$ may not be the same as $L^{+}_i$, as vertices allow for strands to merge and split. An example of a $w$-foam is presented on the left in Figure~\ref{skeleton_map_foams}.

The \textit{auxiliary operations} on $\wf$ are external to the structure of a circuit algebra, which is to say that these are operations that are not parametrised by the operad of wiring diagrams. The full topological explanation of these operations can be found in \cite[Section 4.1.3]{BND:WKO2}.

\begin{definition}\label{def:w-foam operations} The circuit algebra $\wf$ is equipped with the following auxiliary operations:
\begin{enumerate}
	\item \textit{Orientation switch $S_e$}: Diagrammatically, orientation switch reverses the direction of the strand $e$. Topologically, this operation switches both the 1D {\em direction} and the 2D {\em orientation} of the tube of the strand $e$. 
	\item \textit{Adjoint $A_e$}: Diagrammatically, the adjoint operation reverses the direction of the strand $e$ and conjugates each crossing $e$ passes {\em over} by virtual crossings. Topologically, it reverses only the 1D direction of a tube $e$, but not the 2D orientation of the surface. 
	\item \textit{Unzip, and disc unzip $u_e$}: Diagrammatically, unzip doubles the strand $e$ between two foam vertices using the blackboard framing, then attaches the ends of the doubled strands to the corresponding ends from the foam vertices, as in Figure~\ref{fig: unzip for foams}. A similar operation for capped strands is \textit{disc unzip}, also illustrated in Figure~\ref{fig: unzip for foams}. Topologically, this operation doubles a tube in the framing direction. 
	\item \textit{Deletion $d_e$}: It deletes the strand $e$,  as long as $e$ is not attached to foam vertices on either end (these are called ``long strands'').
\end{enumerate}
\end{definition} 
	
%\begin{remark} \label{rmk:parenthesization}
%The added vertices $\vertex$ and the operation unzip $u_e$ combine to give us a way to parenthesise $w$-tangles. \mrnote{more here} 
%\end{remark} 

The circuit algebra of $w$-foams is an example of a circuit algebra with a \emph{skeleton}. In the context of Remark~\ref{rmk: CA are wheeled props}, a circuit algebra with a skeleton is a wheeled prop for which the set of objects also forms a wheeled prop.  

\begin{definition}\label{def: skeleta circuit alg}
	
The circuit algebra of $w$-foam \emph{skeleta}  \[\mathcal{S}=\mathsf{CA}\left<\upcap, \vertex\right>\] is the free circuit algebra generated by the oriented \emph{caps} $\upcap$ and \emph{vertices} $\vertex$. Moreover, this circuit algebra also possesses auxiliary operations: $S_e, A_e, u_e$ and $d_e$. 
\end{definition} 

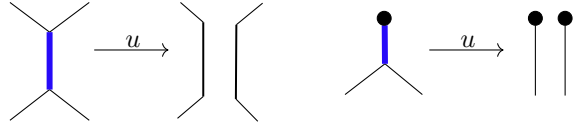
\begin{figure}

\begin{tikzpicture}[x=0.75pt,y=0.75pt,yscale=-.5,xscale=.5]
%uncomment if require: \path (0,300); %set diagram left start at 0, and has height of 300

%Straight Lines [id:da6853618661021692] 
\draw    (60,32) -- (100,62) ;
%Straight Lines [id:da23897206846723007] 
\draw    (140,32) -- (100,62) ;
%Straight Lines [id:da21105007111430774] 
\draw    (140,151) -- (100,119.95) ;
%Straight Lines [id:da06938622387803961] 
\draw    (60,151) -- (100,119.95) ;
%Straight Lines [id:da6940378991800786] 
\draw [color={rgb, 255:red, 38; green, 10; blue, 248 }  ,draw opacity=1 ][line width=2.25]    (100,62) -- (100,119.95) ;
%Straight Lines [id:da0812450632445294] 
\draw [->]    (145,80) -- (220,80) ;
%Straight Lines [id:da44076135197190314] 
\draw    (232,32) -- (255,52) ;
%Straight Lines [id:da888028261265249] 
\draw    (229,149) -- (255,126.95) ;
%Straight Lines [id:da3781337720788289] 
\draw [color={rgb, 255:red, 0; green, 0; blue, 0 }  ,draw opacity=1 ][line width=0.75]    (255,52) -- (255,126.95) ;
%Straight Lines [id:da7884185307267026] 
\draw    (310,152) -- (287.9,129.44) ;
%Straight Lines [id:da35863017640643613] 
\draw    (314.89,32.71) -- (288.38,54.5) ;
%Straight Lines [id:da7903422528351294] 
\draw [color={rgb, 255:red, 0; green, 0; blue, 0 }  ,draw opacity=1 ][line width=0.75]    (287.9,129.44) -- (288.38,54.5) ;
%Straight Lines [id:da017280089698071377] 
\draw    (476,125) -- (438.28,93.95) ;
%Straight Lines [id:da695442657159342] 
\draw    (398,125) -- (438.28,93.95) ;
%Straight Lines [id:da7186117179560048] 
\draw [color={rgb, 255:red, 38; green, 10; blue, 248 }  ,draw opacity=1 ][line width=2.25]    (438,55) -- (438.28,93.95) ;
%Shape: Circle [id:dp8596492744647435] 
\draw  [fill={rgb, 255:red, 0; green, 0; blue, 0 }  ,fill opacity=1 ] (430.5,48) .. controls (430.5,44.13) and (433.63,41) .. (437.5,41) .. controls (441.37,41) and (444.5,44.13) .. (444.5,48) .. controls (444.5,51.87) and (441.37,55) .. (437.5,55) .. controls (433.63,55) and (430.5,51.87) .. (430.5,48) -- cycle ;
%Straight Lines [id:da27643729004369755] 
\draw[->]    (483,80) -- (558,80) ;
%Straight Lines [id:da2199059504749139] 
\draw    (590,55) -- (590,126) ;
%Shape: Circle [id:dp5787701191577097] 
\draw  [fill={rgb, 255:red, 0; green, 0; blue, 0 }  ,fill opacity=1 ] (583,48) .. controls (583,44.13) and (586.13,41) .. (590,41) .. controls (593.87,41) and (597,44.13) .. (597,48) .. controls (597,51.87) and (593.87,55) .. (590,55) .. controls (586.13,55) and (583,51.87) .. (583,48) -- cycle ;
%Straight Lines [id:da8940442447079364] 
\draw    (620,55) -- (620,126) ;
%Shape: Circle [id:dp2561814070781032] 
\draw  [fill={rgb, 255:red, 0; green, 0; blue, 0 }  ,fill opacity=1 ] (613,48) .. controls (613,44.13) and (616.13,41) .. (620,41) .. controls (623.87,41) and (627,44.13) .. (627,48) .. controls (627,51.87) and (623.87,55) .. (620,55) .. controls (616.13,55) and (613,51.87) .. (613,48) -- cycle ;

% Text Node
\draw (174,63) node [anchor=north west][inner sep=0.75pt]    {$u$};
% Text Node
\draw (512,63) node [anchor=north west][inner sep=0.75pt]    {$u$};

\end{tikzpicture}
\caption{The unzip operation doubles a tube edge ending in the distinguished edges of two foamed vertices. Disc unzip doubles a capped strand which ends as a distinguished strand of a vertex.}\label{fig: unzip for foams}
\end{figure}

An example of an element of $\mathcal{S}$ is depicted on the right in Figure~\ref{skeleton_map_foams}. We saw in Example~\ref{ex: symmetric group} that since $\calS$ is a free circuit algebra, it contains every element of $\mathsf{WD}(L^{\pm}_0;-)$.  In particular, $\calS$ contains all of the symmetric groups. As with $w$-tangles, we can project the circuit algebra of $w$-foams to the underlying circuit algebra of skeleta: 

\begin{definition}\label{def: skeletal projection}
For any pair of label sets $L^{-},L^{+}$ we define a \emph{skeleton} projection map \[\begin{tikzcd}\pi: \wf[L^{-};L^{+}]\arrow[r]& \calS[L^{-};L^{+}],\end{tikzcd}\] which ``flattens'' the generators $\overcrossing$ and $\undercrossing$ to $\virtualcrossing$. We write $\wf(s) =\pi^{-1}(s)$ for the subset of foams $\wf[L^{-};L^{+}]$ with skeleton $s\in\calS$. 
\end{definition} 

\begin{example} 
In Figure~\ref{skeleton_map_foams}, the circle on the left depicts a $w$-foam as an element of the circuit algebra $\wf$, given as a wiring diagram composition $\WD(\undercrossing, \vertex)$. The black circles represent the input discs of the wiring diagram $\WD$, in which we have drawn the generators $\undercrossing$ and $\vertex$.  These generators are composed via the (abstract) $1$-manifold in $\WD$, which is depicted in red. The picture on the right of Figure~\ref{skeleton_map_foams} is the projection of this foam onto its skeleton, which is an element of the circuit algebra $\calS$. The generator $\undercrossing$ is sent to $\virtualcrossing$ in the skeleton, which forms part of the wiring diagram. % which we depict as taking the generator $\undercrossing$ to a virtual crossing $\virtualcrossing$.

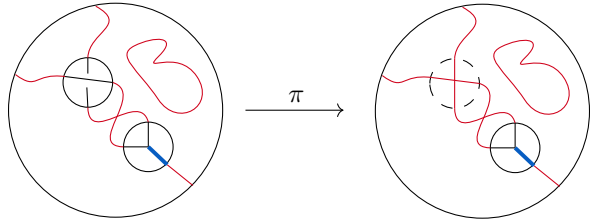
\begin{figure}
\begin{tikzpicture}[x=0.75pt,y=0.75pt,yscale=-.5,xscale=.5]
%uncomment if require: \path (0,300); %set diagram left start at 0, and has height of 300

%Shape: Circle [id:dp6869919636782762] 
\draw   (21,141) .. controls (21,81.35) and (69.35,33) .. (129,33) .. controls (188.65,33) and (237,81.35) .. (237,141) .. controls (237,200.65) and (188.65,249) .. (129,249) .. controls (69.35,249) and (21,200.65) .. (21,141) -- cycle ;
%Shape: Circle [id:dp8688794254627903] 
\draw   (137,178) .. controls (137,164.19) and (148.19,153) .. (162,153) .. controls (175.81,153) and (187,164.19) .. (187,178) .. controls (187,191.81) and (175.81,203) .. (162,203) .. controls (148.19,203) and (137,191.81) .. (137,178) -- cycle ;
%Shape: Circle [id:dp10308924063555347] 
\draw   (76,113) .. controls (76,99.19) and (87.19,88) .. (101,88) .. controls (114.81,88) and (126,99.19) .. (126,113) .. controls (126,126.81) and (114.81,138) .. (101,138) .. controls (87.19,138) and (76,126.81) .. (76,113) -- cycle ;
%Curve Lines [id:da38487539214314226] 
\draw [color={rgb, 255:red, 208; green, 2; blue, 27 }  ,draw opacity=1 ]   (126,113) .. controls (160,124) and (98,177) .. (137,178) ;
%Curve Lines [id:da8180043446445087] 
\draw [color={rgb, 255:red, 208; green, 2; blue, 27 }  ,draw opacity=1 ]   (101,138) .. controls (102,178) and (153,120) .. (162,153) ;
%Straight Lines [id:da7646800386543853] 
\draw    (100,118) -- (101,138) ;
%Straight Lines [id:da08977873901021072] 
\draw    (78,107) -- (126,113) ;
%Straight Lines [id:da7573975979047387] 
\draw    (101,88) -- (101,104) ;
%Curve Lines [id:da42487196470333344] 
\draw [color={rgb, 255:red, 208; green, 2; blue, 27 }  ,draw opacity=1 ]   (28,105) .. controls (51,122) and (46,103) .. (78,107) ;
%Curve Lines [id:da38758004980014915] 
\draw [color={rgb, 255:red, 208; green, 2; blue, 27 }  ,draw opacity=1 ]   (101,88) .. controls (102,62) and (143,55) .. (109,36) ;
%Straight Lines [id:da37513838356964424] 
\draw [color={rgb, 255:red, 208; green, 2; blue, 27 }  ,draw opacity=1 ]   (181,196) -- (207,217) ;
%Shape: Polygon Curved [id:ds32423461076702265] 
\draw  [color={rgb, 255:red, 208; green, 2; blue, 27 }  ,draw opacity=1 ] (148,76) .. controls (168,66) and (193,66) .. (173,86) .. controls (153,106) and (191,77) .. (211,107) .. controls (231,137) and (192,160) .. (172,130) .. controls (152,100) and (128,86) .. (148,76) -- cycle ;
%Straight Lines [id:da21612480805734835] 
\draw    (137,178) -- (162,178) ;
%Straight Lines [id:da2737273636605313] 
\draw    (162,153) -- (162,178) ;
%Straight Lines [id:da8791531899650387] 
\draw [color={rgb, 255:red, 7; green, 93; blue, 194 }  ,draw opacity=1 ][line width=1.5]    (162,178) -- (181,196) ;
%Shape: Circle [id:dp6694115497636166] 
\draw   (391,142) .. controls (391,82.35) and (439.35,34) .. (499,34) .. controls (558.65,34) and (607,82.35) .. (607,142) .. controls (607,201.65) and (558.65,250) .. (499,250) .. controls (439.35,250) and (391,201.65) .. (391,142) -- cycle ;
%Shape: Circle [id:dp693601143869949] 
\draw   (507,179) .. controls (507,165.19) and (518.19,154) .. (532,154) .. controls (545.81,154) and (557,165.19) .. (557,179) .. controls (557,192.81) and (545.81,204) .. (532,204) .. controls (518.19,204) and (507,192.81) .. (507,179) -- cycle ;
%Shape: Circle [id:dp13283887170997244] 
\draw  [dash pattern={on 4.5pt off 4.5pt}] (446,114) .. controls (446,100.19) and (457.19,89) .. (471,89) .. controls (484.81,89) and (496,100.19) .. (496,114) .. controls (496,127.81) and (484.81,139) .. (471,139) .. controls (457.19,139) and (446,127.81) .. (446,114) -- cycle ;
%Curve Lines [id:da6166725225346821] 
\draw [color={rgb, 255:red, 208; green, 2; blue, 27 }  ,draw opacity=1 ]   (496,114) .. controls (530,125) and (468,178) .. (507,179) ;
%Curve Lines [id:da528664496045478] 
\draw [color={rgb, 255:red, 208; green, 2; blue, 27 }  ,draw opacity=1 ]   (471,139) .. controls (472,179) and (523,121) .. (532,154) ;
%Straight Lines [id:da6079004128249443] 
\draw [color={rgb, 255:red, 208; green, 2; blue, 27 }  ,draw opacity=1 ]   (448,108) -- (496,114) ;
%Straight Lines [id:da5526750451207436] 
\draw [color={rgb, 255:red, 208; green, 2; blue, 27 }  ,draw opacity=1 ]   (471,89) -- (471,139) ;
%Curve Lines [id:da9574473965768568] 
\draw [color={rgb, 255:red, 208; green, 2; blue, 27 }  ,draw opacity=1 ]   (398,106) .. controls (421,123) and (416,104) .. (448,108) ;
%Curve Lines [id:da9162125079519856] 
\draw [color={rgb, 255:red, 208; green, 2; blue, 27 }  ,draw opacity=1 ]   (471,89) .. controls (472,63) and (513,56) .. (479,37) ;
%Straight Lines [id:da9648014201858697] 
\draw [color={rgb, 255:red, 208; green, 2; blue, 27 }  ,draw opacity=1 ]   (551,197) -- (577,218) ;
%Shape: Polygon Curved [id:ds9158152295332023] 
\draw  [color={rgb, 255:red, 208; green, 2; blue, 27 }  ,draw opacity=1 ] (518,77) .. controls (538,67) and (563,67) .. (543,87) .. controls (523,107) and (561,78) .. (581,108) .. controls (601,138) and (562,161) .. (542,131) .. controls (522,101) and (498,87) .. (518,77) -- cycle ;
%Straight Lines [id:da4811435228399612] 
\draw    (507,179) -- (532,179) ;
%Straight Lines [id:da273877244097976] 
\draw    (532,154) -- (532,179) ;
%Straight Lines [id:da5484348169244309] 
\draw [color={rgb, 255:red, 7; green, 93; blue, 194 }  ,draw opacity=1 ][line width=1.5]    (532,179) -- (551,197) ;
%Straight Lines [id:da03905068826087188] 
\draw [->]   (260,141) -- (359,141) ;

% Text Node
\draw (301,118.4) node [anchor=north west][inner sep=0.75pt]    {$\pi $};

\end{tikzpicture}

\caption{A projection of a $w$-foam to its skeleton.}\label{skeleton_map_foams}
\end{figure}

\end{example} 

\begin{prop}
The skeleton projection maps assemble to give a homomorphism of circuit algebras, meaning that the following diagram commutes for all wiring diagrams $\WD$:
\[\begin{tikzcd}  \wf(s_1)\times \ldots \times \wf(s_r)\arrow[r, "\WD"]\arrow[d, "\pi \times\ldots\times \pi", swap]& \wf(s)\arrow[d, "\pi"] \\ (s_1,\ldots,s_r)\in\calS^{r}\arrow[r, mapsto, "\WD"]& s=\WD(s_1,\ldots, s_r)
\end{tikzcd}\] 
Skeleton projections also commute with all auxiliary operations.
\end{prop}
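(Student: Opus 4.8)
The plan is to exploit the fact that $\wf$ is given by a finite presentation as a circuit algebra, so that a circuit algebra homomorphism out of $\wf$ is determined by its values on the generators $\overcrossing, \undercrossing, \vertex, \upcap$, subject only to the requirement that the defining relations are respected. First I would define a homomorphism on the free circuit algebra $\mathsf{CA}\langle \overcrossing, \undercrossing, \vertex, \upcap\rangle$ by sending $\overcrossing \mapsto \virtualcrossing$, $\undercrossing \mapsto \virtualcrossing$, $\vertex \mapsto \vertex$ and $\upcap \mapsto \upcap$, where $\virtualcrossing \in \mathsf{WD}(2)$ is the transposition, viewed as a structural element present in every circuit algebra (Example~\ref{ex: symmetric group}). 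By the universal property of free circuit algebras this assignment extends uniquely to a circuit algebra homomorphism $\tilde\pi$ into $\calS$, and this homomorphism commutes with the operadic action of wiring diagrams by its very construction. It then remains only to check that $\tilde\pi$ factors through the relation ideal to yield the flattening map $\pi$ of Definition~\ref{def: skeletal projection}.

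For the descent I would verify that $\tilde\pi$ sends the two sides of each defining relation to a common element of $\calS$. The relations $R1^s, R2, R3$ and $OC$ involve only crossings, and their two sides share the same underlying skeleton, differing solely in the over/under and sign data that $\pi$ forgets; since $\overcrossing$ and $\undercrossing$ are both sent to $\virtualcrossing$, the two images coincide, the resulting identities being consequences of the symmetric-group relations built into the wiring-diagram operad (Example~\ref{ex: symmetric group}). For $R4$ and $CP$, which also involve $\vertex$ and $\upcap$, I would read off directly from Figure~\ref{fig:R4 and CP} that flattening the crossings renders the two sides literally equal skeleta; in particular the asymmetry of $CP$ disappears after flattening, as it distinguishes only which strand passes over. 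Hence $\tilde\pi$ descends to $\pi$, which is therefore a circuit algebra homomorphism, and the commuting square in the statement is exactly this homomorphism property restricted to the skeletal fibres $\wf(s)=\pi^{-1}(s)$.

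It remains to treat the auxiliary operations $S_e, A_e, u_e, d_e$, which are external to the circuit algebra structure and so must be handled directly. Each is defined edge-locally, and the skeleton $\calS$ carries the same four operations (Definition~\ref{def: skeleta circuit alg}), so I would establish $\pi \circ X_e = X_e \circ \pi$ for $X \in \{S, u, d\}$ by inspecting the effect on a neighbourhood of the edge $e$. For the orientation switch $S_e$, the unzip $u_e$ and the deletion $d_e$ this is immediate, since these operations reverse a strand, double a strand along the framing, or remove a long strand without reference to the over/under data that $\pi$ discards, so they manifestly commute with flattening.

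The main obstacle I anticipate is the adjoint $A_e$, since by definition it reverses the direction of $e$ and conjugates every crossing at which $e$ passes over by virtual crossings. After flattening, the crossings at which $e$ passes over are already virtual, so I must verify that this conjugation by virtual crossings becomes the trivial modification of the skeleton, matching the skeletal adjoint, which merely reverses direction. This is where the $OC$ relation and the commutation of virtual crossings in $\mathsf{WD}$ carry the weight: conjugating a virtual crossing by virtual crossings leaves the underlying wiring diagram unchanged, so $\pi(A_e(w))$ and $A_e(\pi(w))$ agree. Assembling these verifications completes the proof.
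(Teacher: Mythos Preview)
Your proposal is correct and follows essentially the same strategy as the paper's proof: since $\pi$ is defined on generators, one checks that it respects each defining relation of $\wf$ (the paper illustrates this with $R2$ and leaves the rest to the reader), and then notes that commutativity with the auxiliary operations is equally straightforward. Your write-up is in fact more thorough than the paper's, particularly in isolating $A_e$ as the only auxiliary operation requiring a moment's thought; the paper simply declares these verifications routine and omits them.
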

\begin{proof}
To verify this, observe that since the map $\pi$ is defined on generators, it is a circuit algebra map if it respects the relations of $\wf$. As one example, consider the relation R$2$ (Figure~\ref{fig:Reidemeister}). The projection $\pi$ sends both sides of R$2$ to the identity element in the symmetric group $\calS_2$ and thus $\pi$ preserves the relation R$2$. The remainder of the relations are equally straightforward, and so is the commutativity with auxiliary operations. We leave these to the reader to check. 
\end{proof}

As the projection map is a homomorphism of circuit algebras, it follows that we can use the skeleton to provide an indexing of the circuit algebra of $w$-foams and write \[\wf:= \coprod_{s\in\calS}\wf(s).\]   Note, however, that for a specific $s\in\calS$ the set $\wf(s)$ is \emph{not} a circuit subalgebra of $\wf$.

\subsection{Completion of $\mathbf{w}$-foams}\label{sec:completion} 

In this section, we construct a prounipotent \emph{completion} of the circuit algebra $\wf$. This is largely formal: circuit algebras are algebras over the operad of wiring diagrams (Definition~\ref{def:CA}) and thus completion of a circuit algebra is the completion of an algebra over an operad similar to that in \cite[1.4.2]{FresseLie}. The main difference between this section and \cite[1.4.2]{FresseLie} is that we index the circuit algebra $\wf$ by skeleta as opposed to the arity of the operations. 

\medskip 
\begin{definition}
An \emph{ideal} $\calI$ of a linear circuit algebra $\mathsf{V}=\{\mathsf{V}[L^-;L^+]\}$ is a collection $$\calI=\{\calI[L^-;L^+]\subseteq \mathsf{V}[L^-;L^+]\} \quad \text{where}$$ 
%where
\[\WD(p_1,\ldots, p_r,k)\in\calI[L_0^+;L_0^-] \quad \text{whenever} \quad p_i\in \mathsf{V}[L_i^-;L_i^+], 1\leq i\leq r, \quad \text{and}\quad k\in \calI[L_j^-;L_j^+]. \]
Equivalently, $\mathcal I$ is an ideal if the action of the operad $\mathsf{WD}$ descends to a circuit algebra structure on the quotient $\mathsf{V}/\calI$.

The \emph{$n$th power} ideal $\mathcal{I}^{n}$ consists of operations $\WD(p_1,\ldots,p_r)$ in which at least $n$ of the $p_i$, $1\leq i\leq r$, are in $\mathcal{I}$.  

A quotient circuit algebra $\mathsf{V}/\calI^{n}$ is said to be \emph{nilpotent} if the circuit algebra multiplications \[\begin{tikzcd}F_{\WD}:(\mathsf{V}/\calI^n)[L_1^-;L_1^+]\otimes \ldots \otimes (\mathsf{V}/\calI^n)[L_r^-;L_r^+] \arrow[r] & (\mathsf{V}/\calI^n)[L_0^+;L_0^-]\end{tikzcd} \] vanish for all wiring diagrams $\WD$ with $r>R$ input discs, for some $R$. 
\end{definition}

\begin{definition}\label{def:completion}
A circuit algebra $\mathsf{V}$ is \emph{complete} if $\mathsf{V}=\lim_{n} \mathsf{V}/\calI^n$, where $\calI^n$, $n\geq 1$ is a descending sequence of ideals of $\mathsf{V}$ and each $\mathsf{V}/\calI^n$ is nilpotent.
\end{definition} 

To complete the circuit algebra $\wf$, we first linearly extend $\wf$ to a circuit algebra in $\mathbb{Q}$-vector spaces. For each skeleton $s\in\calS$, let $\mathbb{Q}[\wf](s)$ denote the $\mathbb{Q}$-vector space of formal linear combinations of $w$-foams $T_i$ with skeleton $s\in\calS$, 
\[
\sum_{T_i\in \wf(s)}\alpha_iT_i \in \mathbb{Q}[\wf](s).
\]   
The collection of vector spaces $$\mathbb{Q} [\wf] = \bigsqcup_{s\in \mathcal{S}} \mathbb{Q} [\wf](s)$$ forms a circuit algebra where the operad of wiring diagrams acts by the linear extension of the action on $\wf$. In particular, $\mathbb{Q}[\wf]$ is a linear circuit algebra with skeleton $\calS$.% in the sense that we can linearly extend the circuit algebra homomorphism to $\pi:\wf\rightarrow \calS$. 
%Letting $s$ range over all elements of $\calS$ we can now define an augmentation ideal of the circuit algebra $\mathbb{Q}[\wf]$. 

\begin{definition}\label{def: augmentation ideal} 
At each skeleton $s\in\calS$ we define an \emph{augmentation map}
\begin{align*}
\epsilon_{s}:\mathbb{Q}[\wf](s)\ \longrightarrow \ \mathbb{Q}\\ 
\Sigma\alpha_iT_i \mapsto \Sigma\alpha_i.
\end{align*} We denote the kernel of $\epsilon_{s}$ by $\mathcal{I}(s)$. 
The \emph{augmentation ideal} of $\mathbb{Q}[\wf]$ is the disjoint union $$\mathcal{I}= \bigsqcup_{s\in\mathcal{S}} \mathcal{I}(s).$$ 
\end{definition}

\begin{lemma}\label{lem:IandOps} $\calI$ is an ideal in $\mathbb{Q}[\wf]$. 
\end{lemma}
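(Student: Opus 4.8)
The plan is to reduce the entire statement to a single multiplicativity property of the augmentation maps $\epsilon_s$ under wiring-diagram composition. Concretely, I would first prove the following claim: for every wiring diagram $\WD$ with $r$ input discs and every choice of $p_i \in \mathbb{Q}[\wf](s_i)$, one has
\[ \epsilon_s\big(F_{\WD}(p_1,\ldots,p_r)\big) = \prod_{i=1}^r \epsilon_{s_i}(p_i), \qquad s = \WD(s_1,\ldots,s_r). \]
Once this is in hand, the ideal condition of Definition~\ref{def: augmentation ideal} is immediate: if the distinguished input $k = p_j$ lies in $\calI(s_j) = \ker \epsilon_{s_j}$, then the right-hand side contains the factor $\epsilon_{s_j}(k) = 0$, so $F_{\WD}(p_1,\ldots,p_r) \in \ker \epsilon_s = \calI(s)$, which is exactly the defining property of an ideal.

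To prove the multiplicativity claim I would work on basis elements first and then extend linearly. Since $\mathbb{Q}[\wf]$ is by construction the linear extension of the set-level circuit algebra $\wf$, the action $F_{\WD}$ restricted to single foams $T_i \in \wf(s_i)$ returns a \emph{single} foam $F_{\WD}(T_1,\ldots,T_r) \in \wf(s)$ (the set map $F_\WD$ sends a tuple to one element), so $\epsilon_s\big(F_{\WD}(T_1,\ldots,T_r)\big) = 1 = \prod_i \epsilon_{s_i}(T_i)$. Writing each $p_i = \sum_{j} \alpha_{ij} T_{ij}$ as a finite linear combination of foams of skeleton $s_i$ and using that $F_\WD$ is multilinear, the image expands as $\sum_{j_1,\ldots,j_r} \big(\prod_i \alpha_{ij_i}\big) F_{\WD}(T_{1j_1},\ldots,T_{rj_r})$; applying the linear map $\epsilon_s$ and factoring the resulting sum of products gives $\prod_i \big(\sum_{j} \alpha_{ij}\big) = \prod_i \epsilon_{s_i}(p_i)$.

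The only genuine content — and hence the step I would treat most carefully — is the basis-level observation that composing single foams yields a single foam, so that the augmentation of the output is $1$; this is what makes $\epsilon$ behave like a homomorphism into the multiplicative monoid of $\mathbb{Q}$. Everything else is bookkeeping: the skeleton of the output is pinned at $s = \WD(s_1,\ldots,s_r)$ because all the $T_{ij}$ share the skeleton $s_i$, so the output genuinely lands in the single graded piece $\mathbb{Q}[\wf](s)$ on which $\epsilon_s$ is defined, and one should record the boundary case of an arity-$0$ wiring diagram, where the empty product makes the claim read $\epsilon_s(\WD) = 1$. I do not expect any real obstruction here; the statement is essentially the assertion that the augmentation is compatible with the operad action, which becomes formal once the single-foam case is settled.
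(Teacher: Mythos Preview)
Your proof is correct and rests on the same core observation as the paper's: the set-level map $F_\WD$ sends a tuple of single foams to a single foam, so the coefficient sums multiply under the linearised action. The paper first reduces to the case of a wiring diagram with two inputs via operad associativity and then carries out the explicit computation in that case; your packaging of the same computation as the multiplicativity identity $\epsilon_s\big(F_\WD(p_1,\ldots,p_r)\big)=\prod_i \epsilon_{s_i}(p_i)$ is slightly cleaner, since it handles all arities at once and makes the ideal property an immediate corollary without the reduction step.
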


\begin{proof}
Let $\WD$ be a wiring diagram composing the elements $p_{1},...,p_{r}, k$, where $p_i\in \mathbb{Q}[\wf](s_i)$, $1\leq i \leq r$, and $k\in\calI(s_k)$. We then verify that the composite $\WD(p_1,\ldots,p_r,k)$ is in $\calI$.

Because a circuit algebra is an algebra over an operad, circuit algebra composition is associative and equivariant. It follows that any circuit algebra composition $\WD(p_1,\ldots,p_r,k)$ can be equivalently written $\WD_1(\WD_2(p_1,\ldots p_{r-1},k), p_r).$ Therefore, without loss of generality we may assume that $\WD$ has only two inputs.

Given elements $p\in\mathbb{Q}[\wf](s)$ and $k\in\calI (s_k)$ with 
\[
p = \sum_{j} \alpha_{j} T_{j} \quad \text{and} \quad k=\sum_{i} \lambda_{i} K_{i}, \quad \text{ with } T_j\in\wf(s), \text{ and } K_i\in\wf(s_k),
\] 

the composite via the wiring diagram $\WD$ is by definition 
\[
\WD(p,k)= \sum_{j}\alpha_j\sum_{i}\lambda_i \;\; \WD(T_j, K_i).
\] 
Since $k$ is in $\calI(s_k)$ we know that $\sum_{i} \lambda_{i}=0$.  It follows that $\sum_{j}\alpha_j\sum_{i}\lambda_i =0$ and therefore the composition $\WD(p,k)$ is in $\calI$. The lemma now follows.
\end{proof}

Since $\mathcal I$ is an ideal, $\mathbb{Q}[\wf]/\mathcal I$ is a circuit algebra. The next lemma implies that the auxiliary operations are also well-defined on the quotient:

\begin{lemma} 
For any $k\in \calI$, the w-foams resulting from auxiliary operations $u_e(k)$, $S_e(k)$, $A_e(k)$ and $d_e(k)$ are in $\calI$ whenever these operations are defined. 
\end{lemma}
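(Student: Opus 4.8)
The plan is to mimic the structure of the preceding Lemma~\ref{lem:IandOps} and reduce the claim to a simple statement about augmentations, since each auxiliary operation is a well-defined linear map on $\mathbb Q[\wf]$ preserving the grading by skeleton (sending $\wf(s)$ to $\wf(s')$ for an appropriately transformed skeleton $s'$). The key observation is that each of $u_e$, $S_e$, $A_e$, $d_e$ is defined on basis foams $T \in \wf(s)$ and extended linearly, and crucially each one sends a single basis foam to a single basis foam — none of these operations introduces any linear combination or scalar coefficients, they merely modify the diagram (reverse a strand, double an edge, delete a strand) to produce another honest $w$-foam. This is the feature that makes the argument work.

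First I would recall that by Definition~\ref{def: augmentation ideal}, an element $k \in \calI(s_k)$ is precisely one whose coefficients sum to zero, i.e.\ $\epsilon_{s_k}(k)=0$. Writing $k = \sum_i \lambda_i K_i$ with $K_i \in \wf(s_k)$ and $\sum_i \lambda_i = 0$, I would apply one of the auxiliary operations, say $u_e$, and use linearity to obtain
\[
u_e(k) = \sum_i \lambda_i\, u_e(K_i).
\]
Since each $u_e(K_i)$ is again a single basis foam lying in $\wf(s')$ for the common transformed skeleton $s' = u_e(s_k)$ (the operation acts on skeleta compatibly, as already noted in the preceding Proposition that skeleton projections commute with auxiliary operations), the element $u_e(k)$ is a linear combination of basis foams all sharing the skeleton $s'$. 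Its augmentation is then $\epsilon_{s'}\bigl(u_e(k)\bigr) = \sum_i \lambda_i = 0$, so $u_e(k) \in \calI(s') \subseteq \calI$. The identical argument applies verbatim to $S_e$, $A_e$, and $d_e$, with the only caveat that for $d_e$ the strand $e$ must be a long strand (not attached to foamed vertices), which is exactly the condition under which $d_e$ is defined, so the hypothesis ``whenever these operations are defined'' covers this.

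The main point to get right — the only place that is not purely formal — is verifying that each auxiliary operation genuinely maps a single foam to a single foam, so that no cancellation or combination of basis elements can alter the coefficient sum. For $S_e$, $A_e$, and $d_e$ this is immediate from their diagrammatic definitions in Definition~\ref{def:w-foam operations}. For unzip $u_e$, I would note that doubling an edge via the blackboard framing and reattaching produces one well-defined foam, so it too is basis-to-basis. I expect no serious obstacle here; the substance of the lemma is simply that these operations, being skeleton-respecting and permutation-of-basis in nature, commute with the augmentation maps, and hence preserve the augmentation ideal. The cleanest phrasing is therefore: each auxiliary operation $\phi \in \{u_e, S_e, A_e, d_e\}$ satisfies $\epsilon_{s'} \circ \phi = \epsilon_s$ on $\mathbb Q[\wf](s)$, from which $\phi(\calI) \subseteq \calI$ follows at once.
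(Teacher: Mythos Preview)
Your proposal is correct and follows essentially the same approach as the paper: write $k=\sum_i \lambda_i K_i$, apply the auxiliary operation term by term using linearity, and observe that since each operation sends a basis foam to a single basis foam the coefficient sum is preserved. Your phrasing via $\epsilon_{s'}\circ\phi=\epsilon_s$ is a clean way to summarize exactly what the paper does.
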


\begin{proof}
Let $p\in\mathbb{Q}[\wf](s)$ be an element with $p = \sum_{T_j\in\wf(s)} \alpha_{j} T_{j}$.  For a strand $e$ in the skeleton $s\in\calS$, the unzip operation produces $u_e(p)\in\mathbb{Q}[\wf](u_e(s))$, which is by definition the element \[u_e(p) = \sum_{u_e(T_j)\in\wf(u_e(s))} \alpha_{j} u_e(T_{j}).\] In this case, the coefficients remain unchanged, so if $\sum_{T_j}\alpha_j =0$ and $p\in \calI$, this still holds for $u_e(p)$. The same argument works for the operations $S_e$, $A_e$ and $d_e$. 
\end{proof}

For each $s\in\calS$, the $\mathbb{Q}$-vector space $\mathbb{Q}[\wf](s)$ admits a descending filtration given by powers of the augmentation ideal: 
\[\mathbb{Q}[\wf](s)  \supset \calI(s) \supset \calI^2(s)\supset\ldots \supset \calI^n(s)\supset\ldots.\] Since circuit algebra composition and the auxiliary operations are compatible with this filtration, $\mathbb{Q}[\wf]$ is a circuit algebra in \emph{filtered} $\mathbb{Q}$-vector spaces. 

\begin{definition}\label{def:completion}
The (prounipotent) {\em completion} of the $\mathbb{Q}$-vector space $\mathbb{Q}[\wf](s)$ is the inverse limit of the system $$ \mathbb{Q}[\wf](s)/\mathcal{I}(s)\stackrel{}{\leftarrow} \mathbb{Q}[\wf](s)/\mathcal{I}^{2}(s) \stackrel{}{\leftarrow} \mathbb{Q}[\wf](s)/\mathcal{I}^{3}(s) \stackrel{}{\leftarrow}...$$ We denote the resulting completed $\mathbb{Q}$-vector space by $\hatwf(s) = \lim_n \mathbb{Q}[\wf](s)/\calI^n(s)$. 
\end{definition}

%The completion $\hatwf$ inherits the circuit algebra structure of $\mathbb{Q}[\wf]$ as all the operations are compatible with the filtration.  
The passage from filtered $\mathbb{Q}$-vector spaces to completed $\mathbb{Q}$-vector spaces extends to a lax symmetric monoidal functor (\cite[Proposition 7.3.11; Section 7.3.12]{FresseVol1}) \[\begin{tikzcd} \widehat{(-)}:\operatorname{fVect}(\mathbb{Q}) \arrow[r] & \widehat{\operatorname{fVect}}(\mathbb{Q}).\end{tikzcd}\] Algebras over coloured operads transfer over symmetric monoidal functors, which in this case means that, for every wiring diagram $\WD$, the following diagram commutes: 

\[\begin{tikzcd} 
\mathbb{Q}[\wf](s_1)\otimes\ldots\otimes \mathbb{Q}[\wf](s_r) \arrow[r, "\widehat{(-)}"]\arrow[d, swap, "F_\WD"] &\widehat{(\mathbb{Q}[\wf](s_1)\otimes\ldots\otimes \mathbb{Q}[\wf](s_r))}\arrow[r,"\cong"] & \hatwf(s_1)\otimes\ldots\otimes \hatwf(s_r) \arrow[d, "F_\WD"]
\\
\mathbb{Q}[\wf](s) \arrow[rr, "\widehat{(-)}"] && \hatwf(s).
\end{tikzcd}\] The following proposition follows immediately. 

\begin{prop}
The completion of $w$-foams $$\hatwf :=\coprod_{s\in\calS}\hatwf(s)$$ is a circuit algebra with auxiliary operations $A_e$, $S_e$, $u_e$ and $d_e$. \qed
\end{prop}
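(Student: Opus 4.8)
The plan is to read off both assertions directly from the functoriality assembled immediately above, treating the circuit-algebra structure and the auxiliary operations separately. For the circuit-algebra structure, recall that $\mathbb{Q}[\wf]$ has just been exhibited as an algebra over the coloured operad $\mathsf{WD}$ internal to $\operatorname{fVect}(\mathbb{Q})$. Since the completion functor $\widehat{(-)}\colon\operatorname{fVect}(\mathbb{Q})\to\widehat{\operatorname{fVect}}(\mathbb{Q})$ is lax symmetric monoidal, and algebras over a coloured operad are transported along symmetric monoidal functors, the object $\hatwf=\widehat{\mathbb{Q}[\wf]}$ is again an algebra over $\mathsf{WD}$, i.e.\ a complete circuit algebra. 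Concretely, for each wiring diagram $\WD$ the completed action map is the outer composite of the square displayed just before the proposition: precompose $\widehat{F_\WD}$ with the iterated lax-monoidal structure isomorphism $\hatwf(s_1)\otimes\cdots\otimes\hatwf(s_r)\xrightarrow{\cong}\widehat{\mathbb{Q}[\wf](s_1)\otimes\cdots\otimes\mathbb{Q}[\wf](s_r)}$. The associativity, unitality, and equivariance axioms of Definition~\ref{def:CA} hold for these maps because they hold for the $F_\WD$ before completion and are preserved by a symmetric monoidal functor; this is precisely the content of commutativity of that square for all $\WD$.

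For the auxiliary operations $S_e,A_e,u_e,d_e$, which are external to the circuit-algebra structure and so are not covered by the transfer argument, I would argue that each is a filtered linear map and hence descends to the inverse limit. The preceding lemma shows that each operation $o_e$ sends $\calI$ into $\calI$. To promote this to every power of the augmentation ideal, I would use that each $o_e$ acts by modifying the skeleton while leaving the scalar coefficients untouched and commutes, up to the induced map on skeleta, with wiring-diagram composition; an element of $\calI^n(s)$, being a composite of wiring diagrams with at least $n$ factors in $\calI$, is then carried into $\calI^n(s')$ for the image skeleton $s'$. Thus each auxiliary operation is strictly filtered, induces compatible maps $\mathbb{Q}[\wf](s)/\calI^n(s)\to\mathbb{Q}[\wf](s')/\calI^n(s')$ on the finite quotients, and hence a map on the inverse limits, which is its definition on $\hatwf$. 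Its diagrammatic defining relations and its compatibility with the maps $F_\WD$, already verified at finite level, are preserved under the inverse limit.

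The only point requiring genuine care, everything else being formal, is the strict-filtration claim for the auxiliary operations: that they respect all powers $\calI^n$ and not merely the $\calI=\calI^1$ supplied by the lemma. I would verify this by making precise the assertion that each $o_e$ commutes with $F_\WD$ whenever the affected strand $e$ survives the composition, so that an $n$-fold augmentation-ideal composite is sent to an $n$-fold augmentation-ideal composite. Once this compatibility is in hand, both halves of the statement, that $\hatwf$ is a complete circuit algebra and that it carries the operations $A_e,S_e,u_e,d_e$, follow immediately.
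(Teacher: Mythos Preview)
Your proposal is correct and follows essentially the same approach as the paper, which in fact offers no proof beyond the preceding discussion and marks the proposition with \qed. You have actually been more careful on one point: the paper asserts without further justification that the auxiliary operations are compatible with the full filtration by powers of $\calI$, whereas the preceding lemma only treats $\calI=\calI^1$; your identification of this as the one non-formal step, and your sketch that it follows from the compatibility of the auxiliary operations with wiring-diagram composition together with the $\calI^1$ case, is precisely what fills the gap.
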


\begin{definition}\label{def:gr}
For every skeleton $s\in\calS$, the filtration of $\mathbb{Q}[\wf](s)$ by powers of the augmentation ideal gives rise to the (complete) associated graded vector space 
$$\A(s)= \prod_{n\geq0} \mathcal{I}^{n}(s)/\mathcal{I}^{n+1}(s).$$ 
\end{definition} 

	The functor from filtered $\mathbb{Q}$-vector spaces to completed $\mathbb{Q}$-vector spaces restricts to a lax symmetric monoidal functor from filtered $\mathbb{Q}$-vector spaces to graded $\mathbb{Q}$-vector spaces~\cite[Lemma 7.3.10; Section 7.3.13]{FresseVol1}.  It follows from the same argument as above that the graded $\mathbb{Q}$-vector spaces $\A(s)$ assemble into a circuit algebra $$\A:=\coprod_{s\in\mathcal{S}} \A(s).$$  Moreover, the filtration is compatible with the auxiliary operations of $\wf$, and therefore $\A$ is a circuit algebra with the associated graded auxiliary operations, also denoted $A_e$, $S_e$, $u_e$ and $d_e$.  The following proposition shows that completion of the circuit algebra $\wf$ is filtration-preserving and that $\A$ is the \emph{associated graded space} of $\widehat{\wf}$ also. 

\begin{prop}\label{prop:AssocGradedOfCompletion}
The circuit algebra $\hatwf$ is filtered by $\widehat{\mathcal{I}^{n}}:= \displaystyle{\lim_{k>n} \mathcal{I}^{n}/\mathcal{I}^{k}}$. Moreover, with this filtration it is a complete circuit algebra and there is a canonical isomorphism of circuit algebras $\text{gr}( \hatwf )\cong \A$.
\end{prop}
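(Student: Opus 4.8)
The plan is to reduce the statement, at each fixed skeleton, to a standard fact about a single filtered $\mathbb{Q}$-vector space, and then to assemble the resulting isomorphisms into a map of circuit algebras using the monoidal functoriality already established. Fix $s \in \calS$ and write $V = \mathbb{Q}[\wf](s)$ with its descending filtration $V \supset \calI(s) \supset \calI^2(s) \supset \cdots$. By Definition~\ref{def:completion}, $\hatwf(s) = \lim_n V/\calI^n(s)$, and the claimed filtration term $\widehat{\calI^n}(s) = \lim_{k>n} \calI^n(s)/\calI^k(s)$ should first be identified with the kernel of the canonical projection $\hatwf(s) \to V/\calI^n(s)$: a direct inspection of compatible sequences shows that an element of the inverse limit dies in $V/\calI^n(s)$ exactly when its $k$th component lies in $\calI^n(s)/\calI^k(s)$ for every $k>n$, so $\ker\bigl(\hatwf(s)\to V/\calI^n(s)\bigr)=\lim_{k>n}\calI^n(s)/\calI^k(s)$.

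Next I would verify that $\{\widehat{\calI^n}\}$ is a decreasing sequence of circuit-algebra ideals on $\hatwf$, compatible with the auxiliary operations. The essential point is that the transition maps $V/\calI^{k+1}(s) \to V/\calI^k(s)$ are surjective, so the tower satisfies the Mittag--Leffler condition; consequently each projection $\hatwf(s) \to V/\calI^n(s)$ is surjective and induces $\hatwf(s)/\widehat{\calI^n}(s) \cong V/\calI^n(s)$. That $\widehat{\calI^n}$ is an ideal interacting correctly with wiring-diagram compositions and with $A_e, S_e, u_e, d_e$ then follows because each of these is induced by the corresponding filtered-level structure via the commuting square for $F_\WD$ displayed immediately before the proposition, together with the fact that completion is lax symmetric monoidal.

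The heart of the argument is the identification $\widehat{\calI^n}(s)/\widehat{\calI^{n+1}}(s) \cong \calI^n(s)/\calI^{n+1}(s)$, which I would obtain by comparing two short exact sequences. The surjectivity above yields
\[ 0 \to \widehat{\calI^n}(s)/\widehat{\calI^{n+1}}(s) \to \hatwf(s)/\widehat{\calI^{n+1}}(s) \to \hatwf(s)/\widehat{\calI^n}(s) \to 0, \]
and substituting $\hatwf(s)/\widehat{\calI^m}(s) \cong V/\calI^m(s)$ turns this into the canonical sequence $0 \to \calI^n(s)/\calI^{n+1}(s) \to V/\calI^{n+1}(s) \to V/\calI^n(s) \to 0$. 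Comparing leftmost terms forces $\widehat{\calI^n}(s)/\widehat{\calI^{n+1}}(s)\cong \calI^n(s)/\calI^{n+1}(s)$, which is precisely the $n$th graded piece of $\A(s)$ from Definition~\ref{def:gr}; taking the product over $n$ produces a canonical isomorphism $\gr(\hatwf(s)) \cong \A(s)$ of graded vector spaces.

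Finally I would assemble these skeleton-wise isomorphisms over $s \in \calS$. Since both the associated-graded functor and the completion functor are lax symmetric monoidal (the cited results of Fresse), $\gr(\hatwf)$ and $\A$ carry compatible circuit-algebra and auxiliary-operation structures, and the isomorphism built above is natural in $s$; hence it commutes with every $F_\WD$ and with $A_e, S_e, u_e, d_e$ by the same argument already used to endow $\A$ with its circuit-algebra structure. The main obstacle I anticipate is purely bookkeeping: pinning down that $\widehat{\calI^n}$ is genuinely the closure of $\calI^n$ in $\hatwf$ and that the tower projections are surjective (the Mittag--Leffler point), since once exactness of the inverse limit on this tower is in hand, the graded identification and its naturality follow formally.
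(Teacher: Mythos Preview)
Your proposal is correct and follows essentially the same approach as the paper: identify $\widehat{\calI^n}(s)/\widehat{\calI^{n+1}}(s)\cong \calI^n(s)/\calI^{n+1}(s)$ at each skeleton, then use the lax symmetric monoidal functoriality (citing Fresse) to lift to a circuit-algebra isomorphism. The paper compresses your first three paragraphs into the phrase ``standard facts about filtered complete $\mathbb{Q}$-vector spaces,'' so you have simply unpacked what the paper leaves implicit.
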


\begin{proof}
%Standard facts about filtered complete $\mathbb{Q}$-vector spaces show that each component of the associated graded $\A$ can be identified as \[\A(s)^{\ell}=\calI^{\ell}(s)/\calI^{\ell+1}(s) =\widehat{\calI}^{\ell}(s)/\widehat{\calI}^{\ell +1}(s)=\text{gr}(\hatwf(s))^{\ell}.\]   The resulting filtration respects the symmetric monoidal structure of filtered $\mathbb{Q}$-vector spaces (e.g.  \cite[7.3.10]{FresseVol1}) and thus we can lift this identification to a canonical isomorphism of circuit algebras $\text{gr}( \hatwf )\cong \A$ . 

Our first goal is to show that $\hatwf$ is filtered. We know that the $\mathbb{Q}$-module $\mathbb{Q}[\wf](s)$ has a filtration by powers of the augmentation ideal and that the quotient maps $\mathbb{Q}[\wf](s)\rightarrow \mathbb{Q}[\wf](s)/\calI^{\ell}(s)$ lifts canonically to a morphism $$\mathbb{Q}[\wf](s)\rightarrow \hatwf(s)=\lim_{n}\mathbb{Q}[\wf](s)/\calI^n(s).$$ This induces a canonical filtration on $\hatwf(s)$ with $F_\ell(\hatwf(s))=\text{ker}\left(\hatwf(s)\rightarrow \mathbb{Q}[\wf](s)/\calI^{\ell+1}(s)\right)$ which are the kernels of the projection of maps on the right-hand tower  
\[\begin{tikzcd} F_{\ell}(\hatwf(s)) \arrow[d, "\subseteq"]\arrow[r, hook]&\hatwf(s) \arrow[d] \arrow[r]\arrow[d, "="] & \mathbb{Q}[\wf](s)/\calI^{\ell + 1} (s)\arrow[d, two heads] \\ F_{\ell-1}(\hatwf(s))\arrow[r, hook]&\hatwf(s) \arrow[r] & \mathbb{Q}[\wf](s)/\calI^{\ell }(s) \end{tikzcd} .\]    
What is more, we have $F_\ell(\hatwf(s))=\widehat{\calI}^\ell(s)=  \displaystyle{\lim_{k>\ell} \mathcal{I}(s)^{\ell}/\mathcal{I}(s)^{k}}$ by \cite[7.3.6]{FresseVol1} and \[F_{\ell}(\hatwf(s))/F_{\ell+1}(\hatwf(s)) \cong \calI^{\ell}(s)/\calI^{\ell+1}(s).\]

In a similar fashion, we define the associated graded as \[\A^{\ell}(s) = \text{ker}\left(\mathbb{Q}[\wf](s)/\calI^{\ell + 1}(s)\twoheadrightarrow \mathbb{Q}[\wf](s)/\calI^{\ell}(s)\right).\] By a short calculation, each component of the associated graded $\A$ is identified as \[\A^{\ell}(s)= \calI^{\ell}(s)/\calI^{\ell+1}(s) \cong \widehat{\calI}^{\ell}(s)/\widehat{\calI}^{\ell +1}(s)= \text{gr}(\hatwf(s))^{\ell}.\]  

These filtrations respect the symmetric monoidal structure of filtered $\mathbb{Q}$-modules \cite[7.3.10]{FresseVol1} and thus we can lift this identification to a canonical isomorphism of circuit algebras $\text{gr}( \hatwf )\cong \A$ . 
\end{proof}

%\begin{remark}
%The reader will notice that most of the results in this section are direct consequences of the description of circuit algebras as algebras over a coloured operad. We note that this is not equivalent to giving a full rational homotopy theory of circuit algebras, as this would require an extension of the Malcev completion of operads from \cite[Part 3]{FresseVol1} to wheeled props.  \mrnote{what point do we want to be making in this remark?}Comparing the two towers results, identifies $\hatwf(s)/F_{\ell}(\hatwf(s)) = \mathbb{Q}[\wf](s)/\calI^{\ell}(s)$.
%\end{remark} 

\section{Arrow diagrams and the Kashiwara--Vergne Lie algebras}
\subsection{The circuit algebra of arrow diagrams}\label{sec: arrow diagrams} 

Variants of chord diagram spaces -- chord diagrams, Jacobi diagrams, arrow diagrams -- are prevalent throughout knot theory, and in particular, the theory of finite type invariants. They are combinatorial models for the associated graded spaces of filtered linearised spaces of knots, braids, or other knotted objects. Chord diagram spaces also arise in quantum algebra as a computational tool capturing properties of Lie algebras. 

Informally, a Jacobi diagram is a  graph with trivalent and univalent vertices, whose univalent vertices are arranged on a knot or tangle skeleton (e.g. a circle, or $n$ disjoint horizontal lines). These diagrams are considered modulo a set of relations: STU, IHX and anti-symmetry, and others depending on the context. The trivalent vertex captures the algebraic properties of a Lie bracket, for example, the IHX relation corresponds to the Jacobi identity. For more details on Jacobi diagrams in the classical setting see, for example,  \cite{BN_Survey_Knot_Invariants} or \cite[Chapter 5]{chmutov_duzhin_mostovoy_2012} and references therein.  

The space of \emph{$w$-Jacobi diagrams} is an oriented variant of Jacobi diagrams, on $w$-foam skeleta, which gives a combinatorial description for the associated graded space of $w$-foams. They are introduced and studied in \cite[Definition 3.8 and Section 4.2]{BND:WKO2}. Here, we briefly review their definition and most important properties.

\begin{definition} 
A $w$-\emph{Jacobi diagram} on a $w$-foam skeleton $s\in \calS$ is a -- possibly infinite -- linear combination of uni-trivalent oriented graphs with the following properties:
\begin{enumerate}
\item trivalent vertices are incident to two incoming edges and one outgoing edge, and are cyclically ordered
\item univalent vertices are attached to $s$ (this data is combinatorial, i.e. only the ordering of univalent vertices along each skeleton edge matters).
\end{enumerate}
The collection of $w$-Jacobi diagrams on a given $s\in \calS$ forms a graded complete vector space, where the degree is given by half the number of vertices in a diagram (including trivalent and univalent vertices).
\end{definition} 

An example of a $w$-Jacobi diagram is depicted in Figure~\ref{fig: examples of diagrams} where the oriented graph is depicted in red and the $w$-foamed skeleton is in black. 

\begin{figure}[h]

\begin{tikzpicture}[x=0.75pt,y=0.75pt,yscale=-.5,xscale=.5]
%uncomment if require: \path (0,300); %set diagram left start at 0, and has height of 300

%Straight Lines [id:da6727553101071337] 
\draw [<-] [line width=1.5]    (98,60) -- (98,200) ;
%Straight Lines [id:da6308408783190449] 
\draw [<-] [line width=1.5]    (140,60) -- (140,200) ;
%Straight Lines [id:da9684729551275519] 
\draw [<-][line width=1.5]    (179,60) -- (179,200) ;
%Straight Lines [id:da46401537971271556] 
\draw [->][color={rgb, 255:red, 208; green, 2; blue, 27 }  ,draw opacity=1 ]   (100.69,73.83) -- (119.21,97.43) ;
%Curve Lines [id:da48559498666840817] 
\draw [->] [color={rgb, 255:red, 208; green, 2; blue, 27 }  ,draw opacity=1 ]   (177.91,83.15) .. controls (126.73,121.28) and (87.15,141.14) .. (138.67,164.03) ;
%Curve Lines [id:da35159818107074337] 
\draw [->] [color={rgb, 255:red, 208; green, 2; blue, 27 }  ,draw opacity=1 ]   (137.79,78.02) .. controls (125.08,91.3) and (108.17,95.67) .. (129.69,119.21) ;
%Straight Lines [id:da5514302997660563] 
\draw    (516,170) -- (478.28,132.95) ;
%Straight Lines [id:da43582796743229957] 
\draw    (438,170) -- (478.28,132.95) ;
%Straight Lines [id:da19783387338553493] 
\draw [color={rgb, 255:red, 38; green, 10; blue, 248 }  ,draw opacity=1 ][line width=2.25]    (478,94) -- (478.28,132.95) ;
%Shape: Circle [id:dp5457366511729831] 
\draw  [fill={rgb, 255:red, 0; green, 0; blue, 0 }  ,fill opacity=1 ] (470.5,87) .. controls (470.5,83.13) and (473.63,80) .. (477.5,80) .. controls (481.37,80) and (484.5,83.13) .. (484.5,87) .. controls (484.5,90.87) and (481.37,94) .. (477.5,94) .. controls (473.63,94) and (470.5,90.87) .. (470.5,87) -- cycle ;
%Shape: Rectangle [id:dp9188491488634298] 
\draw  [color={rgb, 255:red, 208; green, 2; blue, 27 }  ,draw opacity=1 ][fill={rgb, 255:red, 255; green, 255; blue, 255 }  ,fill opacity=1 ][line width=1.5]  (440,145) -- (515,145) -- (515,160) -- (440,160) -- cycle ;
%Straight Lines [id:da4511429041819337] 
\draw [->][color={rgb, 255:red, 208; green, 2; blue, 27 }  ,draw opacity=1 ]   (140.5,135) -- (178.72,135) ;
\end{tikzpicture}

\caption{On the left is an example of a $w$-Jacobi diagram. Sometimes, we denote an arbitrary w-Jacobi diagram located on a certain part of a skeleton by a red box, as shown on the right. } \label{fig: examples of diagrams} 
\end{figure}
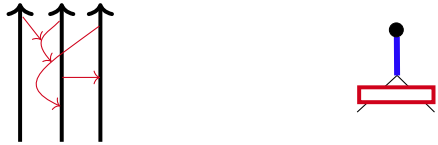 

\begin{definition}
Given a $w$-foam skeleton $s\in\calS$, let $\J(s)$ denote the graded complete $\mathbb{Q}$-vector space of $w$-Jacobi diagrams on the skeleton $s$, modulo the STU, AS, IHX, TC, VI, CP and RI relations shown in Figures~\ref{fig:relations on J}~and~\ref{fig:VICPRI} and briefly explained below. 
\end{definition}

The convention for diagrammatic relations is that only the relevant part of the diagrams is depicted, and the rest of the diagrams is arbitrary, but the same throughout the relation. Importantly, the TC relation states that arrow Tails Commute along skeleton strands. The CP relation means that any arrow diagram with an arrow head immediately adjacent to a cap is zero. The VI relation states that an arrow diagram with a single arrow ending (head or tail) on the distinguished edge of a skeleton vertex is equivalent to the sum of the two arrow diagrams with the arrow ending on each of the two merging strands. This version of the VI relation applies when the non-distinguished strands of the skeleton vertex are both incoming or both outgoing. If their orientations are opposite, a sign is introduced. Note that the VI relation implies that an arrow diagram with $k$ arrows ending (heads or tails) on the distinguished edge of a skeleton vertex is equal to a sum of $2^k$ arrow diagrams where the arrow endings are split between the two non-distinguished edges in all possible ways.

\begin{figure}
	
\tikzset{every picture/.style={line width=0.75pt}} %set default line width to 0.75pt        
	
	\tikzset{every picture/.style={line width=0.75pt}} %set default line width to 0.75pt        

\begin{tikzpicture}[x=0.75pt,y=0.75pt,yscale=-.7,xscale=.7]
%uncomment if require: \path (0,300); %set diagram left start at 0, and has height of 300

%TC
\draw [->] [line width=1.5]    (505,100) -- (505,20) ;

\draw [->] [line width=1.5]    (580, 100) -- (580,20) ;
\draw [->] [color={rgb, 255:red, 208; green, 2; blue, 27 }  ,draw opacity=1 ]   (505,48) -- (540,48) ;
\draw [->] [color={rgb, 255:red, 208; green, 2; blue, 27 }  ,draw opacity=1 ]   (505,76) -- (540,76) ;
\draw [->][color={rgb, 255:red, 208; green, 2; blue, 27 }  ,draw opacity=1 ]   (582,76) -- (615,43) ;
\draw [<-][color={rgb, 255:red, 208; green, 2; blue, 27 }  ,draw opacity=1 ]   (615,80) -- (582,43) ;
% end TC 

%STU1
%STU tree
\draw [-] [color={rgb, 255:red, 208; green, 2; blue, 27 }  ,draw opacity=1 ]   (223.4,80) -- (202.31,58) ;
\draw [-] [color={rgb, 255:red, 208; green, 2; blue, 27 }  ,draw opacity=1 ]   (226,35) -- (202.31,58) ;
\draw[<-] [color={rgb, 255:red, 208; green, 2; blue, 27 }  ,draw opacity=1 ]   (181,59) -- (202.31,58) ;
%STU Skeleton of tree
\draw [<-][line width=1.5]    (180,20) -- (180,100) ;
\draw [<-][line width=1.5]    (100,20) -- (100,100) ;
%STU Crossed Lines
\draw [<-][color={rgb, 255:red, 208; green, 2; blue, 27 }  ,draw opacity=1 ]   (101,80) -- (132,44) ;
\draw[->] [color={rgb, 255:red, 208; green, 2; blue, 27 }  ,draw opacity=1 ]   (132,80) -- (101,44) ;
%STU straight lines 
\draw [<-][color={rgb, 255:red, 208; green, 2; blue, 27 }  ,draw opacity=1 ]   (40,44) -- (75,44) ;
\draw [<-][color={rgb, 255:red, 208; green, 2; blue, 27 }  ,draw opacity=1 ]   (40,80) -- (75,80) ;
\draw [<-][line width=1.5]    (38,20) -- (38,100) ;

%STU2
%STU2 tree
\draw [->][color={rgb, 255:red, 208; green, 2; blue, 27 }  ,draw opacity=1 ][line width=0.75]    (460,80) -- (436.31,58) ;
\draw [<-][color={rgb, 255:red, 208; green, 2; blue, 27 }  ,draw opacity=1 ][line width=0.75]    (460,35) -- (436.31,58) ;
\draw[->] [color={rgb, 255:red, 208; green, 2; blue, 27 }  ,draw opacity=1 ]   (415,58) -- (436.31,58) ;
%STU2 tree skeleton
\draw [<-][line width=1.5]    (414,20) -- (414,100) ;
\draw [<-][line width=1.5]    (334,20) -- (334,100) ;
%STU2 crossed lines
\draw [->] [color={rgb, 255:red, 208; green, 2; blue, 27 }  ,draw opacity=1 ]   (335,80) -- (368,44) ;
\draw [->] [color={rgb, 255:red, 208; green, 2; blue, 27 }  ,draw opacity=1 ]   (368,80) -- (335,44) ;
%STU2 straight lines 
\draw[->] [color={rgb, 255:red, 208; green, 2; blue, 27 }  ,draw opacity=1 ]   (271,40) -- (307,40) ;
\draw [<-][color={rgb, 255:red, 208; green, 2; blue, 27 }  ,draw opacity=1 ]   (272,80) -- (308,80) ;
\draw [<-][line width=1.5]    (271,20) -- (271,100) ;

%AS
\draw  [color={rgb, 255:red, 208; green, 2; blue, 27 }  ,draw opacity=1 ][line width=0.75]    (71,270) -- (96.5,232) ;
\draw [color={rgb, 255:red, 208; green, 2; blue, 27 }  ,draw opacity=1 ][line width=0.75]    (125,270) -- (96.5,232) ;
\draw [->][color={rgb, 255:red, 208; green, 2; blue, 27 }  ,draw opacity=1 ][line width=0.75]    (96.5,232) -- (96.5,200) ;
%AS curvy tree
\draw [->][color={rgb, 255:red, 208; green, 2; blue, 27 }  ,draw opacity=1 ][line width=0.75]    (198.5,233) -- (197.56,200) ;
\draw [color={rgb, 255:red, 208; green, 2; blue, 27 }  ,draw opacity=1 ][line width=0.75]    (198.5,233) .. controls (225,233) and (212,255) .. (177,270) ;
\draw [color={rgb, 255:red, 208; green, 2; blue, 27 }  ,draw opacity=1 ][line width=0.75]    (198.5,233) .. controls (178.5,233) and (189,255) .. (221,270) ;
%Straight Lines [id:da8515955900637611] 

%IHX
\draw [->][color={rgb, 255:red, 208; green, 2; blue, 27 }  ,draw opacity=1 ][line width=0.75]    (345,261) -- (367.5,244) ;
\draw [->][color={rgb, 255:red, 208; green, 2; blue, 27 }  ,draw opacity=1 ][line width=0.75]    (391,261) -- (367.5,244) ;
\draw [->][color={rgb, 255:red, 208; green, 2; blue, 27 }  ,draw opacity=1 ][line width=0.75]    (366,243) -- (366,210) ;
\draw [->] [color={rgb, 255:red, 208; green, 2; blue, 27 }  ,draw opacity=1 ][line width=0.75]    (366,209) -- (391,192) ;
\draw [<-][color={rgb, 255:red, 208; green, 2; blue, 27 }  ,draw opacity=1 ][line width=0.75]    (366,209) -- (342,192) ;
%third figure 
\draw [->][color={rgb, 255:red, 208; green, 2; blue, 27 }  ,draw opacity=1 ][line width=0.75]    (464.5,223) -- (497.5,223) ;
\draw [->][color={rgb, 255:red, 208; green, 2; blue, 27 }  ,draw opacity=1 ][line width=0.75]    (564,259) -- (621,206) ;
\draw[->] [color={rgb, 255:red, 208; green, 2; blue, 27 }  ,draw opacity=1 ][line width=0.75]    (634,259) -- (580.92,206) ;
\draw [->][color={rgb, 255:red, 208; green, 2; blue, 27 }  ,draw opacity=1 ][line width=0.75]    (586,206) -- (616,206) ;

\draw [->] [color={rgb, 255:red, 208; green, 2; blue, 27 }  ,draw opacity=1 ][line width=0.75]    (562,190) -- (580,206) ;
\draw [->][color={rgb, 255:red, 208; green, 2; blue, 27 }  ,draw opacity=1 ][line width=0.75]    (622,206) -- (641.45,190) ;
 %IHX mid
\draw [->] [color={rgb, 255:red, 208; green, 2; blue, 27 }  ,draw opacity=1 ][line width=0.75]    (431,191) -- (463.07,222.39) ;

%Straight Lines [id:da09440906607002719] 
\draw [->][color={rgb, 255:red, 208; green, 2; blue, 27 }  ,draw opacity=1 ][line width=0.75]    (431,259) -- (463.12,225.24) ;

%Straight Lines [id:da45772083864311175] 
\draw [->][color={rgb, 255:red, 208; green, 2; blue, 27 }  ,draw opacity=1 ][line width=0.75]    (530,261) -- (500.76,224.76) ;
%Straight Lines [id:da5016782197268228] 
\draw [->][color={rgb, 255:red, 208; green, 2; blue, 27 }  ,draw opacity=1 ][line width=0.75]    (499.5,223.21) -- (530.6,191.43) ;

% Text Node
\draw (539,55) node [anchor=north west][inner sep=0.75pt]    {$=$};
% Text Node
\draw (543,104) node [anchor=north west][inner sep=0.75pt]    {$\textnormal{TC}$};
% Text Node
\draw (83,104) node [anchor=north west][inner sep=0.75pt]    {$\textnormal{STU}\ 1$};
% Text Node
\draw (73,55) node [anchor=north west][inner sep=0.75pt]    {$-$};
% Text Node
\draw (148,55) node [anchor=north west][inner sep=0.75pt]    {$=$};
% Text Node
\draw (317,104) node [anchor=north west][inner sep=0.75pt]    {$\textnormal{STU}\ 2$};
% Text Node
\draw (306,55) node [anchor=north west][inner sep=0.75pt]    {$-$};
% Text Node
\draw (382,55) node [anchor=north west][inner sep=0.75pt]    {$=$};
% Text Node
\draw (145,225) node [anchor=north west][inner sep=0.75pt]    {$+$};
% Text Node
\draw (227,225) node [anchor=north west][inner sep=0.75pt]    {$=\ 0$};
% Text Node
\draw (140,266) node [anchor=north west][inner sep=0.75pt]    {$\textnormal{AS}$};
% Text Node
\draw (400,214) node [anchor=north west][inner sep=0.75pt]    {$=$};
% Text Node
\draw (547,214) node [anchor=north west][inner sep=0.75pt]    {$-$};
% Text Node
\draw (468,266) node [anchor=north west][inner sep=0.75pt]    {$\textnormal{IHX}$};

\end{tikzpicture}

	\caption{The STU, TC, AS and IHX relations. }\label{fig:relations on J}
\end{figure}
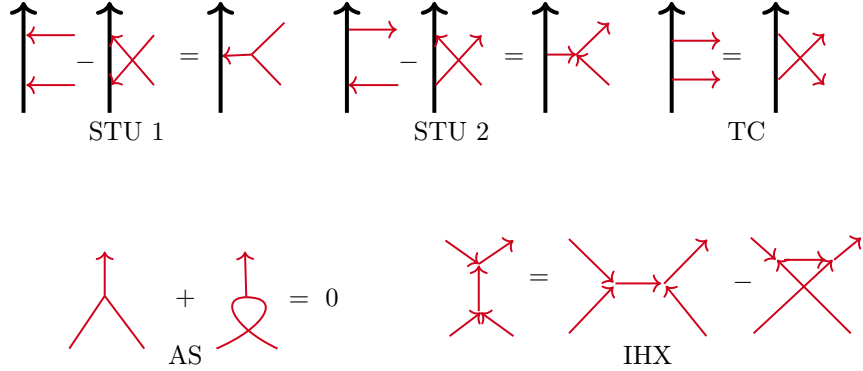

\begin{figure}

\begin{tikzpicture}[x=0.75pt,y=0.75pt,yscale=-.5,xscale=.5]
%uncomment if require: \path (0,300); %set diagram left start at 0, and has height of 300

%CP
\draw   [line width=1.5] (415,50) -- (415,121) ;
%Shape: Circle [id:dp10518842835376985] 
\draw  [fill={rgb, 255:red, 0; green, 0; blue, 0 }  ,fill opacity=1 ] (408.13,43) .. controls (408.13,39.13) and (411.27,36) .. (415.13,36) .. controls (419,36) and (422.13,39.13) .. (422.13,43) .. controls (422.13,46.87) and (419,50) .. (415.13,50) .. controls (411.27,50) and (408.13,46.87) .. (408.13,43) -- cycle ;
%Straight Lines [id:da8067468601160259] 
\draw [<-][color={rgb, 255:red, 208; green, 2; blue, 27 }  ,draw opacity=1 ]   (415,60) -- (462,60) ;
%Straight Lines [id:da45371039225825815] 
\draw    [<-][line width=1.5](120,30) -- (120,122) ;
%Curve Lines [id:da8251060739181677] 
\draw [<-][color={rgb, 255:red, 208; green, 2; blue, 27 }  ,draw opacity=1 ]   (121,71) .. controls (143.93,71) and (142,50) .. (121,50) ;
%Straight Lines [id:da25706623812447926] 
\draw   [<-][line width=1.5] (188,30) -- (188,122) ;
%Curve Lines [id:da9089150998548838] 
\draw [->][color={rgb, 255:red, 208; green, 2; blue, 27 }  ,draw opacity=1 ]   (188,71) .. controls (210.93,71) and (209,50) .. (188,50) ;
%vertex
\draw    (204.94,260) -- (159.52,221.09) ;
%Straight Lines [id:da9925733317909609] 
\draw    (111,260) -- (159.52,221.09) ;
%Straight Lines [id:da5674260206623409] 
\draw [color={rgb, 255:red, 38; green, 10; blue, 248 }  ,draw opacity=1 ][line width=2.25]    (159.18,173.58) -- (159.52,221.09) ;
%Straight Lines [id:da2932598759812066] 
\draw    (388.57,258.97) -- (343.14,221.09) ;
%Straight Lines [id:da1206266603854691] 
\draw    (294.62,258.97) -- (343.14,221.09) ;
%Straight Lines [id:da07859507292701351] 
\draw [color={rgb, 255:red, 38; green, 10; blue, 248 }  ,draw opacity=1 ][line width=2.25]    (342.8,173.58) -- (343.14,221.09) ;
%Straight Lines [id:da0036191987211990906] 
\draw    (548.57,258.97) -- (503.14,221.09) ;
%Straight Lines [id:da2868978220537195] 
\draw    (454.62,258.97) -- (503.14,221.09) ;
%Straight Lines [id:da12405971734067722] 
\draw [color={rgb, 255:red, 38; green, 10; blue, 248 }  ,draw opacity=1 ][line width=2.25]    (503,173.58) -- (503,221.09) ;
%Straight Lines [id:da6965403430515273] 
\draw [color={rgb, 255:red, 208; green, 2; blue, 27 }  ,draw opacity=1 ]   (104,190) -- (157,190) ;
%Straight Lines [id:da15362018847718106] 
\draw [color={rgb, 255:red, 208; green, 2; blue, 27 }  ,draw opacity=1 ]   (290,231) -- (329,231) ;
%Straight Lines [id:da3097002330670895] 
\draw [color={rgb, 255:red, 208; green, 2; blue, 27 }  ,draw opacity=1 ]   (496,241) -- (525.85,241) ;

% Text Node
\draw (485,50) node [anchor=north west][inner sep=0.75pt]    {$=\ 0$};
% Text Node
\draw (485,90) node [anchor=north west][inner sep=0.75pt]    {$\textnormal{CP}$};
% Text Node
\draw (140,50) node [anchor=north west][inner sep=0.75pt]    {$=$};
% Text Node
\draw (135,90) node [anchor=north west][inner sep=0.75pt]    {$\textnormal{RI}$};
% Text Node
\draw (230,190) node [anchor=north west][inner sep=0.75pt]    {$=$};
% Text Node
\draw (230,223.09) node [anchor=north west][inner sep=0.75pt]    {$\textnormal{VI}$};
% Text Node
\draw (411.78,190) node [anchor=north west][inner sep=0.75pt]  [font=\normalsize]  {$+$};

\end{tikzpicture}
\caption{Relations VI, CP and RI in $\arrows$.}\label{fig:VICPRI}
\end{figure}

All of these relations are degree-preserving and hence the quotient $\J(s)$ remains graded. The STU and TC relations in fact imply the AS and IHX relations; the latter are stated for convenience. We remark that the arrow diagram relations described arise from the relations of $\wf$ via the associated graded construction. For a brief explanation of this point, see \cite[Definition 3.8 and Proposition 3.9]{BND:WKO2}.

The collection $\J=\{\J(s)\}_{s\in\calS}$ assembles naturally into a linear circuit algebra where the operad of wiring diagrams acts on $\J$ by wiring together the skeleta and preserving the arrow graphs.

\medskip 
The circuit algebra $\J$ admits a -- not obvious -- finite presentation. Note that via iterated applications of the STU relations, all trivalent vertices in Jacobi diagrams can be eliminated. In other words, all Jacobi diagrams are equivalent to an ``arrows only'' form: a linear combination of diagrams in which only oriented arrows are attached to the foam skeleton. Such diagrams are called \emph{arrow diagrams}, as below. It turns out that the arrow diagram equivalent of a $w$-Jacobi diagram is unique up to the arrow diagram relations 4T, TC, VI, CP and RI, and this leads to an isomorphic finite presentation of $\J$ as a circuit algebra, denoted $\arrows$, as below. \cite[Theorem 4.6, based on Theorem 3.13]{BND:WKO2}

\begin{definition}\label{def: arrow diagram circuit alg}\cite[Definition 4.8]{BND:WKO2}
The circuit algebra of \emph{arrow diagrams} is the complete circuit algebra in $\mathbb{Q}$-vector spaces with presentation\footnote{One technically needs to include caps and vertices in all possible orientations as circuit algebra generators. However, these can all be obtained from the upward-oriented cap and vertex using orientation switch operations.}
\[\arrows := \mathsf{CA}\left< \rightarrowdiagram, \upcap, \vertex \mid \text{4T, TC, VI, CP, RI}\right>.\] In addition, $\arrows$ is equipped with the associated graded auxiliary operations:  orientation switches $S_e, A_e$, unzips $u_e$, and strand deletion $d_e$ described in Definition~\ref{def:operations on arrows}.

The relations are shown in Figures~\ref{fig:VICPRI}~and~\ref{fig:TC4T}. There is a well-defined skeleton map $\pi:\arrows\rightarrow\calS$ which forgets arrows, so $\arrows$ is indexed by skeleta: $\arrows=\coprod_{s\in\calS} \arrows(s)$, where $\arrows(s):=\pi^{-1}(s)$. For any given $s\in S$, $\arrows(s)$ is a graded complete vector space, where the degree is given by the number of arrows.
\end{definition}

\begin{figure}[h]
\begin{tikzpicture}[x=0.75pt,y=0.75pt,yscale=-.7,xscale=.7]
%uncomment if require: \path (0,300); %set diagram left start at 0, and has height of 300

%Straight Lines [id:da8386036236033305] 
\draw   [<-][line width=1.5]   (61,50) -- (61,150) ;
%Straight Lines [id:da6016466534383866] 
\draw [<-][line width=1.5]     (101,50) -- (101,150) ;
%Straight Lines [id:da39885786997283645] 
\draw  [<-][line width=1.5]    (140,50) -- (140,150) ;
%Straight Lines [id:da986517145227535] 
\draw  [<-][line width=1.5]    (182,50) -- (182,150) ;
%Straight Lines [id:da7146356135022398] 
\draw  [<-][line width=1.5]    (222,50) -- (222,150) ;
%Straight Lines [id:da9283119029637661] 
\draw   [<-][line width=1.5] (261,50) -- (261,150) ;
%Straight Lines [id:da11536753905395125] 
\draw   [<-][line width=1.5] (334,50) -- (334,150) ;
%Straight Lines [id:da4540980204074294] 
\draw  [<-][line width=1.5]  (374,50) -- (374,150) ;
%Straight Lines [id:da27979233026998473] 
\draw  [<-][line width=1.5]  (413,50) -- (413,150) ;
%Straight Lines [id:da13405753011145682] 
\draw  [<-][line width=1.5]  (455,50) -- (455,150) ;
%Straight Lines [id:da46775594137507137] 
\draw  [<-][line width=1.5]  (495,50) -- (495,150) ;
%Straight Lines [id:da39450205722492204] 
\draw  [<-][line width=1.5]  (534,50) -- (534,150) ;
%Straight Lines [id:da43230871490987965] 
\draw [<-][color={rgb, 255:red, 208; green, 2; blue, 27 }  ,draw opacity=1 ]   (61,80) -- (100,80) ;
%Straight Lines [id:da1683692438420119] 
\draw [->][color={rgb, 255:red, 208; green, 2; blue, 27 }  ,draw opacity=1 ]   (63,110) -- (138,110) ;
%Straight Lines [id:da8850314936465196] 
\draw [->][color={rgb, 255:red, 208; green, 2; blue, 27 }  ,draw opacity=1 ]   (183,110) -- (258,110) ;
%Straight Lines [id:da24204668475278712] 
\draw [->][color={rgb, 255:red, 208; green, 2; blue, 27 }  ,draw opacity=1 ]   (222,80) -- (261,80) ;
%Straight Lines [id:da34361441724886066] 
\draw [->][color={rgb, 255:red, 208; green, 2; blue, 27 }  ,draw opacity=1 ]   (334,80) -- (409,80) ;
%Straight Lines [id:da7545702522907158] 
\draw [<-][color={rgb, 255:red, 208; green, 2; blue, 27 }  ,draw opacity=1 ]   (335,110) -- (374,110) ;
%Straight Lines [id:da4668782527457229] 
\draw [->] [color={rgb, 255:red, 208; green, 2; blue, 27 }  ,draw opacity=1 ]   (457,80) -- (532,80) ;
%Straight Lines [id:da4569708455203312] 
\draw [->][color={rgb, 255:red, 208; green, 2; blue, 27 }  ,draw opacity=1 ]   (495,110) -- (534,110) ;

\draw (288,76.4) node [anchor=north west][inner sep=0.75pt]    {$=$};
% Text Node
\draw (288,105.4) node [anchor=north west][inner sep=0.75pt]    {$4T$};
% Text Node
\draw (154,81) node [anchor=north west][inner sep=0.75pt]    {$+$};
% Text Node
\draw (430,81) node [anchor=north west][inner sep=0.75pt]    {$+$};

\end{tikzpicture}
\caption{Relation $4$T in $\arrows$.}\label{fig:TC4T}
\end{figure}

The following is Theorem~4.5 of \cite{BND:WKO2}. In the theory of finite type invariants, this is called a ``bracket rise'' theorem, and the proof uses standard Jacobi diagram techniques, as in \cite[Chapter 5]{chmutov_duzhin_mostovoy_2012}, for example. We give a proof sketch below to illustrate the main points.

\begin{thm}\label{equivalence J and A}
There is a canonical isomorphism of circuit algebras $\arrows\cong \J$.
\end{thm}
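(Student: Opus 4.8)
The plan is to construct mutually inverse circuit algebra homomorphisms between $\arrows$ and $\J$ and to verify that they descend to the respective quotients, following the classical bracket-rise strategy (as in \cite[Chapter 5]{chmutov_duzhin_mostovoy_2012}) adapted to the oriented, welded setting. An arrow diagram is precisely a $w$-Jacobi diagram with no trivalent vertices, so there is an evident degreewise-linear map $\iota\colon\arrows\to\J$ sending an arrow diagram to itself. The relations TC, VI, CP and RI are imposed identically in both circuit algebras, so the only thing to check for $\iota$ to be well defined is that the $4$T relation holds in $\J$. First I would verify this by applying the STU relation to the configuration underlying $4$T: resolving a single trivalent vertex whose stem meets the skeleton, in the two adjacent skeleton positions, produces exactly the four arrow-only terms of Figure~\ref{fig:TC4T} with the correct signs. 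Compatibility of $\iota$ with the wiring diagram operad is automatic, since wiring diagrams act by gluing skeleta and leave the arrow and Jacobi graphs untouched, while STU is a purely local move.

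Next I would construct the candidate inverse by \emph{resolving} trivalent vertices. Given a $w$-Jacobi diagram $D$ with at least one trivalent vertex, each connected component of its internal graph is attached to the skeleton, so following a shortest path from the skeleton produces a trivalent vertex one of whose edges terminates at a univalent vertex on $s\in\calS$. Applying STU there rewrites $D$ as a signed sum of two diagrams, each with one fewer trivalent vertex; iterating lands in the span of arrow diagrams. Since STU preserves the degree (half the total number of vertices), this procedure is degree preserving, and in particular it exhibits $\iota$ as surjective.

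The crux is to show that this resolution descends to a well-defined map $\psi\colon\J\to\arrows$, independent of all choices, and that it is inverse to $\iota$; this is where I expect the real work to lie. Concretely I would establish two local statements. First, two STU moves supported on disjoint parts of the skeleton commute exactly, whereas two moves competing for the same skeleton interval differ precisely by a $4$T relation; this shows the arrow-only output is independent of the order of resolution modulo $4$T, TC, VI, CP and RI. Once order-independence is in hand, the resolution automatically respects STU, and since the remaining defining relations AS and IHX of $\J$ are themselves consequences of STU, they too map to zero after full resolution. Granting these, $\psi$ is well defined, and both $\psi\circ\iota=\id$ and $\iota\circ\psi=\id$ hold by construction, so $\iota$ is the desired isomorphism. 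As with $\iota$, compatibility of $\psi$ with the operad action is immediate from the locality of STU, so the isomorphism is one of circuit algebras, and it is canonical because neither map involves any choices once the relations are imposed. The main obstacle throughout is the order-independence of resolution, which is the technical heart of the bracket-rise theorem; in the welded setting one must additionally track arrow orientations and confirm that the TC and VI relations are respected at each resolution step.
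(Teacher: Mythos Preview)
Your proposal is correct and follows essentially the same approach as the paper's proof sketch: both construct the natural inclusion $\iota\colon\arrows\to\J$, verify that $4$T follows from STU, build the inverse by iterated STU-resolution of trivalent vertices, identify the well-definedness of this resolution as the technical heart (a case analysis in the paper, your order-independence argument), and obtain a circuit algebra isomorphism from the locality of the operad action on skeleta. The only minor refinement is that in the oriented setting the paper notes AS and IHX follow from STU \emph{together with} TC rather than STU alone, but you already flag TC as something to track at each step.
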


\begin{proof} ({\em Sketch}) Since arrow diagrams are in particular $w$-Jacobi diagrams (without trivalent vertices), there is a natural inclusion
$\iota_s: \arrows(s)\rightarrow \J(s)$.  The image of the $4T$ relation under $\iota_s$ vanishes by the STU relation. The rest of the relations of $\arrows(s)$ are also imposed in $\J(s)$, so $\iota_s$ is well-defined.  The inclusion $\iota_s$ has an inverse  $\phi_s:\J(s)\rightarrow \arrows(s)$ which resolves all of the trivalent vertices of a $w$-Jacobi diagram by repeated applications of the STU relation, as mentioned above; to show that this is well-defined is a case analysis.  

The operad $\mathsf{WD}$ acts on both $\J$ and $\arrows$ by wiring together skeleta and preserving arrows or arrow graphs. Thus, the collection of all isomorphisms $\iota_s: \arrows(s)\rightarrow \J(s)$ assembles into a circuit algebra isomorphism $\iota:\arrows\rightarrow \J$. 
\end{proof}

\begin{remark}
The presentation $\arrows$ has the obvious advantages of a finite presentation; while the definition of $\J$ has a more convenient set of relations, especially in the context of Lie algebras, which plays a prominent role later in this paper. Given the canonical isomorphism $\arrows \cong \J$, from now on we do not distinguish $w$-Jacobi and arrow diagrams, refer to both as ``arrow diagrams'', and use $\arrows$ to denote either space. In general, we work with Jacobi diagrams, but we use the finite presentation of $\arrows$ in a fundamental way in Sections~\ref{sec: expansions} and \ref{sec: KRV}. 
\end{remark} 

The circuit algebra $\arrows$ is equipped with auxiliary operations: $S_e, A_e, u_e$ and $d_e$, which are the associated graded operations of the $w$-foam operations by the same name described in Definition~\ref{def:w-foam operations}. 

\begin{definition}\label{def:operations on arrows} 
Let $e$ denote a strand (edge) in the skeleton $s\in\calS$. The auxiliary operations on $\arrows$ are defined as follows:
\begin{enumerate}
	\item The \textbf{orientation switch} $S_e: \arrows(s) \rightarrow \arrows(S_e(s))$ reverses the direction of $e$ and multiplies each arrow diagram by $$(-1)^{\#(\text{arrow heads and tails on }e)}.$$
	\item The \textbf{adjoint} map $A_e: \arrows(s) \rightarrow \arrows(A_e(s))$ reverses the direction of $e$ and multiplies each arrow diagram by
	$$(-1)^{\#(\text{arrow heads on } e)}.$$
	\item The \textbf{unzip} and \textbf{disc unzip} maps, both denoted $u_e: \arrows(s) \rightarrow \arrows(u_e(s))$, unzip the strand $e$ and map each arrow ending on $e$ to a sum of two arrows, as shown in Figure~\ref{fig: unzip for arrows}.
	\item The \textbf{deletion} map $d_e: \arrows(s) \rightarrow \arrows(d_e(s))$ deletes the long strand $e$ (which is not incident to any skeleton vertices), and an arrow diagram with any arrow head or tail on $e$ is sent to 0.
\end{enumerate}
\end{definition}

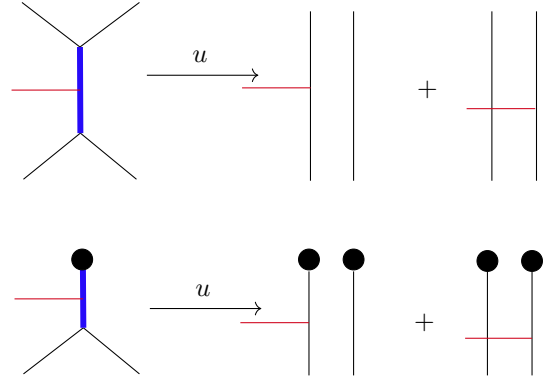
\begin{figure}[h]
\begin{tikzpicture}[x=0.75pt,y=0.75pt,yscale=-.75,xscale=.75]
%uncomment if require: \path (0,300); %set diagram left start at 0, and has height of 300

%Straight Lines [id:da6853618661021692] 
\draw    (7,19) -- (46,49) ;
%Straight Lines [id:da23897206846723007] 
\draw    (86,19) -- (46,49) ;
%Straight Lines [id:da21105007111430774] 
\draw    (84,138) -- (46.28,106.95) ;
%Straight Lines [id:da06938622387803961] 
\draw    (8,138) -- (46.28,106.95) ;
%Straight Lines [id:da6940378991800786] 
\draw [color={rgb, 255:red, 38; green, 10; blue, 248 }  ,draw opacity=1 ][line width=2.25]    (46,49) -- (46.28,106.95) ;
%Straight Lines [id:da0812450632445294] 
\draw  [->]  (91,68) -- (166,68) ;
%Straight Lines [id:da3781337720788289] 
\draw [color={rgb, 255:red, 0; green, 0; blue, 0 }  ,draw opacity=1 ]    (201,25) -- (201,139) ;
%Straight Lines [id:da017280089698071377] 
\draw    (86,269) -- (48.28,237.95) ;
%Straight Lines [id:da695442657159342] 
\draw    (8,269) -- (48.28,237.95) ;
%Straight Lines [id:da7186117179560048] 
\draw [color={rgb, 255:red, 38; green, 10; blue, 248 }  ,draw opacity=1 ][line width=2.25]    (48,199) -- (48.28,237.95) ;
%Shape: Circle [id:dp8596492744647435] 
\draw  [fill={rgb, 255:red, 0; green, 0; blue, 0 }  ,fill opacity=1 ] (40.5,192) .. controls (40.5,188.13) and (43.63,185) .. (47.5,185) .. controls (51.37,185) and (54.5,188.13) .. (54.5,192) .. controls (54.5,195.87) and (51.37,199) .. (47.5,199) .. controls (43.63,199) and (40.5,195.87) .. (40.5,192) -- cycle ;
%Straight Lines [id:da27643729004369755] 
\draw  [->]  (93,225) -- (168,225) ;
%Straight Lines [id:da2199059504749139] 
\draw    (200,200) -- (200,270) ;
%Shape: Circle [id:dp5787701191577097] 
\draw  [fill={rgb, 255:red, 0; green, 0; blue, 0 }  ,fill opacity=1 ] (193,192) .. controls (193,188.13) and (196.13,185) .. (200,185) .. controls (203.87,185) and (207,188.13) .. (207,192) .. controls (207,195.87) and (203.87,199) .. (200,199) .. controls (196.13,199) and (193,195.87) .. (193,192) -- cycle ;
%Straight Lines [id:da8940442447079364] 
\draw    (230,200) -- (230,270) ;
%Shape: Circle [id:dp2561814070781032] 
\draw  [fill={rgb, 255:red, 0; green, 0; blue, 0 }  ,fill opacity=1 ] (223,192) .. controls (223,188.13) and (226.13,185) .. (230,185) .. controls (233.87,185) and (237,188.13) .. (237,192) .. controls (237,195.87) and (233.87,199) .. (230,199) .. controls (226.13,199) and (223,195.87) .. (223,192) -- cycle ;
%Straight Lines [id:da6800253652117485] 
\draw [color={rgb, 255:red, 208; green, 2; blue, 27 }  ,draw opacity=1 ]   (0,78) -- (46.14,77.97) ;
%Straight Lines [id:da6956907829315813] 
\draw [color={rgb, 255:red, 208; green, 2; blue, 27 }  ,draw opacity=1 ]   (155,76.5) -- (201.14,76.47) ;
%Straight Lines [id:da34136225077844085] 
\draw [color={rgb, 255:red, 208; green, 2; blue, 27 }  ,draw opacity=1 ]   (2,218.5) -- (48.14,218.47) ;
%Straight Lines [id:da4861863423767101] 
\draw [color={rgb, 255:red, 208; green, 2; blue, 27 }  ,draw opacity=1 ]   (153.86,234.53) -- (200,234.5) ;
%Straight Lines [id:da312547815232872] 
\draw [color={rgb, 255:red, 0; green, 0; blue, 0 }  ,draw opacity=1 ]  (230,139) -- (230,25) ;
%Straight Lines [id:da13436004962540204] 
\draw [color={rgb, 255:red, 0; green, 0; blue, 0 }  ,draw opacity=1 ]  (323,25) -- (323,139) ;
%Straight Lines [id:da47735502933395035] 
\draw [color={rgb, 255:red, 208; green, 2; blue, 27 }  ,draw opacity=1 ]   (306,90.5) -- (352.14,90.47) ;
%Straight Lines [id:da7140335955706409] 
\draw [color={rgb, 255:red, 0; green, 0; blue, 0 }  ,draw opacity=1 ]    (353,139) -- (353,25) ;
%Straight Lines [id:da5380560923308235] 
\draw    (320,200) -- (320,270) ;
%Shape: Circle [id:dp3690845250561461] 
\draw  [fill={rgb, 255:red, 0; green, 0; blue, 0 }  ,fill opacity=1 ] (313,193) .. controls (313,189.13) and (316.13,186) .. (320,186) .. controls (323.87,186) and (327,189.13) .. (327,193) .. controls (327,196.87) and (323.87,200) .. (320,200) .. controls (316.13,200) and (313,196.87) .. (313,193) -- cycle ;
%Straight Lines [id:da6371636934911362] 
\draw    (350,200) -- (350,270) ;
%Shape: Circle [id:dp3445339972720798] 
\draw  [fill={rgb, 255:red, 0; green, 0; blue, 0 }  ,fill opacity=1 ] (343,193) .. controls (343,189.13) and (346.13,186) .. (350,186) .. controls (353.87,186) and (357,189.13) .. (357,193) .. controls (357,196.87) and (353.87,200) .. (350,200) .. controls (346.13,200) and (343,196.87) .. (343,193) -- cycle ;
%Straight Lines [id:da5387431229823134] 
\draw [color={rgb, 255:red, 208; green, 2; blue, 27 }  ,draw opacity=1 ]   (305,245) -- (350,245) ;

% Text Node
\draw (120,50) node [anchor=north west][inner sep=0.75pt]    {$u$};
% Text Node
\draw (122,207) node [anchor=north west][inner sep=0.75pt]    {$u$};
% Text Node
\draw (271,70) node [anchor=north west][inner sep=0.75pt]    {$+$};
% Text Node
\draw (269,227) node [anchor=north west][inner sep=0.75pt]    {$+$};

\end{tikzpicture}
\caption{The unzip operations $u_e$ for arrow diagrams. A single arrow ending on $e$ is mapped to a sum of two arrows. If a diagram has $k$ arrows ending on $e$, it is mapped to a sum of $2^k$ terms.}\label{fig: unzip for arrows}
\end{figure}

%\begin{figure}[h]
%\includegraphics[height=3.5cm]{Drawing 63.pdf}
%\caption{An application of STU.}\label{fig:Example of STU}
%\end{figure}
%\zdnote{weird "63.pdf" appearing inside figure 13}

\begin{remark} 
The operations are well-defined, that is, they commute with all the relations on $\arrows$. For example, the commutative diagram in Figure~\ref{VI and unzip} shows how unzip and the ``vertex invariance'' operation (VI) are compatible. The reader may verify as an exercise that all of the operations are compatible with the relations on $\arrows$. 
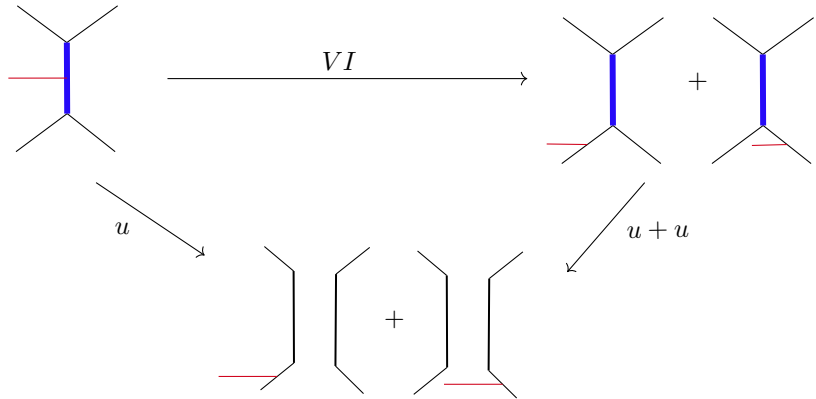
\begin{figure}[h]

\begin{tikzpicture}[x=0.75pt,y=0.75pt,yscale=-.75,xscale=.75]
%uncomment if require: \path (0,300); %set diagram left start at 0, and has height of 300

%Straight Lines [id:da27981946022419524] 
\draw    (56.98,36) -- (90.32,60.75) ;
%Straight Lines [id:da4476940330430994] 
\draw    (124.51,36) -- (90.32,60.75) ;
%Straight Lines [id:da7441636974685182] 
\draw    (122.8,134.18) -- (90.56,108.56) ;
%Straight Lines [id:da9855389820036049] 
\draw    (56.13,134.18) -- (90.56,108.56) ;
%Straight Lines [id:da7129499857802113] 
\draw [color={rgb, 255:red, 38; green, 10; blue, 248 }  ,draw opacity=1 ][line width=2.25]    (90.32,60.75) -- (90.56,108.56) ;
%Straight Lines [id:da3312583320921565] 
\draw    (223.15,198) -- (242.81,214.5) ;
%Straight Lines [id:da8481053285949967] 
\draw    (220.59,294.52) -- (243.05,276.33) ;
%Straight Lines [id:da1134358126915993] 
\draw [color={rgb, 255:red, 0; green, 0; blue, 0 }  ,draw opacity=1 ][line width=0.75]    (242.81,214.5) -- (243.05,276.33) ;
%Straight Lines [id:da5999620396181757] 
\draw    (289.82,297) -- (270.94,278.39) ;
%Straight Lines [id:da5175590519279463] 
\draw    (294,198.59) -- (271.35,216.56) ;
%Straight Lines [id:da8527049755849552] 
\draw [color={rgb, 255:red, 0; green, 0; blue, 0 }  ,draw opacity=1 ][line width=0.75]    (270.94,278.39) -- (271.35,216.56) ;
%Straight Lines [id:da7474814511056236] 
\draw [color={rgb, 255:red, 208; green, 2; blue, 27 }  ,draw opacity=1 ]   (51,84.68) -- (90.44,84.65) ;
%Straight Lines [id:da01621560808899969] 
\draw [color={rgb, 255:red, 208; green, 2; blue, 27 }  ,draw opacity=1 ]   (192.38,285.45) -- (231.82,285.43) ;
%Straight Lines [id:da48002908273661293] 
\draw    (423.98,44) -- (457.32,68.75) ;
%Straight Lines [id:da9108669789550179] 
\draw    (491.51,44) -- (457.32,68.75) ;
%Straight Lines [id:da5006777140690162] 
\draw    (489.8,142.18) -- (457.56,116.56) ;
%Straight Lines [id:da5591223119416302] 
\draw    (423.13,142.18) -- (457.56,116.56) ;
%Straight Lines [id:da39310794122223025] 
\draw [color={rgb, 255:red, 38; green, 10; blue, 248 }  ,draw opacity=1 ][line width=2.25]    (457.32,68.75) -- (457.56,116.56) ;
%Straight Lines [id:da6583799435974829] 
\draw [color={rgb, 255:red, 208; green, 2; blue, 27 }  ,draw opacity=1 ]   (413,129) -- (440.34,129.37) ;
%Straight Lines [id:da864520844089234] 
\draw    (524.98,44) -- (558.32,68.75) ;
%Straight Lines [id:da8444407201953275] 
\draw    (592.51,44) -- (558.32,68.75) ;
%Straight Lines [id:da9307761095609375] 
\draw    (590.8,142.18) -- (558.56,116.56) ;
%Straight Lines [id:da1382184126564472] 
\draw    (524.13,142.18) -- (558.56,116.56) ;
%Straight Lines [id:da5724261252616845] 
\draw [color={rgb, 255:red, 38; green, 10; blue, 248 }  ,draw opacity=1 ][line width=2.25]    (558.32,68.75) -- (558.56,116.56) ;
%Straight Lines [id:da8426156520676822] 
\draw [color={rgb, 255:red, 208; green, 2; blue, 27 }  ,draw opacity=1 ]   (551,130) -- (574.68,129.37) ;
%Straight Lines [id:da5648456821401131] 
\draw [->]   (158,85) -- (400,85) ;
%Straight Lines [id:da8727124872645926] 
\draw    (326.15,201) -- (345.81,217.5) ;
%Straight Lines [id:da11422652151663115] 
\draw    (323.59,297.52) -- (346.05,279.33) ;
%Straight Lines [id:da042571042057548025] 
\draw [color={rgb, 255:red, 0; green, 0; blue, 0 }  ,draw opacity=1 ][line width=0.75]    (345.81,217.5) -- (346.05,279.33) ;
%Straight Lines [id:da004484425760802768] 
\draw    (392.82,300) -- (373.94,281.39) ;
%Straight Lines [id:da4744834242004623] 
\draw    (397,201.59) -- (374.35,219.56) ;
%Straight Lines [id:da876669716204644] 
\draw [color={rgb, 255:red, 0; green, 0; blue, 0 }  ,draw opacity=1 ][line width=0.75]    (373.94,281.39) -- (374.35,219.56) ;
%Straight Lines [id:da6607141498693101] 
\draw [color={rgb, 255:red, 208; green, 2; blue, 27 }  ,draw opacity=1 ]   (343.94,290.72) -- (383.38,290.7) ;
%Straight Lines [id:da31997702358502256] 
\draw  [->]  (478.91,155) -- (426.81,215.16) ;
%Straight Lines [id:da16867521327413626] 
\draw  [->]  (110,155) -- (183,204) ;

% Text Node
\draw (506,79.4) node [anchor=north west][inner sep=0.75pt]    {$+$};
% Text Node
\draw (302,239.4) node [anchor=north west][inner sep=0.75pt]    {$+$};
% Text Node
\draw (260,65.4) node [anchor=north west][inner sep=0.75pt]    {$VI$};
% Text Node
\draw (465,180) node [anchor=north west][inner sep=0.75pt]  [rotate=-359.05]  {$u+u$};
% Text Node
\draw (121,180) node [anchor=north west][inner sep=0.75pt]    {$u$};

\end{tikzpicture}
\caption{The compatibility of the unzip operation with the VI relation.}\label{VI and unzip}
\end{figure}
\end{remark} 

\begin{remark} 
We note that the rotation invariance $RI$ relation of $\arrows$ has an equivalent ``wheel'' formulation for Jacobi diagrams, as shown in Figure~\ref{fig:wheelRI}. 
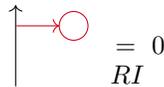
\begin{figure}[h] 

\begin{tikzpicture}[x=0.75pt,y=0.75pt,yscale=-.5,xscale=.5]
%uncomment if require: \path (0,300); %set diagram left start at 0, and has height of 300

%Straight Lines [id:da20092301306916927] 
\draw [<-]   (127,121) -- (127,203) ;
%Straight Lines [id:da26223202304802995] 
\draw [->][color={rgb, 255:red, 208; green, 2; blue, 27 }  ,draw opacity=1 ]   (128,142) -- (171,142) ;
%Shape: Circle [id:dp6937769361929198] 
\draw  [color={rgb, 255:red, 208; green, 2; blue, 27 }  ,draw opacity=1 ] (171,142) .. controls (171,133.99) and (177.49,127.5) .. (185.5,127.5) .. controls (193.51,127.5) and (200,133.99) .. (200,142) .. controls (200,150.01) and (193.51,156.5) .. (185.5,156.5) .. controls (177.49,156.5) and (171,150.01) .. (171,142) -- cycle ;

% Text Node
\draw (225,150) node [anchor=north west][inner sep=0.75pt]    {$=\ 0$};
% Text Node
\draw (220,180) node [anchor=north west][inner sep=0.75pt]    {$RI$};
\end{tikzpicture}
\caption{The RI relation, reformulated.}\label{fig:wheelRI}
\end{figure}
\end{remark}

Recall that the complete associated graded space of $\wf$ (Definition~\ref{def:gr}) was also denoted $\arrows$, like the circuit algebra of arrow diagrams. This is no accident, as the two spaces are canonically isomorphic. This follows from the existence of a homomorphic expansion for $\wf$ combined with \cite{BND:WKO2} Propositions 3.9 and 2.7: 

\begin{theorem} \cite{BND:WKO2}
The circuit algebra $\arrows$ of arrow diagrams is canonically isomorphic to the complete associated graded circuit algebra of $\wf$ and the diagrammatic operations $S_e, A_e, u_e$ and $d_e$ are the associated graded maps of their topological counterparts.
\end{theorem}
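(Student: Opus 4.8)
The claim has two halves: an isomorphism $\gr(\hatwf) \cong \arrows$ and an identification of the auxiliary operations with their associated graded analogues. The strategy is to reduce the first half to the work already established in the excerpt, and to treat the second half as a compatibility check that is largely formal once the identification of spaces is in place.

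For the first half, I would proceed as follows. By Proposition~\ref{prop:AssocGradedOfCompletion}, there is already a canonical isomorphism of circuit algebras $\gr(\hatwf) \cong \A$, where $\A = \coprod_{s} \A(s)$ is the complete associated graded of $\wf$ built from powers of the augmentation ideal (Definition~\ref{def:gr}). So the content to be proved is that this $\A$ agrees with the circuit algebra of arrow diagrams from Definition~\ref{def: arrow diagram circuit alg}. The plan is to invoke the cited results of \cite{BND:WKO2} (Propositions~2.7 and~3.9) which, via the bracket-rise machinery summarised in Theorem~\ref{equivalence J and A}, identify the associated graded space $\A(s)$ at each skeleton $s$ with the space $\J(s)$ of $w$-Jacobi diagrams modulo STU, AS, IHX, TC, VI, CP and RI, and hence with $\arrows(s)$ via the canonical isomorphism $\arrows \cong \J$. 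The key input is that the arrow diagram relations are precisely the relations induced on the associated graded by the $\wf$-relations $R1^s, R2, R3, R4, OC, CP$, so that $\gr$ of the presented circuit algebra $\wf$ is the presented circuit algebra $\arrows$. First I would note that a homomorphic expansion $Z: \wf \to \arrows$ exists by \cite[Theorem 4.9]{BND:WKO2}, which guarantees that the natural surjection $\arrows \twoheadrightarrow \A$ coming from ``arrow diagrams are in particular associated graded elements'' is in fact an isomorphism (the expansion splits the filtration degree by degree).

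For the second half, I would verify at each filtration level that the topological operations $S_e, A_e, u_e, d_e$ on $\wf$ descend to the associated graded, and that their induced maps coincide with the combinatorial formulas in Definition~\ref{def:operations on arrows}. The descent to $\gr$ is already available: the lemma following Lemma~\ref{lem:IandOps} shows each auxiliary operation preserves $\calI$, hence preserves the filtration and acts on associated graded pieces. The identification of formulas is a degree-by-degree check: an orientation switch or adjoint acts on a crossing generator by introducing signs which, passing to the arrow diagram representing the degree-$n$ part, reproduce the sign $(-1)^{\#\text{heads/tails on } e}$; unzip doubles the framed strand, and the leading-order behaviour of doubling a single arrow ending is exactly the sum-of-two-arrows rule depicted in Figure~\ref{fig: unzip for arrows}; deletion of a long strand kills any diagram with an arrow ending on it at leading order. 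These are precisely the formulas already recorded, so the check is to confirm that no higher-order corrections survive in the associated graded.

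The main obstacle is the unzip operation. Unlike the circuit-algebra operations parametrised by wiring diagrams, unzip is an \emph{auxiliary} operation external to the operadic structure, and doubling a tube interacts non-trivially with crossings (each crossing on the doubled strand produces two crossings). The hard part will be checking that, after passing to the associated graded, the genuinely quadratic-and-higher contributions of a single unzip collapse to the clean linear ``sum of two arrows'' rule, i.e.\ that the filtration degree is additive under unzip so that the leading term is as claimed. I expect this to follow from the compatibility of unzip with the augmentation-ideal filtration established in Section~\ref{sec:completion}, together with the bracket-rise identification, but it is the step requiring genuine care rather than formal transport of structure.
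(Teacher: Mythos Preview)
Your proposal is correct and follows essentially the same approach the paper indicates. Note that the paper does not give a proof of this theorem at all: it is stated as a cited result from \cite{BND:WKO2}, with the one-line justification ``This follows from the existence of a homomorphic expansion for $\wf$ combined with \cite{BND:WKO2} Propositions 3.9 and 2.7.'' Your plan is a faithful unpacking of exactly that sentence---using the existence of a homomorphic expansion to split the filtration and guarantee that the natural surjection from arrow diagrams to the associated graded is an isomorphism, together with the cited propositions to match up the relations and auxiliary operations.

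One minor remark: your invocation of Proposition~\ref{prop:AssocGradedOfCompletion} is a slight detour, since the theorem as stated concerns the associated graded of $\wf$ (which is $\A$ by Definition~\ref{def:gr} directly), not of $\hatwf$; the proposition is only needed if you want to phrase things in terms of the completion. Your discussion of unzip as the ``hard part'' is reasonable caution, but in the associated graded the check is indeed the degree-one computation you describe, and this is what \cite{BND:WKO2} carries out.
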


%%%%%%%%%%%%%%%%%%%%%%%%%%%%%%%%%%%%%%%%%%%%%%%%%%%%%%%%%%%%%%%%%%%%%%%%%%%%%%%%%%%%%%%%%%%%%%%%%%%%%

\subsection{Algebraic structures in $\arrows$}\label{sec:coproduct and product} 
At each $w$-foam skeleton $s\in\calS$, the $\mathbb{Q}$-vector space $\arrows(s)$ can be equipped with a counital, coassociative coproduct:
\begin{definition}\label{def: coprod}
For each $s\in\calS$ the coproduct
 \[\begin{tikzcd}\Delta: \arrows(s)\arrow[r]& \arrows(s)\otimes\arrows(s)\end{tikzcd}\] 
 sends an arrow diagram $D\in\arrows(s)$ to the sum of all ways of distributing the connected components of the arrow graph of $D$ -- that is, the diagram minus the skeleton -- amongst two copies of the skeleton. 
 \end{definition} 

An example of the coproduct is given in Figure~\ref{fig:coproduct}. Note that the relative position of the two connected components of the Jacobi diagram is preserved. 
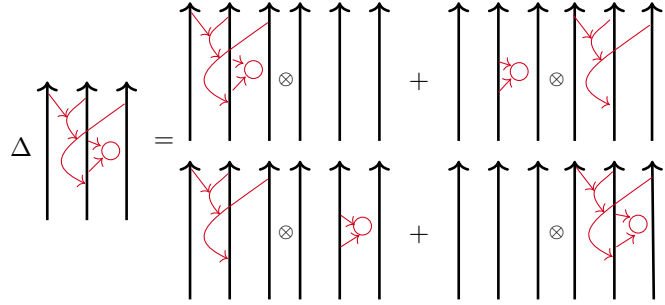
\begin{figure}[h!]

\begin{tikzpicture}[x=0.75pt,y=0.75pt,yscale=-.5,xscale=.5]
%uncomment if require: \path (0,300); %set diagram left start at 0, and has height of 300

%Straight Lines [id:da9557237807850913] 
\draw [<-][line width=1]   (45,80) -- (45,220) ;
%Straight Lines [id:da7874782447245048] 
\draw[<-][line width=1]   (85,80) -- (85,220) ;
%Straight Lines [id:da4971424650700056] 
\draw[<-][line width=1]   (125,80) -- (125,220) ;
%Straight Lines [id:da5073168533913943] 
\draw[->] [color={rgb, 255:red, 208; green, 2; blue, 27 }  ,draw opacity=1 ]    (46.35,92.41) -- (64.61,116.21) ;
%Curve Lines [id:da3795700816858749] 
\draw [->, color={rgb, 255:red, 208; green, 2; blue, 27 }  ,draw opacity=1 ]    (123,102.58) .. controls (71.87,140.14) and (32.07,159.57) .. (83.33,183.01) ;
%Curve Lines [id:da8294601685555942] 
\draw [->][color={rgb, 255:red, 208; green, 2; blue, 27 }  ,draw opacity=1 ]   (83.4,97.01) .. controls (70.54,110.15) and (53.59,114.33) .. (74.84,138.1) ;
%Shape: Circle [id:dp9508650460437864] 
\draw  [color={rgb, 255:red, 208; green, 2; blue, 27 }  ,draw opacity=1 ] (100,150) .. controls (100,145.03) and (104.03,141) .. (109,141) .. controls (113.97,141) and (118,145.03) .. (118,150) .. controls (118,154.97) and (113.97,159) .. (109,159) .. controls (104.03,159) and (100,154.97) .. (100,150) -- cycle ;
%Straight Lines [id:da1012104988437248] 
\draw[<-] [color={rgb, 255:red, 208; green, 2; blue, 27 }  ,draw opacity=1 ]    (101,157) -- (87,171) ;
%Straight Lines [id:da9714020667575423] 
\draw [->] [color={rgb, 255:red, 208; green, 2; blue, 27 }  ,draw opacity=1 ]    (86,140) -- (99,145) ;
%Straight Lines [id:da9006246610420272] 
\draw[<-][line width=1]    (189,-0) -- (189,140) ;
%Straight Lines [id:da435255725685046] 
\draw[<-][line width=1]   (229,-0) -- (229,140) ;
%Straight Lines [id:da6182967534610662] 
\draw [<-][line width=1]   (269,-0) -- (269,140) ;
%Straight Lines [id:da4994608829593603] 
\draw [->][color={rgb, 255:red, 208; green, 2; blue, 27 }  ,draw opacity=1 ]   (190.35,10.41) -- (208.61,34.21) ;
%Curve Lines [id:da8234064818219444] 
\draw [->, color={rgb, 255:red, 208; green, 2; blue, 27 }  ,draw opacity=1 ]  (267.46,20.58) .. controls (215.87,58.14) and (176.07,77.57) .. (227.33,101.01) ;
%Curve Lines [id:da24869103112006652] 
\draw [->] [color={rgb, 255:red, 208; green, 2; blue, 27 }  ,draw opacity=1 ]   (227.4,15.01) .. controls (214.54,28.15) and (197.59,32.33) .. (218.84,56.1) ;
%Shape: Circle [id:dp3614291781088539] 
\draw  [->][color={rgb, 255:red, 208; green, 2; blue, 27 }  ,draw opacity=1 ]  (244,68) .. controls (244,63.03) and (248.03,59) .. (253,59) .. controls (257.97,59) and (262,63.03) .. (262,68) .. controls (262,72.97) and (257.97,77) .. (253,77) .. controls (248.03,77) and (244,72.97) .. (244,68) -- cycle ;
%Straight Lines [id:da6984395766057103] 
\draw[<-] [color={rgb, 255:red, 208; green, 2; blue, 27 }  ,draw opacity=1 ]   (245,75) -- (232,89) ;
%Straight Lines [id:da73029355180645] 
\draw [->] [color={rgb, 255:red, 208; green, 2; blue, 27 }  ,draw opacity=1 ]   (232,58) -- (243,62) ;
%Straight Lines [id:da4171928558390021] 
\draw [<-][line width=1]   (460,0) -- (460,140) ;
%Straight Lines [id:da20191104065251397] 
\draw [<-][line width=1]   (500,0) -- (500,140) ;
%Straight Lines [id:da5496804517654297] 
\draw [<-][line width=1]   (540,0) -- (540,140) ;
%Shape: Circle [id:dp9787723593530824] 
\draw  [color={rgb, 255:red, 208; green, 2; blue, 27 }  ,draw opacity=1 ]  (512,70) .. controls (512,65.03) and (516.03,61) .. (521,61) .. controls (525.97,61) and (530,65.03) .. (530,70) .. controls (530,74.97) and (525.97,79) .. (521,79) .. controls (516.03,79) and (512,74.97) .. (512,70) -- cycle ;
%Straight Lines [id:da017632116207606963] 
\draw [<-][color={rgb, 255:red, 208; green, 2; blue, 27 }  ,draw opacity=1 ]   (513,77) -- (500,91) ;
%Straight Lines [id:da6033103933471675] 
\draw[->] [color={rgb, 255:red, 208; green, 2; blue, 27 }  ,draw opacity=1 ]  (500,60) -- (513,64) ;
%Straight Lines [id:da04016662136248206] 
\draw [<-][line width=1]   (189,160) -- (189,300) ;
%Straight Lines [id:da28152841133244566] 
\draw[<-][line width=1]   (229,160) -- (229,300) ;
%Straight Lines [id:da38475193090080206] 
\draw [<-][line width=1]   (269,160) -- (269,300) ;
%Straight Lines [id:da7400059211581058] 
\draw[->] [color={rgb, 255:red, 208; green, 2; blue, 27 }  ,draw opacity=1 ]  (190.35,168.41) -- (208.61,192.21) ;
%Curve Lines [id:da5977599946941032] 
\draw [->, color={rgb, 255:red, 208; green, 2; blue, 27 }  ,draw opacity=1 ]   (267.46,178.58) .. controls (215.87,216.14) and (176.07,235.57) .. (227.33,259.01) ;
%Curve Lines [id:da7384399890392965] 
\draw[->] [color={rgb, 255:red, 208; green, 2; blue, 27 }  ,draw opacity=1 ]  (227.4,173.01) .. controls (214.54,186.15) and (197.59,190.33) .. (218.84,214.1) ;
%Straight Lines [id:da5280255876246993] 
\draw [<-][line width=1]   (460,160) -- (460,300) ;
%Straight Lines [id:da04018901535631114] 
\draw [<-][line width=1]    (500,160) -- (500,300) ;
%Straight Lines [id:da41816841364231916] 
\draw [<-][line width=1]   (540,160) -- (540,300) ;
%Shape: Circle [id:dp07949335709660432] 
%\draw  [color={rgb, 255:red, 208; green, 2; blue, 27 }  ,draw opacity=1 ]  (512,224) .. controls (512,219.03) and (516.03,215) .. (521,215) .. controls (525.97,215) and (530,219.03) .. (530,224) .. controls (530,228.97) and (525.97,233) .. (521,233) .. controls (516.03,233) and (512,228.97) .. (512,224) -- cycle ;
%Straight Lines [id:da02405832268822783] 
%\draw [<-][color={rgb, 255:red, 208; green, 2; blue, 27 }  ,draw opacity=1 ]    (513,231) -- (500,245) ;
%Straight Lines [id:da5883923864821485] 
%\draw[->] [color={rgb, 255:red, 208; green, 2; blue, 27 }  ,draw opacity=1 ]  (500,214) -- (513,218) ;
%Straight Lines [id:da08086569551810929] 
\draw [<-][line width=1]    (300,-0) -- (300,140) ;
%Straight Lines [id:da45908847934529784] 
\draw [<-][line width=1]   (340,-0) -- (340,140) ;
%Straight Lines [id:da7069258493364181] 
\draw [<-][line width=1]   (380,-0) -- (380,140) ;
%Straight Lines [id:da8062188284960268] 
\draw [<-][line width=1]  (300,160) -- (300,300) ;
%Straight Lines [id:da35567828310500804] 
\draw [<-][line width=1]   (340,160) -- (340,300) ;
%Straight Lines [id:da26598272542261037] 
\draw [<-][line width=1]    (380,160) -- (380, 300) ;
%Shape: Circle [id:dp6669120030148026] 
\draw  [color={rgb, 255:red, 208; green, 2; blue, 27 }  ,draw opacity=1 ]  (354,226) .. controls (354,221.03) and (358.03,217) .. (363,217) .. controls (367.97,217) and (372,221.03) .. (372,226) .. controls (372,230.97) and (367.97,235) .. (363,235) .. controls (358.03,235) and (354,230.97) .. (354,226) -- cycle ;
%Straight Lines [id:da7664990017475874] 
\draw [<-][color={rgb, 255:red, 208; green, 2; blue, 27 }  ,draw opacity=1 ]    (360,235) -- (341,247) ;
%Straight Lines [id:da9367180472746639] 
\draw[->] [color={rgb, 255:red, 208; green, 2; blue, 27 }  ,draw opacity=1 ]    (341,215) -- (355,225) ;
%Straight Lines [id:da045178169551390424] 

%Straight Lines [id:da5162008243817431] 
\draw[<-] [line width=1]    (577,0) -- (577,140) ;
%Straight Lines [id:da06505699484844141] 
\draw[<-] [line width=1]    (617,0) -- (617,140) ;
%Straight Lines [id:da0998134077803643] 
\draw[<-] [line width=1]    (655,0) -- (655,140) ;
%Straight Lines [id:da3266473238870764] 
\draw[->] [color={rgb, 255:red, 208; green, 2; blue, 27 }  ,draw opacity=1 ]   (575.6,12.99) -- (593.86,36.79) ;
%Curve Lines [id:da6395226612904947] 
\draw[->] [color={rgb, 255:red, 208; green, 2; blue, 27 }  ,draw opacity=1 ]  (652.71,23.15) .. controls (601.12,60.71) and (561.32,80.14) .. (612.58,103.59) ;
%Curve Lines [id:da6192749061744784] 
\draw[->] [color={rgb, 255:red, 208; green, 2; blue, 27 }  ,draw opacity=1 ]   (612.65,17.59) .. controls (599.79,30.73) and (582.84,34.91) .. (604.09,58.68) ;
%Straight Lines [id:da7756828375119602] 

%Straight Lines [id:da062737052868997] 
\draw[<-] [line width=1]    (577,160) -- (577,300) ;
%Straight Lines [id:da9547372378880141] 
\draw[<-] [line width=1]    (617,160) -- (617,300) ;
%Straight Lines [id:da10745573038801282] 
\draw [<-][line width=1]    (655.22,160) -- (656.75,300) ;
%Straight Lines [id:da9138021497148603] 
\draw[->] [color={rgb, 255:red, 208; green, 2; blue, 27 }  ,draw opacity=1 ]   (578.35,166.41) -- (596.61,190.21) ;
%Curve Lines [id:da008394472436727751] 
\draw[->] [color={rgb, 255:red, 208; green, 2; blue, 27 }  ,draw opacity=1 ]   (655.46,176.58) .. controls (603.87,214.14) and (564.07,233.57) .. (615.33,257.01) ;
%Curve Lines [id:da21876750317735838] 
\draw[->] [color={rgb, 255:red, 208; green, 2; blue, 27 }  ,draw opacity=1 ]   (615.4,171.01) .. controls (602.54,184.15) and (585.59,188.33) .. (606.84,212.1) ;
%Shape: Circle [id:dp7663912764204257] 
\draw  [color={rgb, 255:red, 208; green, 2; blue, 27 }  ,draw opacity=1 ]  (632,224) .. controls (632,219.03) and (636.03,215) .. (641,215) .. controls (645.97,215) and (650,219.03) .. (650,224) .. controls (650,228.97) and (645.97,233) .. (641,233) .. controls (636.03,233) and (632,228.97) .. (632,224) -- cycle ;
%Straight Lines [id:da36997754989298803] 
\draw [<-][color={rgb, 255:red, 208; green, 2; blue, 27 }  ,draw opacity=1 ]    (638,235) -- (619,247) ;
%Straight Lines [id:da6689507000937278] 
\draw [->][color={rgb, 255:red, 208; green, 2; blue, 27 }  ,draw opacity=1 ]   (618,214) -- (633,217) ;

% Text Node
\draw (150,134) node [anchor=north west][inner sep=0.75pt]    {$=$};
% Text Node
\draw (5,134) node [anchor=north west][inner sep=0.75pt]    {$\Delta$};
% Text Node
\draw (407,68) node [anchor=north west][inner sep=0.75pt]    {$+$};
% Text Node
\draw (407,223) node [anchor=north west][inner sep=0.75pt]    {$+$};
% Text Node
\draw (275,68) node [anchor=north west][inner sep=0.75pt]  [font=\scriptsize]   {$\otimes $};
% Text Node
\draw (275,223) node [anchor=north west][inner sep=0.75pt]  [font=\scriptsize]   {$\otimes $};
% Text Node
\draw (548,223) node [anchor=north west][inner sep=0.75pt]   [font=\scriptsize]  {$\otimes $};
% Text Node
\draw (548,68) node [anchor=north west][inner sep=0.75pt]   [font=\scriptsize]  {$\otimes $};
\end{tikzpicture}

\caption{The coproduct of an arrow diagram in $\arrows(\uparrow_3)$. The arrow diagram on the left has two connected components (after the skeleton is removed). Hence, the coproduct is a sum of four terms.}\label{fig:coproduct} 
\end{figure} 
 
It is a straightforward exercise, which we leave to the reader, to verify that the coproduct in Definition~\ref{def: coprod} is counital and coassociative. 

\begin{definition}\label{def:primitives}
An arrow diagram $D\in\arrows(s)$ is said to be \emph{primitive} if \[\Delta(D) = D\otimes 1 + 1 \otimes D.\] 
\end{definition}

A quick inspection shows that primitive arrow diagrams are (linear combinations of) diagrams whose arrow graph (the arrow diagram with the skeleton removed)  is connected. For example, the arrow diagram in Figure~\ref{fig:coproduct} is \emph{not} primitive, but the arrow diagram in  Figure~\ref{fig:wheel example} is primitive.

In \cite[Section 3.2]{BND:WKO2} there is a full characterisation of connected arrow graphs in terms of {\em trees} and {\em wheels}:
\begin{enumerate}
\item {\em trees} are cycle-free arrow graphs oriented towards a single head, and
\item {\em wheels} consist of an oriented cycle with inward-oriented arrow ``spokes''.
\end{enumerate}
The two components of the arrow graph in Figure~\ref{fig:coproduct} are a tree and a 2-wheel. Arrow graphs where trees are attached to a cycle can be resolved using IHX relations to a linear combination of wheels. Arrow graphs with more than one cycle are zero as they cannot satisfy the two-in-one-out condition at all vertices.

\medskip 

An important case of arrow diagram spaces are arrow diagrams on a skeleton which  consists of $n$ long strands, drawn as $n$ upward-oriented vertical lines,  $\arrows(\uparrow_n)$.  For example, the arrow diagrams in Figure~\ref{fig:coproduct} are in $\arrows(\uparrow_3)$.  In addition to the coalgebra structure, $\arrows(\uparrow_n)$ is equipped with an associative product 
\[\begin{tikzcd} \arrows(\uparrow_n)\otimes \arrows(\uparrow_n) \arrow[r,"\WD_{s}"] & \arrows(\uparrow_n) \end{tikzcd}\]  
given by ``stacking''. Stacking is a circuit algebra operation realised by the wiring diagram in Figure~\ref{WD for stacking}. This product is associative because wiring diagram composition is associative. The multiplicative unit is the empty arrow diagram (with no arrows) in $\arrows(\uparrow_n)$.

\begin{figure}[h] 
\begin{tikzpicture}[x=0.75pt,y=0.75pt,yscale=-.5,xscale=.5]
%uncomment if require: \path (0,300); %set diagram left start at 0, and has height of 300

%Shape: Circle [id:dp11193194149458952] 
\draw   (302,179) .. controls (302,165.19) and (313.19,154) .. (327,154) .. controls (340.81,154) and (352,165.19) .. (352,179) .. controls (352,192.81) and (340.81,204) .. (327,204) .. controls (313.19,204) and (302,192.81) .. (302,179) -- cycle ;
%Shape: Circle [id:dp23543632025875583] 
\draw   (303,107) .. controls (303,93.19) and (314.19,82) .. (328,82) .. controls (341.81,82) and (353,93.19) .. (353,107) .. controls (353,120.81) and (341.81,132) .. (328,132) .. controls (314.19,132) and (303,120.81) .. (303,107) -- cycle ;
%Shape: Circle [id:dp7746752005149562] 
\draw   (237,141.31) .. controls (237,190.74) and (277.07,230.81) .. (326.5,230.81) .. controls (375.93,230.81) and (416,190.74) .. (416,141.31) .. controls (416,91.88) and (375.93,51.81) .. (326.5,51.81) .. controls (277.07,51.81) and (237,91.88) .. (237,141.31) -- cycle ;
%Straight Lines [id:da9734148923752395] 
\draw    (311,124) -- (311,158) ;
%Straight Lines [id:da1771327747509026] 
\draw    (341,127) -- (341,158) ;
%Straight Lines [id:da23169451321586754] 
\draw    (311,199) -- (311, 230) ;
%Straight Lines [id:da05609112637719271] 
\draw    (341,199) -- (341, 230) ;
%Straight Lines [id:da3226406192467637] 
\draw    (311,52) -- (311,87.81) ;
%Straight Lines [id:da9516593295329139] 
\draw    (341,52) -- (341,87.81) ;

% Text Node
\draw (298,235) node [anchor=north west][inner sep=0.75pt]    {$\underbrace{...n...}$};
\end{tikzpicture}
\caption{The wiring diagram $\WD_s$ for stacking elements of $\arrows(\uparrow_n)$.}\label{WD for stacking}
\end{figure}

The following fact is established in \cite[Section 3.2]{BND:WKO2}:
\begin{prop}
The stacking product and arrow diagram coproduct make $\arrows(\uparrow_n)$ into a Hopf algebra, and, by the Milnor--Moore theorem \cite{MR174052}, $\arrows(\uparrow_n)$ is isomorphic to the completed universal enveloping algebra over its primitive elements. 
\end{prop}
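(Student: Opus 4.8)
The plan is to upgrade the two structures already in hand --- the associative stacking product and the coassociative, counital coproduct --- first to a bialgebra, then to a connected graded cocommutative Hopf algebra, and finally to invoke the Milnor--Moore theorem. The associativity of stacking and the coassociativity/counitality of $\Delta$ are assumed from the discussion above, so the work is to establish their compatibility and the structural hypotheses of Milnor--Moore.

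First I would verify the bialgebra axiom, namely that $\Delta\colon \arrows(\uparrow_n)\to \arrows(\uparrow_n)\otimes\arrows(\uparrow_n)$ is an algebra homomorphism (equivalently, that stacking is a morphism of coalgebras). The key observation is that the arrow graph of a stacked product $D\cdot D'$ is exactly the disjoint union of the arrow graphs of $D$ and of $D'$: stacking places the two diagrams on disjoint segments of the $n$ vertical strands, so no connected component of $D$ can merge with a component of $D'$. Hence the connected components of $D\cdot D'$ are the components of $D$ together with those of $D'$. Distributing these components over two copies of the skeleton and then reading off the stacked representatives produces the same result as distributing the components of $D$ and of $D'$ separately and multiplying in $\arrows(\uparrow_n)\otimes\arrows(\uparrow_n)$, giving $\Delta(D\cdot D')=\Delta(D)\cdot\Delta(D')$. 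Compatibility of unit and counit is immediate, since the empty diagram is both the multiplicative unit and the unique component-free diagram.

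Next I would record the grading and connectedness. The space $\arrows(\uparrow_n)$ is graded by the number of arrows, with degree-$0$ part spanned by the empty diagram, and both the stacking product and the coproduct are degree-preserving; thus $\arrows(\uparrow_n)$ is a connected graded bialgebra, and each graded piece is finite dimensional. Cocommutativity follows because $\Delta(D)$ is a sum ranging symmetrically over all ways of splitting the set of connected components into an ordered pair of complementary subsets: the involution exchanging a subset with its complement identifies $\tau\circ\Delta$ with $\Delta$, where $\tau$ is the tensor flip. For a connected graded bialgebra an antipode exists and is uniquely determined by the standard degreewise inductive formula, upgrading the bialgebra to a Hopf algebra at no extra cost.

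Finally, since we work over $\mathbb{Q}$ (characteristic zero) and $\arrows(\uparrow_n)$ is a connected, graded, cocommutative Hopf algebra with finite dimensional graded pieces, the Milnor--Moore theorem applies and yields a Hopf-algebra isomorphism $\arrows(\uparrow_n)\cong U(\mathrm{Prim}(\arrows(\uparrow_n)))$, where the primitives --- the connected arrow graphs, i.e.\ the trees and wheels described above --- form a Lie algebra under the commutator of the stacking product. I expect the main obstacle to be the careful verification of the bialgebra axiom: checking that stacking genuinely realises the disjoint union of arrow graphs \emph{at the level of the quotient} by the relations (TC, VI, CP, RI and $4$T), so that none of these relations secretly creates or destroys links between components. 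Once this is pinned down, the remaining hypotheses of Milnor--Moore hold essentially by inspection, with the completion of $\arrows(\uparrow_n)$ corresponding to the completed enveloping algebra when one passes from the graded statement to the complete one.
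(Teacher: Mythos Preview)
Your approach is correct and is exactly the standard argument for this type of result. Note, however, that the paper does not actually prove this proposition: it is stated as a fact established in \cite[Section 3.2]{BND:WKO2} and simply quoted. So there is no ``paper's own proof'' to compare against; what you have written is essentially what the cited reference does.

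A couple of small comments. Your identification of the main obstacle is accurate: the only non-formal point is checking that the coproduct descends to the quotient by the relations, i.e.\ that the ideal generated by $4T$, TC, etc.\ is a coideal. For TC, CP, RI this is immediate since each relation involves a single connected component. For $4T$, each of the four terms has two connected components (two separate arrows), and one checks that after applying $\Delta$ the ``split'' terms pair off into instances of TC (tails commute) and hence vanish in the quotient, while the ``unsplit'' terms reproduce $4T$ on each tensor factor. This is the routine but necessary verification you anticipated.

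Your remark about completion is also apt: $\arrows(\uparrow_n)$ is defined as a graded \emph{complete} vector space, so strictly one applies Milnor--Moore degree by degree and then takes the product, landing in the completed enveloping algebra $\widehat{U}$, consistent with how the paper uses the result in Proposition~\ref{prop:arrows are hopf algebra}.
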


Note that arrow diagrams on an arbitrary skeleton do not assemble into a Hopf algebra. However, the following lemma is a straightforward application of the $\textnormal{VI}$ relation:

\begin{lemma}\label{lemma: VI}\cite[Lemma 4.7]{BND:WKO2}
There is a canonical isomorphism of vector spaces \[\arrows(\vertex)\cong \arrows(\uparrow_2).\]  Therefore, the Hopf algebra structure on $\arrows(\uparrow_2)$ can be pulled back along this isomorphism to give a Hopf algebra structure on $\arrows(\vertex).$
\end{lemma}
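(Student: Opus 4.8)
The plan is to realise the isomorphism concretely through the $VI$ relation, which is precisely the move that trades arrow endings on the distinguished (top) edge $e_0$ of the vertex for endings on the two merging bottom edges $e_1,e_2$. Writing $\uparrow_2$ for the two bottom legs of $\vertex$ regarded as parallel strands, I would define an \emph{inclusion} $\iota:\arrows(\uparrow_2)\to\arrows(\vertex)$ that reinterprets the two strands of $\uparrow_2$ as $e_1,e_2$ and attaches the distinguished edge $e_0$ carrying no arrows, together with a \emph{reduction} $\rho:\arrows(\vertex)\to\arrows(\uparrow_2)$ that applies $VI$ to every arrow ending on $e_0$, replacing a diagram with $k$ endings on $e_0$ by the sum of the $2^k$ diagrams obtained by distributing those endings over $e_1,e_2$ in all possible ways, and then forgets the (now arrow-free) edge $e_0$. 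Since the standard vertex $\vertex$ has both non-distinguished edges incoming, no signs appear in this splitting.

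First I would check that $\iota$ is well-defined and surjective. Diagrams in the image of $\iota$ carry no arrows on $e_0$, and $\vertex$ has no caps, so the $CP$ and $VI$ relations are never triggered; the relations $4T$, $TC$ and $RI$ of $\arrows(\uparrow_2)$ then map to instances of the same relations in $\arrows(\vertex)$ supported on $e_1,e_2$. Surjectivity is immediate from the relations: for any $D\in\arrows(\vertex)$ the $VI$ relation gives $D=\rho(D)$ as \emph{elements} of $\arrows(\vertex)$, and $\rho(D)$ has no endings on $e_0$, hence lies in the image of $\iota$. Note that this step uses only that $VI$ holds in the quotient, so it requires no case analysis.

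The crux is that $\rho$ descends to the quotient by $4T$, $TC$, $VI$, $CP$, $RI$. On the free vector space of diagrams $\rho$ is manifestly defined, so the content is compatibility with the relations. Relations supported entirely on $e_1,e_2$ are preserved verbatim, and the $VI$ relation is collapsed to an identity by construction, since $\rho$ is exactly the $VI$-reduction. The remaining cases are relations one of whose arrow endings sits on $e_0$: applying the $2^k$-fold splitting to such a relation, one verifies by a short local computation that it becomes a sum of the corresponding relations ($4T$, $TC$ or $RI$; the $CP$ case is vacuous as $\vertex$ has no caps) now supported on $e_1,e_2$. I expect this case check to be essentially the only work in the lemma. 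Granting it, the two maps are mutually inverse: $\rho\circ\iota=\id$ holds because $\iota$ produces diagrams with nothing on $e_0$, on which $\rho$ acts trivially before forgetting $e_0$; and $\iota\circ\rho=\id$ because, by the previous paragraph, the reduction $\rho(D)$ represents the same class as $D$ in $\arrows(\vertex)$. The isomorphism is canonical because the simultaneous splitting over all $2^k$ distributions is canonical.

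Finally, the Hopf algebra statement is formal transport of structure. Transporting the stacking product, the coproduct of Definition~\ref{def: coprod}, and the unit, counit and antipode of $\arrows(\uparrow_2)$ along the isomorphism $\iota$ -- for instance defining the product on $\arrows(\vertex)$ as $\iota\circ m\circ(\iota^{-1}\otimes\iota^{-1})$, and likewise for the remaining structure maps -- yields a Hopf algebra, since every Hopf axiom is preserved under transport of structure along a linear isomorphism.
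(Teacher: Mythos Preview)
Your proposal is correct and matches the paper's intended approach exactly. The paper does not give a detailed proof of this lemma; it only remarks that the statement is ``a straightforward application of the VI relation'' and cites \cite[Lemma 4.7]{BND:WKO2}. Your argument is precisely that application: use VI to sweep all arrow endings off the distinguished edge $e_0$ onto the two legs $e_1,e_2$, observe that this process and its inverse (inclusion with an empty $e_0$) are well-defined modulo the relations, and then transport the Hopf structure. The only substantive content---checking that $\rho$ respects the $4T$, $TC$, and $RI$ relations when an ending lies on $e_0$---is exactly the ``short local computation'' you flag, and it goes through as you describe.
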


\begin{example}
The product of two arrow diagrams $A$ and $B$ in $\arrows(\vertex)$ can be realised by a sequence of circuit algebra compositions and unzip, as shown in Figure \ref{fig:vertexcomp}. 
First compose $A$ with a vertex of the opposite orientation, and compose with $B$ (circuit algebra operations). Then unzip the middle edge. This product, together with the coproduct of Definition~\ref{def: coprod} makes $\arrows(\vertex)$ into a Hopf algebra.

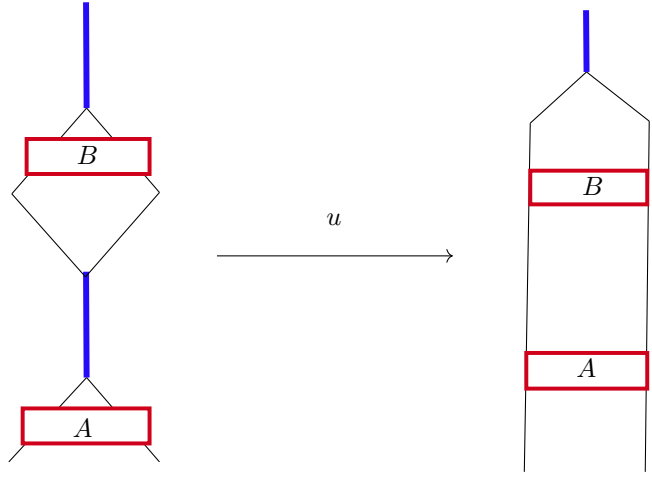
\begin{figure}[h]
\[
\begin{tikzpicture}[x=0.75pt,y=0.75pt,yscale=-1,xscale=1]
%uncomment if require: \path (0,300); %set diagram left start at 0, and has height of 300

%Straight Lines [id:da2771117273656629] 
\draw    (90,266) -- (53.25,223.41) ;
%Straight Lines [id:da3058264784745721] 
\draw    (14,266) -- (53.25,223.41) ;
%Straight Lines [id:da1271455475197718] 
\draw [color={rgb, 255:red, 38; green, 10; blue, 248 }  ,draw opacity=1 ][line width=2.25]    (52.97,170) -- (53.25,223.41) ;
%Straight Lines [id:da8904431268838799] 
\draw    (337,94) -- (305.39,69.16) ;
%Straight Lines [id:da23724177957428005] 
\draw    (277,95) -- (305.39,69.16) ;
%Straight Lines [id:da7400149564455583] 
\draw [color={rgb, 255:red, 38; green, 10; blue, 248 }  ,draw opacity=1 ][line width=2.25]    (305.15,38) -- (305.39,69.16) ;
%Straight Lines [id:da3865496288955177] 
\draw    (15.52,130.69) -- (52.7,172.61) ;
%Straight Lines [id:da06583275029171243] 
\draw    (90,130) -- (52.7,172.61) ;
%Straight Lines [id:da7686686239725613] 
\draw    (90,130) -- (53.25,87.41) ;
%Straight Lines [id:da9688528896709079] 
\draw    (15.52,130.69) -- (53.25,87.41) ;
%Straight Lines [id:da1814999667533339] 
\draw [color={rgb, 255:red, 38; green, 10; blue, 248 }  ,draw opacity=1 ][line width=2.25]    (52.97,34) -- (53.25,87.41) ;
%Shape: Rectangle [id:dp7612278145136369] 
\draw  [color={rgb, 255:red, 208; green, 2; blue, 27 }  ,draw opacity=1 ][fill={rgb, 255:red, 255; green, 255; blue, 255 }  ,fill opacity=1 ][line width=1.5]  (21,239) -- (85,239) -- (85,256.67) -- (21,256.67) -- cycle ;
%Shape: Rectangle [id:dp5823485894923568] 
\draw  [color={rgb, 255:red, 208; green, 2; blue, 27 }  ,draw opacity=1 ][fill={rgb, 255:red, 255; green, 255; blue, 255 }  ,fill opacity=1 ][line width=1.5]  (23,103) -- (85,103) -- (85,120.67) -- (23,120.67) -- cycle ;
%Straight Lines [id:da25477745300311017] 
\draw[->]    (119,162) -- (238,162) ;
%Straight Lines [id:da5394194299586457] 
\draw    (277,95) -- (274,271) ;
%Straight Lines [id:da547641563108543] 
\draw    (337,94) -- (335,271) ;
%Shape: Rectangle [id:dp33829454367357714] 
\draw  [color={rgb, 255:red, 208; green, 2; blue, 27 }  ,draw opacity=1 ][fill={rgb, 255:red, 255; green, 255; blue, 255 }  ,fill opacity=1 ][line width=1.5]  (277,119) -- (336,119) -- (336,136) -- (277,136) -- cycle ;
%Shape: Rectangle [id:dp13688552083252747] 
\draw  [color={rgb, 255:red, 208; green, 2; blue, 27 }  ,draw opacity=1 ][fill={rgb, 255:red, 255; green, 255; blue, 255 }  ,fill opacity=1 ][line width=1.5]  (275,211) -- (336,211) -- (336,229) -- (275,229) -- cycle ;

% Text Node
\draw (173,139.4) node [anchor=north west][inner sep=0.75pt]    {$u$};
% Text Node
\draw (45,243) node [anchor=north west][inner sep=0.75pt]    {$A$};
% Text Node
\draw (299,213) node [anchor=north west][inner sep=0.75pt]    {$A$};
% Text Node
\draw (47,105) node [anchor=north west][inner sep=0.75pt]    {$B$};
% Text Node
\draw (302,121) node [anchor=north west][inner sep=0.75pt]    {$B$};

\end{tikzpicture}
\] 
\caption{Stacking multiplication realised on a vertex.}\label{fig:vertexcomp}
\end{figure}

\end{example}

\subsection{The Kashiwara--Vergne spaces}\label{sec: AT spaces}

In this section, we review the relationship between the circuit algebra of arrow diagrams and the spaces where the Kashiwara--Vergne equations ``live'', based on \cite[Section 3.2]{BND:WKO2} and \cite{AT12}. 

Throughout, we will write $\lie_n$ for the degree completed \emph{free Lie algebra} generated by $x_1,\ldots, x_n$. In the case of $n=1,2,3$ we will often write the generators as $x,y,z$ to reduce notational clutter.  We let $\widehat{\mathfrak{ass}}_n:=\mathbb{Q}\left<\left<x_1,\ldots,x_n\right>\right>$ denote the degree completed free associative algebra generated by the same symbols.  
As a completed Hopf algebra, $\widehat{\mathfrak{ass}}_n$ is the universal enveloping algebra of $\lie_n$: $\widehat{U}(\lie_n)=\widehat{\mathfrak{ass}}_n.$ Degree completions throughout ensure that the group-like elements in $\widehat{U}(\lie_n)=\widehat{\mathfrak{ass}}_n$ are identified with the group $\exp(\lie_n)$.

 A \emph{tangential derivation} on $\lie_n$ is a derivation $u$ of $\lie_n$ which acts on the generators $x_i$ by $u(x_i)=[x_i,a_i]$, for some $a_i\in\lie_n$. For this reason, we write tangential derivations as a tuple of Lie words $u:=(a_1,\ldots,a_n)$ (\cite[Definition 3.2]{AT12}).   The collection of all tangential derivations of $\lie_n$ forms a Lie algebra which we denote by $\tder_n$, where the bracket $[u,v]$ is given by $[u,v](x_k):=u(v(x_k))-v(u(x_k))$; see \cite[Proposition 3.4]{AT12}. 

\begin{example}\label{example: tan der t}
	The tuple $t^{1,2}=(y,x)$ is a tangential derivation of $\lie_2$, given by $t^{1,2}(x)=[x,y]$ and $t^{1,2}(y)=[y,x]$.  
\end{example}  

The tuple description of tangential derivations defines a map $(\lie_n)^{\oplus n} \to \tder_n$, whose kernel is generated by the tuples $(0,...,0,x_i,0,...,0)$, where $x_i$ appears in the $i$th component. Therefore, there is an isomorphism $(\lie_n)^{\oplus n}\cong\tder_n \oplus \mathfrak{a}_n$, where $\mathfrak{a}_{n}$ is the $n$-dimensional abelian Lie algebra generated by $\{x_1,...,x_n\}$.

Both $\lie_n$  and $\widehat{\ass}_n$ are naturally graded. Explicitly,  $\lie_n=\prod_{k\geq 1}\lie_n^k$ where $\lie_n^k$ is spanned by Lie words with $k$ letters. For example,  $[x, z]\in\lie_3^2$ and $[y,[x,z]]\in\lie_3^3$. Similarly, $\widehat{\mathfrak{ass}}_n=\prod_{k\geq 1}\widehat{\mathfrak{ass}}_n^k$. This grading descends to the vector space of \emph{cyclic words}.

\begin{definition}\label{def: cyc}
The complete graded vector space of \emph{cyclic words} is the linear quotient  $$\cyc_n := \widehat{\mathfrak{ass}}_n/\left<ab-ba\mid \forall a,b\in\widehat{\mathfrak{ass}}_n\right> = \widehat{\mathfrak{ass}}_n/[\widehat{\mathfrak{ass}}_n,\widehat{\mathfrak{ass}}_n].$$  We denote the natural projection map by $\trace:\widehat{\mathfrak{ass}}_n\rightarrow\cyc_n$. 
\end{definition} 
 %As an example, $\trace(xy)=\trace(yx)$ is a cyclic word in $\cyc_2^2$. 

The action of $\tder_n$ on $\lie_n$ extends to $\widehat{\ass}_n$  by the Leibniz rule, and in turn descends to $\cyc_n$. We illustrate this in the following example, borrowed from \cite[Example 3.18]{AT12}:

\begin{example}\label{ex:tdercyc}
Let $u=(y,0)\in \tder_2$ and $a=\trace(xy)\in \cyc^2_2$. Then $$u\cdot a=\trace(u(x)\,y+x\,u(y))=\trace([x,y]y)=\trace(xy^2-yxy)=0.$$ 
\end{example}

In \cite[Section 3.2]{BND:WKO2}, primitive elements of $\arrows(\uparrow_n)$ are mapped isomorphically to tangential derivations and cyclic words in the following way. Let $D$ be either a single tree or single wheel arrow graph on a skeleton of $n$ vertical strands. Label the skeleton strands with the generators $x_1, x_2,...,x_n$.
\begin{enumerate} 
\item If $D$ is a tree, we construct an element of $(\lie_n)^{\oplus n}$. The tree determines a Lie word by reading the generator corresponding to the skeleton strand of each leaf (tail) of the tree, and combining these with brackets corresponding to each trivalent vertex; see Example~\ref{example: arrow diagram that gives an element of tder} below. The placement of the \emph{root} (head) of the tree determines which coordinate of $(\lie_n)^{\oplus n}$ the Lie word is placed in, as in the example. 
\item If $D$ is a wheel, $D$ determines a cyclic word in the letters $x_i$, which is determined by the placement of the ``spokes'' of the wheel on the skeleton, and the orientation of the wheel. 
\end{enumerate}

\begin{example} \label{example: arrow diagram that gives an element of tder}
Consider the element $t^{1,2}=(y,x)\in\tder_2$ from Example~\ref{example: tan der t}.  The arrow diagram corresponding to $t^{1,2}$ is depicted on the left in Figure~\ref{fig: tder arrow diagrams}. Similarly, the tree arrow diagram on the right of Figure~\ref{fig: tder arrow diagrams} corresponds to $u=(0,[[x,y],z],0)\in\tder_3$. The wheel in Figure~\ref{fig:wheel example} corresponds to the cyclic word $\trace(x^2yz) \in\cyc_3^4$. 
\end{example}

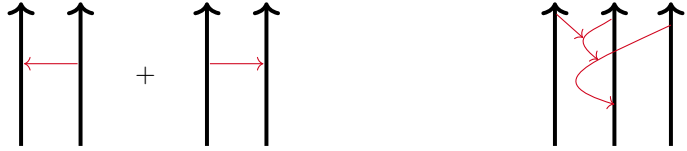
\begin{figure}[h]
\begin{tikzpicture}[x=0.75pt,y=0.75pt,yscale=-.5,xscale=.75]
%uncomment if require: \path (0,300); %set diagram left start at 0, and has height of 300

%Straight Lines [id:da0828948029029608] 
\draw [<-, line width=1.5]    (75,80) -- (75,225) ;
%Straight Lines [id:da009207654945389665] 
\draw [<-, line width=1.5]    (115,80) -- (115,225) ;
%Straight Lines [id:da8580006109055092] 
\draw [<-, line width=1.5]    (200,80) -- (200,225) ;
%Straight Lines [id:da1730055697077526] 
\draw [<-, line width= 1.5]    (240,80) -- (240,225) ;
%Straight Lines [id:da6726340483260107] 
\draw [<-,color={rgb, 255:red, 208; green, 2; blue, 27 }  ,draw opacity=1 ]   (77,142) -- (113,142) ;
%Straight Lines [id:da5337406971635803] 
\draw [->,color={rgb, 255:red, 208; green, 2; blue, 27 }  ,draw opacity=1 ]   (202,142) -- (238,142) ;
%Straight Lines [id:da8471855864210314] 
\draw [<-, line width=1.5]    (434,80) -- (434,225) ;
%Straight Lines [id:da5423433755424463] 
\draw [<-, line width=1.5]    (474,80) -- (474,225) ;
%Straight Lines [id:da7528869596048826] 
\draw [<-, line width=1.5]    (512,80) -- (512,225) ;
%Straight Lines [id:da038819756832489505] 
\draw [->][ color={rgb, 255:red, 208; green, 2; blue, 27 }  ,draw opacity=1 ]   (435,92) -- (453,116) ;
%Curve Lines [id:da10051484168342428] 
\draw [->, color={rgb, 255:red, 208; green, 2; blue, 27 }  ,draw opacity=1 ]     (512,103) .. controls (460,140) and (420,159) .. (474,183) ;
%Curve Lines [id:da24935023212931196] 
\draw [->] [ color={rgb, 255:red, 208; green, 2; blue, 27 }  ,draw opacity=1 ]    (472,97) .. controls (459,110) and (442,114) .. (463,138) ;

% Text Node
\draw (150,142) node [anchor=north west][inner sep=0.75pt]    {$+$};

\end{tikzpicture}
\caption{The arrow diagram on the left corresponds to $t^{1,2}=(y,x)\in \tder_2$; the diagram on the right corresponds to $u=(0,[[x,y],z],0)\in\tder_3$.}\label{fig: tder arrow diagrams}
\end{figure}

\begin{figure}[h]

\begin{tikzpicture}[x=0.75pt,y=0.75pt,yscale=-.5,xscale=.5]
%uncomment if require: \path (0,300); %set diagram left start at 0, and has height of 300

%Straight Lines [id:da7463048345700343] 
\draw [<-, line width=1.5]    (220,80) -- (220,230) ;
%Straight Lines [id:da6103505265086415] 
\draw [<-, line width=1.5]    (275,80) -- (275,230) ;
%Straight Lines [id:da44253388907927804] 
\draw [<-, line width=1.5]    (330,80) -- (330,230) ;
%Shape: Circle [id:dp18369688605643097] 
\draw  [color={rgb, 255:red, 208; green, 2; blue, 27 }  ,draw opacity=1 ] (247.45,134.69) .. controls (255.55,134.52) and (262.26,140.96) .. (262.42,149.07) .. controls (262.59,157.18) and (256.15,163.88) .. (248.04,164.04) .. controls (239.94,164.21) and (233.23,157.77) .. (233.07,149.66) .. controls (232.9,141.56) and (239.34,134.85) .. (247.45,134.69) -- cycle ;
%Straight Lines [id:da5720247266335334] 
\draw [<-, color={rgb, 255:red, 208; green, 2; blue, 27 }  ,draw opacity=1 ] (240,160) -- (223,175) ;
%Straight Lines [id:da393410637790758] 
\draw [ ->, color={rgb, 255:red, 208; green, 2; blue, 27 }  ,draw opacity=1 ] (220,120) -- (240,138) ;
\draw [<-, color={rgb, 255:red, 208; green, 2; blue, 27 }  ,draw opacity=1 ]    (253,162) -- (273,175) ;
%Straight Lines [id:da8363060309578032] 
\draw [->, color={rgb, 255:red, 208; green, 2; blue, 27 }  ,draw opacity=1 ]   (328,110) -- (260,140) ;

\end{tikzpicture}
\caption{The cyclic word $\trace(x^2yz)$ as an arrow diagram. We use the convention that wheels are oriented couterclockwise, unless otherwise marked.}\label{fig:wheel example}
\end{figure}
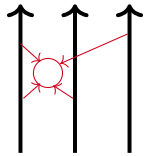

The subspace of $\arrows(\uparrow_n)$ spanned by primitive elements, $\mathsf{P}(\uparrow_n)$, forms a Lie algebra with the bracket given by the algebra commutator of $\arrows(\uparrow_n)$. There is a Lie algebra isomorphism\footnote{The isomorphism arises from a split short exact sequence of Lie algebras $$0\to \cyc_n \to \mathsf{P}(\uparrow)_n \to \tder_n\to 0.$$ The map $\mathsf{P}(\uparrow)_n \to \tder_n$ is as explained in (1) above; the splitting map depends on a choice in placing the ``heads'' of trees in relation to their ``tails'' on the skeleton strand. See \cite[Proposition 3.19]{BND:WKO2} for details.} $\mathsf{P}(\uparrow_n)\cong (\tder_n\oplus \mathfrak{a}_n)\ltimes \cyc_n$ \cite[Proposition 3.19]{BND:WKO2}. In the semidirect product, $\mathfrak{a}_n$ is central and $\tder_n$ acts on $\cyc_n$ as in Example~\ref{ex:tdercyc}, which in light of the discussion above can be realised in $\arrows(\uparrow_n)$, as illustrated in Figure~\ref{fig:tder_action_cyc}.
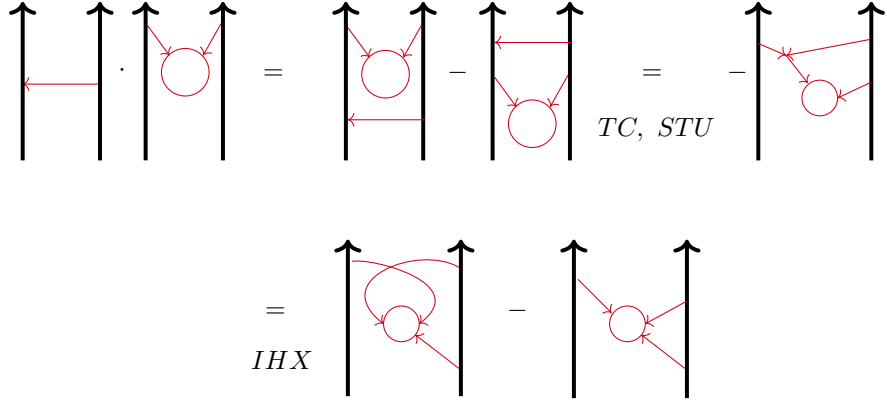
\begin{figure}[h]

\begin{tikzpicture}[x=0.75pt,y=0.75pt,yscale=-1,xscale=1]
%uncomment if require: \path (0,300); %set diagram left start at 0, and has height of 300

%Straight Lines [id:da5309560137395033] 
\draw [->] [color={rgb, 255:red, 208; green, 2; blue, 27 }  ,draw opacity=1 ]   (99,52) -- (110,67) ;
%Straight Lines [id:da8161857250838866] 
\draw[<-] [line width=1.5]    (98,40) -- (98,120) ;
%Straight Lines [id:da6658627834528743] 
\draw[<-] [line width=1.5]    (137,40) -- (137,120) ;
%Straight Lines [id:da3855216786577411] 
\draw [<-] [color={rgb, 255:red, 208; green, 2; blue, 27 }  ,draw opacity=1 ]   (127,67) -- (136,51) ;
%Shape: Circle [id:dp31005739251119313] 
\draw  [color={rgb, 255:red, 208; green, 2; blue, 27 }  ,draw opacity=1 ] (106,75.5) .. controls (106,68.87) and (111.37,63.5) .. (118,63.5) .. controls (124.63,63.5) and (130,68.87) .. (130,75.5) .. controls (130,82.13) and (124.63,87.5) .. (118,87.5) .. controls (111.37,87.5) and (106,82.13) .. (106,75.5) -- cycle ;
%Straight Lines [id:da6629184880065031] 
\draw[->] [color={rgb, 255:red, 208; green, 2; blue, 27 }  ,draw opacity=1 ]   (200,53) -- (211,68) ;
%Straight Lines [id:da7575314827376984] 
\draw[<-] [line width=1.5]    (199,40) -- (199,120) ;
%Straight Lines [id:da6173244669632683] 
\draw [<-][line width=1.5]    (238,40) -- (238,120) ;
%Straight Lines [id:da8531212236328285] 
\draw [<-] [color={rgb, 255:red, 208; green, 2; blue, 27 }  ,draw opacity=1 ]   (228,68) -- (237,52) ;
%Shape: Circle [id:dp7274510852671987] 
\draw  [color={rgb, 255:red, 208; green, 2; blue, 27 }  ,draw opacity=1 ] (207,76.5) .. controls (207,69.87) and (212.37,64.5) .. (219,64.5) .. controls (225.63,64.5) and (231,69.87) .. (231,76.5) .. controls (231,83.13) and (225.63,88.5) .. (219,88.5) .. controls (212.37,88.5) and (207,83.13) .. (207,76.5) -- cycle ;
%Straight Lines [id:da19677390865134559] 
\draw[<-] [color={rgb, 255:red, 208; green, 2; blue, 27 }  ,draw opacity=1 ]   (36,81.5) -- (75,81.5) ;
%Straight Lines [id:da16776531010746143] 
\draw[<-] [line width=1.5]    (36,40) -- (36,120) ;
%Straight Lines [id:da40345031018276334] 
\draw[<-] [line width=1.5]    (75,40) -- (75,120) ;
%Straight Lines [id:da19045181805588052] 
\draw[<-] [color={rgb, 255:red, 208; green, 2; blue, 27 }  ,draw opacity=1 ]   (200,99.5) -- (239,99.5) ;
%Straight Lines [id:da4106823613300947] 
\draw [->] [color={rgb, 255:red, 208; green, 2; blue, 27 }  ,draw opacity=1 ]   (274,78) -- (285,93) ;
%Straight Lines [id:da9985159713653762] 
\draw[<-] [line width=1.5]    (273,40) -- (273,120) ;
%Straight Lines [id:da8922458229766834] 
\draw[<-] [line width=1.5]    (312,40) -- (312,120) ;
%Straight Lines [id:da6704078542158127] 
\draw[<-] [color={rgb, 255:red, 208; green, 2; blue, 27 }  ,draw opacity=1 ]   (302,93) -- (311,77) ;
%Shape: Circle [id:dp9171646410805685] 
\draw  [color={rgb, 255:red, 208; green, 2; blue, 27 }  ,draw opacity=1 ] (281,101.5) .. controls (281,94.87) and (286.37,89.5) .. (293,89.5) .. controls (299.63,89.5) and (305,94.87) .. (305,101.5) .. controls (305,108.13) and (299.63,113.5) .. (293,113.5) .. controls (286.37,113.5) and (281,108.13) .. (281,101.5) -- cycle ;
%Straight Lines [id:da237795530681284] 
\draw [<-][color={rgb, 255:red, 208; green, 2; blue, 27 }  ,draw opacity=1 ]   (274,60.5) -- (313,60.5) ;
%Straight Lines [id:da14163535501419133] 
\draw[->] [color={rgb, 255:red, 208; green, 2; blue, 27 }  ,draw opacity=1 ]   (421,67) -- (432,81) ;
%Straight Lines [id:da3212807263538253] 
\draw[<-] [line width=1.5]    (407,40) -- (407,120) ;
%Straight Lines [id:da01385312805104677] 
\draw[<-] [line width=1.5]    (464,40) -- (464,120) ;
%Straight Lines [id:da06201462369890476] 
\draw [<-] [color={rgb, 255:red, 208; green, 2; blue, 27 }  ,draw opacity=1 ]   (447,88.5) -- (464,80.5) ;
%Shape: Circle [id:dp7622736465701505] 
\draw  [color={rgb, 255:red, 208; green, 2; blue, 27 }  ,draw opacity=1 ] (429,88.5) .. controls (429,83.53) and (433.03,79.5) .. (438,79.5) .. controls (442.97,79.5) and (447,83.53) .. (447,88.5) .. controls (447,93.47) and (442.97,97.5) .. (438,97.5) .. controls (433.03,97.5) and (429,93.47) .. (429,88.5) -- cycle ;
%Straight Lines [id:da09911892824278001] 
%\draw [<-] [color={rgb, 255:red, 208; green, 2; blue, 27 }  ,draw opacity=1 ]   (407,69) -- (463,59) ;
%Straight Lines [id:da2805000455249038] 
%Straight Lines [id:da8557297906597556] 
\draw[->] [color={rgb, 255:red, 208; green, 2; blue, 27 }  ,draw opacity=1 ]   (407,61) -- (421,67); 
%Straight Lines [id:da09911892824278001] 
\draw [<-] [color={rgb, 255:red, 208; green, 2; blue, 27 }  ,draw opacity=1 ]   (421,67) -- (463,59) ;
\draw[<-] [line width=1.5]    (200,160) -- (200,239) ;
%Straight Lines [id:da5099766537810639] 
\draw[<-] [line width=1.5]    (257,160) -- (257,239) ;
%Straight Lines [id:da014936045595214109] 
\draw [->] [color={rgb, 255:red, 208; green, 2; blue, 27 }  ,draw opacity=1 ]   (316,180) -- (333,197) ;
%Straight Lines [id:da4573178185395652] 
\draw[<-] [line width=1.5]    (314,160) -- (314,240) ;
%Straight Lines [id:da91104920207477] 
\draw [<-] [line width=1.5]    (371,160) -- (371,240) ;
%Straight Lines [id:da6681285161220434] 
\draw [<-] [color={rgb, 255:red, 208; green, 2; blue, 27 }  ,draw opacity=1 ]   (350,202.5) -- (371,191) ;
%Shape: Circle [id:dp6639652158069373] 
\draw  [color={rgb, 255:red, 208; green, 2; blue, 27 }  ,draw opacity=1 ] (332,202.5) .. controls (332,197.53) and (336.03,193.5) .. (341,193.5) .. controls (345.97,193.5) and (350,197.53) .. (350,202.5) .. controls (350,207.47) and (345.97,211.5) .. (341,211.5) .. controls (336.03,211.5) and (332,207.47) .. (332,202.5) -- cycle ;
%Straight Lines [id:da30156091589195766] 
\draw [<-] [color={rgb, 255:red, 208; green, 2; blue, 27 }  ,draw opacity=1 ]   (348,208) -- (370,225) ;
%Shape: Circle [id:dp8255550497235584] 
\draw  [color={rgb, 255:red, 208; green, 2; blue, 27 }  ,draw opacity=1 ] (218,202.5) .. controls (218,197.53) and (222.03,193.5) .. (227,193.5) .. controls (231.97,193.5) and (236,197.53) .. (236,202.5) .. controls (236,207.47) and (231.97,211.5) .. (227,211.5) .. controls (222.03,211.5) and (218,207.47) .. (218,202.5) -- cycle ;
%Straight Lines [id:da44506701262549164] 
\draw [<-] [color={rgb, 255:red, 208; green, 2; blue, 27 }  ,draw opacity=1 ]   (234,208) -- (256,225) ;
%Curve Lines [id:da23279623893646928] 
\draw[->] [color={rgb, 255:red, 208; green, 2; blue, 27 }  ,draw opacity=1 ]   (256,174) .. controls (236,162) and (189,180) .. (218,202.5) ;
%Curve Lines [id:da561246465633628] 
\draw [<-] [color={rgb, 255:red, 208; green, 2; blue, 27 }  ,draw opacity=1 ]   (236,202.5) .. controls (263,182) and (212,169) .. (202,171) ;

% Text Node
\draw (83,70) node [anchor=north west][inner sep=0.75pt]    {$\cdot $};
% Text Node
\draw (156,73) node [anchor=north west][inner sep=0.75pt]    {$=$};
% Text Node
\draw (347,73) node [anchor=north west][inner sep=0.75pt]    {$=$};
% Text Node
\draw (325,97.4) node [anchor=north west][inner sep=0.75pt]    {$TC,\ STU$};
% Text Node
\draw (390,70) node [anchor=north west][inner sep=0.75pt]    {$-$};
% Text Node
\draw (279,190) node [anchor=north west][inner sep=0.75pt]    {$-$};
% Text Node
\draw (157,193) node [anchor=north west][inner sep=0.75pt]    {$=$};
% Text Node
\draw (150,215) node [anchor=north west][inner sep=0.75pt]    {$IHX$};
% Text Node
\draw (249,70) node [anchor=north west][inner sep=0.75pt]    {$-$};

\end{tikzpicture}

\caption{An illustration of the action of $\tder_n$ on $\cyc_n$ from Example~\ref{ex:tdercyc} realised as arrow diagrams.}\label{fig:tder_action_cyc}
\end{figure}

 Since, by the Milnor--Moore Theorem, $\arrows(\uparrow_n)$ is the universal enveloping algebra of $\mathsf{P}(\uparrow_n)$, this implies the following:

\begin{prop}\label{prop:arrows are hopf algebra}
There is an isomorphism of Hopf algebras $$\Upsilon: \arrows(\uparrow_n)\stackrel{\cong}{\longrightarrow} \widehat{U}((\tder_n\oplus \mathfrak{a}_n) \ltimes \cyc_n).$$ 
\end{prop}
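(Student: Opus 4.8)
The plan is to obtain $\Upsilon$ by applying the completed universal enveloping algebra functor $\widehat U$ to the Lie algebra isomorphism on primitives recorded just above. Two inputs are in hand. First, the preceding proposition, which via the Milnor--Moore theorem identifies $\arrows(\uparrow_n)$ with the completed universal enveloping algebra of its primitives, $\arrows(\uparrow_n)\cong\widehat U(\mathsf P(\uparrow_n))$, as graded complete Hopf algebras. Second, the Lie algebra isomorphism $\mathsf P(\uparrow_n)\cong(\tder_n\oplus\mathfrak a_n)\ltimes\cyc_n$ of \cite[Proposition 3.19]{BND:WKO2}. Since $\widehat U$ is a functor from complete graded Lie algebras to complete graded Hopf algebras that carries isomorphisms to isomorphisms, applying it to the second input and composing with the first yields
\[
\arrows(\uparrow_n)\;\cong\;\widehat U(\mathsf P(\uparrow_n))\;\xrightarrow{\ \cong\ }\;\widehat U\bigl((\tder_n\oplus\mathfrak a_n)\ltimes\cyc_n\bigr),
\]
which is the desired Hopf algebra isomorphism $\Upsilon$.

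Before invoking functoriality I would make the isomorphism on primitives explicit, so that $\Upsilon$ is a concrete map rather than an abstract existence statement. By Definition~\ref{def:primitives} and the subsequent discussion, $\mathsf P(\uparrow_n)$ is spanned by connected arrow graphs, which are precisely trees and wheels. I would send a tree to the tuple in $(\lie_n)^{\oplus n}$ read off from its leaves, trivalent vertices and head as in Example~\ref{example: arrow diagram that gives an element of tder}, then project to the $\tder_n\oplus\mathfrak a_n$ summand; and send a wheel to its cyclic word in $\cyc_n$ as in Figure~\ref{fig:wheel example}. That this assignment is a \emph{Lie algebra} isomorphism --- matching the commutator bracket on $\mathsf P(\uparrow_n)$ coming from the stacking product with the semidirect-product bracket, where $\mathfrak a_n$ is central and $\tder_n$ acts on $\cyc_n$ as in Example~\ref{ex:tdercyc} --- is exactly the content supplied by \cite[Proposition 3.19]{BND:WKO2}. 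Were one to reprove it from scratch, the substance would split into the tree--tree bracket reproducing $[u,v](x_k)=u(v(x_k))-v(u(x_k))$, the tree--wheel bracket reproducing the $\tder_n$-action on $\cyc_n$, and the wheel--wheel bracket vanishing because the commutator of two wheels is a sum of arrow graphs with more than one cycle, which are zero by the two-in/one-out condition; each is an $STU$ computation, aided by the $RI$/wheel relation.

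The only genuine subtlety, and hence the step I would be most careful about, is keeping the completions and gradings consistent: one needs $\widehat U$ on complete graded Lie algebras of finite type, and the complete (pro-nilpotent) form of Milnor--Moore, so that the identification $\arrows(\uparrow_n)\cong\widehat U(\mathsf P(\uparrow_n))$ is an isomorphism of complete Hopf algebras --- respecting coproducts, not merely products --- before $\widehat U$ is applied to the primitive-level isomorphism. Since $\arrows(\uparrow_n)$ is graded, connected and cocommutative over $\mathbb Q$ with each graded piece finite-dimensional, these hypotheses hold, and the functorial argument goes through without further obstruction.
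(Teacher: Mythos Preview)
Your proposal is correct and follows precisely the paper's approach: the paper states the proposition as an immediate consequence of the Lie algebra isomorphism $\mathsf P(\uparrow_n)\cong(\tder_n\oplus\mathfrak a_n)\ltimes\cyc_n$ from \cite[Proposition~3.19]{BND:WKO2} together with the Milnor--Moore identification $\arrows(\uparrow_n)\cong\widehat U(\mathsf P(\uparrow_n))$, applying $\widehat U$ functorially. Your added care about completions and the explicit description on trees and wheels is appropriate elaboration, but the core argument is the same.
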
 

It is important to keep in mind that at the level of primitive elements $\Upsilon$ identifies $\tder_n\oplus \mathfrak{a}_n$ with connected tree diagrams, and $\cyc_n$ with wheels.

\begin{remark}\label{ex: action of arrow diag on lie}
In \cite[Proof of Proposition 19]{BND:WKO2} the action of $\tder_n$ on $\lie_n$ is described in diagrammatic terms; we give a summary here.
Let $D\in\mathsf{P}(\uparrow_n)$ be an arrow diagram consisting of a single tree, corresponding to $u=(0,\ldots, a_i,\ldots, 0)\in\tder_n$. The tails of $D$ correspond to the letters in the Lie word $a_i$ and the root of the tree (the single arrow head) is on the $i$th strand of $
\arrows(\uparrow_n)$. 

We consider $D$ as an element of $\arrows(\uparrow_{n+1})$ by adding an empty skeletal strand to the right. The free Lie algebra $\lie_n$ embeds in $\arrows(\uparrow_n)$, by representing $x_j$ as an arrow from strand $j$ to the extra strand. Then $D$ acts on $x_j$ via the commutator, i.e. the difference of the stacking products $x_jD-Dx_j$. Using the TC and STU relations, one can verify that this commutator is in the image of $\lie_n$. 

We write $\TAut_n:=\exp(\tder_n)$ to denote the group associated to the Lie algebra $\tder_n$. As a 
group, $\TAut_n$ consists of ``basis-conjugating'' automorphisms of $\exp(\lie_n)$:  for any $e^u \in \TAut_n$ there exists an element $e^{a_i}\in\exp(\lie_n)$ where $e^u\cdot e^{x_i}=e^{-a_i}e^{x_i}e^{a_i}$. 

After exponentiation, we obtain that the following diagram commutes:
\begin{equation}
\begin{tikzcd}
\TAut_n \times \exp(\lie_n)  \arrow[d, hookrightarrow, swap, "\Upsilon^{-1}\times \iota"]\arrow[rr, "\ad"]  && \exp(\lie_n) \arrow[d, hookrightarrow, "\iota"]
\\
\arrows(\uparrow_{n+1})\times \arrows(\uparrow_{n+1}) \arrow[rr, "\text{conj}"] && \arrows(\uparrow_{n+1})  
\end{tikzcd}
\end{equation} 

Here, given $e^u\in \TAut_n$, by $\Upsilon^{-1}(u)$ we mean $\Upsilon^{-1}(((u+0),0))$ embedded on the first $n$ strands of $\arrows(\uparrow_n)$. The map $\iota$ is the exponential of the embedding of $\lie_n$ above. Ad denotes the adjoint action and ``conj'' denotes conjugation of the second component by the first.
\end{remark}

Arrow diagrams with an arrow head adjacent to a cap vanish by the $\textnormal{CP}$ relation. Thus, when the skeleton consists only of capped strands, arrow heads can be eliminated entirely by successive applications of $\textnormal{STU}$ and $\textnormal{CP}$ relations. Arrow diagrams with no arrow heads can always be expressed as a linear combination of only {\em wheels}, which in turn can be encoded as cyclic words. This leads to the following lemma, which is a straightforward generalisation of \cite[Lemma 4.6]{BND:WKO2}.

\begin{lemma}\label{hopf_vertex}
There is an isomorphism of graded vector spaces $\kappa: \arrows(\upcap_n) \stackrel{\cong}{\longrightarrow} \cyc_n/\cyc_n^1$. Here $\cyc_n^1$ denotes the degree 1 component of cyclic words.
\end{lemma}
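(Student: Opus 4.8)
The plan is to construct the inverse of $\kappa$ explicitly and to prove it is a bijection by the bracket-rise technique, following \cite[Lemma 4.6]{BND:WKO2}. Label the $n$ capped strands by the generators $x_1,\dots,x_n$. First I would define $\lambda\colon\cyc_n\to\arrows(\upcap_n)$ by sending a cyclic word $x_{i_1}\cdots x_{i_k}$ to the wheel whose $k$ inward-pointing spokes are attached, in cyclic order, to the strands $i_1,\dots,i_k$. Since the rim of a wheel is an oriented cycle, a cyclic permutation of its spokes produces the same arrow diagram, so $\lambda$ is well defined on cyclic --- rather than linear --- words. The wheel form of the RI relation (Figure~\ref{fig:wheelRI}) sends every $1$-wheel to zero, so $\lambda$ annihilates the degree-one part $\cyc_n^1$ and descends to $\bar\lambda\colon\cyc_n/\cyc_n^1\to\arrows(\upcap_n)$; this $\bar\lambda$ is the candidate inverse of $\kappa$.

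To see that $\bar\lambda$ is surjective, I would show every class in $\arrows(\upcap_n)$ is represented by wheels. Working in the Jacobi presentation $\J(\upcap_n)\cong\arrows(\upcap_n)$, each connected component of an arrow graph is a tree or a wheel, as recalled before Proposition~\ref{prop:arrows are hopf algebra}. A tree has a unique head lying on some strand; using the STU relation to commute this head past the arrow endings it meets, I slide it toward the cap, where the leading term vanishes by CP and the correction terms have absorbed the head into a trivalent vertex. Iterating removes every head from the skeleton, and a connected, head-free arrow graph on $\upcap_n$ is precisely a wheel; the TC relation makes the resulting ordering of tails along each strand immaterial. Hence every generator of $\arrows(\upcap_n)$ is a combination of wheels, i.e.\ lies in the image of $\lambda$, and $\bar\lambda$ is onto.

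The essential point --- and the step I expect to be hardest --- is injectivity of $\bar\lambda$, i.e.\ that wheels obey no relations beyond cyclic invariance and the vanishing of $1$-wheels. I would establish this by producing $\kappa$ directly: reading a head-free diagram off as the cyclic word recorded by the cyclic order of its spokes, and checking that this assignment respects each defining relation of $\arrows$. A relation applied to a head-free diagram again involves only tails and rim edges, so only TC and the rim-level consequences of AS and IHX can act; these correspond exactly to the cyclic symmetry already imposed in $\cyc_n$, while RI accounts for the degree-one quotient. Verifying that these are the only identifications is a finite case analysis of the relations restricted to wheels, exactly as in the bracket-rise arguments of \cite[Chapter 5]{chmutov_duzhin_mostovoy_2012} and \cite[Lemma 4.6]{BND:WKO2}; alternatively, linear independence of the wheels can be certified by pairing them against the trace functionals on $\cyc_n$ furnished by the identification $\Upsilon$ of Proposition~\ref{prop:arrows are hopf algebra}. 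Together with surjectivity this shows $\kappa=\bar\lambda^{-1}$ is an isomorphism of graded vector spaces.
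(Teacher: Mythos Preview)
Your approach is exactly the one the paper sketches in the paragraph preceding the lemma and attributes to \cite[Lemma~4.6]{BND:WKO2}: push arrow heads toward the caps via STU, where they vanish by CP, leaving only wheels, and then encode wheels as cyclic words. Your treatment of injectivity---checking that the defining relations of $\arrows$ impose nothing on wheels beyond cyclic symmetry and the vanishing of $1$-wheels---is more explicit than anything the paper writes out here.

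One step needs tightening. Your reduction shows that every class in $\arrows(\upcap_n)$ is represented by diagrams all of whose connected components are wheels, but a diagram with \emph{several} wheel components is not a single wheel and hence not in the image of $\bar\lambda$ as you have defined it: $\bar\lambda$ sends one cyclic word to one wheel, and $\cyc_n/\cyc_n^1$ carries no product along which to extend multiplicatively. In the paper's subsequent use of the lemma (e.g.\ writing $\kappa(C)=e^{\trace(c(\xi))}$ in the proof of Theorem~\ref{thm: aut(A)=KRV}) the target is implicitly treated as the completed symmetric algebra on $\cyc_n^{\geq 2}$. To close the gap you should either prove the isomorphism at the level of primitives---connected diagrams versus single cyclic words---and then pass to the full coalgebra, or enlarge the target to $\widehat S(\cyc_n/\cyc_n^1)$ and extend $\bar\lambda$ as an algebra map; either route completes your argument along the same lines the paper intends.
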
 

The reason for factoring out the degree $1$ component in this lemma is the $RI$ relation, see Figure~\ref{fig:wheelRI}. 

\begin{remark}\label{rmk: wheels power series} Note that cyclic words in one letter are power series, so we will also write 
$\kappa: \arrows(\upcap_1)\stackrel{\cong}{\longrightarrow} \mathbb Q[[\xi]]/\langle\xi \rangle$, where the quotient is understood linearly. In addition, since $\cyc_n\cong \trace(\mathbb Q\langle\langle x_1,x_2,...,x_n \rangle\rangle)$, we have, for example $\arrows(\upcap_2)\cong \trace(\mathbb{Q}\langle\langle \xi, \zeta \rangle\rangle) /\langle \xi, \zeta\rangle$. 
\end{remark}

\begin{remark}\label{ex: action on cyc 2}
The $\mathbb{Q}$-vector space $\arrows(\upcap_2)$ is a left $\arrows(\uparrow_2)$-module with the action given by stacking: \[\arrows(\uparrow_2)\times\arrows(\upcap_2)\xrightarrow{\text{stack}} \arrows(\upcap_2).\]
This stacking action is compatible with the natural action of $\tder_2$ on $\cyc_2$ described in Example~\ref{ex:tdercyc}, in the sense that the following diagram commutes:
\begin{equation}
\begin{tikzcd}
\tder_n \times \cyc_n  \arrow[d, hookrightarrow, swap, "\Upsilon^{-1}\times \kappa^{-1}"]\arrow[rr, "\cdot"]  && \cyc_n \arrow[d, hookrightarrow, "\kappa^{-1}"]
\\
\arrows(\uparrow_n) \times \arrows(\upcap_n) \arrow[rr, "\text{stack}"] && \arrows(\upcap_n)  
\end{tikzcd}
\end{equation} 

It is non-trivial to see why a stacking action agrees with a commutation action. In short, this is because $\tder_n$ is embedded in $\arrows(\uparrow_n)$ as {\em trees}, and the term of the commutator where the tree is adjacent to the cap vanishes by the $\textnormal{CP}$ relation. For a more thorough explanation, see \cite[Remark 3.24]{BND:WKO2}.

The action of $\tder_n$ on $\cyc_n$ can be formally exponentiated to a conjugation action of $\TAut_n$ on $\exp(\cyc_n)$. This action commutes with the stacking action of $\arrows(\uparrow_n)$ on $\arrows(\upcap_n)$ for the same reason as above:
\begin{equation}
\begin{tikzcd}
\TAut_n \times \exp(\cyc_n)  \arrow[d, hookrightarrow, swap, "\Upsilon^{-1}\times \kappa^{-1}"]\arrow[rr, "\cdot"]  && \exp(\cyc_n) \arrow[d, hookrightarrow, "\kappa^{-1}"]
\\
\arrows(\uparrow_n) \times \arrows(\upcap_n) \arrow[rr, "\text{stack}"] && \arrows(\upcap_n)  
\end{tikzcd}
\end{equation} 
\end{remark}

\subsubsection{Cosimplicial Structure} \label{sec: cosimplicial structure} 
Tangential derivations admit a semi-cosimplicial structure, as described in \cite[Section 3.2]{AT12}. That is, for $i=0,1,...,n,$ there are Lie algebra homomorphisms, called {\em coface maps} $\delta_i:\tder_{n-1}\rightarrow \tder_n$ defined by sending the tangential derivation $u=(a_1,\ldots, a_{n-1})$ to $\delta_i(u)=(a_1,\ldots, 0,\ldots, a_{n-1})$ in $\tder_n$, where $0$ is inserted in the $i$th entry of $\delta_i(u)$, and the subsequent components are shifted.

To simplify notation, we will suppress the coface maps and use a superscript notation with $\delta_i(u):= u^{1,\ldots, i-1, i+1,\ldots, n}.$ For example, if $u=(a_1(x,y),a_2(x,y))\in\tder_2$, then $u^{2,3}=(0,a_1(y,z),a_2(y,z)) \in \tder_3$. Diagrammatically, that is, after applying $\Upsilon^{-1}$, this corresponds to inserting an empty strand in position $i$.

A second type of coface map, denoted by double entries in the superscripts, corresponds to an ``unzip-style'' strand-doubling. %We only illustrate this on an example, as the formal definition is tedious. 
For instance, given $u\in \tder_2$ as above, $$u^{1,23}=(a_1(x,y+z),a_2(x,y+z),a_2(x,y+z))\in \tder_3.$$
Diagrammatically, $\Upsilon^{-1}(u^{1,...,i(i+1),i+2,...,n})$ is obtained from $\Upsilon^{-1}(u)$ by doubling the $i$th strand and replacing any arrows ending on the $i$th strand with a sum of two arrows, as in the definition of the unzip operation, Figure~\ref{fig: unzip for arrows}.

\subsection{Divergence and the Jacobian}\label{sec: div and Jacobian}
Note that each element $a\in\widehat{\mathfrak{ass}}_n$ has a unique decomposition $$a=a_0+\partial_1(a)x_1+\ldots +\partial_n(a)x_n=a_0+\Sigma_{i=1}^{n}\partial_i(a)x_i$$ for some $a_0\in\mathbb{Q}$ and $\partial_i(a)\in\widehat{\mathfrak{ass}}_n$ for $1\leq i\leq n$. In practice, $\partial_i$ picks out the words of a sum which end in $x_i$ and deletes their last letter $x_i$, as well as all other words. This enables the definition of the non-commutative \emph{divergence}, a $1$-cocycle of $\tder_n$ (\cite[Proposition 3.20]{AT12}):

 \begin{definition}\label{def AT div}
The non-commutative \emph{divergence} map $j:\tder_n\rightarrow \cyc_n$ is a linear map
 defined by $$j(u):=\trace(\Sigma_{i=1}^{n} \partial_i(a_i)x_i)$$ where $u$ is the derivation given by the tuple of Lie words $(a_1,\ldots,a_n)$.  
 \end{definition} 
 
The divergence map is a $1$-cocycle of the Lie algebra $\tder_n$, that is: $$j([u,v])=u\cdot j(v)-v\cdot j(u),$$ where the $\cdot$ notation denotes the natural $\tder_n$ action on $\cyc_n$ is in Example~\ref{ex:tdercyc}.

 %That is the grouplike elements of the completed universal enveloping algebra $\hat{U}(\tder_n)$. 

\begin{example}\label{example: div} 
The tangential derivation $t^{1,2} = (y, x) \in\tder_2$ described in Example~\ref{example: tan der t} has vanishing divergence since $\partial_x(y)=\partial_y(x)=0$ so $j(t^{1,2})=\trace\left(\partial_x(y) x + \partial_y(x)y\right) = 0$. 
\end{example}

Integrating the divergence cocycle leads to a linear map called the non-commutative \emph{Jacobian}.

 \begin{definition} 
 The non-commutative \emph{Jacobian} map\footnote{Our notation $(j,J)$ matches that of \cite{ATE10}, and corresponds to $(div,j)$ in \cite{AT12} and \cite[Section 3.3]{BND:WKO2}.} $$J:\TAut_n\rightarrow\cyc_n$$ is given by setting 
 \[J(1)=0 \quad \text{and} \quad \frac{d}{dt}\Big|_{t=0}J(e^{tu}g)=j(u)+u\cdot J(g)\] for $g\in \TAut_n$ and $u\in\tder_n$. 
 \end{definition} 
 
 The map $J$ is a group $1$-cocycle, that is, for any $g,h\in\TAut_n$, $$J(gh)=J(g)+g\cdot J(h).$$    
 
Finally, the exponential Jacobian, denoted  $\mathcal J$, is the map $\mathcal{J}:\TAut_n\rightarrow \exp(\cyc_n)$ given by $\mathcal{J}(e^u)=e^{J(e^u)}$, for $u\in \tder_n$. 

The exponential Jacobian has a diagrammatic interpretation captured by the  \emph{adjoint} operation $A_e$, defined in Definition~\ref{def:operations on arrows}. For $a\in\arrows(\uparrow_n)$ let $a^*$ denote $A_1...A_n(a)$, that is, the adjoint operation applied to every strand of $a$. Then, for any $u\in\tder_n$, we have $\Upsilon^{-1}(\mathcal J(e^u))=\Upsilon^{-1}(e^u)(\Upsilon^{-1}(e^u))^*$. See \cite[Proposition 3.27]{BND:WKO2} for details.

\section{Homomorphic expansions}\label{sec: expansions}

Expansions are known in knot theory and cognate areas as ``universal finite type invariants''. An expansion is {\em homomorphic} if it respects any additional structure or operations possessed by a class of knotted objects. For example, an expansion for classical knots is the Kontsevich integral, which is {\em homomorphic} in the sense that it {\em respects} connected sum and cabling of knots (e.g. \cite{MR1237836}, \cite{DancsoKI}). 
Constructing homomorphic expansions is often difficult, for instance, different constructions of the Kontsevich integral involve complex analysis, Drinfeld associators, and perturbative Chern--Simons theory (also known as ``configuration space integrals''). We suggest the paper \cite{BN_Survey_Knot_Invariants} for an introduction to the theory of finite type invariants, however, a familiarity with the theory is not necessary for reading this paper. 

Our interest in homomorphic expansions stems from their capacity to translate problems from topology to quantum algebra and vice versa. A classical example of this, analogous to the results of this paper, is the description of Drinfeld associators and the Grothendieck-Teichm\"uller groups in terms of homomorphic expansions of parenthesised braids and their symmetries \cite{BNGT}.

In \cite{BND:WKO2}, Bar-Natan and the first author show that homomorphic expansions of $\wf$ are in one-to-one\footnote{Up to a minor technical condition on the value of the vertex given in Definition~\ref{def:vsmall}.} correspondence with solutions to the Kashiwara--Vergne equations.  As a consequence, the existence of homomorphic expansions for $w$-foams follows from the existence of solutions to the Kashiwara--Vergne conjecture. 

In this section we review useful classification criteria \cite{BND:WKO2} for homomorphic expansions of $w$-foams $Z:\wf \rightarrow \arrows$ (Proposition~\ref{prop:ExpansionEquations}). Furthermore, in Theorem~\ref{thm: expansions are isomorphisms} we show that homomorphic expansions $Z: \wf \to \arrows$ induce isomorphisms $\widehat{Z}:\hatwf\to \arrows$ of completed circuit algebras. This is a new contribution to the theory of $w$-foams, since \cite{BND:WKO2} did not consider prounipotent completions of $w$-foams. Combining Theorem~\ref{thm: expansions are isomorphisms} with the \cite{BND:WKO2} correspondence, it follows that that solutions of the Kashiwara--Vergne conjecture are in one-to-one correspondence with a class of isomorphisms $\widehat{Z}:\hatwf\to \arrows$ of completed circuit algebras.

\subsection{Homomorphic expansions and Kashiwara--Vergne solutions}
Homomorphic expansions of $w$-foams are circuit algebra homomorphisms $Z:\wf\rightarrow\arrows$ satisfying a universal property. The terminology for expansions comes from group theory, so we begin with the group theory context for motivation. 

\begin{example}\label{ex:expansion for groups}
Given a group $G$,  consider the group ring $ \mathbb{Q}[G]$, which is filtered by powers of its augmentation ideal $\calI=\text{ker}\left(\mathbb{Q}[G]\xrightarrow{\epsilon} \mathbb{Q}\right)$. Here, the map $\epsilon$ sends a linear combination of group elements to the sum of their coefficients. Define the complete \emph{associated graded algebra} of $\mathbb Q[G]$ to be the $\mathbb{Q}$-algebra $$A(G) :=\text{gr}(\mathbb{Q}[G]) = \prod_{m\geq 0} \calI^m/\calI^{m+1}.$$ 
An \emph{expansion} of the group $G$ is a map $Z : G \rightarrow A(G)$, such that the
linear extension of $Z$ to the group ring, $Z : \mathbb{Q}[G] \rightarrow A(G)$, is a filtration-preserving map of algebras and the induced map $$\text{gr}(Z): (\text{gr}(\mathbb{Q}[G] )= A(G)) \rightarrow (\text{gr}(A(G)) = A(G))$$
is the identity of $A(G)$.  The latter condition is equivalent to saying that the degree $m$ piece of $Z$ restricted
to $\calI^m$ is the projection onto $\calI^m/\calI^{m+1}$.  
\end{example}

As we move to homomorphic expansions of $w$-foams, we will require that they respect the skeleton index of the underlying circuit algebras:

\begin{definition}
Given two circuit algebras $\mathsf{V}$ and $\mathsf{W}$ with skeleta in $\calS$, a circuit algebra morphism $F:\mathsf{V}\rightarrow \mathsf{W}$ is said to be \emph{skeleton preserving} if it restricts to a homomorphism $F(s): \mathsf{V}(s)\rightarrow\mathsf{W}(s)$ for each $s\in\calS$. \end{definition}

\begin{definition}\label{def: homomorphic expansion}
A {\em homomorphic expansion} of $w$-foams is a circuit algebra homomorphism $Z:\wf\rightarrow\A$ such that its linear extension $Z: \mathbb{Q}[\wf] \to \A$ is a filtered, skeleton preserving homomorphism of linear circuit algebras for which $\gr Z = \id_{\A}$, and which intertwines all auxiliary operations with their associated graded counterparts. 
\end{definition}

\begin{definition}
A homomorphic expansion is {\em group-like} if the $Z$-values of generators are exponentials of primitive elements in $\arrows$ (group-like in the target space).  
\end{definition}

\subsubsection{A classification of homomorphic expansions of $\wf$}
Homomorphisms of finitely presented circuit algebras are determined by their values on the generators, and must satisfy the relations between those generators -- this is true broadly for any homomorphism of a finitely presented algebraic structure. 

Therefore, a homomorphic expansion of $\wf$ is determined by its values on the generators, and for a group-like expansion these values are exponentials of primitive elements in $\A$. By \cite[Theorem 3.30]{BND:WKO2}, any group-like homomorphic expansion of $w$-foams sends the crossing $\overcrossing$ to the exponential of a single arrow from the over-strand to the under-strand $e^{\rightarrowdiagram}$:
\[ Z(\overcrossing) = e^{\rightarrowdiagram} = R \in \A(\uparrow_2), \quad Z(\vertex)=e^v=V \in\A(\vertex)\cong\A(\uparrow_2), \] \[ \text{and} \quad Z(\upcap) =e^c=C  \in \A(\upcap)\cong \mathbb Q[[\xi]]/\langle \xi \rangle.\]

Since $Z$ is skeleton preserving, the value of $Z(\vertex)$ is an arrow diagram in $\arrows(\vertex)$. By Lemma ~\ref{prop:arrows are hopf algebra}, the VI relation on $\arrows$ induces an isomorphism of Hopf algebras $\arrows(\vertex)\cong \arrows(\uparrow_2)$. Under the isomorphism $\Upsilon: \arrows(\uparrow_2)\rightarrow \widehat{U}(\cyc_2\rtimes(\tder_2\oplus \mathfrak{a}_2))$ we can identify the value $Z(\vertex)\in\arrows(\uparrow_2)$ with $$\Upsilon(Z(\vertex))=e^be^\nu \in  \widehat{U}(\cyc_2 \rtimes (\tder_2\oplus \mathfrak{a}_2)),$$ where $b\in \cyc_2$ and $\nu \in \tder_2 \oplus \mathfrak a _2$. (We will soon assume that the $\mathfrak a _2$ component of $\nu$ is zero.)

Similarly, we recall from Lemma~\ref{hopf_vertex} that there is a linear isomorphism $\kappa: \A(\upcap)\to \mathbb Q[[\xi]]/\langle \xi \rangle$ from arrow diagrams on a single capped strand to the completed polynomial algebra understood as a graded vector space and factored out by linear terms. Therefore, the value of the cap can be described as the exponential of a power series $\kappa(c)\in \mathbb Q[[\xi]]/\langle \xi \rangle$.

Kashiwara--Vergne solutions are in fact in bijection with families of group-like homomorphic expansions of $\wf$; and in each family one representative has the following special property: 
\begin{definition}\label{def:vsmall}
We say that a homomorphic expansion is \emph{$v$-small} if the projection of $\log \Upsilon(Z(\vertex))$ onto $\mathfrak{a}_2$ is zero. 
\end{definition} 

We are now ready to recall the main theorem of \cite{BND:WKO2}. Note that Kashiwara--Vergne solutions will be formally defined in Definition~\ref{def:solkv}.

\begin{theorem}\label{expansions are solKV}\cite[Theorem 4.9]{BND:WKO2}
There is a one-to-one correspondence between the set of $v$-small, group-like homomorphic expansions $Z:\wf\xrightarrow{} \A$, and Kashiwara--Vergne solutions. 
\end{theorem}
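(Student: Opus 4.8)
The plan is to exploit the finite presentation of $\wf$ to reduce a homomorphic expansion to finite data on generators, reinterpret that data through the Hopf-algebraic identifications already established, and then match the defining relations of $\wf$ against the Kashiwara--Vergne equations one by one.

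First I would observe that, since $Z$ is a circuit algebra homomorphism out of the finitely presented algebra $\wf$, it is determined by the three values $R = Z(\overcrossing)$, $V = Z(\vertex)$ and $C = Z(\upcap)$, subject to the constraint that these satisfy the images of the relations $R1^s, R2, R3, R4, OC, CP$ and intertwine the auxiliary operations. Group-likeness together with \cite[Theorem 3.30]{BND:WKO2} pins down $R = e^{\rightarrowdiagram}$, so the genuinely free data are $V$ and $C$. Using Lemma~\ref{lemma: VI} and Proposition~\ref{prop:arrows are hopf algebra} I would write $\Upsilon(V) = e^b e^\nu$ with $\nu\in\tder_2\oplus\mathfrak{a}_2$ and $b\in\cyc_2$; the $v$-small normalisation (Definition~\ref{def:vsmall}) kills the $\mathfrak{a}_2$-component, so that $F := e^\nu\in\TAut_2$ is a genuine tangential automorphism. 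Dually, Lemma~\ref{hopf_vertex} lets me record $C$ as a power series $h := \kappa(C)$.

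Next I would translate relations into equations. The crossing-only relations $R1^s$, $R2$, $R3$ and $OC$ should be satisfied automatically by $R = e^{\rightarrowdiagram}$, verified directly from the $4T$ and $TC$ relations of $\A$. The two remaining relations carry the content. Expanding $Z$ across $R4$ and transporting the result through $\Upsilon$ via the conjugation description of the $\TAut_2$-action in Remark~\ref{ex: action of arrow diag on lie} should produce exactly the first Kashiwara--Vergne equation $F(\mathfrak{bch}(x,y)) = x+y$. The $CP$ relation, together with compatibility with the disc unzip operation, should produce the second (divergence) equation: here the adjoint operation $A_e$ realises the exponential Jacobian of Section~\ref{sec: div and Jacobian}, so that $CP$ forces the group Jacobian $J(F)$ to equal $\trace$ of a series determined by $h$, which is the second KV equation.

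Finally I would assemble the bijection. In the forward direction, a $v$-small group-like expansion yields the pair $(F,h)$ satisfying both KV equations, hence a KV solution. Conversely, given a KV solution, I would reconstruct $Z$ by setting $R = e^{\rightarrowdiagram}$, $C = \kappa^{-1}(h)$, and $V = \Upsilon^{-1}(e^b e^\nu)$, where $F = e^\nu$ and the cyclic correction $b$ is the one dictated by the Jacobian so that $CP$ holds; the remaining relations then hold by reversing the computations above, and $v$-smallness selects the unique representative in each family. I expect the main obstacle to be the precise diagrammatic bookkeeping identifying $CP$ and disc unzip compatibility with the divergence equation --- matching the cap series $h$ to $J(F)$ through the $A_e$ operation is the delicate step, considerably more involved than the $R4\leftrightarrow\mathfrak{bch}$ correspondence.
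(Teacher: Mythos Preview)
Your overall architecture is correct and matches the strategy the paper imports from \cite{BND:WKO2}: reduce to generator values, translate $V$ and $C$ through $\Upsilon$ and $\kappa$, and match the resulting equations against the KV conditions. However, there is a genuine gap in your accounting of the relations.

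You list two sources of constraint beyond the crossing relations: $R4$ for the first KV equation, and ``the $CP$ relation together with disc unzip'' for the second. This misses the \emph{ordinary} unzip operation entirely. As the paper makes explicit in Proposition~\ref{prop:ExpansionEquations}, the non-tautological conditions on $(V,C)$ are three, not two: the relation $R4$ gives equation \eqref{R4}; homomorphicity with respect to \emph{unzip} gives the unitarity equation \eqref{U}, namely $V\cdot A_1A_2(V)=1$; and homomorphicity with respect to \emph{disc} unzip gives the cap equation \eqref{C}. The relation $CP$ of $\wf$ is among those that are satisfied tautologically once $R=e^{\rightarrowdiagram}$ is fixed; it is not the source of the divergence condition.

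This matters because your attribution of the Jacobian is misplaced. You correctly recall that the adjoint operation $A_e$ realises the exponential Jacobian, but $A_e$ appears in \eqref{U}, not in $CP$ or in \eqref{C}. The way the second KV equation emerges is a two-step argument (visible in the paper's proof of Theorem~\ref{thm: aut(A)=KRV} and in Proposition~\ref{prop:U'}): first \eqref{U} forces $\mathcal{J}(e^\nu)=e^{-2b}$, tying the wheel part $b$ of $\Upsilon(V)$ to the Jacobian of its tree part; then \eqref{C}, after using \eqref{R4} to simplify the action on $\gamma(x+y)$, identifies $b$ with $\trace(\gamma(x)+\gamma(y)-\gamma(x+y))$. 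Combining the two yields $J(F)=\trace(r(x+y)-r(x)-r(y))$ with $r=2\gamma$. Without isolating \eqref{U} you have no mechanism to relate $b$ to $\mathcal J$, and the passage from the cap equation to the divergence condition does not go through. (A minor point: your $F=e^\nu$ should be $F=e^{-\nu}$ under the paper's conventions, but this is only a sign convention, flagged in the footnotes.)
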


%In particular, the existence of solutions to the Kashiwara-Vergne conjecture imply the existence of homomorphic expansions for $\wf$. 
\subsection{Characterisation of group-like homomorphic expansions}
The following classification for group-like homomorphic expansions of $w$-foams can be found in \cite[Section 4.3]{BND:WKO2}; note that it is not formally stated as a theorem there, but discussed in a short section. In this statement, we use the (diagrammatic) cosimplicial notation as explained in Section~\ref{sec: cosimplicial structure}.

\begin{prop}\label{prop:ExpansionEquations} A filtered, skeleton preserving homomorphism of circuit algebras $Z:\wf\xrightarrow{} \arrows$ is a v-small group-like homomorphic expansion of $\wf$ if and only if the values \[ Z(\overcrossing) = e^{\rightarrowdiagram} = R \quad Z(\vertex)=e^v = V \quad \text{and} \quad Z(\upcap) =e^c = C\] satisfy the following equations: 
\begin{equation}\label{R4} 
 V^{12}R^{(12)3}=R^{23}R^{13}V^{12} \tag{R4}
\end{equation}
\begin{equation}\label{U}
V\cdot A_1A_2(V)=1\tag{U}
\end{equation} 
\begin{equation}\label{C} V^{12}C^{12}=C^1C^2  \quad \textnormal{ in } \quad \A(\upcap_2).  \tag{C}\end{equation}
\end{prop}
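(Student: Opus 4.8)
The plan is to exploit that $\wf$ is finitely presented as a circuit algebra (Definition~\ref{def: w-foams}), so that a circuit algebra homomorphism $Z\colon \wf\to\arrows$ is completely determined by its values on the four generators $\overcrossing,\undercrossing,\vertex,\upcap$, and conversely any choice of target values respecting the defining relations $R1^s,R2,R3,R4,OC,CP$ and intertwining the auxiliary operations $S_e,A_e,u_e,d_e$ extends uniquely to such a homomorphism. The proposition then becomes a bookkeeping problem: translate each relation and each operation-compatibility into an equation in $\arrows$ and check that the full list collapses to exactly \eqref{R4}, \eqref{U}, \eqref{C}. First I would record that group-likeness together with \cite[Theorem 3.30]{BND:WKO2} forces $Z(\overcrossing)=e^{\rightarrowdiagram}=R$ to be the exponential of a single arrow from the over- to the under-strand, whence $Z(\undercrossing)=R^{-1}=e^{-\rightarrowdiagram}$ by $R2$, and that $V=e^v$, $C=e^c$ is the general group-like form for the remaining two generators.

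Next I would dispatch the crossing-only relations. Because $R$ is the exponential of a single arrow, the relations among crossings translate into identities that already hold in $\arrows$: $R2$ becomes $R\cdot R^{-1}=1$; $OC$ becomes the statement that arrow tails commute, i.e. the $TC$ relation; $R3$ becomes the arrow-diagram identity implied by $TC$ together with $4T$; and $R1^s$ becomes the rotation-invariance $RI$ relation. Hence none of $R1^s,R2,R3,OC$ imposes a condition beyond the shape of $R$. The remaining data then contribute precisely the three listed equations: applying $Z$ to both sides of $R4$ (a crossing sliding past a vertex) yields \eqref{R4}; the compatibility of $Z$ with the unzip operation together with orientation reversal, which identifies the value of the reversed vertex as $A_1A_2(V)$, yields the unitarity equation \eqref{U}; and the cap relation $CP$, read through the identification $\kappa\colon\arrows(\upcap)\cong\mathbb{Q}[[\xi]]/\langle\xi\rangle$ of Lemma~\ref{hopf_vertex} and stacked on the two-cap skeleton, yields \eqref{C}. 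Throughout I would use $\Upsilon\colon\arrows(\uparrow_2)\cong\widehat{U}((\tder_2\oplus\mathfrak{a}_2)\ltimes\cyc_2)$ and the identification $\arrows(\vertex)\cong\arrows(\uparrow_2)$ of Lemma~\ref{lemma: VI} to make sense of $V^{12}$, $R^{(12)3}$, and the cosimplicial superscripts of Section~\ref{sec: cosimplicial structure}.

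For the converse, given $R=e^{\rightarrowdiagram}$, $V=e^v$, $C=e^c$ satisfying \eqref{R4}, \eqref{U}, \eqref{C}, I would define $Z$ on generators and verify that it descends from the free circuit algebra to the quotient $\wf$: the crossing-only relations hold automatically as above, while \eqref{R4}, \eqref{U}, \eqref{C} guarantee $R4$, the unzip-compatibility, and $CP$ respectively. Intertwining with $S_e$, $d_e$, and the remaining unzips then follows formally, since these equations are stated $S_e$- and $u_e$-equivariantly. Finally $\gr Z=\id_{\A}$ holds because $R,V,C$ are exponentials whose degree-one parts are exactly the generating arrows, and $v$-smallness is precisely the normalisation that the $\mathfrak{a}_2$-component of $\log\Upsilon(V)$ vanishes (Definition~\ref{def:vsmall}), which I would carry as a standing hypothesis pinning down the clean form of \eqref{U} rather than as an additional equation.

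The hard part will be the middle step: verifying that the crossing-only relations genuinely impose no further constraint and that the vertex- and cap-relations reduce \emph{exactly} to \eqref{R4}, \eqref{U}, \eqref{C} with nothing left over. In particular the derivation of \eqref{U} is the most delicate, since it requires tracking the unzip of the distinguished edge of a vertex together with orientation reversal, correctly identifying the reversed vertex with $A_1A_2(V)$, and passing between $\arrows(\vertex)$ and $\arrows(\uparrow_2)$ via the $VI$ relation; a sign or orientation slip here would alter the equation. I would therefore treat this case in full detail and indicate that the translations $R4\mapsto\eqref{R4}$ and $CP\mapsto\eqref{C}$ follow by the same, more routine, argument.
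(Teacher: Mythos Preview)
Your overall strategy matches the paper's: since $\wf$ is finitely presented, a homomorphic expansion is determined by its values on generators, and one checks which of the relations and auxiliary-operation compatibilities impose genuine conditions on $R,V,C$. Your treatment of the crossing-only relations and the derivation of \eqref{R4} and \eqref{U} are correct and essentially identical to the paper's sketch.

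However, there is a genuine error in your derivation of \eqref{C}. You attribute it to the $CP$ relation of $\wf$, but this cannot be right: the $CP$ relation (Figure~\ref{fig:R4 and CP}) involves only a crossing and a cap, so applying $Z$ to it produces an equation in $R$ and $C$ alone. The equation \eqref{C}, namely $V^{12}C^{12}=C^1C^2$, involves $V$, which appears nowhere in the $CP$ relation. In the paper's proof, \eqref{C} arises instead from homomorphicity with respect to \emph{disc unzip}: the disc unzip of a capped distinguished edge at a vertex sends the single-vertex-with-cap configuration (whose $Z$-value is $V\cdot C$ stacked appropriately) to two disjoint capped strands (whose $Z$-value is $C^1C^2$); compatibility forces $u_e(V\cdot C)=C^1C^2$, i.e.\ $V^{12}C^{12}=C^1C^2$. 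Meanwhile, the $CP$ relation of $\wf$ is one of the ``tautologically true'' equations once $R=e^{\rightarrowdiagram}$, because the arrow-diagram $CP$ relation in $\arrows$ kills arrow heads adjacent to caps. You should add disc unzip to your list of auxiliary operations to check (you currently mention only $u_e$ on an ordinary vertex edge), and reassign the $CP$ relation to the ``automatic'' column.
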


\begin{proof}[Sketch of the proof:]
As $Z:\wf\rightarrow\A$ is a homomorphism of circuit algebras, it is uniquely determined
by its values on the generators. These values must satisfy the equations obtained from applying $Z$ to the relations of $\wf$, and the equations forced by the homomorphicity condition with respect to the auxiliary operations. The result is obtained by going through each of these conditions. Many of the equations obtained are tautologically true, due to the choice of $Z(\overcrossing) = e^{\rightarrowdiagram}$. The few which are not give the equations stated: the $R4$ relation with a strand moving under a vertex implies the equation $(R4)$; homomorphicity with respect to unzip implies the unitarity equation $(U)$; homomorphicity with respect to disc unzip implies the cap equation $(C)$.
\end{proof}

\subsection{Completing homomorphic expansions}
We end this section with a few important facts about extending homomorphic expansions to the prounipotent completion $\hatwf$. Recall from Proposition \ref{prop:AssocGradedOfCompletion} that $\hatwf$ is filtered, and its associated graded circuit algebra is canonically isomorphic to $\A$.

\begin{prop}\label{prop:Zhat} Any homomorphic expansion $Z: \wf \to \A$ induces a map on the completion $\widehat{Z}: \hatwf \to \A$, with $\gr \widehat{Z} = \id_A$. Furthermore, this correspondence is a bijection between the set of homomorphic expansions and filtered circuit algebra maps $W: \hatwf \to \A$ which respect auxiliary operations and for which $\gr W=\id_\A$.
\end{prop}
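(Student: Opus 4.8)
The plan is to use the universal property of completion together with the fact, established just before Proposition~\ref{prop:AssocGradedOfCompletion}, that completion is a lax symmetric monoidal functor intertwining both the circuit algebra operations and the auxiliary operations. Write $\widehat{\eta}\colon\mathbb{Q}[\wf]\to\hatwf$ for the canonical map into the completion. Since $\A$ is complete and separated and the linear extension $Z\colon\mathbb{Q}[\wf]\to\A$ is filtration-preserving, $Z$ factors uniquely through $\widehat{\eta}$: for each $n$ it induces $\mathbb{Q}[\wf]/\calI^n\to\A/F^n\A$, and passing to the limit (using $\A=\lim_n\A/F^n\A$) yields a unique filtered map $\widehat{Z}\colon\hatwf\to\A$ with $\widehat{Z}\circ\widehat{\eta}=Z$. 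Functoriality of completion then guarantees that $\widehat{Z}$ is a map of circuit algebras and respects the auxiliary operations, since $Z$ does and these operations on $\hatwf$ are by construction the completions of those on $\mathbb{Q}[\wf]$.

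For the associated graded statement, recall from Proposition~\ref{prop:AssocGradedOfCompletion} that $\widehat{\eta}$ induces the canonical identification $\gr\hatwf\cong\A=\gr\mathbb{Q}[\wf]$, so $\gr\widehat{\eta}=\id_\A$. Functoriality of $\gr$ on filtered maps then gives $\gr\widehat{Z}=\gr\widehat{Z}\circ\gr\widehat{\eta}=\gr(\widehat{Z}\circ\widehat{\eta})=\gr Z=\id_\A$.

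It remains to establish the bijection. The inverse assignment sends a filtered circuit algebra map $W\colon\hatwf\to\A$ respecting auxiliary operations with $\gr W=\id_\A$ to the map $Z_W\colon\wf\to\A$ obtained by restricting $W\circ\widehat{\eta}$ to the basis elements $\wf\hookrightarrow\mathbb{Q}[\wf]$. Its linear extension is exactly $W\circ\widehat{\eta}$, which is a filtered, skeleton-preserving circuit algebra homomorphism respecting auxiliary operations (being a composite of two such maps), and $\gr(W\circ\widehat{\eta})=\gr W\circ\gr\widehat{\eta}=\id_\A$; hence $Z_W$ is a homomorphic expansion. The two assignments are mutually inverse: by construction $\widehat{Z}\circ\widehat{\eta}=Z$, so restricting to $\wf$ recovers $Z$; conversely, both $W$ and $\widehat{Z_W}$ are filtered extensions of $W\circ\widehat{\eta}$ along $\widehat{\eta}$, so they coincide.

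The only genuinely non-formal point is the uniqueness invoked in the last step, namely that a filtered map out of $\hatwf$ into the separated complete space $\A$ is determined by its composite with $\widehat{\eta}$. This holds because $\widehat{\eta}(\mathbb{Q}[\wf])$ is dense in $\hatwf$ and any filtered map is continuous for the inverse-limit topologies, so two such maps agreeing on a dense set agree everywhere. I expect this density-and-continuity argument, together with the careful bookkeeping needed to confirm that the auxiliary operations---which are \emph{external} to the circuit algebra structure---are preserved at every stage, to be the main thing requiring attention, though it is standard completion bookkeeping rather than a deep obstacle.
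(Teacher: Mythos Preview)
Your proof is correct and follows essentially the same route as the paper: both construct $\widehat{Z}$ by passing to quotients $\mathbb{Q}[\wf]/\calI^{n}$ and taking the inverse limit, and both obtain the bijection via restriction along $\wf\hookrightarrow\hatwf$. The only stylistic difference is that the paper writes out the truncations $Z_{\leq m}$ explicitly and checks their compatibility by hand, whereas you package the same argument as the universal property of completion together with a density-and-continuity remark; your version is in fact slightly more careful about the uniqueness direction of the bijection.
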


\begin{proof}The setup can be summarised in the following commutative diagram:
\[\begin{tikzcd}
\wf  \arrow[d, "\gamma"]\arrow[rrd, "Z"] 
\\
\hatwf \arrow[rr, dashed, "W"] && \A  
\end{tikzcd}\]
It is clear that if $W: \hatwf \to \A$ is a filtered circuit algebra map respecting auxiliary operations and $\gr W=\id_\A$, then it induces a homomorphic expansion $Z=W\circ \gamma$ via pre-composition with the canonical map $\gamma: \wf \rightarrow \hatwf$. It therefore remains to show that each homomorphic expansion $Z$ can be completed to a map $W$ with the necessary properties. 

Let $Z_{\leq m}: \mathbb{Q}[\wf] \to \A_{\leq m}$ denote the composition $p_{\leq m} \circ Z$, where $p_{\leq m}$ is the quotient map which truncates $\A$ at degree $m$. Note that by the universal property of expansions, $Z_{\leq m}$ restricted to $\calI^{m+1}$ is zero, and therefore it makes sense to talk about $Z_{\leq m}$ as a map on $\mathbb{Q}[\wf]/\calI^{m+1}$: 
$$Z_{\leq m}: \mathbb{Q}[\wf]/\calI^{m+1} \to \A_{\leq m} \hookrightarrow \A.$$
The maps $Z_{\leq m}$ for $ m\in \Z_{\geq 0}$ are compatible with the projections $\mathbb{Q}[\wf]/\calI^{m} \leftarrow \mathbb{Q}[\wf]/\calI^{m+1}$, and $\A_{\leq m} \leftarrow \A_{\leq m+1}$, and therefore, by the functoriality of inverse limits, induce a filtered circuit algebra map
$$W=\widehat{Z}: \hatwf \to \A.$$ 
By construction, $\gr \widehat{Z}=\id_\A$, $\widehat{Z}$ respects the auxiliary operations, and  $Z=W\circ \gamma$. Therefore, this is a bijective correspondence between homomorphic expansions $Z$ and maps $W: \hatwf \to \arrows$ with the stated properties.  
\end{proof}

\begin{theorem}\label{thm: expansions are isomorphisms}
For any homomorphic expansion $Z:\wf\xrightarrow{}\A$, the induced map $\widehat{Z}: \hatwf \to \A$ is an isomorphism of filtered, completed circuit algebras. 
\end{theorem}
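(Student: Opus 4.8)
The plan is to deduce the result from the standard principle that a filtered morphism between complete filtered vector spaces which induces an isomorphism on associated graded spaces is itself an isomorphism. By Proposition~\ref{prop:Zhat}, $\widehat{Z}$ is a filtered circuit algebra map satisfying $\gr\widehat{Z}=\id_\A$, and by Proposition~\ref{prop:AssocGradedOfCompletion} the source $\hatwf$ is complete with $\gr(\hatwf)\cong\A$, while the target $\A$ is complete by construction. Since both circuit algebras are indexed by skeleta and $\widehat{Z}$ is skeleton preserving, it suffices to prove that for each $s\in\calS$ the component $\widehat{Z}(s)\colon\hatwf(s)\to\A(s)$ is a linear isomorphism; the inverses can then be reassembled.

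Fix $s\in\calS$. First I would establish injectivity: because $\hatwf(s)=\lim_n \mathbb{Q}[\wf](s)/\calI^n(s)$ is an inverse limit, its filtration $\{\widehat{\calI^n}(s)\}$ is separated, i.e. $\bigcap_n \widehat{\calI^n}(s)=0$. Hence any nonzero $v$ lies in $\widehat{\calI^n}(s)\setminus\widehat{\calI^{n+1}}(s)$ for some $n$, and its image in $\gr^n\hatwf(s)$ is nonzero; since $\gr\widehat{Z}(s)=\id$ is injective, the leading term of $\widehat{Z}(s)(v)$ is nonzero and so $\widehat{Z}(s)(v)$ cannot vanish. For surjectivity I would use successive approximation: given a target element, the fact that $\gr\widehat{Z}(s)$ is surjective lets me correct a candidate preimage one filtration degree at a time, and the resulting sequence of partial sums converges because $\hatwf(s)$ is complete, producing an exact preimage.

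With each $\widehat{Z}(s)$ a bijection, the inverse maps $\widehat{Z}(s)^{-1}$ are automatically filtration preserving: the induced graded map is $(\gr\widehat{Z}(s))^{-1}=\id$, which is strict, so the inverse is filtered. The family $\{\widehat{Z}(s)^{-1}\}_{s\in\calS}$ assembles into a two-sided inverse of $\widehat{Z}$ which is again a homomorphism of circuit algebras: since $\widehat{Z}$ commutes with all wiring-diagram operations and is a bijection, its set-theoretic inverse commutes with them as well, and the same formal argument shows the inverse respects the auxiliary operations $S_e$, $A_e$, $u_e$ and $d_e$. Therefore $\widehat{Z}$ is an isomorphism of filtered, complete circuit algebras.

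I expect the main obstacle to be the surjectivity half of the completeness lemma: one must check that the degree-by-degree correction terms genuinely assemble into a well-defined element of the inverse limit $\hatwf(s)$ rather than merely a formal series, which is exactly where completeness of $\hatwf$ from Proposition~\ref{prop:AssocGradedOfCompletion} is used, and that this construction is uniform across skeleta so that the global inverse is indeed a circuit algebra map.
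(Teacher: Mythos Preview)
Your proof is correct and follows essentially the same approach as the paper: both arguments exploit that $\gr\widehat{Z}=\id_\A$ together with completeness of the source and target to produce an inverse, with you invoking the standard ``filtered map with iso on associated graded between complete separated spaces is an iso'' lemma, while the paper carries out the same successive-approximation construction explicitly via maps $\psi_m:\A\to\mathbb{Q}[\wf]/\calI^{m+1}$ and the universal property of the inverse limit. One small wording issue: your justification that the inverse is filtered is slightly circular as stated; the clean argument is that since $\gr\widehat{Z}$ is an isomorphism, $\widehat{Z}$ is \emph{strict} (it maps $\widehat{\calI^n}$ onto $\A^{\geq n}$), and hence $\widehat{Z}^{-1}$ sends $\A^{\geq n}$ back to $\widehat{\calI^n}$.
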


\begin{proof}
Since $\widehat{Z}$ is a filtered circuit algebra homomorphism, we only need to prove that it is invertible. Note that by the universal property of expansions, $Z_{\leq m}: \mathbb{Q}[\wf]/\calI^{m+1} \to \A$ restricts to the identity map $\calI^{m}/\calI^{m+1} \to \A_m$. Let $\psi_m$ denote the composition of the projection onto degree $m$ with the identity map and the inclusion:
$$\psi_m: \A \twoheadrightarrow \A_m  \to \calI^m/\calI^{m+1} \hookrightarrow \mathbb{Q}[\wf]/\calI^{m+1}$$ 
By construction, both $\psi_m \circ Z_{\leq m}$ and $Z_{\leq m}\circ \psi_m$ restrict to the identity map on $\calI^m/\calI^{m+1}$ in $\mathbb{Q}[\wf]/\calI^{m+1}$ and in $\A$, respectively. This can be summarised in the following commutative diagram:
\[\begin{tikzcd}[cells={nodes={minimum height=2em}}]
& \A=\widehat{\A} \arrow[dl] \arrow[d] \arrow[dr] \arrow[drr, yshift=2pt] & & \\
\A_{\leq 0} \arrow[d, "\psi_0"] & \A_{\leq 1}  \arrow[l] \arrow[d, "\psi_1"] &  \A_{\leq 2} \arrow[l] \arrow[d, "\psi_2"]  &  \phantom{\A}\cdots \arrow[l] \\
\mathbb{Q}[wF]/I \arrow[u,xshift=-5pt,"Z_{\leq 0}"] & \mathbb{Q}[wF]/I^2  \arrow[l] \arrow[u,xshift=-5pt,"Z_{\leq 1}"] & \mathbb{Q}[wF]/I^3 \arrow[l] \arrow[u,xshift=-5pt,"Z_{\leq 2}"] & \phantom{\A}\cdots \arrow[l] 
\end{tikzcd}\]

By the universal property of inverse limits, there is a map $\Psi : \A \to \hatwf$, compatible with the maps $\psi_m$, which implies that $\Psi$ is an inverse to $\widehat{Z}$.
\end{proof}

\section{Topological characterisation of the groups $\kv$ and $\krv$}\label{sec: KRV}
Alekseev and Torossian describe solutions to the generalised Kashiwara--Vergne (KV) equations in Section 5.3 of \cite{AT12}. A solution of the KV equations is a tangential automorphism  of the degree completed free Lie algebra on two generators, $F\in\TAut_2$, such that $$F(e^xe^y)=e^{x+y},$$ which also satisfies an additional divergence condition.\footnote{We note that what we denote by $F\in\TAut_2$ would be $F^{-1}$ in \cite{AT12} and \cite{BND:WKO2}. The reason for this is that we match our notation with the group presentations of $\kv$ and $\krv$ in \cite{ATE10}.} Recall that the logarithm of $e^xe^y$ is an infinite Lie series \[\mathfrak{bch}(x,y)= \log(e^x e^y)= x+y+\frac{1}{2}[x,y]+ \frac{1}{12}[x,[x,y]]+ \ldots \] called the Baker--Campbell--Hausdorff (BCH) series in $x$ and $y$. In this sense, the KV problem is a refinement of the BCH Theorem.

\begin{definition}\label{def:solkv}The set of solutions to the KV equations is given by 
\begin{multline*}
\text{SolKV}=\text{SolKV}(\mathbb{Q}):= \Big\{(F, r)\in \TAut_2(\mathbb{Q})\times u^2\mathbb{Q}[[u]] \Big| \\ 
F(e^xe^y)=e^{x+y} \ \text{and} \  J(F)=\trace\Big(r(x+y)-r(x)-r(y)\Big)\Big\}.
\end{multline*} 
\end{definition}

In fact, in any pair $(F,r)\in \text{SolKV}$, the element $F$ uniquely determines the power series $r$ (\cite[before Proposition 6]{ATE10}). This assignment $F \mapsto r$ is called the \emph{Duflo map}, and $r$ is called the \emph{Duflo function} of the automorphism $F$. Accordingly, the automorphism $F\in \TAut_2$ is often called a KV solution and, following the literature, we will write $F\in \text{SolKV}$, as long as there exists $r\in u^2\mathbb Q[[u]]$ with $(F,r)\in \text{SolKV}$. 

The symmetry groups $\kv$ and $\krv$ act freely and transitively on the set of KV solutions. Both $\kv$ and $\krv$ are subgroups of tangential automorphisms which satisfy a divergence condition. While elements of $\kv$ and $\krv$ are defined below as pairs, in both cases the first component of the pair determines the second \emph{uniquely}, and the group structures are given by composition in $\TAut_2$ (\cite[Section 6.1]{ATE10}). 
\begin{definition}\label{def: kv}\cite[after Proposition 7]{ATE10}
The Kashiwara--Vergne group $\kv$ is
\begin{multline*}
\kv=\kv(\mathbb{Q}):= \Big\{(a, \sigma)\in \TAut_2(\mathbb{Q})\times u^2\mathbb{Q}[[u]] \Big| \\ 
a(e^{x}e^{y})= e^{x}e^{y} \ \text{and} \  J(a)=\trace\Big(\sigma(\mathfrak{bch}(x,y))-\sigma(x)-\sigma(y)\Big)\Big\}
\end{multline*}
The left action of $\kv$ on SolKV is given by right composition with the inverse in $\TAut_2$. That is, $a\cdot F=F\circ a^{-1}.$
\end{definition}

The graded Kashiwara--Vergne group $\krv$ is the group corresponding to the Lie subalgebra of the tangential derivations of $\lie_2$ which vanish on the sum of the generators, and whose divergence is in the kernel of the 1-cocycle $\delta$ \cite[Remark 50]{ATE10}, \cite[Sec.\ 4.1]{AT12}:
\begin{definition}\label{def: krv}\cite[after Proposition 7]{ATE10}
 The group $\krv$ is defined as 
\begin{multline*}
\krv=\krv(\mathbb{Q}):= \Big\{(\alpha, s)\in \TAut_2(\mathbb{Q})\times u^2\mathbb{Q}[[u]] \Big| \\ 
\alpha(e^{x+y})=e^{x+y} \ \text{and} \  J(\alpha)=\trace\Big(s(x+y)-s(x)-s(y)\Big)\Big\}
\end{multline*}
The right action of $\krv$ on SolKV is given by left composition with the inverse in $\TAut_2$. That is, $F\cdot \alpha=\alpha^{-1}\circ F$.
\end{definition} 

For both $\kv$ and $\krv$ there are Duflo maps given by $a\mapsto \sigma$ and $\alpha \mapsto s$. These Duflo maps are group homomorphisms $\TAut_2 \to u^2\mathbb Q[[u]]$, where the group operation in $\TAut_2$ is composition, and $u^2\mathbb Q[[u]]$ is the additive group \cite[Proposition 34]{ATE10}. For example, if $(\alpha_1, s_1)\in \krv$ and $(\alpha_2,s_2)\in \krv$ then $(\alpha_2\circ \alpha_1, s_1+s_2)\in \krv$.

\medskip 

In this section we identify the two symmetry groups $\kv$ and $\krv$ with the groups of automorphisms of the circuit algebras of $w$-foams and arrow diagrams, respectively. These act freely and transitively on the set of homomorphic expansions -- which are identified with $\text{SolKV}$ in \cite{BND:WKO2} -- by pre- and post-composition. Since working with graded spaces is more tractable, we consider automorphisms of arrow diagrams first. 

 \subsection{Automorphisms of arrow diagrams and the group $\krv$}
A circuit algebra automorphism of $\arrows$ is said to be \emph{skeleton preserving} if it restricts to a linear map $\arrows(s) \to \arrows(s)$ for each skeleton $s \in \calS$.  

\begin{definition}\label{def:aut}
We denote by $\Aut(\arrows)$ the group of skeleton preserving, filtered circuit algebra automorphisms $G:\arrows\to \arrows$ which commute with the auxiliary operations (orientation switches and unzips as in Definition~\ref{def:operations on arrows}), and which satisfy \[G(\rightarrowdiagram)=\rightarrowdiagram, \quad G(\vertex)=N=e^\eta, \quad \text{and} \quad G(\upcap)=\Gamma=e^\gamma,\] for primitive elements $\eta \in \arrows(\uparrow_2)$ and $\gamma \in \arrows(\upcap)$.  
\end{definition}

The following proposition justifies why $\Aut(\arrows)$ is the correct choice of automorphism group to consider for $\A$. 

\begin{prop} \label{prop:AutAndExpansions} Given a circuit algebra automorphism $G:\arrows \to \arrows$,  the following are equivalent:
\begin{enumerate}
\item $G\in \Aut(\arrows)$.
\item For any group-like homomorphic expansion $Z: \wf \to \arrows$, the composition $G\circ Z: \wf \rightarrow \arrows$ is also a group-like homomorphic expansion.
\end{enumerate}
\end{prop}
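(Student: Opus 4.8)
The plan is to treat the two implications separately, exploiting that a circuit algebra homomorphism out of a finitely presented circuit algebra is determined by its values on the generators $\rightarrowdiagram,\vertex,\upcap$, and that being a group-like homomorphic expansion can be checked generator-by-generator together with the structural conditions of Definition~\ref{def: homomorphic expansion}. For $(1)\Rightarrow(2)$, suppose $G\in\Aut(\arrows)$ and let $Z$ be a group-like homomorphic expansion. The composite $G\circ Z$ is automatically a circuit algebra homomorphism, and it is filtered and skeleton preserving since both $G$ and $Z$ are. As $Z$ intertwines unzip and orientation-switch with their graded counterparts and $G$ commutes with those graded operations, so does $G\circ Z$. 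To obtain $\gr(G\circ Z)=\gr G\circ \gr Z=\id_\arrows$ it suffices to show $\gr G=\id_\arrows$: the map $\gr G$ is a graded circuit algebra endomorphism determined by its values on generators, and it fixes the degree-one generator $\rightarrowdiagram$ because $G(\rightarrowdiagram)=\rightarrowdiagram$, while it fixes the degree-zero generators $\vertex,\upcap$ because $G(\vertex)=e^\eta$ and $G(\upcap)=e^\gamma$ have degree-zero part equal to the respective generator.

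The one remaining point in this direction is that $G\circ Z$ is group-like, i.e.\ that $G$ carries the group-like generator values $Z(\overcrossing)=e^{\rightarrowdiagram}$, $Z(\vertex)=V$, $Z(\upcap)=C$ to group-like elements. Since the stacking product is a circuit algebra operation and $G$ is filtered, $G$ commutes with exponentials, so $G(e^{\rightarrowdiagram})=e^{\rightarrowdiagram}$ is group-like at once. For $V$ and $C$ I would first show that $G$ respects the coproduct of Definition~\ref{def: coprod}: the connected-component coproduct $\Delta$ is itself a morphism of circuit algebras, so that $\Delta\circ G$ and $(G\otimes G)\circ\Delta$ are both circuit algebra morphisms and hence agree once they agree on generators. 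On $\rightarrowdiagram$ this is the primitivity of the single arrow, and on $\vertex,\upcap$ it is exactly the group-likeness of $e^\eta,e^\gamma$ supplied by Definition~\ref{def:aut}. Thus $G$ preserves group-like elements, so $G(V)$ and $G(C)$ are group-like and $G\circ Z$ is a group-like homomorphic expansion.

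For $(2)\Rightarrow(1)$, fix one group-like homomorphic expansion $Z$ (which exists by Theorem~\ref{expansions are solKV}); by hypothesis $G\circ Z$ is another. By Proposition~\ref{prop:Zhat} and Theorem~\ref{thm: expansions are isomorphisms}, both $\widehat{Z}$ and $\widehat{G\circ Z}=G\circ\widehat{Z}$ (the latter identity by the uniqueness in Proposition~\ref{prop:Zhat}, using that $G$ is filtered) are isomorphisms $\hatwf\to\arrows$, so $G=\widehat{G\circ Z}\circ\widehat{Z}^{-1}$. This exhibits $G$ as a composite of filtered, skeleton preserving isomorphisms respecting the auxiliary operations with $\gr=\id$, so $G$ inherits all of these, in particular $\gr G=\id_\arrows$. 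The generator values are then pinned down: from $G(e^{\rightarrowdiagram})=(G\circ Z)(\overcrossing)=e^{\rightarrowdiagram}$ together with $\gr G=\id$ one extracts $G(\rightarrowdiagram)=\rightarrowdiagram$ degree by degree; and since the expansions $\widehat{Z},\widehat{G\circ Z}$ are compatible with the coproducts, so is $G$, whence the group-like elements $\vertex,\upcap$ are sent to group-like elements $e^\eta,e^\gamma$ with $\eta,\gamma$ primitive. This is precisely membership in $\Aut(\arrows)$.

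The main obstacle is the group-likeness bookkeeping. The coproduct $\Delta$ is not one of the wiring-diagram operations, so its compatibility with $G$ must be argued by hand, and one must avoid the circularity between the statements ``$G$ respects $\Delta$'' and ``$G$ sends $\vertex,\upcap$ to group-like elements.'' In the direction $(1)\Rightarrow(2)$ this circularity is broken cleanly because the group-likeness of $G(\vertex),G(\upcap)$ is built into Definition~\ref{def:aut}; in the direction $(2)\Rightarrow(1)$ it is broken by routing through the coproduct-compatibility of homomorphic expansions and the identity $G=\widehat{G\circ Z}\circ\widehat{Z}^{-1}$. Verifying carefully that $\Delta$ is a morphism of circuit algebras, and that group-like homomorphic expansions are coproduct-compatible, is the technical heart of the argument; the remaining verifications are formal consequences of functoriality of completion and the fact that homomorphisms are determined on generators.
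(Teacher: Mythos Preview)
Your overall architecture matches the paper's: both directions use the decomposition $G = \widehat{G\circ Z}\circ \widehat{Z}^{-1}$ and the fact that circuit algebra maps are determined on generators. The substantive difference is in how group-likeness is handled.

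For $(1)\Rightarrow(2)$ you show that $G$ intertwines the coproduct $\Delta$, and hence preserves group-like elements. The paper bypasses this entirely: since $Z(\vertex)=V$ is a wiring-diagram composition of many copies of $\rightarrowdiagram$ with a single $\vertex$, and $G$ fixes $\rightarrowdiagram$, one reads off directly that $(G\circ Z)(\vertex)=VN$ (the arrows are unchanged, the skeleton vertex is replaced by $N$); likewise $(G\circ Z)(\upcap)=C\Gamma$. Then $VN$ and $C\Gamma$ are group-like simply because products of group-like elements in the Hopf algebras $\arrows(\uparrow_2)$ and $\arrows(\upcap)$ are group-like. This is shorter and avoids having to verify that $\Delta$ is a morphism of circuit algebras.

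The same shortcut works for $(2)\Rightarrow(1)$: once you know $(G\circ Z)(\vertex)=VN$ with $V$ and $VN$ both group-like, $N=V^{-1}(VN)$ is group-like, so $N=e^\eta$ with $\eta$ primitive; similarly for $\Gamma$. You instead route through ``expansions are coproduct-compatible,'' which would require defining a coproduct on $\hatwf$ and checking compatibility---extra scaffolding the paper never builds.

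Your approach is not wrong, and the coalgebra-morphism viewpoint is conceptually pleasant, but it front-loads two lemmas (that $\Delta$ is a circuit algebra morphism, and that expansions respect it) which you correctly flag as ``the technical heart'' but leave unproved. The paper's direct computation of $(G\circ Z)(\text{generator})$ via the circuit algebra presentation is the more elementary route and gives the result without those detours. One small caution: in $(2)\Rightarrow(1)$ you invoke ``$G$ is filtered'' to identify $\widehat{G\circ Z}=G\circ\widehat{Z}$ \emph{before} deriving that $G$ is filtered from that very identity; the paper has the same ordering, but you should be aware it rests on the uniqueness clause of Proposition~\ref{prop:Zhat} rather than on filteredness of $G$.
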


\begin{proof}
First assume that $G\in \Aut(\arrows)$ and that $Z: \wf \to \arrows$ is a group-like homomorphic expansion. We will show that $G\circ Z$ is also a group-like homomorphic expansion.

Since $G$ and $Z$ are both circuit algebra homomorphisms which commute with the auxiliary operations, it follows that $G\circ Z$ is also a homomorphism which commutes with these operations. We need to verify that $\gr (G\circ Z)= \id_\arrows$. Indeed, \begin{equation*}\gr (G \circ Z)=\gr(G) \circ \gr(Z) = \gr(G) \circ \id_A=\gr(G).\end{equation*} The fact that $\gr(G)= \id_\arrows$ follows from the properties that $G(\rightarrowdiagram)=\rightarrowdiagram$, $G(\vertex)=e^\eta$, $G(\upcap)=e^\gamma$, via the definition of the associated graded map. Hence, $G\circ Z$ is a homomorphic expansion.

The homomorphic expansion $G\circ Z$ is group-like if and only if the $G\circ Z$-values of the generators $\overcrossing$, $\vertex$ and $\upcap$ are group-like (exponentials of primitives). Recall from Proposition~\ref{prop:ExpansionEquations} that 
$$Z(\overcrossing)=R=e^{\rightarrowdiagram}\in\arrows(\uparrow_2), \quad Z(\upcap)=C=e^{c} \in \arrows(\upcap) \quad \text{and}\quad Z(\vertex)=V=e^{v}\in\arrows(\uparrow_2).$$
Therefore, $(G\circ Z )(\overcrossing)= e^{\rightarrowdiagram}$, since $G$ fixes the arrow $\rightarrowdiagram$; this is a group-like element. For the cap, $(G\circ Z)(\upcap)= C\Gamma$ as shown in Figure~\ref{fig:GZ}. This is a group-like element, as products of group-like elements are group-like (product here is understood as stacking in $\arrows(\uparrow)$). Similarly for the vertex, $(G\circ Z)(\vertex)=VN$ is group-like, as it is a product of group-like elements in the Hopf algebra $\arrows(\uparrow_2)$ (using Lemma~\ref{lemma: VI}). This completes the proof of the direction $(1) \Rightarrow (2)$.

For the opposite direction, assume that $G: \arrows \to \arrows$ is a circuit algebra automorphism with the property that, for any group-like homomorphic expansion $Z$, the composite $G\circ Z$ is also a group-like homomorphic expansion.
First note that by the uniqueness of the induced maps $\widehat{Z}$ (Proposition \ref{prop:Zhat}), we have that $$\widehat{G\circ Z}= G\circ \widehat{Z}: \widehat{\wf} \stackrel{\cong}{\longrightarrow} \arrows.$$ 
We need to show that $G$ is filtered, skeleton preserving and commutes with all auxiliary operations. All of these follow from the fact that $\widehat{Z}$ is a skeleton preserving circuit algebra isomorphism which intertwines auxiliary operations. To explicitly demonstrate this for the unzip operation, note that in the diagram:
\[
\begin{tikzcd}
\widehat{\wf}  \arrow[d, "u"]\arrow[r, "\widehat{Z}"]  & \arrows  \arrow[d, "u"]  \arrow[r, "G"] & \arrows  \arrow[d, "u"]
\\
\widehat{\wf} \arrow[r, "\widehat{Z}"] & \arrows \arrow[r, "G"] &\arrows 
\end{tikzcd}
\]
the outer rectangle commutes and that the $\widehat{Z}$ maps are invertible. It follows that the square on the right-hand side commutes. An identical argument works for the other operations. Thus, $G$ is a skeleton preserving circuit algebra automorphism which commutes with unzips and orientation switches. 

The automorphism $G$ is filtered because the composite $G\circ \widehat{Z}$ is filtered and $\widehat{Z}$ is a filtered isomorphism, therefore $G=G\circ \widehat{Z} \circ \widehat{Z}^{-1}$ is a composition of filtered maps.

By Theorem 3.10 of \cite{BND:WKO2}, any homomorphic expansion $Z: \wf \to \arrows$ takes the value $Z(\overcrossing)=e^{\rightarrowdiagram}=R$. Therefore, if $G\circ Z$ and $Z$ are both homomorphic expansions, then $G(\rightarrowdiagram)=\rightarrowdiagram$. Let $G(\vertex)=N$ and $G(\upcap)=\Gamma$. Then $(G\circ Z)(\vertex)=VN$ and $(G\circ Z)(\upcap)=C\Gamma$. By assumption, $V$, $VN$,  $C$ and $C\Gamma$ are group-like.  It follows that $N$ and $\Gamma$ are group-like, and hence can be described as $N=e^\eta$ and $\Gamma=e^\gamma$ as in Definition~\ref{def:aut}. This completes the proof of the $(2) \Rightarrow (1)$ direction.
\end{proof}

\begin{figure}	%\hspace{2cm}	
	$$G \circ Z(\upcap) = G\left(\raisebox{-0.35\height}{\def\svgscale{0.5}%% Creator: Inkscape 1.0 (4035a4fb49, 2020-05-01), www.inkscape.org
%% PDF/EPS/PS + LaTeX output extension by Johan Engelen, 2010
%% Accompanies image file '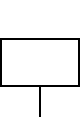' (pdf, eps, ps)
%%
%% To include the image in your LaTeX document, write
%%   \input{<filename>.pdf_tex}
%%  instead of
%%   \includegraphics{<filename>.pdf}
%% To scale the image, write
%%   \def\svgwidth{<desired width>}
%%   \input{<filename>.pdf_tex}
%%  instead of
%%   \includegraphics[width=<desired width>]{<filename>.pdf}
%%
%% Images with a different path to the parent latex file can
%% be accessed with the `import' package (which may need to be
%% installed) using
%%   \usepackage{import}
%% in the preamble, and then including the image with
%%   \import{<path to file>}{<filename>.pdf_tex}
%% Alternatively, one can specify
%%   \graphicspath{{<path to file>/}}
%% 
%% For more information, please see info/svg-inkscape on CTAN:
%%   http://tug.ctan.org/tex-archive/info/svg-inkscape
%%
\begingroup%
  \makeatletter%
  \providecommand\color[2][]{%
    \errmessage{(Inkscape) Color is used for the text in Inkscape, but the package 'color.sty' is not loaded}%
    \renewcommand\color[2][]{}%
  }%
  \providecommand\transparent[1]{%
    \errmessage{(Inkscape) Transparency is used (non-zero) for the text in Inkscape, but the package 'transparent.sty' is not loaded}%
    \renewcommand\transparent[1]{}%
  }%
  \providecommand\rotatebox[2]{#2}%
  \newcommand*\fsize{\dimexpr\f@size pt\relax}%
  \newcommand*\lineheight[1]{\fontsize{\fsize}{#1\fsize}\selectfont}%
  \ifx\svgwidth\undefined%
    \setlength{\unitlength}{38.53507034bp}%
    \ifx\svgscale\undefined%
      \relax%
    \else%
      \setlength{\unitlength}{\unitlength * \real{\svgscale}}%
    \fi%
  \else%
    \setlength{\unitlength}{\svgwidth}%
  \fi%
  \global\let\svgwidth\undefined%
  \global\let\svgscale\undefined%
  \makeatother%
  \begin{picture}(1,1.45970929)%
    \lineheight{1}%
    \setlength\tabcolsep{0pt}%
    \put(0,0){\includegraphics[width=\unitlength,page=1]{Composition.pdf}}%
    \put(0.38878495,0.54217816){\color[rgb]{0,0,0}\makebox(0,0)[lt]{\lineheight{1.25}\smash{\begin{tabular}[t]{l}$e^c$\end{tabular}}}}%
    \put(0,0){\includegraphics[width=\unitlength,page=2]{Composition.pdf}}%
  \end{picture}%
\endgroup%
}\right)=\raisebox{-0.4\height}{\def\svgscale{0.5}%% Creator: Inkscape 1.0 (4035a4fb49, 2020-05-01), www.inkscape.org
%% PDF/EPS/PS + LaTeX output extension by Johan Engelen, 2010
%% Accompanies image file '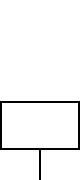' (pdf, eps, ps)
%%
%% To include the image in your LaTeX document, write
%%   \input{<filename>.pdf_tex}
%%  instead of
%%   \includegraphics{<filename>.pdf}
%% To scale the image, write
%%   \def\svgwidth{<desired width>}
%%   \input{<filename>.pdf_tex}
%%  instead of
%%   \includegraphics[width=<desired width>]{<filename>.pdf}
%%
%% Images with a different path to the parent latex file can
%% be accessed with the `import' package (which may need to be
%% installed) using
%%   \usepackage{import}
%% in the preamble, and then including the image with
%%   \import{<path to file>}{<filename>.pdf_tex}
%% Alternatively, one can specify
%%   \graphicspath{{<path to file>/}}
%% 
%% For more information, please see info/svg-inkscape on CTAN:
%%   http://tug.ctan.org/tex-archive/info/svg-inkscape
%%
\begingroup%
  \makeatletter%
  \providecommand\color[2][]{%
    \errmessage{(Inkscape) Color is used for the text in Inkscape, but the package 'color.sty' is not loaded}%
    \renewcommand\color[2][]{}%
  }%
  \providecommand\transparent[1]{%
    \errmessage{(Inkscape) Transparency is used (non-zero) for the text in Inkscape, but the package 'transparent.sty' is not loaded}%
    \renewcommand\transparent[1]{}%
  }%
  \providecommand\rotatebox[2]{#2}%
  \newcommand*\fsize{\dimexpr\f@size pt\relax}%
  \newcommand*\lineheight[1]{\fontsize{\fsize}{#1\fsize}\selectfont}%
  \ifx\svgwidth\undefined%
    \setlength{\unitlength}{38.76751607bp}%
    \ifx\svgscale\undefined%
      \relax%
    \else%
      \setlength{\unitlength}{\unitlength * \real{\svgscale}}%
    \fi%
  \else%
    \setlength{\unitlength}{\svgwidth}%
  \fi%
  \global\let\svgwidth\undefined%
  \global\let\svgscale\undefined%
  \makeatother%
  \begin{picture}(1,2.22777126)%
    \lineheight{1}%
    \setlength\tabcolsep{0pt}%
    \put(0,0){\includegraphics[width=\unitlength,page=1]{Composition2.pdf}}%
    \put(0.38645387,0.53892759){\color[rgb]{0,0,0}\makebox(0,0)[lt]{\lineheight{1.25}\smash{\begin{tabular}[t]{l}$e^c$\end{tabular}}}}%
    \put(0,0){\includegraphics[width=\unitlength,page=2]{Composition2.pdf}}%
    \put(0.29460928,1.30788528){\color[rgb]{0,0,0}\makebox(0,0)[lt]{\lineheight{1.25}\smash{\begin{tabular}[t]{l}$\Gamma$\end{tabular}}}}%
  \end{picture}%
\endgroup%
}$$
	\caption{The $G\circ Z$ value of the cap.}\label{fig:GZ}
\end{figure}

Any skeleton preserving automorphism $G \in \Aut(\arrows)$ is determined by its values on the generators of $\arrows$: the arrow $\rightarrowdiagram$, the cap $\upcap$, and the vertex $\vertex$.  Proposition~\ref{prop:AutAndExpansions} implies that any such $G$ fixes the generator $\rightarrowdiagram$, which means that $G$ is determined by
$$G(\upcap):=\Gamma=e^\gamma \quad \text{and} \quad G(\vertex):=N=e^\eta.$$

These values cannot be arbitrarily chosen, but must satisfy the equations arising from the relations of $\arrows$.   To determine these equations, recall from Proposition \ref{prop:ExpansionEquations}, that any group-like homomorphic expansion is determined by its values on the generators of $\wf$:
$$Z(\overcrossing)=R=e^{\rightarrowdiagram}\in\arrows(\uparrow_2), \quad Z(\upcap)=C=e^{c} \in \arrows(\upcap) \quad \text{and}\quad Z(\vertex)=V=e^{v}\in\arrows(\uparrow_2),$$ 
subject to the equations \eqref{R4}, \eqref{U} and \eqref{C}. Combining this with Proposition~\ref{prop:AutAndExpansions} we arrive at the following classification of automorphisms $G\in\Aut(\A)$.  %determines the set of equations that $\Gamma$ and $N$ must satisfy:

\begin{prop}\label{prop:SimplifiedEqns} The values $G(\upcap)=\Gamma=e^\gamma $ and $G(\vertex)=N=e^\eta$  generate an automorphism in $\Aut(\arrows)$ if and only if they satisfy the following equations with $R=e^{\rightarrowdiagram}\in\arrows(\uparrow_2)$:
\begin{gather} 
N^{12}R^{(12)3}(N^{12})^{-1}=R^{(12)3} \quad \text{ in } \quad \arrows(\uparrow_3) \tag{R4'}  \label{R4'}\\
N\cdot A_1A_2 (N)=1 \quad \text{ in } \quad \arrows(\uparrow_2)  \tag{U'} \label{U'}\\
N^{12}\Gamma^{12} =\Gamma^1\Gamma^2  \quad \text{ in } \quad \arrows(\upcap_2) \tag{C'} \label{C'}.
\end{gather}

\end{prop}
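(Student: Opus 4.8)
The plan is to reduce everything to the classification of expansions in Proposition~\ref{prop:ExpansionEquations} by way of the equivalence in Proposition~\ref{prop:AutAndExpansions}. First I would fix a $v$-small group-like homomorphic expansion $Z\colon\wf\to\arrows$ (one exists), with generator values $R=e^{\rightarrowdiagram}$, $V=Z(\vertex)$ and $C=Z(\upcap)$, which by Proposition~\ref{prop:ExpansionEquations} satisfy \eqref{R4}, \eqref{U} and \eqref{C}. As computed in the proof of Proposition~\ref{prop:AutAndExpansions}, the candidate map $G$ with $G(\rightarrowdiagram)=\rightarrowdiagram$, $G(\vertex)=N$, $G(\upcap)=\Gamma$ sends the composite $G\circ Z$ to the generator values $(G\circ Z)(\overcrossing)=R$, $(G\circ Z)(\vertex)=VN$ and $(G\circ Z)(\upcap)=C\Gamma$, with products taken by stacking. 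By Proposition~\ref{prop:AutAndExpansions}, $G\in\Aut(\arrows)$ if and only if $G\circ Z$ is again a ($v$-small) group-like homomorphic expansion, which by Proposition~\ref{prop:ExpansionEquations} holds if and only if the triple $(R,VN,C\Gamma)$ satisfies \eqref{R4}, \eqref{U} and \eqref{C}. Since $(R,V,C)$ already satisfies these, the entire content of the proposition is to show that, granting the equations for $(R,V,C)$, the equations for $(R,VN,C\Gamma)$ are equivalent to \eqref{R4'}, \eqref{U'} and \eqref{C'}. I would treat the three equations one at a time, each by a direct substitution and cancellation, using that group-like elements are invertible.

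For \eqref{R4}, I would rewrite the equation for $Z$ as $R^{23}R^{13}=V^{12}R^{(12)3}(V^{12})^{-1}$ and substitute it into the $R4$-equation $V^{12}N^{12}R^{(12)3}=R^{23}R^{13}V^{12}N^{12}$ for $VN$; the right-hand side becomes $V^{12}R^{(12)3}N^{12}$, and cancelling the invertible factor $V^{12}$ on the left yields $N^{12}R^{(12)3}=R^{(12)3}N^{12}$, which is exactly \eqref{R4'}. Every step is reversible, so \eqref{R4'} is equivalent to the $R4$-equation for $G\circ Z$.

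For \eqref{U}, I would use that $A_1A_2$ is an anti-automorphism of the stacking algebra $\arrows(\uparrow_2)$, so $A_1A_2(VN)=A_1A_2(N)A_1A_2(V)$, together with the fact that \eqref{U} for $Z$ says precisely $A_1A_2(V)=V^{-1}$. Substituting into $VN\cdot A_1A_2(VN)=1$ gives $V\,\bigl(N\cdot A_1A_2(N)\bigr)\,V^{-1}=1$, hence $N\cdot A_1A_2(N)=1$, which is \eqref{U'}. For \eqref{C}, which lives in $\arrows(\upcap_2)$, I would first note that strand-doubling is multiplicative, so $(C\Gamma)^{12}=C^{12}\Gamma^{12}$, and that wheels on disjoint capped strands commute, so $(C\Gamma)^1(C\Gamma)^2=(C^1C^2)(\Gamma^1\Gamma^2)$. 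Using the compatibility of the stacking action with these identities (Remark~\ref{ex: action on cyc 2}) and the $Z$-equation $V^{12}C^{12}=C^1C^2$ to absorb the $V$- and $C$-contributions, the $C$-equation for $(R,VN,C\Gamma)$ collapses, modulo \eqref{C}, to $N^{12}\Gamma^{12}=\Gamma^1\Gamma^2$, which is \eqref{C'}.

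The main obstacle is the bookkeeping in the \eqref{C} step: unlike \eqref{R4}, it takes place in the module $\arrows(\upcap_2)$ rather than in a group, and the stacking action of $\arrows(\uparrow_2)$ on $\arrows(\upcap_2)$ is only affine (the wheel part of $V$ acts by translation while the tree part acts by automorphisms), so one must be careful about the order in which $V^{12}$ and $N^{12}$ act and about which factors genuinely commute. I expect this to be the only place where the precise Hopf-algebra and module conventions from Section~\ref{sec:coproduct and product} and Remark~\ref{ex: action on cyc 2} are needed in an essential way. A secondary point to verify is that $G\circ Z$ remains $v$-small, so that Proposition~\ref{prop:ExpansionEquations} applies to it verbatim; this follows from the group-likeness of $N$ together with the form of \eqref{U'}.
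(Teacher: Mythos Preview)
Your overall strategy is exactly the paper's: reduce via Proposition~\ref{prop:AutAndExpansions} and Proposition~\ref{prop:ExpansionEquations} to the statement that, given \eqref{R4}, \eqref{U}, \eqref{C} for $(R,V,C)$, the corresponding equations for $(R,VN,C\Gamma)$ are equivalent to \eqref{R4'}, \eqref{U'}, \eqref{C'}. Your handling of \eqref{R4} and \eqref{U} matches the paper's almost word for word.

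The gap is in the \eqref{C} step, precisely where you flag the ``main obstacle'' but do not resolve it. After expanding the cap equation for $G\circ Z$ to $V^{12}N^{12}C^{12}\Gamma^{12}=C^1C^2\Gamma^1\Gamma^2$, you need to commute $N^{12}$ past $C^{12}$ before you can invoke $V^{12}C^{12}=C^1C^2$ and cancel. This is not merely ``bookkeeping'' or a matter of module conventions: $N$ has tree components with heads, so TC alone does not suffice. The paper proves this commutation using the VI relation (see Figure~\ref{fig: V and C}): on a vertex skeleton, $C$ on the distinguished edge equals $C^{12}$ on the two merging edges; applying the circuit algebra automorphism $G$ to both sides of this identity and comparing yields $C^{12}N^{12}=N^{12}C^{12}$. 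You should supply this argument explicitly rather than deferring to Remark~\ref{ex: action on cyc 2}. Once you have it, the paper's cancellation goes through: rewrite the right-hand side as $V^{12}C^{12}\Gamma^1\Gamma^2$ (by applying $G$ to the cap equation for $Z$) and then left-cancel $V^{12}C^{12}$ using the module structure.

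A minor point: your worry about $v$-smallness of $G\circ Z$ is misplaced. Proposition~\ref{prop:SimplifiedEqns} concerns $\Aut(\arrows)$, not $\Aut_v(\arrows)$, and the equations \eqref{R4}, \eqref{U}, \eqref{C} in Proposition~\ref{prop:ExpansionEquations} arise purely from the relations and auxiliary operations of $\wf$; the $v$-small condition is an independent normalisation, not a consequence of these equations. In particular, your claim that $v$-smallness ``follows from the group-likeness of $N$ together with the form of \eqref{U'}'' is incorrect, but fortunately it is also unnecessary.
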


\begin{proof}
Assume that $G\in \Aut(\arrows)$. As in the previous proof, we use that $(G\circ Z)(\upcap)= C\Gamma$ and $(G\circ Z)(\vertex)= VN$. 
Because $G\circ Z$ is a homomorphic expansion, the values $(G\circ Z)(\upcap)=C\Gamma$ and $(G\circ Z)(\vertex)=VN$ satisfy the equations \eqref{R4}, \eqref{U} and \eqref{C} of Proposition~\ref{prop:ExpansionEquations}. 

The equation \eqref{R4} applied to the composite gives the equation:
\begin{equation}\label{R4 combined} V^{12}N^{12}R^{(12)3}=R^{23}R^{13}V^{12}N^{12}.\end{equation}
Applying the \eqref{R4} equation on the right of \eqref{R4 combined} results in 
\begin{equation*}V^{12}N^{12}R^{(12)3}=V^{12}R^{(12)3}N^{12}.\end{equation*} After some rearranging, this is equivalent to the simplified equation \eqref{R4'}, as stated.

Similarly, if we apply \eqref{U} to the composite $G\circ Z$ we get the equation:
\begin{equation}\label{U composite}VN\cdot A_1A_2(VN)=1.\end{equation} 
The adjoint operation reverses the multiplication order (as it is an orientation switch). Multiplying by $V^{-1}$ on the left of \eqref{U composite} and $A_1A_2(V)^{-1}$ on the right of \eqref{U composite}, we obtain
\begin{equation*} N\cdot A_1A_2(N)=V^{-1}\cdot A_1A_2(V)^{-1}=1,\end{equation*} where the second equality holds because of the \eqref{U} equation for $V$. 

Finally, the Cap equation \eqref{C} for the composite $G\circ Z$ results in the equation:
\begin{equation}\label{C combined} V^{12}N^{12}C^{12}\Gamma^{12}=C^1 \Gamma^1 C^2 \Gamma^2.\end{equation}
First, observe that $N^{12}$ commutes with $C^{12}$: this is by the $VI$ relation, as shown in Figure~\ref{fig: V and C}. Furthermore, all instances of $\Gamma$ and $C$ commute with each other by the $TC$ relation. This leads to:
$$V^{12}C^{12}N^{12}\Gamma^{12}=C^1C^2\Gamma^1\Gamma^2.$$
Now note that the original Cap equation \eqref{C} says that on two capped strands $V^{12}C^{12}=C^1C^2$. Applying $G$ on both sides then gives $V^{12}C^{12}\Gamma^1\Gamma^2=C^1C^2\Gamma^1\Gamma^2$. Therefore, we can replace the right hand side above with $V^{12}C^{12}\Gamma^1\Gamma^2$.
$$V^{12}C^{12}N^{12}\Gamma^{12}=V^{12}C^{12}\Gamma^1\Gamma^2 .$$ 
Using that $\arrows(\upcap_2)$ has a left $\arrows(\uparrow_2)$-module by stacking -- see Example~\ref{ex: action on cyc 2} -- we can then left multiply by $(V^{12}C^{12})^{-1}$ to obtain the (\ref{C'}) equation.

Conversely, the equations \eqref{R4'}, \eqref{U'} and \eqref{C'} for $G\circ Z$ imply the equations \eqref{R4}, \eqref{U} and \eqref{C} for $Z$, and therefore by Proposition~\ref{prop:AutAndExpansions},  $G\in \Aut{\arrows}$.
\end{proof}

\begin{figure}
\begin{tikzpicture}[x=0.75pt,y=0.75pt,yscale=-1,xscale=1]
%uncomment if require: \path (0,300); %set diagram left start at 0, and has height of 300

%Straight Lines [id:da5688758773904327] 
\draw    (131.94,98.39) -- (86.52,60.51) ;
%Straight Lines [id:da296455922801823] 
\draw    (38,98.39) -- (86.52,60.51) ;
%Straight Lines [id:da8752756841115313] 
\draw [color={rgb, 255:red, 38; green, 10; blue, 248 }  ,draw opacity=1 ][line width=2.25]    (86.18,13) -- (86.52,60.51) ;
%Shape: Rectangle [id:dp8629582861288344] 
\draw  [color={rgb, 255:red, 208; green, 2; blue, 27 }  ,draw opacity=1 ][fill={rgb, 255:red, 255; green, 255; blue, 255 }  ,fill opacity=1 ][line width=1.5]  (69.31,34.32) -- (105.45,34.32) -- (105.45,55.69) -- (69.31,55.69) -- cycle ;
%Straight Lines [id:da4310933536891284] 
\draw    (465.57,98.39) -- (420.14,60.51) ;
%Straight Lines [id:da8316034283914496] 
\draw    (371.62,98.39) -- (420.14,60.51) ;
%Straight Lines [id:da7143967211597757] 
\draw [color={rgb, 255:red, 38; green, 10; blue, 248 }  ,draw opacity=1 ][line width=2.25]    (419.8,13) -- (420.14,60.51) ;
%Shape: Rectangle [id:dp04794699819413173] 
\draw  [color={rgb, 255:red, 208; green, 2; blue, 27 }  ,draw opacity=1 ][fill={rgb, 255:red, 255; green, 255; blue, 255 }  ,fill opacity=1 ][line width=1.5]  (378.85,70.33) -- (458.34,70.33) -- (458.34,92.29) -- (378.85,92.29) -- cycle ;
%Straight Lines [id:da09207914204541623] 
\draw    (133.15,282.58) -- (87.72,244.7) ;
%Straight Lines [id:da4708938797025314] 
\draw    (39.2,282.58) -- (87.72,244.7) ;
%Straight Lines [id:da44005175015811193] 
\draw [color={rgb, 255:red, 38; green, 10; blue, 248 }  ,draw opacity=1 ][line width=2.25]    (87.38,197.19) -- (87.72,244.7) ;
%Shape: Rectangle [id:dp7664742978620831] 
\draw  [color={rgb, 255:red, 208; green, 2; blue, 27 }  ,draw opacity=1 ][fill={rgb, 255:red, 255; green, 255; blue, 255 }  ,fill opacity=1 ][line width=1.5]  (45.23,256.97) -- (127.13,256.97) -- (127.13,275.26) -- (45.23,275.26) -- cycle ;
%Straight Lines [id:da6132370399401305] 
\draw    (301.77,283.8) -- (256.34,245.92) ;
%Straight Lines [id:da1207123656843555] 
\draw    (207.82,283.8) -- (256.34,245.92) ;
%Straight Lines [id:da3093817516867645] 
\draw [color={rgb, 255:red, 38; green, 10; blue, 248 }  ,draw opacity=1 ][line width=2.25]    (256,198.41) -- (256.34,245.92) ;
%Shape: Rectangle [id:dp7829455140949763] 
\draw  [color={rgb, 255:red, 208; green, 2; blue, 27 }  ,draw opacity=1 ][fill={rgb, 255:red, 255; green, 255; blue, 255 }  ,fill opacity=1 ][line width=1.5]  (225.53,251) -- (287,251) -- (287,272.6) -- (225.53,272.6) -- cycle ;
%Straight Lines [id:da725340116133399] 
\draw    (160.85,48.38) -- (347.54,49.59) ;
%Shape: Rectangle [id:dp32962525190792114] 
\draw  [color={rgb, 255:red, 208; green, 2; blue, 27 }  ,draw opacity=1 ][fill={rgb, 255:red, 255; green, 255; blue, 255 }  ,fill opacity=1 ][line width=1.5]  (207.03,276.48) -- (303.38,276.48) -- (303.38,296) -- (207.03,296) -- cycle ;
%Straight Lines [id:da6124824096654427] 
\draw    (470.39,280.14) -- (424.96,242.26) ;
%Straight Lines [id:da2656133110129766] 
\draw    (376.44,280.14) -- (424.96,242.26) ;
%Straight Lines [id:da9926824679808576] 
\draw [color={rgb, 255:red, 38; green, 10; blue, 248 }  ,draw opacity=1 ][line width=2.25]    (424.62,194.75) -- (424.96,242.26) ;
%Shape: Rectangle [id:dp7227557594229398] 
\draw  [color={rgb, 255:red, 208; green, 2; blue, 27 }  ,draw opacity=1 ][fill={rgb, 255:red, 255; green, 255; blue, 255 }  ,fill opacity=1 ][line width=1.5]  (393.15,249.65) -- (450.32,249.65) -- (450.32,267.94) -- (393.15,267.94) -- cycle ;
%Shape: Rectangle [id:dp3405653210247571] 
\draw  [color={rgb, 255:red, 208; green, 2; blue, 27 }  ,draw opacity=1 ][fill={rgb, 255:red, 255; green, 255; blue, 255 }  ,fill opacity=1 ][line width=1.5]  (377.65,272.82) -- (474,272.82) -- (474,292.34) -- (377.65,292.34) -- cycle ;
%Straight Lines [id:da3300837249785278] 
\draw [->]   (87,109) -- (87,172) ;
%Straight Lines [id:da1933443426791588] 
\draw [->]   (423,109) -- (423,172) ;
%Shape: Rectangle [id:dp07729305278045173] 
\draw  [color={rgb, 255:red, 208; green, 2; blue, 27 }  ,draw opacity=1 ][fill={rgb, 255:red, 255; green, 255; blue, 255 }  ,fill opacity=1 ][line width=1.5]  (70.52,238.67) -- (105.45,238.67) -- (105.45,220.37) -- (70.52,220.37) -- cycle ;
%Straight Lines [id:da1926244873229045] 
\draw [->]   (139.17,242) -- (198.19,242) ;
%Straight Lines [id:da2324015319524152] 
\draw[->]    (305.38,242) -- (364.4,242) ;

% Text Node
\draw (248.82,24.92) node [anchor=north west][inner sep=0.75pt]    {$=$};
% Text Node
\draw (249.61,61.51) node [anchor=north west][inner sep=0.75pt]    {$VI$};
% Text Node
\draw (80,37.47) node [anchor=north west][inner sep=0.75pt]  [font=\normalsize]  {$C$};
% Text Node
\draw (412,73.82) node [anchor=north west][inner sep=0.75pt]  [font=\normalsize]  {$C^{^{12}}$};
% Text Node
\draw (80,223) node [anchor=north west][inner sep=0.75pt]  [font=\normalsize]  {$C$};
% Text Node
\draw (80,259.01) node [anchor=north west][inner sep=0.75pt]  [font=\small]  {$N^{12}$};
% Text Node
\draw (243,253) node [anchor=north west][inner sep=0.75pt]  [font=\normalsize]  {$C^{^{12}}$};
% Text Node
\draw (243,279.4) node [anchor=north west][inner sep=0.75pt]  [font=\small]  {$N^{12}$};
% Text Node
\draw (412,253) node [anchor=north west][inner sep=0.75pt]  [font=\small]  {$N^{12}$};
% Text Node
\draw (412,274) node [anchor=north west][inner sep=0.75pt]  [font=\normalsize]  {$C^{^{12}}$};
% Text Node
\draw (156.82,222.92) node [anchor=north west][inner sep=0.75pt]    {$=$};
% Text Node
\draw (325.82,226.92) node [anchor=north west][inner sep=0.75pt]    {$=$};
% Text Node
\draw (67,132) node [anchor=north west][inner sep=0.75pt]    {$G$};
% Text Node
\draw (430,132) node [anchor=north west][inner sep=0.75pt]    {$G$};

\end{tikzpicture}

\caption{For any $G\in \Aut(\arrows)$ and $C\in \arrows(\uparrow)$, $N$ commutes with $C^{12}$.
}\label{fig: V and C}
\end{figure}

\begin{remark}
It is also possible, and requires approximately the same effort, to deduce the equations of Proposition~\ref{prop:SimplifiedEqns} directly from the relations of $\arrows$. However, it would require significantly more work to show that this is a complete set of equations without relying on the \cite{BND:WKO2} characterisation of expansions and Proposition~\ref{prop:AutAndExpansions}.
\end{remark}

Recall from Lemma~\ref{lemma: VI} (based on \cite[Lemma 4.6]{BND:WKO2}) that there is an isomorphism of Hopf algebras $$ \arrows(\uparrow_2)\stackrel{\Upsilon}\cong \widehat{U}(\cyc_2\rtimes(\tder_2\oplus \mathfrak{a}_2)),$$ where $\mathfrak{a}_2$ is central, and $\cyc_2$ is commutative. 

\begin{notation}\label{def:UpsilonN}
For $N=G(\vertex)$, we write $\Upsilon(N)=\Upsilon (e^\eta)=:e^we^{n+a}$ for the image of $N$ in $\widehat{U}(\cyc_2\rtimes(\tder_2\oplus \mathfrak{a}_2)),$ where $w\in \cyc_2$, $n\in \tder_2$, and $a\in \mathfrak{a}_2$. 
\end{notation}

In Section~\ref{sec: expansions} we saw that the set of solutions to the Kashiwara--Vergne conjecture are in one-to-one correspondence with the $v$-small homomorphic expansions. To establish the $v$-small condition in the $\krv$ context, we make the following definition. 

\begin{definition}\label{def:vsmallG}
An automorphism $G\in \Aut(\arrows)$ is {\em v-small} if the projection of $\log \Upsilon(N)$ onto the central subalgebra $\mathfrak{a}_2$ is zero. In other words, for a v-small $N=G(\vertex)$, we have $\Upsilon(N)=e^we^n $, with $w\in \cyc_2$ and $n\in \tder_2$. Let $\Aut_v(\arrows)$ denote the subgroup of v-small elements of $\Aut(\arrows).$
\end{definition}

The modified equation \eqref{U'} is identical to the equation \eqref{U}, but for $N$ in place of $V$. 
Therefore, the following result from \cite[Proof of Theorem 4.9 in Section 4.4]{BND:WKO2} applies directly: 

\begin{prop}\label{prop:U'}
The equation \eqref{U'} is satisfied if and only if $\mathcal{J}(e^n)=e^{-2w}$.
\end{prop}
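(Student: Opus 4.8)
The plan is to transport equation~\eqref{U'} across the Hopf algebra isomorphism $\Upsilon\colon\arrows(\uparrow_2)\xrightarrow{\cong}\widehat{U}(\cyc_2\rtimes(\tder_2\oplus\mathfrak a_2))$ and to exploit the diagrammatic description of the exponential Jacobian. By Notation~\ref{def:UpsilonN} together with $v$-smallness we may write $\Upsilon(N)=e^we^n$ with $w\in\cyc_2$ and $n\in\tder_2$; since $\Upsilon$ is an algebra map this means $N=W\cdot T$ in $\arrows(\uparrow_2)$, where $W:=\Upsilon^{-1}(e^w)$ lies in the subalgebra generated by wheels and $T:=\Upsilon^{-1}(e^n)$ is the image of the tree exponential $e^n\in\TAut_2$. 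The first step is to record, via the diagrammatic Jacobian formula \cite[Proposition 3.27]{BND:WKO2}, that $T\cdot T^{*}=\Upsilon^{-1}(\mathcal J(e^n))$, where $(-)^{*}:=A_1A_2$.

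Next I would compute the left-hand side of~\eqref{U'}. As noted in the proof of Proposition~\ref{prop:SimplifiedEqns}, the adjoint is an orientation switch and so reverses the order of the stacking product; hence $A_1A_2(N)=A_1A_2(W T)=T^{*}W^{*}$, and therefore
\[
N\cdot A_1A_2(N)=W\,(T\,T^{*})\,W^{*}=W\,\Upsilon^{-1}(\mathcal J(e^n))\,W^{*}.
\]
Both $W$ and $\Upsilon^{-1}(\mathcal J(e^n))$ are built from wheels, and wheels commute with one another by the TC relation (equivalently, $\cyc_2$ is commutative), so I may rewrite this as $\Upsilon^{-1}(\mathcal J(e^n))\cdot W\,W^{*}$.

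It then remains to evaluate $W\,W^{*}$. The key point is that the adjoint fixes the wheel part: a wheel has all of its skeleton endings as \emph{tails} (the spokes are oriented inward), so $A_1A_2$ introduces no sign $(-1)^{\#(\text{heads})}$ and leaves the underlying cyclic word unchanged, giving $\Upsilon(W^{*})=e^{w}=\Upsilon(W)$. Consequently $\Upsilon(W\,W^{*})=e^{2w}$, and combining the above,
\[
\Upsilon\bigl(N\cdot A_1A_2(N)\bigr)=\mathcal J(e^n)\,e^{2w}\in\exp(\cyc_2).
\]
Since $\exp(\cyc_2)$ is an abelian group, $N\cdot A_1A_2(N)=1$ holds if and only if $\mathcal J(e^n)\,e^{2w}=1$, that is $\mathcal J(e^n)=e^{-2w}$, which is the assertion.

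The step I expect to be the main obstacle is the wheel computation $\Upsilon(W\,W^{*})=e^{2w}$, which is where the factor of $2$ is produced. One must verify carefully that $A_1A_2$ acts as the identity on wheels: both that the sign is trivial because no arrow heads lie on the skeleton, and that reversing the orientations of the two strands, followed by the re-identification of $\arrows(\downarrow_2)$ with $\cyc_2$, leaves each cyclic word unchanged. All the remaining ingredients -- the order-reversal of the adjoint, the commutativity of wheels, and the identity $T\,T^{*}=\Upsilon^{-1}(\mathcal J(e^n))$ -- are imported directly from the cited results, so the argument is precisely the substitution of $N$ for $V$ in \cite{BND:WKO2}.
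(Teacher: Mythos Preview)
Your proposal is correct and follows precisely the approach indicated by the paper, which simply cites \cite[Proof of Theorem 4.9, Section 4.4]{BND:WKO2} and notes that one substitutes $N$ for $V$. In fact you have supplied more detail than the paper itself: the decomposition $N=WT$, the order-reversal $(WT)^*=T^*W^*$, the identification $TT^*=\Upsilon^{-1}(\mathcal J(e^n))$ from \cite[Proposition~3.27]{BND:WKO2}, the invariance $W^*=W$ of wheels under the adjoint, and the commutativity of the wheel part are exactly the ingredients of the cited argument, and your concern about the factor of $2$ is correctly resolved by the observation that wheels carry only arrow \emph{tails} on the skeleton.
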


The following is a straightforward implication of Proposition~\ref{prop:AutAndExpansions} and~\ref{prop:SimplifiedEqns}:

\begin{prop} \label{prop:AutvAndExpansions}For a circuit algebra automorphism $G: \arrows \to \arrows$, the following are equivalent:
\begin{enumerate}
\item $G\in \Aut_v(\arrows)$.
\item For any v-small group-like homomorphic expansion $Z: \wf \to \arrows$, the composition $G\circ Z: \wf \rightarrow \arrows$ is also a v-small group-like homomorphic expansion.
\item Denoting $G(\vertex)=N$ and $G(\upcap)=\Gamma$, the projection of $\log \Upsilon(N)$ to $\mathfrak{a}_2$ is zero, and $N$ and $\Gamma$ satisfy the equations \eqref{R4'}, \eqref{U'} and \eqref{C'} with $R=e^{\rightarrowdiagram}$.
\end{enumerate} \qed
\end{prop}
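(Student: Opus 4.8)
The plan is to prove the two substantive equivalences $(1)\Leftrightarrow(3)$ and $(1)\Leftrightarrow(2)$ by reducing everything to Propositions~\ref{prop:AutAndExpansions} and~\ref{prop:SimplifiedEqns}, supplemented by a single elementary observation about how $v$-smallness behaves under composition. I would begin with $(1)\Leftrightarrow(3)$, which is just a matter of unwinding definitions: by Definition~\ref{def:vsmallG} an automorphism $G\in\Aut(\arrows)$ lies in $\Aut_v(\arrows)$ exactly when the projection of $\log\Upsilon(N)$ onto $\mathfrak{a}_2$ vanishes, where $N=G(\vertex)$, while Proposition~\ref{prop:SimplifiedEqns} characterises membership in $\Aut(\arrows)$ by the equations \eqref{R4'}, \eqref{U'} and \eqref{C'} for $N=e^\eta$ and $\Gamma=e^\gamma$. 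Intersecting these two conditions is precisely statement $(3)$, so $(1)\Leftrightarrow(3)$ needs no further work.

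The key technical input for $(1)\Leftrightarrow(2)$ is that $v$-smallness is \emph{additive} under composition. Because $\mathfrak{a}_2$ is central in $\cyc_2\rtimes(\tder_2\oplus\mathfrak{a}_2)$ and a direct summand of $\tder_2\oplus\mathfrak{a}_2$, every bracket of the Lie algebra lands in $\cyc_2\oplus\tder_2$, so the projection onto $\mathfrak{a}_2$ is a Lie algebra homomorphism. It therefore induces a group homomorphism $p$ from the group of group-like elements of $\widehat{U}(\cyc_2\rtimes(\tder_2\oplus\mathfrak{a}_2))$ to the additive group $(\mathfrak{a}_2,+)$, sending a group-like element to the $\mathfrak{a}_2$-component of its logarithm. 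Applying $p$ to the identity $(G\circ Z)(\vertex)=VN$ recorded in the proofs of Propositions~\ref{prop:AutAndExpansions} and~\ref{prop:SimplifiedEqns}, and using that $\Upsilon$ is a Hopf algebra isomorphism so $\Upsilon(VN)=\Upsilon(V)\Upsilon(N)$, gives
\[
p(\Upsilon(VN))=p(\Upsilon(V))+p(\Upsilon(N)).
\]
Hence, for a $v$-small expansion $Z$ (so $p(\Upsilon(V))=0$ by Definition~\ref{def:vsmall}), the composite $G\circ Z$ is $v$-small if and only if $p(\Upsilon(N))=0$, that is, if and only if $G$ is $v$-small.

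With this additivity in hand I would close the loop. For $(1)\Rightarrow(2)$: if $G\in\Aut_v(\arrows)\subseteq\Aut(\arrows)$, then Proposition~\ref{prop:AutAndExpansions} already makes $G\circ Z$ a group-like homomorphic expansion for every group-like $Z$, and the displayed computation upgrades this to $v$-smallness whenever $Z$ is $v$-small. For $(2)\Rightarrow(1)$: I would fix one $v$-small group-like homomorphic expansion $Z$, which exists by Theorem~\ref{expansions are solKV} together with the existence of Kashiwara--Vergne solutions. Hypothesis $(2)$ makes $G\circ Z$ a ($v$-small) group-like homomorphic expansion, and I would then re-run the $(2)\Rightarrow(1)$ half of Proposition~\ref{prop:AutAndExpansions} essentially verbatim to conclude $G\in\Aut(\arrows)$; the additivity identity then forces $p(\Upsilon(N))=0$, so $G\in\Aut_v(\arrows)$.

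The main obstacle I anticipate is exactly the quantifier mismatch in this last step: statement $(2)$ only tests $G$ against $v$-small expansions, while Proposition~\ref{prop:AutAndExpansions} is stated for \emph{all} group-like expansions. The resolution is the observation that the $(2)\Rightarrow(1)$ direction of Proposition~\ref{prop:AutAndExpansions} is genuinely \emph{local}: its argument uses only one expansion $Z$ whose composite $G\circ Z$ is again an expansion, together with the fact that $\widehat{Z}$ is an isomorphism. Combined with the existence of at least one $v$-small group-like expansion, this is enough to invoke it. The remaining checks — that the $\mathfrak{a}_2$-projection is a Lie algebra map and that $\Upsilon$ is multiplicative on group-likes — are routine and I would not spell them out.
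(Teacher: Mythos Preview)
Your proposal is correct and follows the paper's intended approach: the paper states this proposition as a ``straightforward implication'' of Propositions~\ref{prop:AutAndExpansions} and~\ref{prop:SimplifiedEqns} and gives no proof beyond a \qed. Your elaboration supplies exactly the details the paper suppresses --- the additivity of the $\mathfrak{a}_2$-projection (which holds because $\mathfrak{a}_2$ is central and hence receives no contribution from any bracket) and the observation that the $(2)\Rightarrow(1)$ direction of Proposition~\ref{prop:AutAndExpansions} only uses a single expansion, resolving the quantifier mismatch with $v$-small expansions.
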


We're now ready to state and prove the main theorem of this section:

\begin{theorem}\label{thm: aut(A)=KRV}
There is an isomorphism of groups $\Aut_v(\arrows)\cong\krv$. 
\end{theorem}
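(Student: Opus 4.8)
The plan is to build an explicit isomorphism $\Phi\colon \Aut_v(\arrows)\to\krv$ directly out of the generating data $(N,\Gamma)=(G(\vertex),G(\upcap))$ of an automorphism, reading off the two coordinates of the target from the $\tder_2$-part of $N$ and from the cap value $\Gamma$. Concretely, writing $\Upsilon(N)=e^we^n$ with $n\in\tder_2$ and $w\in\cyc_2$ as in Notation~\ref{def:UpsilonN} (the $v$-smallness of Definition~\ref{def:vsmallG} kills the $\mathfrak{a}_2$-component, so this is the full decomposition), and setting $g:=\kappa(\log\Gamma)\in u^2\mathbb{Q}[[u]]$ via Lemma~\ref{hopf_vertex}, I would define
\[\Phi(G):=(\alpha,s):=(e^n,\,2g)\in\TAut_2\times u^2\mathbb{Q}[[u]].\]
By Proposition~\ref{prop:AutvAndExpansions}, $G\in\Aut_v(\arrows)$ precisely when $(N,\Gamma)$ satisfies \eqref{R4'}, \eqref{U'} and \eqref{C'}, so the entire content reduces to translating these three equations into the two defining conditions of $\krv$ (Definition~\ref{def: krv}).

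The translation of the three equations is the technical core. First I would identify $R^{(12)3}$ with $e^{x+y}$ using the embedding $\lie_2\hookrightarrow\arrows(\uparrow_3)$ of Remark~\ref{ex: action of arrow diag on lie} (an arrow from strand $i$ to the auxiliary strand represents $x_i$, so the doubled arrow $\rightarrowdiagram^{(12)3}$ represents $x+y$); under this identification the conjugation in \eqref{R4'} is exactly the $\TAut_2$-action of that remark, and the wheel part $e^w$ acts trivially on $\lie_2$, so \eqref{R4'} becomes $\alpha(e^{x+y})=e^{x+y}$, the first $\krv$ condition. Next, using the stacking action on caps (Remark~\ref{ex: action on cyc 2}) together with the strand-doubling rules $\Gamma^{12}\leftrightarrow\trace\, g(x+y)$ and $\Gamma^1\Gamma^2\leftrightarrow\trace(g(x)+g(y))$, equation \eqref{C'} reads $w+\alpha(\trace\, g(x+y))=\trace(g(x)+g(y))$; since \eqref{R4'} forces $\alpha(x+y)=x+y$, the term $\trace\, g(x+y)$ is fixed and this collapses to the key identity $w=\trace\big(g(x)+g(y)-g(x+y)\big)$. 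Finally, Proposition~\ref{prop:U'} converts \eqref{U'} into $\mathcal{J}(e^n)=e^{-2w}$, i.e.\ $J(\alpha)=-2w$; substituting the key identity gives $J(\alpha)=\trace\big(2g(x+y)-2g(x)-2g(y)\big)=\trace\big(s(x+y)-s(x)-s(y)\big)$ with $s=2g$, the second $\krv$ condition. This proves $\Phi(G)\in\krv$, and reversing each step (take $n=\log\alpha$, $g=s/2$, define $w$ by the key identity, then $N=\Upsilon^{-1}(e^we^n)$ and $\Gamma=e^{\kappa^{-1}(g)}$) yields a two-sided inverse, so $\Phi$ is a bijection.

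It then remains to verify that $\Phi$ is a group homomorphism. The crucial observation is that $G$ fixes $\arrows(\uparrow_2)$ pointwise, since that component is generated by $\rightarrowdiagram$ (which $G$ fixes) under wiring diagrams alone; consequently, via Lemma~\ref{lemma: VI}, $G$ acts on $\arrows(\vertex)\cong\arrows(\uparrow_2)$ and on $\arrows(\upcap)$ simply by multiplication by $N$, respectively $\Gamma$. Hence $(G_1\circ G_2)(\upcap)=\Gamma_1\Gamma_2=e^{\gamma_1+\gamma_2}$, so the Duflo functions add, $s_{12}=s_1+s_2$; and $\Upsilon\big((G_1\circ G_2)(\vertex)\big)=(e^{w_1}e^{n_1})(e^{w_2}e^{n_2})$, whose $\tder_2$-part is the composite $\alpha_1\alpha_2$ and whose $\cyc_2$-part is $w_1+\alpha_1(w_2)$, where cyclicity of $\trace$ together with $\alpha_1(x+y)=x+y$ gives $\alpha_1(w_2)=w_2$ in agreement with $s_{12}=s_1+s_2$. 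Thus $\Phi$ respects the group law of Definition~\ref{def: krv}. The step I expect to be the main obstacle is the equation-by-equation translation of the second paragraph: making rigorous that the wheel part $w$ \emph{decouples} from $\alpha$ in \eqref{R4'} yet \emph{re-couples} to it through \eqref{C'} and \eqref{U'} to produce exactly the Duflo relation, which depends on careful bookkeeping of the two distinct module actions---conjugation on $\lie_2$ versus stacking on $\cyc_2$---recorded in Remarks~\ref{ex: action of arrow diag on lie} and~\ref{ex: action on cyc 2}.
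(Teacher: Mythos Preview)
Your approach is essentially identical to the paper's: the same map $G\mapsto(e^n,2g)$, and the same translation of \eqref{R4'}, \eqref{U'}, \eqref{C'} into the two defining equations of $\krv$ via Remarks~\ref{ex: action of arrow diag on lie}, \ref{ex: action on cyc 2} and Proposition~\ref{prop:U'}. Your ``key identity'' $w=\trace(g(x)+g(y)-g(x+y))$ and the inverse construction also agree with the paper's.

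There is, however, a genuine error in your final paragraph. Your claim that $(G_1\circ G_2)(\vertex)$ corresponds to $N_1N_2$ has the factors in the wrong order. Since $G$ fixes $\arrows(\uparrow_2)$ pointwise, it acts on $\arrows(\vertex)\cong\arrows(\uparrow_2)$ by \emph{right} multiplication by $N$ (this is what $(G\circ Z)(\vertex)=VN$ in Proposition~\ref{prop:AutAndExpansions} says). Hence
\[
(G_1\circ G_2)(\vertex)=G_1(N_2)=N_2\,N_1,
\]
so $\Upsilon\big((G_1\circ G_2)(\vertex)\big)=e^{w_2}e^{n_2}e^{w_1}e^{n_1}$ and the $\tder_2$-part is $\alpha_2\alpha_1$, not $\alpha_1\alpha_2$. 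Your map $\Phi$ is therefore an \emph{anti}-homomorphism. (For the cap this is invisible because $\arrows(\upcap)$ is commutative, which may be what misled you.) The paper handles this by first constructing the anti-isomorphism $\overline{\Theta}$ exactly as you do, and then composing with inversion in $\krv$ to obtain the genuine isomorphism $\Theta$. With that one correction your argument goes through.
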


\begin{proof}
%We will construct an isomorphism  explicitly.  
Let $G$ be an element in $\Aut_v(\arrows)$ with \[G(\vertex)=N=e^\eta=\Upsilon^{-1}(e^we^n) \quad \text{and} \quad G(\upcap)=\Gamma=e^\mu=e^{\kappa^{-1}(\gamma)},\]  where we recall from Lemma~\ref{hopf_vertex} that  $\gamma=\kappa(\mu) \in x^2\mathbb Q[[x]]$. Similarly, by Lemma~\ref{lemma: VI}, $e^\eta=\Upsilon^{-1}(e^we^n)$, with $n\in\tder_2$ and $w\in \cyc_2$. We define a map 
\begin{equation}\label{eq:antiiso}\tag{$\overline{\Theta}$}\begin{tikzcd} \overline{\Theta}: \Aut_v(\arrows) \arrow[r] & \krv \end{tikzcd}\end{equation} by setting $\overline{\Theta}(G)=(\alpha,s)\in \krv$ with  $\alpha=e^{n}$ and $s=2\gamma$. We will show that $\overline{\Theta}$ is an anti-isomorphism, and then compose with inversions to obtain an isomorphism.

%It is clear that the map $\Theta$ is multiplicative: given $G_1, G_2 \in \Aut(\arrows)$ with $G_1(\vertex)=N_1$ and $G_2(\vertex)=N_2$, then $G_2\circ G_1(\vertex)=N_1N_2$, where the last multiplication is performed in $\arrows(\uparrow_2)$ (using Lemma~\ref{lemma: VI}). Similarly, if $G_1(\upcap)=e^{\gamma_1(x)}$ and $G_2(\upcap)=e^{\gamma_2(x)}$ then $G_2\circ G_1(\upcap)= e^{(\gamma_1+\gamma_2)(x)}$.

The main difficulty is in showing that the map $\overline{\Theta}$ is well-defined, that is, that the pair $(\alpha,s)$ satisfies the defining equations of the group $\krv$. To do this, we use that the isomorphism $\Upsilon$ is compatible with the adjoint action of $\TAut_2$ on $\exp(\lie_2)$ in the following sense. Let $(\Upsilon^{-1})^{12}$ denote the composition \[(\Upsilon^{-1})^{12}:\TAut_2 \xrightarrow{\Upsilon^{-1}} \arrows(\uparrow_2) \hookrightarrow \arrows(\uparrow_3),\] where the inclusion $\arrows(\uparrow_2)\hookrightarrow\arrows(\uparrow_3)$ is the natural inclusion of arrow diagrams by adding an empty third strand.  
As we saw in Remark~\ref{ex: action of arrow diag on lie}, there is an inclusion $\log \iota: \lie_2 \hookrightarrow \arrows(\uparrow_3)$ defined on basis elements by $\log\iota(x)=a_{13}$, $\log\iota(y)=a_{23}$, where $a_{ij}$ denotes the horizontal arrow from strand $i$ to strand $j$, and the bracket in the Lie algebra of primitive elements of $\arrows(\uparrow_3)$ is the algebra commutator. The map $\iota: \exp(\lie_2) \to \arrows(\uparrow_3)$ is defined to be $e^{\log \iota}$. %For $a,b \in \arrows(\uparrow_3)$, let $\text{conj}(a,b)=a^{-1}ba$. 
With this notation, we recall from Remark~\ref{ex: action of arrow diag on lie} that the following diagram commutes:

\begin{equation}\label{eq:TAutAction}
\begin{tikzcd}
\TAut_2 \times \exp(\lie_2)  \arrow[d, hookrightarrow, swap, "(\Upsilon^{-1})^{12}\times \iota"]\arrow[rr, "\ad"]  && \exp(\lie_2) \arrow[d, hookrightarrow, "\iota"]
\\
\arrows(\uparrow_3)\times \arrows(\uparrow_3) \arrow[rr, "\text{conj}"] && \arrows(\uparrow_3).
\end{tikzcd}
\end{equation} %For more details see (\cite[Proof of Proposition 3.19, heading `Conceptual argument']{BND:WKO2}) or . 

Now, applying $\Upsilon$ to the \eqref{R4'} equation, we obtain:
\begin{equation}\label{Upsilon R4'}e^we^ne^{x+y}e^{-n}e^{-w}=e^{x+y}.\end{equation}
Multiplying \eqref{Upsilon R4'} by $e^{-w}$ on the left and by $e^{w}$ on the right, and using that $[w,x]=[w,y]=0$, as $\cyc_2$ acts trivially on $\lie_2$, this simplifies to 
\begin{equation*} e^ne^{x+y}e^{-n}=e^{x+y}. \end{equation*}
Therefore, $e^{x+y}=e^{-n}e^{x+y}e^n=\alpha(e^{x+y})$ by the definition of the action of $\alpha$.
Thus, we have shown $\alpha (e^{x+y})=e^{x+y}$: the first defining equation of the group $\krv$.

To verify that the pair $(\alpha, s)$ satisfies the second equation of the group $\krv$, recall from Lemma~\ref{hopf_vertex} that there are linear isomorphisms $\kappa: \arrows(\upcap)
\xrightarrow{\cong} \cyc_1/\langle\xi\rangle$ and $\kappa: \arrows(\upcap_2)\xrightarrow{\cong}\cyc_2/\langle \xi, \zeta \rangle$, between arrow diagrams on capped strands and degree 
completed cyclic words with no single-letter terms (see also Example 2.2 and 2.3 of \cite{AT12}).  Thus, we will 
write $\kappa (Z(\upcap))=\kappa (C)=e^{\trace(c(\xi))}$, and $\kappa (G(\upcap))=\kappa(\Gamma)=e^{\trace(\gamma(\xi))}$, where $c, \gamma\in \mathbb Q[[\xi]]$ have no linear terms.  It follows that $
\kappa (C^{12})=e^{\trace( c(\xi + \zeta))}$, and $\kappa(\Gamma^{12})=e^{\trace(\gamma(\xi + \zeta))}$, where $\trace:\widehat{\mathfrak{ass}}_2 \to \cyc_2$ is the linear trace operator.

Recall from Remark~\ref{ex: action on cyc 2} that $\arrows(\upcap_2)$ is a left $\arrows(\uparrow_2)$-module and the following diagram commutes:
\begin{equation}\label{eq:TAutOnWheels}
\begin{tikzcd}
\TAut_2 \times \exp(\cyc_2/\langle\xi,\zeta\rangle) \arrow[d, hookrightarrow, swap, "(\Upsilon^{-1})\times \kappa^{-1}"]\arrow[rr, "\cdot"]  && \exp(\cyc_2/\langle\xi,\zeta\rangle) \arrow[d, rightarrow, "\kappa^{-1}"]
\\
\arrows(\uparrow_2)\times \arrows(\upcap_2) \arrow[rr, "\text{stack}"] && \arrows(\upcap_2)
\end{tikzcd}
\end{equation} where the top map is the basis conjugating action of $\TAut_2$ on $\exp(\cyc_2)$ as in Example \ref{ex:tdercyc} and Remark~\ref{ex: action on cyc 2}.

Putting this all together, we see that applying $\kappa$ to the equation \eqref{C'} yields the following equation in $\cyc_2/\langle\xi,\zeta \rangle$:
\begin{equation}\label{eq:algebraC'}e^w e^n\cdot (e^{\trace(\gamma(\xi+\zeta))})=e^{\trace(\gamma(\xi))}e^{\trace(\gamma(\zeta))}.\end{equation}

Now, since the action of $\TAut_2$ on $\cyc_2$ comes from the action of $\TAut_2$ on $\exp(\lie_2)$ via taking traces, we have that $e^n\cdot e^{\xi+\zeta}=e^{\xi+\zeta}$ by the \eqref{R4'} equation (and noticing that if $e^{-n}$ fixes $(\xi +\zeta)$ then so does $e^n$). Therefore, $$e^n\cdot (e^{\trace(\gamma(\xi+\zeta))})=e^{\trace(\gamma(\xi+\zeta))},$$ 
and equation \eqref{eq:algebraC'} simplifies to
$$e^w =e^{\trace(\gamma(\xi))}e^{\trace(\gamma(\zeta))}e^{-\trace(\gamma(\xi+\zeta))}=e^{\trace(\gamma(\xi)+\gamma(\zeta)-\gamma(\xi+\zeta))}.$$

Proposition~\ref{prop:U'} says that since $G$ respects the \eqref{U'} equation, we know that $\mathcal{J}(e^n)=e^{-2w}$ and thus $e^w=(\mathcal J(e^n))^{-1/2}=\mathcal J(\alpha)^{-1/2}$. 
Thus,
\begin{equation*}\log \mathcal J(\alpha)=J(\alpha)=\trace(2\gamma(\xi+\zeta)-2\gamma(\xi)-2\gamma(\zeta)),\end{equation*}
and therefore $s=2\gamma$ satisfies the second defining equation of $\krv$ with $\alpha=e^n$. This completes the proof that $\overline{\Theta}$ is well-defined.

It remains to show that the map $\overline{\Theta}$ is anti-multiplicative: given $G_1, G_2 \in \Aut_v(\arrows)$ with $G_1(\vertex)=N_1$ and $G_2(\vertex)=N_2$, then $(G_2\circ G_1)(\vertex)=N_1N_2$, where the multiplication is performed in $\arrows(\uparrow_2)$ (using Lemma~\ref{lemma: VI}). Similarly, if $G_1(\upcap)=e^{\kappa^{-1}(\gamma_1(x))}$ and $G_2(\upcap)=e^{\kappa^{-1}(\gamma_2(x))}$ then $(G_2\circ G_1)(\upcap)= e^{\kappa^{-1}((\gamma_1+\gamma_2)(x))}$ (We have omitted the trace notation in order to reduce clutter.). It follows that $\overline{ \Theta}: \Aut_v(\arrows) \to \krv $ is a group anti-homomorphism.  

\medskip 

To show that $\overline{\Theta}$ is bijective, we will construct its inverse $\overline{\Theta}^{-1}: \krv \to \Aut_v(\arrows)$ as follows. Given a pair $(\alpha,s)\in \krv$, define an automorphism $G=\overline{\Theta}^{-1}(\alpha,s)\in \Aut_v(\arrows)$ by setting:
\[N=G(\vertex)=\Upsilon^{-1}\left((\mathcal J(\alpha))^{-1/2}\alpha\right) \quad \text{and}  \quad \Gamma=G(\upcap)=e^{\kappa^{-1}(s/2)}.\]
Recall that in $N$, since $\alpha\in \TAut_2$,  therefore $\Upsilon^{-1}(\alpha)$ is an exponential of tree diagrams, and since $\mathcal J(\alpha)\in \cyc_2$, $\Upsilon^{-1}((\mathcal J(\alpha))^{-1/2})$ is an exponential of wheels.  

The map $\overline{\Theta}^{-1}$ is clearly inverse to $\overline{\Theta}$, and so it remains to show that it is well-defined.  By Proposition~\ref{prop:AutvAndExpansions} this amounts to verifying that $N$ and $\Gamma$ satisfy equations \eqref{R4'}, \eqref{U'} and \eqref{C'} and therefore define an automorphism of $\Aut_v(\arrows)$. Of these, \eqref{U'} is automatic by construction and by Proposition~\ref{prop:U'}. Equation \eqref{R4'} is satisfied by the commutativity of diagram \eqref{eq:TAutAction} and the first defining equation of $\krv$. Equation \eqref{C'} reduces to the second defining equation of $\krv$, using the commutativity of the diagram \eqref{eq:TAutOnWheels}.

To complete the proof and show we have a group isomorphism, we compose $\overline{\Theta}$ with the group inversion in $\krv$:
\begin{equation}\label{iso}\tag{$\Theta$}\begin{tikzcd} {\Theta}: \Aut_v(\arrows) \arrow[r, "\overline{\Theta}"] & \krv \arrow[r, "(-)^{-1}"] &\krv \end{tikzcd}\end{equation}
The map $\Theta$ is well-defined since $\krv$ is a group, and it is an isomorphism because it is a composite of two anti-isomorphisms. Explicitly, we have $\Theta(G)=(\alpha:=e^{-n},s:=-2\gamma)$, where $G$ is as in the beginning of the proof.
\end{proof}

\medskip 

We show that the isomorphism from Theorem~\ref{thm: aut(A)=KRV} is compatible with the action of $\krv$ on $\text{SolKV}$. In order to do this we briefly summarise \cite[Theorem 4.9]{BND:WKO2} which identifies 
the set $\text{SolKV}$ with the set of $v$-small group-like homomorphic expansions of $\wf$ as follows. 

A $v$-small homomorphic expansion $Z$ evaluated at the vertex $\vertex$ determines a value $\Upsilon(Z(\vertex))=e^be^\nu$ with $b\in \cyc_2$ and $\nu\in \tder_2$, so $e^\nu \in \TAut_2$. The corresponding KV solution $F\in\text{SolKV}$ is given by $F_Z:=e^{-\nu}$.\footnote{The paper \cite{BND:WKO2} follows the conventions of \cite{AT12}, where KV solutions are the inverses of the elements defined to be in SolKV in \cite{ATE10}. Hence the negative in the exponent does not appear in \cite{BND:WKO2}.} Conversely, given an $F\in \text{SolKV}$, the corresponding homomorphic expansion $Z_F:\wf\to \arrows$ is defined by $Z_F(\vertex)=\Upsilon^{-1}((\mathcal J(F^{-1}))^{-1/2}F^{-1})$ and $Z_F(\upcap)$ is uniquely determined by $Z_F(\vertex)$. 

Let $\mathcal{HE}$ denote the set of $v$-small, group-like homomorphic expansions $Z:\wf \to \arrows$ and let $\mathcal W$ denote the \cite{BND:WKO2} identification $\mathcal{HE} \longrightarrow \text{SolKV}$ sending $Z\mapsto F_Z:=e^{-\nu}$. 

\begin{prop} The action of $\Aut_v(\arrows)$ on $\mathcal{HE}$ via post-composition with the inverse, $Z\cdot G=G^{-1}\circ Z$, is compatible with the action of $\krv$ on $\textnormal{SolKV}$. That is, the following square commutes: 
\[
\begin{tikzcd}
\mathcal{HE} \times \Aut_v(\arrows)  \arrow[d, "\mathcal{W}\times \Theta", swap]  \arrow[r, "\cdot"] 
& \mathcal{HE} \arrow[d, "\mathcal{W}"]
\\
\textnormal{SolKV} \times \krv  \arrow[r, "\cdot"] & \textnormal{SolKV}
\end{tikzcd}
\]  
\end{prop}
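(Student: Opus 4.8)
The plan is to verify the commutativity of the square by tracing an element $(Z, G) \in \mathcal{HE} \times \Aut_v(\arrows)$ around both paths and checking they agree. Going along the top-then-right path, we first act to get $Z \cdot G = G^{-1} \circ Z$, and then apply $\mathcal{W}$ to extract the KV solution $F_{G^{-1} \circ Z}$. Going along the left-then-bottom path, we apply $\mathcal{W} \times \Theta$ to get the pair $(F_Z, \Theta(G))$, and then apply the $\krv$-action on $\text{SolKV}$. Since the $\krv$-action is $F \cdot \alpha = \alpha^{-1} \circ F$ (Definition~\ref{def: krv}), the target is $\Theta(G)^{-1} \cdot F_Z$ where we only track the $\TAut_2$-component, as the second components are determined uniquely. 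So the core identity to establish is
\begin{equation*}
F_{G^{-1}\circ Z} = \alpha_G^{-1} \circ F_Z,
\end{equation*}
where $\alpha_G = e^{-n}$ is the first component of $\Theta(G)$ as computed at the end of the proof of Theorem~\ref{thm: aut(A)=KRV}.

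First I would unwind both sides using the explicit formulas. Recall $\mathcal{W}$ sends $Z$ to $F_Z := e^{-\nu}$ where $\Upsilon(Z(\vertex)) = e^b e^\nu$. For the composite expansion $G^{-1} \circ Z$, its vertex value is $(G^{-1} \circ Z)(\vertex) = (G^{-1})(\vertex) \cdot$ appropriately composed; more precisely, since $G^{-1}$ is a circuit algebra automorphism, $(G^{-1}\circ Z)(\vertex) = G^{-1}(Z(\vertex)) = G^{-1}(V)$. The key computational step is to extract the $\tder_2$-component of $\Upsilon((G^{-1}\circ Z)(\vertex))$. Writing $\Upsilon(G(\vertex)) = e^w e^n$ and using that $G$ acts on an expansion by $G \circ Z(\vertex) = VN$ (as established repeatedly in the proofs of Propositions~\ref{prop:AutAndExpansions} and~\ref{prop:SimplifiedEqns}), the inverse automorphism $G^{-1}$ has vertex value $\Upsilon(G^{-1}(\vertex)) = e^{-w'}e^{-n}$ for the appropriate $w'$, so that the $\tder_2$-part of $(G^{-1}\circ Z)(\vertex)$ is $e^{-n} \cdot e^\nu$ in $\TAut_2$. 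Applying $\mathcal{W}$ then yields $F_{G^{-1}\circ Z} = (e^{-n}e^\nu)^{-1} = e^{-\nu} e^{n} = F_Z \circ e^n$.

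The main obstacle I anticipate is bookkeeping the direction-reversals: there are three places where inverses and order-reversals interact, namely the anti-homomorphism property of $\overline{\Theta}$ (which contributes a reversal), the inversion composed in to define $\Theta$, and the fact that the $\krv$-action is itself defined by \emph{left} composition with the inverse. I would carefully confirm that $\alpha_G^{-1} \circ F_Z = e^n \circ F_Z$ matches the $F_{G^{-1}\circ Z}$ computed above; the potential discrepancy between $F_Z \circ e^n$ and $e^n \circ F_Z$ must be resolved. This is where the defining equation $\alpha(e^{x+y}) = e^{x+y}$ of $\krv$ and the first KV equation $F(e^xe^y) = e^{x+y}$ become essential: since $\alpha_G \in \krv$ fixes $e^{x+y}$ while $F_Z$ sends $e^xe^y \mapsto e^{x+y}$, the two composition orders act identically on the relevant group-like elements, so the expansions they determine coincide. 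I would finish by noting that because the second components (Duflo functions) are uniquely determined by the first in both $\text{SolKV}$ and $\krv$, agreement of the $\TAut_2$-components suffices to conclude the square commutes, invoking the additivity of Duflo functions (from the homomorphism property noted after Definition~\ref{def: krv}) to cross-check the $s$-component if desired.
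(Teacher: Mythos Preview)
Your overall strategy matches the paper's: compute the value of $(G^{-1}\circ Z)(\vertex)$, extract the $\TAut_2$-component, invert, and compare with the $\krv$-action side. However, there is a genuine error in the order of factors, and the ``resolution'' you propose for the resulting discrepancy is invalid.

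Concretely: you correctly note that $(G\circ Z)(\vertex)=VN$, and by the same reasoning (using $G^{-1}(\vertex)=N^{-1}$) one has $(G^{-1}\circ Z)(\vertex)=VN^{-1}$. Applying $\Upsilon$ gives $e^b e^\nu e^{-n} e^{-w}$, so the $\TAut_2$-component is $e^\nu e^{-n}$, \emph{not} $e^{-n}e^\nu$ as you wrote. Hence $\mathcal{W}(G^{-1}\circ Z)=(e^\nu e^{-n})^{-1}=e^n e^{-\nu}$. On the other side, $\Theta(G)=e^{-n}$, and the $\krv$-action gives $F_Z\cdot e^{-n}=(e^{-n})^{-1}\circ F_Z=e^n e^{-\nu}$. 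The two sides agree on the nose, with no discrepancy to resolve.

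Your attempted fix---arguing that $F_Z\circ e^n$ and $e^n\circ F_Z$ ``determine the same expansion'' because both satisfy the first KV equation---does not work. For $F_Z\circ e^n$ to send $e^xe^y\mapsto e^{x+y}$ one would need $e^n(e^xe^y)=e^xe^y$, i.e.\ the $\kv$ condition, whereas $e^n$ only satisfies the $\krv$ condition $e^n(e^{x+y})=e^{x+y}$. These are different, and in any case two elements of $\TAut_2$ agreeing on $e^xe^y$ need not be equal (the set of such elements is a $\kv$-coset). So the patch fails; fortunately, with the correct multiplication order it is unnecessary.
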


\begin{proof}
Let $(Z,G)\in \mathcal{HE} \times \Aut_v(\arrows)$, with $Z(\vertex)=V$, $G(\vertex)=N$, $\Upsilon(V)=e^be^\nu$, $\Upsilon(N)=e^we^n$. Then $\mathcal{W}(Z)=e^{-\nu}\in \text{SolKV}$, and $\Theta(G)=e^{-n}\in \krv$. Finally, the right action of KRV on SolKV is, as discussed above, $e^{-\nu}\cdot e^{-n}=e^ne^{-\nu}\in \text{SolKV}$.

On the other hand, $Z\cdot G=G^{-1}\circ Z$, and $G^{-1}\circ Z(\vertex)= VN^{-1}$. The latter statement uses the fact that $G(\rightarrowdiagram)=\rightarrowdiagram$, which in turn implies that $G^{-1}(\vertex)=N^{-1}$. Since $\Upsilon(VN^{-1})=e^be^\nu e^{-n}e^{-w}$, we have $\mathcal{W}(G^{-1}\circ Z)=(e^\nu e^{-n})^{-1}=e^n e^{-\nu}$. This agrees with the above, completing the proof.
\end{proof}

\subsection{The group $\kv$}\label{subsec:kv}

In this section, we describe the symmetry group $\kv$ as a group of automorphisms of the completed circuit algebra of welded foams $\hatwf$. The structure of $\hatwf$ is considerably more complicated than its associated graded $\arrows$.  This makes direct computation of the  group of automorphisms of $\hatwf$ difficult. As such, we rely on Theorem~\ref{thm: expansions are isomorphisms}, which states that homomorphic expansions $Z$ of $w$-foams induce circuit algebra isomorphisms $\widehat{Z}: \hatwf \to \arrows$.  

\begin{definition}
Let $\Aut(\hatwf)$ denote the group of circuit algebra automorphisms $g:\hatwf\rightarrow \hatwf$ which preserve the set of group-like homomorphic expansions under pre-composition. Let $\Aut_v(\hatwf)$ denote the group of circuit algebra automorphisms of $\hatwf$ which preserve the set of group-like homomorphic expansions, and the $v$-small property, under pre-composition. 
\end{definition}

\begin{prop}\label{prop:conjbyZ}
Any choice of homomorphic expansion $Z:\wf \to \arrows$ induces an isomorphism $\theta_Z: \Aut(\hatwf) \to \Aut(\arrows)$ given by $\theta_Z(g)=\widehat{Z}g\widehat{Z}^{-1}$, for $g\in \Aut(\hatwf)$. For a homomorphic expansion $Z$ satisfying the $v$-small condition, this restricts to an isomorphism $\theta_Z: \Aut_v(\hatwf)\to \Aut_v(\arrows)$.
\end{prop}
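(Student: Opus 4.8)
**

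The plan is to verify that the assignment $\theta_Z(g) = \widehat{Z} g \widehat{Z}^{-1}$ is a well-defined group isomorphism by checking three things: that $\theta_Z(g)$ genuinely lands in $\Aut(\arrows)$, that $\theta_Z$ is a homomorphism with an inverse, and that it restricts appropriately to the $v$-small subgroups. The underlying mechanism is conjugation by the isomorphism $\widehat{Z}: \hatwf \to \arrows$ furnished by Theorem~\ref{thm: expansions are isomorphisms}, so most of the work is in tracking how the defining properties of the two automorphism groups correspond to each other under this conjugation.

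First I would check that $\theta_Z(g) \in \Aut(\arrows)$. Since $\widehat{Z}$ is an isomorphism of filtered, completed circuit algebras, the composite $\widehat{Z} g \widehat{Z}^{-1}$ is certainly a circuit algebra automorphism of $\arrows$. By Proposition~\ref{prop:AutAndExpansions}, to show it lies in $\Aut(\arrows)$ it suffices to show that for any group-like homomorphic expansion $Y: \wf \to \arrows$, the composite $\theta_Z(g) \circ Y$ is again a group-like homomorphic expansion. The key observation is that homomorphic expansions $Y$ correspond bijectively to filtered circuit algebra isomorphisms $\widehat{Y}: \hatwf \to \arrows$ (Proposition~\ref{prop:Zhat} and Theorem~\ref{thm: expansions are isomorphisms}), and under this correspondence $\theta_Z(g) \circ Y$ corresponds to $\widehat{Z} g \widehat{Z}^{-1} \widehat{Y}$. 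Now $\widehat{Z}^{-1}\widehat{Y}: \hatwf \to \hatwf$ is an automorphism of $\hatwf$; since $g$ preserves the set of group-like homomorphic expansions under pre-composition, I expect that $g$ composed with the expansion underlying $\widehat{Y}$ is again such an expansion, and applying $\widehat{Z}$ recovers a group-like homomorphic expansion. Making this bookkeeping precise — translating ``preserves expansions under pre-composition'' on the $\hatwf$ side into the post-composition statement needed on the $\arrows$ side — is the step requiring the most care.

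Next, $\theta_Z$ is manifestly a group homomorphism, since $\theta_Z(g_1 g_2) = \widehat{Z} g_1 g_2 \widehat{Z}^{-1} = (\widehat{Z} g_1 \widehat{Z}^{-1})(\widehat{Z} g_2 \widehat{Z}^{-1}) = \theta_Z(g_1)\theta_Z(g_2)$. Its inverse is the map $G \mapsto \widehat{Z}^{-1} G \widehat{Z}$; I would verify that this genuinely sends $\Aut(\arrows)$ into $\Aut(\hatwf)$ by the symmetric version of the argument above, using once more that expansions correspond to isomorphisms $\widehat{Y}$ and that $\widehat{Z}$ intertwines the pre-composition and post-composition actions. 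Because $\widehat{Z}^{-1} (\widehat{Z} g \widehat{Z}^{-1}) \widehat{Z} = g$ and conversely, these two maps are mutually inverse, establishing that $\theta_Z$ is an isomorphism.

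Finally, for the $v$-small restriction, I would observe that an automorphism $g \in \Aut(\hatwf)$ preserves the $v$-small property precisely when the corresponding $G = \theta_Z(g)$ does, because the $v$-small condition (Definition~\ref{def:vsmallG}) is phrased in terms of which homomorphic expansions are produced, and $\widehat{Z}$ carries $v$-small expansions to $v$-small ones when $Z$ itself is $v$-small. Concretely, if $Z$ is $v$-small and $g \in \Aut_v(\hatwf)$, then for any $v$-small expansion $Y$ the composite $\theta_Z(g) \circ Y$ is $v$-small, which by Proposition~\ref{prop:AutvAndExpansions} places $\theta_Z(g) \in \Aut_v(\arrows)$; the converse uses $\theta_Z^{-1}$ identically. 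The main obstacle throughout is the first step: correctly matching the pre-composition action on $\hatwf$ (which is how $\Aut(\hatwf)$ is defined) with the post-composition action on $\arrows$ (which is how $\Aut(\arrows)$ is characterised in Proposition~\ref{prop:AutAndExpansions}), via the dictionary $Y \leftrightarrow \widehat{Y}$ and the identity $\widehat{G \circ Z} = G \circ \widehat{Z}$ noted in the proof of Proposition~\ref{prop:AutAndExpansions}.
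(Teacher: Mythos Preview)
Your approach is correct and matches the paper's: both rely on conjugation by the isomorphism $\widehat{Z}$ from Theorem~\ref{thm: expansions are isomorphisms}. The paper's own proof is a single sentence (``This follows from the fact that $\widehat{Z}$ is a circuit algebra isomorphism which also intertwines the auxiliary operations''), so your outline is considerably more explicit---in particular, you correctly identify the pre-composition/post-composition bookkeeping as the only point requiring care, which the paper leaves entirely implicit.
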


\begin{proof}
This follows from the fact that $\widehat{Z}$ is a circuit algebra isomorphism which also intertwines the auxiliary operations (Proposition~\ref{prop:Zhat}). Note that by \cite{BND:WKO2} and \cite{AT12}, the existence of a KV solution implies that at least one homomorphic expansion exists.
\end{proof}

Recall that the groups $\kv$ and $\krv$ act freely and transitively on $\text{SolKV}$ on the left and right respectively, as follows: for $a \in \kv$, $F \in \text{SolKV}$ and $\alpha \in \krv$, we have $a \cdot F:= F \circ a^{-1}$, and  $F \cdot \alpha:=\alpha^{-1}\circ F$, where composition takes place in $\TAut_2$. In particular, given a choice of $F\in \text{SolKV}$, there is an isomorphism\footnote{It is illuminating to check directly that $FaF^{-1}\in \krv$. The first defining equation is straightforward; the second is an argument similar to the first half of the proof of Proposition 35 in \cite{ATE10}.} $T_F: \kv \to \krv$ given by $T_F(a)=FaF^{-1}$ (\cite[after Proposition 8]{ATE10}). 

In summary, there are group isomorphisms \[\krv \stackrel{\Theta}{\cong} \Aut_v(\arrows) \quad \text{and} \quad  \Aut_v(\arrows)\stackrel{\theta_{Z}}{\cong} \Aut_v(\hatwf).\] and there is also an isomorphism $\kv\stackrel{T_F}{\cong} \krv$ dependent on a choice of an element $F\in \text{SolKV}$. Putting these isomorphisms in the commutative diagram:  
\begin{equation}\label{eq:KVKRV}
\begin{tikzcd}
\Aut_v(\widehat{\wf})  \arrow[d, dashed, "\Psi_Z", swap]  \arrow[r, "\theta_Z"] 
& \Aut_v(\arrows) \arrow[d, "\Theta"]
\\
\kv  \arrow[r, "T_{F_Z}"] & \krv 
\end{tikzcd}
\end{equation} we obtain an isomorphism $\kv \cong \Aut_v(\hatwf)$ for any choice of $Z\in \mathcal{HE}$ and corresponding $F_Z\in \text{SolKV}$. 

%Here $\Theta$ is the isomorphism of Theorem~\ref{thm: aut(A)=KRV}, $\theta_Z$ is conjugation by $Z$ as in Proposition~\ref{prop:conjbyZ}, $T_{F_Z}$ is conjugation by $F_Z^{-1}$.

\begin{theorem}\label{thm: ZGZ is in KV}
 The isomorphism
$\Psi_Z=T_{F_Z}^{-1}\circ\Theta\circ\theta_Z$ identifies the left action of $\kv$ on \textnormal{SolKV} with the action of $\Aut_v(\hatwf)$ on homomorphic expansions by pre-composition with the inverse. In other words, the following diagram:
\[
\begin{tikzcd}
\Aut_v(\hatwf)\times \mathcal{HE} \arrow[d, "\Psi_Z \times \mathcal{W}", swap]  \arrow[r, "\cdot"] 
& \mathcal{HE} \arrow[d, "\mathcal{W}"]
\\
\kv \times \textnormal{SolKV} \arrow[r, "\cdot"] & \textnormal{SolKV} 
\end{tikzcd}
\]commutes. 
 \end{theorem}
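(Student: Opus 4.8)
The plan is to recast the commutativity of the square as the statement that the bijection $\mathcal{W}$ is $\Psi_Z$-equivariant, i.e. that $\mathcal{W}(g\cdot Z')=\Psi_Z(g)\cdot\mathcal{W}(Z')$ for every $g\in\Aut_v(\hatwf)$ and every $Z'\in\mathcal{HE}$, where $g\cdot Z'=\widehat{Z'}\circ g^{-1}$ and $a\cdot F=F\circ a^{-1}$. Since $\kv$ acts freely and transitively on $\textnormal{SolKV}$, and $\Psi_Z$ is a group isomorphism, I would first reduce to a single base point: if the $\Aut_v(\hatwf)$-action on $\mathcal{HE}$ is also transitive, then writing an arbitrary $Z'=h\cdot Z$ gives $\mathcal{W}(g\cdot Z')=\mathcal{W}\big((gh)\cdot Z\big)=\Psi_Z(gh)\cdot\mathcal{W}(Z)=\Psi_Z(g)\cdot\big(\Psi_Z(h)\cdot\mathcal{W}(Z)\big)=\Psi_Z(g)\cdot\mathcal{W}(h\cdot Z)=\Psi_Z(g)\cdot\mathcal{W}(Z')$, using only the homomorphy of $\Psi_Z$ and the base-point identity at $Z'=Z$. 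Transitivity of the $\Aut_v(\hatwf)$-action follows from Theorem~\ref{thm: expansions are isomorphisms}: for $W,W'\in\mathcal{HE}$ the composite $g:=\widehat{W}^{-1}\widehat{W'}$ is a circuit-algebra automorphism of $\hatwf$ satisfying $g\cdot W'=W$, and Proposition~\ref{prop:AutAndExpansions} shows $g\in\Aut_v(\hatwf)$ (for any expansion $W''$, the value $g\cdot W''$ is $\big(\widehat{W''}\widehat{W'}{}^{-1}\big)\circ W$, a group-like $v$-small expansion). Thus it remains only to prove equivariance at $Z'=Z$.

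For the base point, the key step is to convert the pre-composition action of $\Aut_v(\hatwf)$ into the post-composition action of $\Aut_v(\arrows)$ treated in the preceding Proposition. From $\theta_Z(g)=\widehat{Z}g\widehat{Z}^{-1}$ we get $\widehat{Z}\circ g^{-1}=\theta_Z(g)^{-1}\circ\widehat{Z}$, so that, after restriction to $\wf$, the expansion $g\cdot Z$ coincides with $\theta_Z(g)^{-1}\circ Z$, which is exactly the post-composition action $Z\cdot\theta_Z(g)$. Applying the preceding Proposition then yields $\mathcal{W}(g\cdot Z)=\Theta(\theta_Z(g))^{-1}\circ F_Z$.

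It remains to match this with the right-hand side $\Psi_Z(g)\cdot\mathcal{W}(Z)=F_Z\circ\Psi_Z(g)^{-1}$. Setting $\beta:=\Theta(\theta_Z(g))\in\krv$ and unwinding $\Psi_Z=T_{F_Z}^{-1}\circ\Theta\circ\theta_Z$ with $T_{F_Z}(a)=F_Z a F_Z^{-1}$, so that $\Psi_Z(g)=F_Z^{-1}\beta F_Z$, a direct computation in $\TAut_2$ gives $F_Z\circ\Psi_Z(g)^{-1}=F_Z\circ F_Z^{-1}\beta^{-1}F_Z=\beta^{-1}\circ F_Z=\Theta(\theta_Z(g))^{-1}\circ F_Z$, which agrees with the previous paragraph and closes the square.

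The main obstacle is the base-point dependence of $\Psi_Z$: it is built from the fixed reference expansion $Z$, while the action ranges over all of $\mathcal{HE}$. The torsor reduction is what neutralises this, but it is legitimate only once transitivity of the $\Aut_v(\hatwf)$-action is secured, so the genuine work sits there (and rests on Theorem~\ref{thm: expansions are isomorphisms}) rather than in the final bookkeeping of inverses and of the composite anti-isomorphisms defining $\Theta$ and $T_{F_Z}$. An alternative, base-point-free route avoids transitivity entirely: for general $Z'$ one sets $H:=\widehat{Z'}\widehat{Z}^{-1}\in\Aut_v(\arrows)$, observes $\theta_{Z'}(g)=H\,\theta_Z(g)\,H^{-1}$, and computes $\Theta(H)=F_{Z'}F_Z^{-1}$ from the preceding Proposition (applied to $H^{-1}\circ Z'=Z$); substituting into the post-composition identity $\mathcal{W}(g\cdot Z')=\Theta(\theta_{Z'}(g))^{-1}\circ F_{Z'}$ reproduces $\Psi_Z(g)\cdot\mathcal{W}(Z')$ after the same $T_{F_Z}$-conjugation bookkeeping. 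Either way, the one delicate point to watch is the consistent use of the left/right action conventions and the inversion built into $\Psi_Z$.
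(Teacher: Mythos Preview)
Your proposal is correct, but it takes a more structural route than the paper. The paper proceeds by direct computation: writing $g=\widehat{Z}^{-1}G\widehat{Z}$ and taking an arbitrary $Z'$, it evaluates $(g\cdot Z')(\vertex)$ explicitly as $VN^{-1}V^{-1}V'$ (inserting a copy of $VV^{-1}$), reads off the $\TAut_2$-component of its $\Upsilon$-image as $e^\nu e^{-n}e^{-\nu}e^{\nu'}$, inverts to get $\mathcal{W}(g\cdot Z')=e^{-\nu'}e^\nu e^{n}e^{-\nu}$, and checks by hand that this equals $\Psi_Z(g)\cdot\mathcal{W}(Z')$. Your approach instead observes that at the base point $Z'=Z$ the identity $g\cdot Z=\theta_Z(g)^{-1}\circ Z=Z\cdot\theta_Z(g)$ converts the pre-composition action into the post-composition action already handled by the preceding Proposition, and then reduces the general case to this via transitivity and the homomorphy of $\Psi_Z$. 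Your argument is conceptually cleaner and avoids the vertex-level bookkeeping; the paper's argument is more self-contained and does not rely on an auxiliary transitivity statement.

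One small gap: in your transitivity step you assert that $(\widehat{W''}\widehat{W'}{}^{-1})\circ W$ is a $v$-small group-like expansion, equivalently that $\widehat{W''}\widehat{W'}{}^{-1}\in\Aut_v(\arrows)$. This is true, but Proposition~\ref{prop:AutAndExpansions} alone does not give it (that proposition is an equivalence, and you would need condition~(2) to hold for \emph{all} expansions, which is what you are trying to prove). The cleanest justification is to invoke the preceding Proposition together with the free transitivity of $\krv$ on $\textnormal{SolKV}$: since $\mathcal{W}$ and $\Theta$ are bijections, $\Aut_v(\arrows)$ acts transitively on $\mathcal{HE}$, so there exists $G\in\Aut_v(\arrows)$ with $G^{-1}\circ W'=W''$, whence $G=\widehat{W'}\widehat{W''}{}^{-1}$ and its inverse lies in $\Aut_v(\arrows)$ as needed. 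Your alternative ``base-point-free'' paragraph has the same issue at the line $H:=\widehat{Z'}\widehat{Z}^{-1}\in\Aut_v(\arrows)$, and is fixed in the same way.
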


\begin{proof}
Let $g\in \Aut_v(\widehat{\wf})$, then $g=\widehat{Z}^{-1}G\widehat{Z}$ for some $G\in \Aut_v(\arrows)$. Let $Z':\wf \to \arrows$ be an arbitrary homomorphic expansion. Set \[Z(\vertex)=V, \ \Upsilon(V)=e^be^\nu \quad \text{and} \quad G(\vertex)=N, \ \Upsilon(N)=e^we^n \quad \text{and} \quad Z'(\vertex)=V', \ \Upsilon(V')=e^{b'}e^{\nu'}.\]

The automorphism $g$ acts on $Z'$ by pre-composition with the inverse: $$g\cdot Z'= Z' \circ (\widehat{Z}^{-1}G^{-1}\widehat{Z}).$$ In order to find the corresponding element of $\text{SolKV}$, we need to determine the value $Z' \widehat{Z}^{-1}G^{-1}\widehat{Z}(\vertex)$. Since we can always multiply $VN^{-1}$ by $V^{-1}V$ (Figure~\ref{fig:vertexproof}) we have $$Z' \widehat{Z}^{-1}G^{-1}\widehat{Z}(\vertex)=VN^{-1}V^{-1}V'.$$ %, the proof of this claim is shown in .

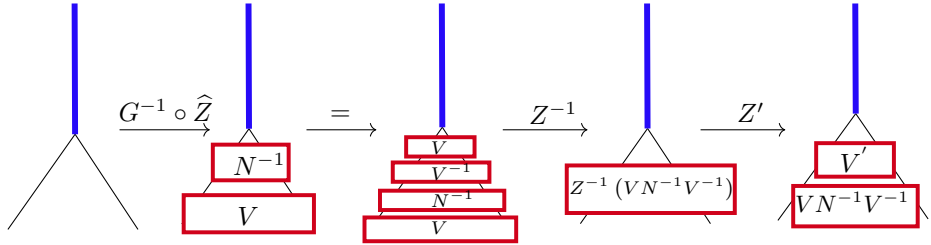
\begin{figure}[h]

\begin{tikzpicture}[x=0.75pt,y=0.75pt,yscale=-.75,xscale=.75]
%uncomment if require: \path (0,300); %set diagram left start at 0, and has height of 300

%Straight Lines [id:da19746643773951233] 
\draw    (206.63,203.37) -- (164.33,139.3) ;
%Straight Lines [id:da39401059230141366] 
\draw    (119.16, 203.37) -- (164.33,139.3) ;
%Straight Lines [id:da8573421897201989] 
\draw [color={rgb, 255:red, 38; green, 10; blue, 248 }  ,draw opacity=1 ][line width=2.25]    (164.01,55) -- (164.33,139.3) ;
%Shape: Rectangle [id:dp7637314182440975] 
\draw  [color={rgb, 255:red, 208; green, 2; blue, 27 }  ,draw opacity=1 ][fill={rgb, 255:red, 255; green, 255; blue, 255 }  ,fill opacity=1 ][line width=1.5]  (140,150) -- (191,150) -- (191,174) -- (140,174) -- cycle ;
%Shape: Rectangle [id:dp9415093557907743] 
\draw  [color={rgb, 255:red, 208; green, 2; blue, 27 }  ,draw opacity=1 ][fill={rgb, 255:red, 255; green, 255; blue, 255 }  ,fill opacity=1 ][line width=1.5]  (120.41,183.99) -- (208,183.99) -- (208,209) -- (120.41,209) -- cycle ;
%Straight Lines [id:da9899827017993614] 
\draw    (474.64,203.18) -- (432.34,139.11) ;
%Straight Lines [id:da09236837727839342] 
\draw    (387.16,203.18) -- (432.34,139.11) ;
%Straight Lines [id:da047833550234493316] 
\draw [color={rgb, 255:red, 38; green, 10; blue, 248 }  ,draw opacity=1 ][line width=2.25]    (432.02,55) -- (432.34,139.11) ;
%Shape: Rectangle [id:dp2505162352495496] 
\draw  [color={rgb, 255:red, 208; green, 2; blue, 27 }  ,draw opacity=1 ][fill={rgb, 255:red, 255; green, 255; blue, 255 }  ,fill opacity=1 ][line width=1.5]  (378,164) -- (491,164) -- (491,196) -- (378,196) -- cycle ;
%Straight Lines [id:da24702289893413587] 
\draw  [->]  (202.99,140) -- (250,140) ;
%Straight Lines [id:da8383009860534929] 
\draw    (89.68,207.07) -- (47.38,143) ;
%Straight Lines [id:da6409726056546372] 
\draw    (2.2,207.07) -- (47.38,143) ;
%Straight Lines [id:da006269627758178409] 
\draw [color={rgb, 255:red, 38; green, 10; blue, 248 }  ,draw opacity=1 ][line width=2.25]    (47.06,55) -- (47.38,143) ;
%Straight Lines [id:da956331403539282] 
\draw  [->]  (77.29,140) -- (138,140) ;
%Straight Lines [id:da6911811825587519] 
\draw    (621,200) -- (572.34,129.11) ;
%Straight Lines [id:da360621493272393] 
\draw    (520,201) -- (572.34,129.11) ;
%Straight Lines [id:da5048729691764307] 
\draw [color={rgb, 255:red, 38; green, 10; blue, 248 }  ,draw opacity=1 ][line width=2.25]    (572.02,55) -- (572.34,129.11) ;
%Shape: Rectangle [id:dp6746566825764262] 
\draw  [color={rgb, 255:red, 208; green, 2; blue, 27 }  ,draw opacity=1 ][fill={rgb, 255:red, 255; green, 255; blue, 255 }  ,fill opacity=1 ][line width=1.5]  (545.72,149) -- (598,149) -- (598,171.55) -- (545.72,171.55) -- cycle ;
%Shape: Rectangle [id:dp5428099220090956] 
\draw  [color={rgb, 255:red, 208; green, 2; blue, 27 }  ,draw opacity=1 ][fill={rgb, 255:red, 255; green, 255; blue, 255 }  ,fill opacity=1 ][line width=1.5]  (530.28,177.8) -- (614,177.8) -- (614,205) -- (530.28,205) -- cycle ;
%Straight Lines [id:da04423227024567877] 
\draw  [->]  (334.99,140) -- (392,140) ;
%Straight Lines [id:da7330410876440192] 
\draw    (336.63,202.37) -- (294.33,138.3) ;
%Straight Lines [id:da9800541955224729] 
\draw    (249.16,202.37) -- (294.33,138.3) ;
%Straight Lines [id:da32262934178964264] 
\draw [color={rgb, 255:red, 38; green, 10; blue, 248 }  ,draw opacity=1 ][line width=2.25]    (294.01,55) -- (294.33,138.3) ;
%Shape: Rectangle [id:dp39037976209644] 
\draw  [color={rgb, 255:red, 208; green, 2; blue, 27 }  ,draw opacity=1 ][fill={rgb, 255:red, 255; green, 255; blue, 255 }  ,fill opacity=1 ][line width=1.5]  (270,145) -- (316,145) -- (316,158) -- (270,158) -- cycle ;
%Shape: Rectangle [id:dp16767252775881847] 
\draw  [color={rgb, 255:red, 208; green, 2; blue, 27 }  ,draw opacity=1 ][fill={rgb, 255:red, 255; green, 255; blue, 255 }  ,fill opacity=1 ][line width=1.5]  (252.41,180.99) -- (336,180.99) -- (336,194) -- (252.41,194) -- cycle ;
%Shape: Rectangle [id:dp8981978736459315] 
\draw  [color={rgb, 255:red, 208; green, 2; blue, 27 }  ,draw opacity=1 ][fill={rgb, 255:red, 255; green, 255; blue, 255 }  ,fill opacity=1 ][line width=1.5]  (261,162) -- (326,162) -- (326,175) -- (261,175) -- cycle ;
%Shape: Rectangle [id:dp8589638316566905] 
\draw  [color={rgb, 255:red, 208; green, 2; blue, 27 }  ,draw opacity=1 ][fill={rgb, 255:red, 255; green, 255; blue, 255 }  ,fill opacity=1 ][line width=1.5]  (242,199) -- (344,199) -- (344,215) -- (242,215) -- cycle ;
%Straight Lines [id:da43542439159366064] 
\draw  [->]  (467.99,140) -- (525,140) ;

% Text Node
\draw (152,153.95) node [anchor=north west][inner sep=0.75pt]  [font=\normalsize]  {$N^{-1}$};
% Text Node
\draw (155,190) node [anchor=north west][inner sep=0.75pt]  [font=\normalsize]  {$V$};
% Text Node
\draw (378,170) node [anchor=north west][inner sep=0.75pt]  [font=\scriptsize]  {$Z^{-1}\left( VN^{-1}V^{-1}\right)$};
% Text Node
\draw (217.2,125) node [anchor=north west][inner sep=0.75pt]    {$=$};
% Text Node
\draw (560,150) node [anchor=north west][inner sep=0.75pt]  [font=\normalsize]  {$V^{'}$};
% Text Node
\draw (530,180) node [anchor=north west][inner sep=0.75pt]  [font=\small]  {$VN^{-1}V^{-1}$};
% Text Node
\draw (285,200) node [anchor=north west][inner sep=0.75pt]  [font=\scriptsize]  {$V$};
% Text Node
\draw (285,161) node [anchor=north west][inner sep=0.75pt]  [font=\scriptsize]  {$V^{-1}$};
% Text Node
\draw (285,179) node [anchor=north west][inner sep=0.75pt]  [font=\scriptsize]  {$N^{-1}$};
% Text Node
\draw (285,146) node [anchor=north west][inner sep=0.75pt]  [font=\scriptsize]  {$V$};
% Text Node
\draw (75,115) node [anchor=north west][inner sep=0.75pt]   [align=left] {$G^{-1}\circ \widehat{Z}$};
% Text Node
\draw (351,120) node [anchor=north west][inner sep=0.75pt]   [align=left] {$Z^{-1}$};
% Text Node
\draw (491,120) node [anchor=north west][inner sep=0.75pt]   [align=left] {$Z'$};

\end{tikzpicture}
\caption{The value $Z' \widehat{Z}^{-1}G^{-1}\widehat{Z}$ at the vertex.
}\label{fig:vertexproof}
\end{figure} 

The corresponding KV solution is the inverse of the $\TAut_2$-component of $$\Upsilon(VN^{-1}V^{-1}V')=e^be^\nu e^{-n}e^{-w}e^{-\nu}e^{-b}e^{b'}e^{\nu'}.$$ After commuting the $\cyc_2$-components in the semi-direct product to the left, we obtain \begin{equation*}(e^\nu e^{-n}e^{-\nu}e^{\nu'})^{-1}=e^{-\nu'}e^\nu e^{n}e^{-\nu} \in \text{SolKV}. \end{equation*} 

 On the other hand, $\Psi_Z(g)=T^{-1}_{F_Z}(\Theta(G))=e^\nu e^{-n} e^{-\nu}\in \kv$. In turn, $\kv$ acts on $\text{SolKV}$ by composition on the right with the inverse, therefore \begin{equation*}e^\nu e^{-n} e^{-\nu} \cdot e^{-\nu'}= e^{-\nu'}e^{\nu}e^n e^{-\nu},\end{equation*} completing the proof.
\end{proof}

\section{The image of $\mathsf{GRT}_1$ in $\krv$}\label{sec:krvgrt}
The Grothendieck--Teichm\"uller groups $\gt$ and $\grt_1$, defined by Drinfeld in \cite{Drin90}, are known to act freely and transitively on the set of all Drinfeld associators. It was shown by Bar-Natan \cite[Proposition 4.5, Proposition 4.8]{BNGT} that $\grt_1$ is isomorphic to the automorphisms of parenthesised chord diagrams; and later understood that this same result could be concisely presented using operads in \cite{tamarkin1998proof} and \cite[Theorem 10.3.6]{FresseVol1}. In their paper \cite{AT12},  Alekseev and Torossian establish a close relationship between the graded Grothendieck--Teichm\"uller group and the group $\krv$,  conjecturing that $\krv\cong \mathbb{Q}t\times \grt_1$ (\cite[Section 4; Remark 9.14]{AT12}).  In this section we demonstrate how, using the relationship between $\grt_1$ and $\krv$ established by Alekseev and Torossian, we can describe $\grt_1$ as certain automorphisms of arrow diagrams.

\subsection{$\grt_1$ as automorphisms of an operad} %Solutions to the KV conjecture are about automorphisms of free Lie algebras while Drinfeld associators are automorphisms of the Lie algebra of infinitesimal braids. 
We briefly recall the description of the Grothendieck-Teichm\"{u}ller group $\grt_1$ as automorphisms of the operad of parenthesised chord diagrams. The interested reader can find a more in-depth description in \cite{BNGT}, \cite{FresseVol1} or \cite{merkGT}.  The Lie algebra of \emph{infinitesimal braids} on $n$ strands, $\mathfrak{t}_n$, $n\geq 2$, is the quotient of the free Lie algebra $\lie_{\frac{n(n-1)}{2}}$ generated by the symbols $\{t_{i,j} = t_{j,i}\}_{1\leq i< j\leq n}$ subject to the relations \[ [t_{i,j} , t_{k,l}] = 0  \quad \text{and} \quad[t_{i,j} , t_{i,k} + t_{k,j}] = 0 \] whenever $i,j,k,l$ are distinct. 

The completed universal enveloping algebra, $\widehat{U}(\mathfrak{t}_n)$, can be viewed as a category of \emph{chord diagrams} on $n$ strands, $\mathsf{CD}(n)$. This category has one object and is enriched in (completed, filtered, coassociative) coalgebras (e.g. \cite[Section 6.4]{merkGT}). Morphisms of $\mathsf{CD}(n)$ are depicted as chord diagrams on $n$ vertical strands, where $t_{i,j}$ is represented by a chord between strands $i$ and $j$. Composition of morphisms is depicted by ``stacking'' the chord diagrams. The collection of categories $\mathsf{CD}:=\{\mathsf{CD}(n)\}$, $n\geq 2$, forms an operad in which operadic composition is defined by a cabling operation (Figure~\ref{operad composition CD}).  Every operation in $\mathsf{CD}$ can be generated by operadic and categorical compositions using a single generator $t_{1,2}:=\chord\in\mathsf{CD}(2)$. 
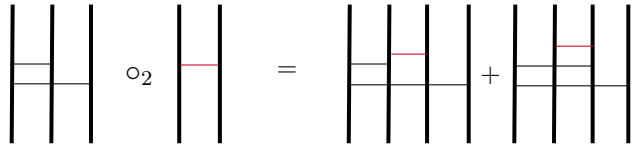
\begin{figure}[h]

\begin{tikzpicture}[x=0.75pt,y=0.75pt,yscale=-.5,xscale=.5]
%uncomment if require: \path (0,300); %set diagram left start at 0, and has height of 300

%Straight Lines [id:da20962765479610124] 
\draw [line width=1.5]    (31,60) -- (30,200) ;
%Straight Lines [id:da3079804645913835] 
\draw [line width=1.5]    (71,60) -- (70,200) ;
%Straight Lines [id:da4697382453519505] 
\draw [line width=1.5]    (110,60) -- (110,200) ;
%Straight Lines [id:da4084834826750323] 
\draw [line width=1.5]    (199,60) -- (199,200) ;
%Straight Lines [id:da6617415536428318] 
\draw [line width=1.5]    (240,60) -- (240,200) ;
%Straight Lines [id:da29559295987878165] 
\draw     (29,120) -- (69,120) ;
%Straight Lines [id:da5261549233082581] 
\draw    (30,140) -- (109,140) ;
%Straight Lines [id:da31243773394519103] 
\draw [color={rgb, 255:red, 208; green, 2; blue, 27 }  ,draw opacity=1 ]    (200,121) -- (238.18,121) ;
%Straight Lines [id:da8649031572076119] 
\draw [line width=1.5]    (371,60) -- (370,200) ;
%Straight Lines [id:da528306244897487] 
\draw [line width=1.5]    (411,60) -- (410,200) ;
%Straight Lines [id:da00017329642341368245] 
\draw [line width=1.5]    (449,60) -- (449,200) ;
%Straight Lines [id:da1447841875932958] 
\draw   (371,120) -- (411,120) ;
%Straight Lines [id:da4694323847339429] 
\draw [line width=1.5]    (538,60) -- (537,200) ;
%Straight Lines [id:da8139016588083937] 
\draw [line width=1.5]    (578,60) -- (577,200) ;
%Straight Lines [id:da0806306292336294] 
\draw [line width=1.5]    (616,60) -- (616,200) ;
%Straight Lines [id:da6957118726199151] 
\draw    (537,122) -- (617,122) ;
%Straight Lines [id:da37125593194230166] 
\draw     (538,142) -- (653,142) ;
%Straight Lines [id:da2725563061227515] 
\draw [line width=1.5]    (491,60) -- (491,200) ;
%Straight Lines [id:da47009591936307593] 
\draw [line width=1.5]    (652,60) -- (652,200) ;
%Straight Lines [id:da16890241592803523] 
\draw [color={rgb, 255:red, 208; green, 2; blue, 27 }  ,draw opacity=1 ]    (413,110) -- (449,110) ;
%Straight Lines [id:da06951848053968257] 
\draw [color={rgb, 255:red, 208; green, 2; blue, 27 }  ,draw opacity=1 ]    (579,102) -- (617,102) ;
%Straight Lines [id:da15329054559094046] 
\draw (371,141) -- (491,141) ;

% Text Node
\draw (142,120) node [anchor=north west][inner sep=0.75pt]  [font=\Large]  {$\circ _{2}$};
% Text Node
\draw (500,120) node [anchor=north west][inner sep=0.75pt]    {$+$};
% Text Node
\draw (295,120) node [anchor=north west][inner sep=0.75pt]    {$=$};

\end{tikzpicture}
\caption{An example of composition in the operad $\mathsf{CD}$.}\label{operad composition CD}
\end{figure}

\medskip 

%The group $\grt_1$ is isomorphic to automorphisms of the operad of \emph{parenthesised chord diagrams}. 
The category of \emph{parenthesised chord diagrams}, $\mathsf{PaCD}(n)$, is obtained by formally replacing the object of the category $\mathsf{CD}(n)$ with the set of parenthesised words of length $n$ with distinct letters. Let $M(n)$ denote the subset of parenthesised monomials in $\{1, ..., n\}$ where each element of the set occurs exactly once.\footnote{Equivalently, $M(n)$ may denote the set of binary rooted trees, leaves labelled from the set $\{1, ..., n\}$, where each element of the set occurs exactly once.} For example, $(12)(3(45))$ is an element in $M(5)$. The symmetric group $\calS_n$ acts on the right of $M(n)$ by permuting the elements of the set $\{1, ..., n\}$ and there is an obvious map $u : M(n)\rightarrow \ast$ which collapses every monomial to a point (e.g. \cite[Section 10.3.1]{FresseVol1}).

The category $\mathsf{PaCD}(n)$ is the pullback of the category $\mathsf{CD}(n)$ along the map $u$.  In other words, the set of objects of $\mathsf{PaCD}(n)$ is $M(n)$. Morphisms between any two $p_i,p_j\in M(n)$ are defined as $$\Hom_{\mathsf{PaCD}(n)}(p_i,p_j) := \Hom_{\mathsf{CD}(n)}(\ast,\ast) = \widehat{U}(\mathfrak{t}_n).$$ Categorical composition makes $\PaCD(n)$ into a Hopf groupoid\footnote{Technically, in the \cite{FresseVol1} presentation, the morphisms of $\PaCD(n)$ are only the \emph{group-like} elements of $ \widehat{U}(\mathfrak{t}_n)$.}  \cite[Section 10.3.2]{FresseVol1}.

\medskip 

The collection of all parenthesised chord diagrams, $\mathsf{PaCD}:=\{\mathsf{PaCD}(n)\}_{n\geq 2}$, forms an operad in Hopf groupoids. At the level of objects, operadic composition is given by replacing letters by monomials.\footnote{Alternatively, operadic composition is given by grafting binary trees.} For example, we have the following composition $(1(34))(25)\circ_4 (13)2 = (1(3((46)5)))(27)$. The operadic composition of morphisms is the operadic composition of $\mathsf{CD}$. See \cite{BNGT} or Chapter 10.3.1~\cite{FresseVol1} for full details. 

Every operation in the operad $\mathsf{PaCD}$ can be written as a combination of operadic and categorical compositions of three generators: \[\virtualcrossing\in \Hom_{\mathsf{PaCD}(2)}((12), (21)), \quad \chord\in  \Hom_{\mathsf{PaCD}(2)}((12), (12))\quad \text{and} \quad \Associator\in \Hom_{\mathsf{PaCD}(3)}((12)3,1(23)).\] These generating morphisms are considered up to the \emph{pentagon} and  two \emph{hexagon} relations (notation explained below): 
\begin{equation}\label{PaCD pentagon}  d_1\Associator \cdot d_3 \Associator= d_4 \Associator \cdot d_2 \Associator \cdot d_0 \Associator \end{equation} 
\begin{equation}\label{PaCD hexagon} d_1\virtualcrossing =  \Associator \cdot d_0\virtualcrossing \cdot ((23) (\Associator)^{-1})\cdot (23)d_3\virtualcrossing\cdot (321)\Associator\end{equation} 
\begin{equation}\label{PaCD hexagon2} d_1(\virtualcrossing)^{-1} =  \Associator \cdot d_0(\virtualcrossing)^{-1} \cdot ((23) (\Associator)^{-1})\cdot (23)d_3 (\virtualcrossing)^{-1}\cdot (321)\Associator\end{equation}
Here, the operation $d_0$ adds an "empty" strand to the left of the chord diagram, and $d_{n+1}$ adds an empty strand to the right. For $1\leq i \leq n$, the operation $d_i$ {\em doubles} the $i$th strand of a chord diagram (similar to the {\em unzip} operation of arrow diagrams). This is described on the generators $t_{j,k}$ as follows (see \cite[Definition 2.9]{BNGT}):\[ d_i(t_{j,k})=\begin{cases} t_{j+1,k+1} \ \text{if} \  i < j < k \\ t_{j,k+1} + t_{j+1,k+1} \ \text{if} \ i=j<k \\ t_{j,k+1} \ \text{if} \ j < i < k \\ t_{j,k} + t_{j,k+1} \ \text{if} \ j < i = k \\ t_{j,k} \ \text{if} \ j < k < i .\end{cases}\]  

\begin{definition}\cite[Section 4.2]{AT12}
The graded Grothendieck--Teichm\"uller group $\grt_1$ is the exponentiation of the Lie algebra $\mathfrak{grt}_1$ spanned by derivations $\psi=(0,\psi)\in\tder_2$ which satisfy the following equations: 
\begin{equation}\label{inversion}
\psi(x,y)=-\psi(y,x),
\end{equation} 
\begin{equation}\label{hexagon} 
\psi(x,y)+\psi(y,z) +\psi(z,x) =0 \quad \text{ whenever } x+y+z=0
\end{equation} 
\begin{equation}\label{pentagon}  
\psi(t_{1,2},t_{2,34})+\psi(t_{12,3},t_{3,4}) =\psi(t_{2,3},t_{3,4})+\psi(t_{1,23},t_{23,4})+\psi(t_{1,2},t_{2,3}).
\end{equation} 
 \end{definition} 
In equation \eqref{pentagon}, $t_{i,j}\in\mathfrak{t}_4$ and the notation $t_{ij,k}$ is shorthand for $t_{i,k}+t_{j,k}$. Similarly, $t_{i,jk}=t_{i,j}+t_{i,k}$.  
\medskip 

Let $\Aut(\mathsf{PaCD})$ denote the group of automorphisms of the operad $\mathsf{PaCD}$ which fix objects and the elements $\virtualcrossing\in \Hom_{\mathsf{PaCD}(2)}((12), (21))$ and $\chord\in  \Hom_{\mathsf{PaCD}(2)}((12), (12))$. The following theorem can be found in \cite[Proposition 4.5, Proposition 4.8]{BNGT} or \cite[Theorem 10.3.6]{FresseVol1}. See also \cite{tamarkin1998proof} or \cite[Section 6.7]{merkGT}.

\begin{theorem}\label{thm: GRT is Aut} 
There is an isomorphism of groups $\Aut(\mathsf{PaCD})\cong\grt_1$. 
\end{theorem}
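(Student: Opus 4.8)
The plan is to establish the isomorphism $\Aut(\mathsf{PaCD})\cong\grt_1$ by showing that any automorphism $G\in\Aut(\mathsf{PaCD})$ is completely determined by a single Lie-algebraic datum, and that this datum is precisely an element of $\mathfrak{grt}_1$ after exponentiation. Since $\mathsf{PaCD}$ is generated as an operad (with categorical composition) by the three morphisms $\virtualcrossing$, $\chord$, and $\Associator$, and since elements of $\Aut(\mathsf{PaCD})$ fix the objects together with $\virtualcrossing$ and $\chord$, such an automorphism $G$ is entirely determined by its value on the associator $\Associator$. The image $G(\Associator)$ must again lie in $\Hom_{\mathsf{PaCD}(3)}((12)3, 1(23))=\widehat{U}(\mathfrak{t}_3)$ and must have the same image under the skeleton/underlying-permutation map as $\Associator$; since $G$ fixes $\chord$ (the generator $t_{1,2}$), the ratio $G(\Associator)\cdot\Associator^{-1}$ is a group-like element of $\widehat{U}(\mathfrak{t}_3)$ which lies in the subalgebra generated by the $t_{i,j}$ and is symmetric in an appropriate sense. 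I would write this correction factor as $\exp(\psi(t_{1,2}, t_{2,3}))$ for some Lie series $\psi\in\lie_2$, so that the data of $G$ is encoded by the single Lie series $\psi$.

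The core of the argument is then to translate the three operadic relations---pentagon \eqref{PaCD pentagon} and the two hexagons \eqref{PaCD hexagon}, \eqref{PaCD hexagon2}---into equations on $\psi$. First I would observe that because $G$ is an operad automorphism fixing $\virtualcrossing$ and $\chord$, applying $G$ to each defining relation of $\mathsf{PaCD}$ yields a relation that must hold for $G(\Associator)$; subtracting (or dividing out) the original relation for $\Associator$ produces an equation purely in the correction factor $\exp\psi$. The pentagon relation, after this cancellation and passing to the associated graded / logarithmic level, becomes precisely the pentagon equation \eqref{pentagon} for $\psi$; similarly the two hexagon relations collapse to the antisymmetry condition \eqref{inversion} and the hexagon condition \eqref{hexagon} for $\psi$. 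This is the computation that identifies the set of admissible correction series $\psi$ with the Lie algebra $\mathfrak{grt}_1$ defined by equations \eqref{inversion}--\eqref{pentagon}. The map $G\mapsto\psi$ is then checked to be a group homomorphism: composition of automorphisms $G_2\circ G_1$ corresponds to multiplying correction factors, and at the level of exponents (using the Baker--Campbell--Hausdorff series and the fact that $\mathfrak{grt}_1$ is closed under the Lie bracket) this is the group law $\exp(\mathfrak{grt}_1)=\grt_1$.

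The hardest step, I expect, will be verifying that the translation of the pentagon relation is both \emph{necessary and sufficient}---that is, that an arbitrary Lie series $\psi$ satisfying \eqref{inversion}--\eqref{pentagon} genuinely defines an operad automorphism, not just that an automorphism produces such a $\psi$. For the forward direction one applies $G$ to the relations and reads off the constraints, but for the converse one must build $G$ from $\psi$ and check \emph{compatibility with all operadic compositions}, including the strand-doubling coface maps $d_i$ and the insertion maps, across all arities $n\geq 2$. The subtlety is that $\mathsf{PaCD}$ is generated by $\Associator$, $\chord$, $\virtualcrossing$ in low arity, so one must verify that the assignment on generators extends \emph{consistently} to a well-defined operad map; this reduces to checking that the pentagon and hexagons form a \emph{complete} set of relations, which is where the Drinfeld--Bar-Natan theory of associators enters. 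Rather than reproving this from scratch, I would invoke Theorem~\ref{thm: GRT is Aut} structure by citing \cite{BNGT} (Propositions 4.5 and 4.8) and \cite[Theorem 10.3.6]{FresseVol1}, where the completeness of the presentation of $\mathsf{PaCD}$ and the correspondence with $\grt_1$ are established; the role here is to confirm that our conventions match theirs, so that the cited isomorphism transfers directly.
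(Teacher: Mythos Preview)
Your proposal is correct and matches the paper's treatment: the paper does not prove this theorem but cites \cite{BNGT} and \cite{FresseVol1}, and only records the key point that $G$ is determined by $G(\Associator)=\Psi(t_{1,2},t_{2,3})$ with $\Psi=e^\psi$, and that the pentagon and hexagon relations force $\psi\in\mathfrak{grt}_1$. One small simplification: since $\Associator$ is the \emph{empty} chord diagram (the identity in $\widehat{U}(\mathfrak{t}_3)$), there is no need to form the ratio $G(\Associator)\cdot\Associator^{-1}$; the value $G(\Associator)$ is itself the group-like element $\Psi$.
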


We do not reproduce the proof of Theorem~\ref{thm: GRT is Aut} in this paper, but we will use the following key points.  Since every element of $\PaCD$ can be written via operadic and categorical compositions of $\chord$, $\virtualcrossing$ and $\Associator$, it follows that any automorphism  $G\in\Aut(\PaCD)$ is uniquely determined by its value $G(\Associator)= \Psi(t_{1,2}, t_{2,3})$, where $\Psi=e^\psi$ is an element of $\widehat{U}(\lie_2)$. Since $G$ is an automorphism of $\PaCD$, $G$ respects the pentagon \eqref{PaCD pentagon} and hexagon \eqref{PaCD hexagon}, \eqref{PaCD hexagon2} relations, which means that $\Psi=e^{\psi}$ satisfies equations \eqref{inversion}, \eqref{hexagon} and \eqref{pentagon} and is therefore in $\grt_1$.

%\begin{remark} 
%Our brief sketch of the proof of Theorem~\ref{thm: GRT is Aut} looks tantalizingly similar to the proof of Theorem~\ref{thm: aut(A)=KRV}\ihnote{Did I reference the right theorem here?}. As such, it is tempting to try to compare $\grt_1$ and $\krv$ by ``mapping'' the operad of parenthesised chords into the circuit algebra of arrow diagrams. Unfortunately, operads and circuit algebras are not compatible algebraic structures and one cannot expect any kind of homomorphism $\mathsf{PaCD}\rightarrow \arrows$. 
%\end{remark} 
\subsection{$\grt_1$ as automorphisms of arrow diagrams} 
 Alekseev and Torossian \cite[Theorem 4.6]{AT12}) provide an injective Lie algebra homomorphism $\varrho: \mathfrak{grt}_1 \rightarrow \mathfrak{krv}_2$ given by
 \[ \psi \mapsto (\psi(-x-y,x), \psi(-x-y,y)). \] Combining this result with those of Theorem~\ref{thm: aut(A)=KRV} and the now classical Theorem~\ref{thm: GRT is Aut}, we have following commutative diagram of groups: \[\begin{tikzcd}
\Aut(\PaCD) \arrow[d, "\cong", swap]\arrow[r, dotted, "\tilde{\varrho}"] & \Aut_v(\arrows) \arrow[d, "\cong"] \\
\grt_1 \arrow[r, "\exp(\varrho)"] & \krv
 \end{tikzcd}\] In order to describe the graded Grothendieck-Teichm\"uller group $\grt_1$ as automorphisms of arrow diagrams, we first describe the image of $\tilde{\varrho}$ in $\Aut_{v}(\arrows).$

\subsubsection{The relationship between $\PaCD$ and $\arrows$}  The algebraic structures of $\PaCD$ and $\arrows$ are different: operads do not embed directly into circuit algebras. Nonetheless, we can describe the image of the underlying collection of Hopf groupoids $\bigcup_{n}\PaCD(n)$ inside the circuit algebra $\arrows$.  

For any $n\geq 2$, there is an inclusion of Lie algebras $\mathfrak{t}_n\hookrightarrow \tder_n$, where $\mathfrak{t}_n$ is isomorphic to a Lie subalgebra of $\tder_n$ spanned by tangential derivations of the form $t^{i,j}=(0,\ldots, x_j,\ldots, x_i,\ldots 0)$,  with non-vanishing components $x_i$ at the $j$th place and $x_j$ at the $i$th place \cite[Proposition 3.11]{AT12}.  The inclusion $\mathfrak{t}_n\hookrightarrow \tder_n$ extends to a Hopf algebra homomorphism \[\begin{tikzcd}\Hom_{\mathsf{CD}(n)}(\ast,\ast):=\widehat{U}(\mathfrak{t}_n)\arrow[r] & \widehat{U}(\tder_n\oplus \mathfrak{a}_n \ltimes \cyc_n)  \arrow[r, "\cong"', "\Upsilon^{-1}"] &  \arrows(\uparrow_n).\end{tikzcd}\]

\begin{lemma}\label{Hopf CD lemma}
For each $n\geq 2$ there exists an inclusion of Hopf algebras \[\begin{tikzcd}\Hom_{\mathsf{CD}(n)}(\ast,\ast)\arrow[r, "\varepsilon"]& \arrows(\uparrow_n).\end{tikzcd}\]
\end{lemma}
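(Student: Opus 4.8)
The plan is to take for $\varepsilon$ the composite already written down in the paragraph preceding the lemma, namely
\[\varepsilon := \Upsilon^{-1}\circ \widehat{U}(\iota)\colon \widehat{U}(\mathfrak{t}_n)\longrightarrow \arrows(\uparrow_n),\]
where $\iota\colon \mathfrak{t}_n\hookrightarrow \tder_n\hookrightarrow (\tder_n\oplus\mathfrak{a}_n)\ltimes\cyc_n$ is the Lie algebra map whose first factor is the injection of \cite[Proposition 3.11]{AT12} and whose second factor is the inclusion of a sub-Lie-algebra. Functoriality of the (completed) enveloping algebra construction shows that $\widehat{U}(\iota)$ is a homomorphism of completed Hopf algebras, and $\Upsilon^{-1}$ is a Hopf algebra isomorphism by Proposition~\ref{prop:arrows are hopf algebra}; hence $\varepsilon$ is automatically a homomorphism of Hopf algebras. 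The entire content of the lemma is thus the claim that $\varepsilon$ is \emph{injective}.

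First I would note that, since $\Upsilon^{-1}$ is an isomorphism, injectivity of $\varepsilon$ is equivalent to injectivity of $\widehat{U}(\iota)$, which in turn reduces to injectivity of $\iota$ together with the fact that the completed enveloping algebra functor carries injections of graded Lie algebras of finite type to injections. The map $\iota$ is injective because it is a composite of two injective Lie algebra maps: the embedding $\mathfrak{t}_n\hookrightarrow \tder_n$ of \cite[Proposition 3.11]{AT12}, and the inclusion of $\tder_n$ as a sub-Lie-algebra of the semidirect product $(\tder_n\oplus\mathfrak{a}_n)\ltimes\cyc_n$. It then remains to argue that $U$ carries this injection to an injection of (completed) Hopf algebras.

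For this I would invoke the Poincar\'e--Birkhoff--Witt theorem. All the Lie algebras in sight are graded of finite type (by word length, equivalently by arrow degree), and $\iota$ is degree-preserving; choosing an ordered homogeneous basis of $\mathfrak{t}_n$ and extending it to one of $(\tder_n\oplus\mathfrak{a}_n)\ltimes\cyc_n$, the PBW monomials in the $\mathfrak{t}_n$-basis form a subset of a PBW basis of the target, hence remain linearly independent. This gives injectivity of $U(\iota)$ in each fixed arrow degree, and passing to the inverse limit over the degree filtration preserves injectivity since each graded piece is finite-dimensional. The main point requiring care is precisely this interaction between PBW and the completion: one must check that the filtration by arrow degree on $\widehat{U}(\mathfrak{t}_n)$ is compatible with that on $\arrows(\uparrow_n)$, so that the degreewise argument assembles correctly in the limit. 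Granting this, composing the injective $\widehat{U}(\iota)$ with the isomorphism $\Upsilon^{-1}$ yields the required injective homomorphism of Hopf algebras $\varepsilon$.
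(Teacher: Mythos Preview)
Your proof is correct and follows essentially the same route as the paper: both define $\varepsilon$ via the Lie algebra inclusion $\mathfrak{t}_n\hookrightarrow\tder_n$ of \cite[Proposition~3.11]{AT12} composed with $\Upsilon^{-1}$, and both rely on that proposition for the key structural input. The only difference is one of emphasis: the paper writes out $\varepsilon$ on generators and checks the coproduct and product by hand, leaving injectivity implicit, whereas you invoke functoriality of $\widehat{U}$ for the Hopf algebra axioms and spell out the injectivity argument via PBW and degreewise finiteness---this is a genuine improvement in rigour over the paper's proof, which does not address injectivity at all.
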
 

\begin{proof}
We start by defining a linear map $\begin{tikzcd}\Hom_{\mathsf{CD}(n)}(\ast,\ast):=\widehat{U}(\mathfrak{t}_n)\arrow[r, "\varepsilon"]& \arrows(\uparrow_n).\end{tikzcd}$ The generating element $t_{i,j}$ in $\mathfrak{t}_{n}$ is represented as a chord diagram in $\widehat{U}(\mathfrak{t}_n)$ which has a single chord connecting the $i$th vertical strand to the $j$th vertical strand (e.g. $t_{1,2}=\chord$). We set $\varepsilon(t_{i,j})$ to be the sum of two arrow diagrams:  the first a single arrow going from strand $i$ to $j$ and the second a single arrow going from $j$ to $i$. In other words, we send the chord diagram $t_{i,j}$ to the arrow diagram $\Upsilon^{-1}(t^{i,j})$. In particular, $\varepsilon(t_{1,2})=\varepsilon(\chord)=\rarrow + \larrow$. By \cite[Proposition 3.11]{AT12}, this map is well-defined as the $t^{i,j}$s satisfy the relations of $\mathfrak{t}_n$. 

The coproduct on $\mathsf{CD}(n)$ is defined on generators as $\Delta_{\mathsf{CD}}(t_{i,j})=t_{i,j}\otimes 1 + 1\otimes t_{i,j}$. It is now straightforward to check that $\varepsilon(\Delta_{\mathsf{CD}}(t_{i,j}))=\Delta_{\arrows}(\varepsilon(t_{i,j}))$, where the latter is defined in Definition~\ref{def: coprod}. Similarly, the product commutes with $\varepsilon$ since the categorical composition of $\mathsf{CD}(n)$ corresponds to circuit algebra composition given by the wiring diagram $\WD_s$ (Figure~\ref{WD for stacking}) which stacks arrow diagrams on $n$ strands. 
%In other words, the following diagram
%\[\begin{tikzcd} \Hom_{\mathsf{CD}(n)}(\ast,\ast)\times Hom_{\mathsf{CD}(n)}(\ast,\ast) \arrow[r] \arrow[d, swap, "\varepsilon\times\varepsilon"]& Hom_{\mathsf{CD}(n)}(\ast,\ast) \arrow[d, "\varepsilon"]\\
%\arrows(\uparrow_n)\times \arrows(\uparrow_n) \arrow[r, "\WD_{s}"] & \arrows(\uparrow_n) \end{tikzcd}\] commutes and the lemma follows. 
\end{proof}

Next, we extend Lemma~\ref{Hopf CD lemma} to an inclusion of Hopf groupoids $\begin{tikzcd}\PaCD(n)\arrow[r, "\varepsilon"]& \arrows\end{tikzcd}$ by working ``one skeleton at a time''. For any two objects $p_1,p_2$ in $\PaCD(n)$, the empty chord diagram in $\Hom_{\PaCD(n)}(p_1,p_2)$ has an underlying permutation $p_2^{-1}p_1\in\mathcal{S}_n$ (here $p_1$, $p_2 \in M(2)$ are considered as permutations interpreted in the one-line notation). As we saw in Example~\ref{ex: w-tangle skeleton}, there is a corresponding skeleton in $\calS$. We have already implicitly used this idea when we write $\virtualcrossing$ for the empty chord diagram in $\Hom_{\PaCD(2)}((12),(21))$ which has underlying permutation $(21)\in\calS_2$.  The empty chord diagram in $\Hom_{\PaCD(n)}(p_1,p_2)$ carries the additional structure of the parenthesisations $p_1$ and $p_2$ which we can encode by ``closing up'' the skeleton permutation $p_2^{-1}p_1$ with binary trees at the bottom and top\footnote{As a general convention we draw diagrams "bottom to top". That is, $p_1$ is shown at the bottom and morphisms travel up to $p_2$.}
 according to the parenthesisations $p_1$ and $p_2$. We will denote this skeleton element by $c_{p_1p_2}$ in $\arrows$. For example, the closure of $\Associator\in\Hom_{\PaCD(3)}((12)3, 1(23))$ is the skeleton element $\bubble\in \arrows$ depicted in Figure~\ref{def: bubble}. 

\begin{figure}[h]
\begin{tikzpicture}[x=0.75pt,y=0.75pt,yscale=-.5,xscale= .8]

%Shape: Ellipse [id:dp35228824509455015] 
\draw   (458,154.5) .. controls (458,120.53) and (476.13,93) .. (498.5,93) .. controls (520.87,93) and (539,120.53) .. (539,154.5) .. controls (539,188.47) and (520.87,216) .. (498.5,216) .. controls (476.13,216) and (458,188.47) .. (458,154.5) -- cycle ;
%Straight Lines [id:da2797990977977364] 
\draw    (466,191) -- (530,117) ;
%Straight Lines [id:da7554179628606673] 
\draw    (498.5,216) -- (499,244) ;
%Straight Lines [id:da2981137171240188] 
\draw    (498.5,93) -- (498,61) ;
%Straight Lines [id:da052771676828520686] 
\draw    (99,99.81) -- (98,234.81) ;
%Straight Lines [id:da21680543217879278] 
\draw    (162,101.81) -- (161,236.81) ;
%Straight Lines [id:da7497021927332164] 
\draw [->]   (212,170) -- (417,170) ;
%Curve Lines [id:da3787919199400831] 
\draw    (107,235) .. controls (114,196) and (147,151) .. (150,102) ;

% Text Node
\draw (35,235) node [anchor=north west][inner sep=0.75pt]    {$p_{1} \ =$};
% Text Node
\draw (35,90) node [anchor=north west][inner sep=0.75pt]    {$p_{2} \ =$};
% Text Node
\draw (282,145) node [anchor=north west][inner sep=0.75pt]    {$c_{p_{1}p_{2}}$};
% Text Node
\draw (85,85) node [anchor=north west][inner sep=0.75pt]   [align=left] {(};
% Text Node
\draw (138,85) node [anchor=north west][inner sep=0.75pt]   [align=left] {(};
% Text Node
\draw (80,226) node [anchor=north west][inner sep=0.75pt]   [align=left] {(};
% Text Node
\draw (113,226) node [anchor=north west][inner sep=0.75pt]   [align=left] {)};
% Text Node
\draw (165,226) node [anchor=north west][inner sep=0.75pt]   [align=left] {)};
% Text Node
\draw (165,85) node [anchor=north west][inner sep=0.75pt]   [align=left] {)};
% Text Node
\draw (170,85) node [anchor=north west][inner sep=0.75pt]   [align=left] {)};
% Text Node
\draw (85,226) node [anchor=north west][inner sep=0.75pt]   [align=left] {(};

\end{tikzpicture}
\caption{The closure of $\Associator$ in $\arrows$. }\label{def: bubble} 
\end{figure}

We define the map
\begin{equation} \begin{tikzcd} \Hom_{\PaCD(n)}(p_1,p_2)\arrow[r, "\varepsilon"] &\arrows(c_{p_1p_2})\end{tikzcd}\label{linear map PaCD to A}\tag{$\star$}\end{equation} 
as the composition of the old  $\varepsilon$ (as in Lemma~\ref{Hopf CD lemma}) applied to the underlying morphism of $\mathsf{CD}(n)$, followed by the skeleton closure. Letting $p_i,p_j$ run over all objects in $\PaCD(n)$, we have the following theorem. 

\begin{theorem}\label{inclusion of Hopf groupoids} The categorical composition of $\PaCD(n)$ can be realised using the operations in $\bigcup_{c_{p_ip_j}} \arrows(c_{p_ip_j})$, and this elevates $\varepsilon$ to an inclusion of Hopf groupoids
\[\begin{tikzcd} \PaCD(n)\arrow[r, "\varepsilon"]& \bigcup_{c_{p_ip_j}} \arrows(c_{p_ip_j}).\end{tikzcd}\] 
\end{theorem}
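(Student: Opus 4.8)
The plan is to upgrade the map $\varepsilon$ of~\eqref{linear map PaCD to A}, which is defined skeleton-by-skeleton, from a collection of linear maps on Hom-spaces to a genuine functor of Hopf groupoids. Concretely, I would first verify that $\varepsilon$ is compatible with \emph{categorical composition}, then check that it respects the coalgebra (Hopf) structure and is injective, so that it constitutes an inclusion of Hopf groupoids.

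The central issue is that categorical composition in $\PaCD(n)$ stacks a morphism in $\Hom_{\PaCD(n)}(p_1,p_2)$ on top of one in $\Hom_{\PaCD(n)}(p_2,p_3)$ to obtain an element of $\Hom_{\PaCD(n)}(p_1,p_3)$; under $\varepsilon$ these land in $\arrows(c_{p_1p_2})$, $\arrows(c_{p_2p_3})$ and $\arrows(c_{p_1p_3})$ respectively. The key geometric observation is that the skeleton $c_{p_1p_3}$ can be obtained from $c_{p_1p_2}$ and $c_{p_2p_3}$ by a wiring diagram composition in $\calS$: one glues the top tree-closure of $c_{p_1p_2}$ (encoding $p_2$) to the bottom tree-closure of $c_{p_2p_3}$ (also encoding $p_2$), and the two matching binary trees on $p_2$ cancel, leaving precisely $c_{p_1p_3}$. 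I would argue that the cancellation of the two opposed $p_2$-trees is valid in $\arrows$: the relevant skeleton is, after the gluing, a composite of vertices and their orientation-reversed partners, and I would invoke the fact that $\arrows$ is the associated graded of $\wf$ (so the R4/CP-type relations and, on the nose, the topological fact that a tree glued to its mirror is isotopic to parallel strands) to identify this composite skeleton with the identity tangle on $p_2$, hence with $c_{p_1p_3}$. Under this identification, the wiring-diagram composition in $\arrows$ restricts on arrow graphs to exactly the ``stacking'' that Lemma~\ref{Hopf CD lemma} already showed matches the categorical composition of $\mathsf{CD}(n)$; since the underlying $\mathsf{CD}(n)$-morphism of a $\PaCD(n)$-morphism is what carries the chords, this gives $\varepsilon(g\circ f)=\WD_{\mathrm{stack}}(\varepsilon(f),\varepsilon(g))$, which is the first claim of the theorem.

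Next I would record that $\varepsilon$ preserves the Hopf structure: on each Hom-space $\varepsilon$ agrees with the map of Lemma~\ref{Hopf CD lemma} followed by the skeleton closure, and the closure operation affects only the skeleton, not the arrow graph, so it commutes with the coproduct $\Delta_\arrows$ of Definition~\ref{def: coprod} (which distributes connected components of the arrow graph and preserves their relative position). Thus $\varepsilon$ is a morphism of coalgebras on each Hom-space, and the objects of $\PaCD(n)$ map injectively to the skeleta $c_{p_ip_j}$, giving a functor of groupoids that is bijective on objects onto its image. Injectivity on morphisms follows from injectivity of the Lemma~\ref{Hopf CD lemma} inclusion $\Hom_{\mathsf{CD}(n)}(\ast,\ast)\hookrightarrow\arrows(\uparrow_n)$ together with the fact that the closure is a bijection of arrow graphs onto those supported on the skeleton $c_{p_ip_j}$; assembling over all pairs $p_i,p_j$ yields the claimed inclusion of Hopf groupoids.

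The step I expect to be the main obstacle is the tree-cancellation argument underpinning compatibility with categorical composition, i.e. justifying rigorously that gluing the $p_2$-closure of $c_{p_1p_2}$ to the opposed $p_2$-closure of $c_{p_2p_3}$ yields $c_{p_1p_3}$ in $\arrows$. This requires being careful that the vertex orientations produced by the two closures are genuinely inverse (so that the VI and orientation-switch bookkeeping of Definition~\ref{def:operations on arrows} produces no stray signs or extra arrow terms) and that no arrow heads are trapped against the cancelling trees in a way that the relations would not resolve. I would handle this by reducing to the topological statement in $\calS$ (where the cancellation is an honest isotopy of foam skeleta) and then transporting it to $\arrows$ via the skeleton-projection compatibility, so that the arrow-graph content is carried along unchanged and the identity $\varepsilon(g\circ f)=\WD_{\mathrm{stack}}(\varepsilon(f),\varepsilon(g))$ holds on the nose.
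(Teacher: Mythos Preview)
Your proposal has a genuine gap in the central step: the ``tree-cancellation'' mechanism you describe does not work in $\arrows$ (or in $\calS$). Recall that $\calS$ is the \emph{free} circuit algebra generated by caps and vertices (Definition~\ref{def: skeleta circuit alg}); there are no relations that would identify a binary tree of vertices glued to its orientation-reversed mirror with a bundle of parallel strands. Your appeal to ``the topological fact that a tree glued to its mirror is isotopic to parallel strands'' and to ``an honest isotopy of foam skeleta in $\calS$'' is therefore unfounded: no such isotopy relation is imposed in $\calS$, and hence no such identification holds in $\arrows$ either. Consequently, the wiring diagram that glues $c_{p_1p_2}$ to $c_{p_2p_3}$ does \emph{not} land in $\arrows(c_{p_1p_3})$; it lands on a strictly larger skeleton with $2(n-1)$ extra vertices in the middle.

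The paper's fix is to use the auxiliary \emph{unzip} operation, which is not a circuit algebra operation but is part of the extra structure carried by $\arrows$ (Definition~\ref{def:operations on arrows}). After stacking via the wiring diagram $\WD_s$, one unzips the $n-1$ interior edges; this genuinely removes the middle vertices and produces an element of $\arrows(c_{p_1p_3})$. The resulting composition $u^{n-1}\WD_s$ is what realises the categorical composition of $\PaCD(n)$; see diagram~\eqref{stacking}. The paper then separately verifies (Lemmas~\ref{lem: pentagon in arrows} and~\ref{lem: hexagon in arrows}) that under this stack-then-unzip composition the pentagon and hexagon relations are preserved, which you do not address at all; these checks are what the paper uses to conclude that $\varepsilon$ is well-defined as a map of Hopf groupoids.
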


We split the proof of this theorem into three lemmas: first, we show that the categorical composition of $\PaCD(n)$ can be realised in $\bigcup_{c_{p_ip_j}} \arrows(c_{p_ip_j})$ to make $\varepsilon$ multiplicative. Then we show that $\varepsilon$ is well-defined, that is, it respects the pentagon and hexagon relations.

\begin{lemma}\label{lem:emultiplicative}
The categorical composition of $\PaCD(n)$ can be realised using the operations of $\bigcup_{c_{p_ip_j}} \arrows(c_{p_ip_j})$ to make $\varepsilon$ multiplicative.
\end{lemma}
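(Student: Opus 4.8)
The plan is to realize the categorical composition $\Hom_{\PaCD(n)}(p_1,p_2)\times\Hom_{\PaCD(n)}(p_2,p_3)\to\Hom_{\PaCD(n)}(p_1,p_3)$ as a single wiring‑diagram operation followed by a sequence of unzips, and then to check that $\varepsilon$ intertwines this composite with the product in $\widehat{U}(\mathfrak{t}_n)$. Recall that under $\varepsilon$ a morphism $f\in\Hom_{\PaCD(n)}(p_1,p_2)$ lands in $\arrows(c_{p_1p_2})$, whose skeleton is the permutation $p_2^{-1}p_1$ on $n$ strands, closed off at the bottom by the binary merge‑tree encoding $p_1$ and at the top by the merge‑tree encoding $p_2$, both assembled from the generating vertex $\vertex$ (as in Figure~\ref{def: bubble}). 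The chords of $f$ lie on the $n$ middle strands, which are exactly the leaves of these two trees.

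First I would define the operation. Given $\varepsilon(f)\in\arrows(c_{p_1p_2})$ and $\varepsilon(g)\in\arrows(c_{p_2p_3})$, stack them with the wiring diagram that glues the single top strand of $c_{p_1p_2}$ (the root of its $p_2$‑tree) to the single bottom strand of $c_{p_2p_3}$ (the root of its $p_2$‑tree), in the manner of the stacking product of Figure~\ref{WD for stacking} applied to the one‑strand roots. The resulting skeleton carries in its middle the $p_2$‑merge‑tree of $f$ glued along its root edge to the $p_2$‑split‑tree of $g$.

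The key step is to contract this middle double‑tree to $n$ parallel strands by iterated unzips. Working inward from the common root edge, each internal edge $e$ of the glued trees joins a merge‑vertex below to the matching split‑vertex above; because both trees encode the same parenthesization $p_2$, unzipping $e$ (Definition~\ref{def:w-foam operations}, Figure~\ref{fig: unzip for foams}) replaces the merge‑then‑split across $e$ by two strands passing straight through. I would formalize this by induction on $n$, the base case $n=2$ being a single unzip of one merge–split pair. Carrying out the $n-1$ unzips removes both trees, joins the $n$ leaf‑strands of $f$ directly to those of $g$, and leaves the bottom $p_1$‑tree and top $p_3$‑tree untouched while the permutations compose as $p_3^{-1}p_2\cdot p_2^{-1}p_1=p_3^{-1}p_1$; hence the contracted skeleton is exactly $c_{p_1p_3}$. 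Since the chords of $f$ and $g$ never lie on the contracted internal edges, the arrow‑diagram unzip rule (Figure~\ref{fig: unzip for arrows}) carries them through unchanged, after which they occupy the stacked $n$‑strand region and realize precisely the categorical composition in $\mathsf{CD}(n)$. Combining this with Lemma~\ref{Hopf CD lemma} yields $\varepsilon(g\circ f)=(\text{stack-then-unzip})(\varepsilon(f),\varepsilon(g))$, which is the asserted multiplicativity.

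The hard part will be controlling the arrow endings adjacent to the vertices being removed: one must verify, using the VI relation, that arrows terminating just below a merge‑vertex do not produce spurious terms as that vertex is unzipped away, so that the net effect of the contraction on arrows is exactly the plain stacking of the two $n$‑strand diagrams with no extra signs. This bookkeeping, rather than any conceptual obstacle, is where the care lies.
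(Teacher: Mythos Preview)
Your approach is essentially the same as the paper's: define the composition on the $\arrows$ side as the stacking wiring diagram (gluing the single root strands) followed by $n-1$ unzips to eliminate the two matching $p_2$-trees in the middle, and observe that this realizes the categorical composition of $\PaCD(n)$. The paper states this in a single commutative diagram with right vertical map ``$u^{n-1}\WD_s$'' and a one-sentence explanation, whereas you spell out the inductive contraction of the glued trees and the fate of the chords; but the content is the same.

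One small comment: your closing paragraph about the ``hard part'' is over-cautious. The chords of $\varepsilon(f)$ and $\varepsilon(g)$ live on the $n$ leaf strands, not on the internal edges of the $p_2$-trees being unzipped, so the unzip rule of Figure~\ref{fig: unzip for arrows} never duplicates them and no VI bookkeeping is needed. You already observed this earlier in your proposal (``the chords of $f$ and $g$ never lie on the contracted internal edges''), so the worry you raise at the end does not actually arise.
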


\begin{proof} We need to define a composition operation on $\bigcup_{c_{p_ip_j}} \arrows(c_{p_ip_j})$ in a way that makes the maps \eqref{linear map PaCD to A} compatible with categorical composition in $\PaCD(n)$. This is summarised in the following commutative diagram, for any $p_1,p_2,p_3\in\PaCD(n)$:
\begin{equation}\label{stacking}
\begin{tikzcd} \Hom_{\PaCD(n)}(p_1,p_2)\otimes \Hom_{\PaCD(n)}(p_2,p_3)\arrow[d, "\circ", swap]\arrow[r, "\varepsilon\times \varepsilon"]& \arrows(c_{p_1p_2}) \otimes \arrows (c_{p_2p_3}) \arrow[d, "u^{n-1}\WD_s"] \\
\Hom_{\PaCD(n)}(p_1,p_3) \arrow[r, "\varepsilon"]& \arrows(c_{p_1p_3}) 
\end{tikzcd}
\end{equation}
In the diagram \eqref{stacking}, the left vertical map is categorical composition in $\PaCD$ and the right vertical map is given by first "stacking" (a circuit algebra operation using a wiring diagram as in Figure~\ref{WD for stacking}), and then applying the unzip operation $n-1$ times to eliminate the vertices in the middle, as in Figure~\ref{fig:ClosureComp}.  
\end{proof}

Recall that every element of the operad $\PaCD$ can be obtained by iterated categorical and operadic compositions of the morphisms $\chord$, $\Associator$ and $\virtualcrossing$. Moreover, these generating morphisms satisfy the pentagon and hexagon relations \eqref{PaCD pentagon}, \eqref{PaCD hexagon}, and \eqref{PaCD hexagon2}. The operadic composition of $\PaCD$ is not preserved by $\varepsilon$ (that is, not realised by circuit algebra operations, unzips and orientation switches). However, operadic composition is not required for stating the pentagon and hexagon relations. This, and the multiplicativity of $\varepsilon$ helps show below that these relations are preserved by 

\begin{equation}\label{eq:ue}\tag{$\star\star$}
\begin{tikzcd}
\bigcup\limits_{n\geq 2}\PaCD(n) \arrow[r, "\cup\varepsilon"]& \arrows.
\end{tikzcd}
\end{equation}

\begin{figure}
\[
\begin{tikzpicture}[x=0.75pt,y=0.75pt,yscale=-1,xscale=1]
%uncomment if require: \path (0,300); %set diagram left start at 0, and has height of 300

%Shape: Ellipse [id:dp6678928487453082] 
\draw   (293,90.53) .. controls (293,64.55) and (307.76,43.48) .. (325.97,43.48) .. controls (344.17,43.48) and (358.93,64.55) .. (358.93,90.53) .. controls (358.93,116.51) and (344.17,137.58) .. (325.97,137.58) .. controls (307.76,137.58) and (293,116.51) .. (293,90.53) -- cycle ;
%Straight Lines [id:da9710513874813407] 
\draw    (299.51,118.45) -- (351.6,61.84) ;
%Straight Lines [id:da10153416338262256] 
\draw [color={rgb, 255:red, 0; green, 0; blue, 0 }  ,draw opacity=1 ]   (325.97,137.58) -- (326.37,159) ;
%Straight Lines [id:da27882035357518253] 
\draw    (325.97,43.48) -- (325.56,19) ;
%Shape: Ellipse [id:dp7876499403457988] 
\draw   (294.29,223.49) .. controls (294.29,198.24) and (308.77,177.78) .. (326.64,177.78) .. controls (344.51,177.78) and (359,198.24) .. (359,223.49) .. controls (359,248.73) and (344.51,269.19) .. (326.64,269.19) .. controls (308.77,269.19) and (294.29,248.73) .. (294.29,223.49) -- cycle ;
%Straight Lines [id:da9562062593358984] 
\draw    (304.08,257.98) -- (355.21,202.99) ;
%Straight Lines [id:da05604686737123066] 
\draw    (326.64,269.19) -- (327.04,290) ;
%Straight Lines [id:da23475048250132824] 
\draw [color={rgb, 255:red, 0; green, 0; blue, 0 }  ,draw opacity=1 ]   (326.64,177.78) -- (326.24,154) ;
%Shape: Arc [id:dp8115446555911434] 
\draw  [draw opacity=0] (353.25,114.21) .. controls (349.08,111.46) and (346,101.9) .. (346,90.53) .. controls (346,78.68) and (349.34,68.79) .. (353.79,66.53) -- (355.72,90.53) -- cycle ; \draw   (353.25,114.21) .. controls (349.08,111.46) and (346,101.9) .. (346,90.53) .. controls (346,78.68) and (349.34,68.79) .. (353.79,66.53) ;
%Shape: Arc [id:dp28564073923636224] 
\draw  [draw opacity=0] (300.81,193.73) .. controls (305.43,197.33) and (308.13,206.24) .. (307.14,216.36) .. controls (306.07,227.24) and (301.1,236.03) .. (295.35,238.08) -- (294.47,215.29) -- cycle ; \draw   (300.81,193.73) .. controls (305.43,197.33) and (308.13,206.24) .. (307.14,216.36) .. controls (306.07,227.24) and (301.1,236.03) .. (295.35,238.08) ;
%Shape: Ellipse [id:dp1496673205857768] 
\draw   (462,154.09) .. controls (462,110.1) and (484.83,74.44) .. (513,74.44) .. controls (541.17,74.44) and (564,110.1) .. (564,154.09) .. controls (564,198.08) and (541.17,233.74) .. (513,233.74) .. controls (484.83,233.74) and (462,198.08) .. (462,154.09) -- cycle ;
%Straight Lines [id:da7111048015880623] 
\draw    (469,193) -- (541,88) ;
%Straight Lines [id:da3214850137427565] 
\draw    (513,233.74) -- (513.63,270) ;
%Straight Lines [id:da20639941867697176] 
\draw    (513,74.44) -- (512.37,33) ;
%Straight Lines [id:da7445610575414237] 
\draw    (484,220) -- (558,115) ;
%Straight Lines [id:da5188584238163465] 
\draw [->]   (367,153) -- (436,153) ;
%Straight Lines [id:da04902927669840573] 
\draw    (62,40) -- (62,140) ;
%Straight Lines [id:da5553639383761588] 
\draw    (82,40) -- (82,140) ;
%Straight Lines [id:da5998369373851833] 
\draw    (102,40) -- (102,140) ;
%Straight Lines [id:da7728835570847521] 
\draw    (122,40) -- (122,140) ;
%Straight Lines [id:da9580904091230229] 
\draw    (62,180) -- (62,285) ;
%Straight Lines [id:da5425491150187467] 
\draw    (82,180) -- (82,285) ;
%Straight Lines [id:da9104113088426318] 
\draw    (102,180) -- (102,285) ;
%Straight Lines [id:da37929141657837806] 
\draw    (123,180) -- (123,285) ;
%Straight Lines [id:da9839127613735819] 
\draw [->]   (162,151) -- (252,151) ;
%Shape: Rectangle [id:dp8582201218448027] 
\draw  [color={rgb, 255:red, 208; green, 2; blue, 27 }  ,draw opacity=1 ][fill={rgb, 255:red, 255; green, 255; blue, 255 }  ,fill opacity=1 ] (56,218) -- (126,218) -- (126,237) -- (56,237) -- cycle ;
%Shape: Rectangle [id:dp19316050097281945] 
\draw  [color={rgb, 255:red, 208; green, 2; blue, 27 }  ,draw opacity=1 ][fill={rgb, 255:red, 255; green, 255; blue, 255 }  ,fill opacity=1 ] (56,78) -- (126,78) -- (126,97) -- (56,97) -- cycle ;
%Shape: Rectangle [id:dp6506083187645129] 
\draw  [color={rgb, 255:red, 208; green, 2; blue, 27 }  ,draw opacity=1 ][fill={rgb, 255:red, 255; green, 255; blue, 255 }  ,fill opacity=1 ] (293,80) -- (358,80) -- (358,96) -- (293,96) -- cycle ;
%Shape: Rectangle [id:dp08848937627677356] 
\draw  [color={rgb, 255:red, 208; green, 2; blue, 27 }  ,draw opacity=1 ][fill={rgb, 255:red, 255; green, 255; blue, 255 }  ,fill opacity=1 ] (295,214) -- (359,214) -- (359,230) -- (295,230) -- cycle ;
%Shape: Rectangle [id:dp622174723374378] 
\draw  [color={rgb, 255:red, 208; green, 2; blue, 27 }  ,draw opacity=1 ][fill={rgb, 255:red, 255; green, 255; blue, 255 }  ,fill opacity=1 ] (463,165) -- (563,165) -- (563,186) -- (463,186) -- cycle ;
%Shape: Rectangle [id:dp758476101129921] 
\draw  [color={rgb, 255:red, 208; green, 2; blue, 27 }  ,draw opacity=1 ][fill={rgb, 255:red, 255; green, 255; blue, 255 }  ,fill opacity=1 ] (463,125) -- (563,125) -- (563,145) -- (463,145) -- cycle ;

% Text Node
\draw (395,135) node [anchor=north west][inner sep=0.75pt]    {$u^{3}$};
% Text Node
\draw (207,135) node [anchor=north west][inner sep=0.75pt]    {$c$};
% Text Node
\draw (50,175) node [anchor=north west][inner sep=0.75pt]  [font=\footnotesize]  {$( ($};
% Text Node
\draw (94,175) node [anchor=north west][inner sep=0.75pt]  [font=\footnotesize]  {$($};
% Text Node
\draw (94,133) node [anchor=north west][inner sep=0.75pt]  [font=\footnotesize]  {$($};
% Text Node
\draw (50,133) node [anchor=north west][inner sep=0.75pt]  [font=\footnotesize]  {$($};
% Text Node
\draw (45,278) node [anchor=north west][inner sep=0.75pt]  [font=\footnotesize]  {$((($};
% Text Node
\draw (83,175) node [anchor=north west][inner sep=0.75pt]  [font=\footnotesize]  {$)$};
% Text Node
\draw (124,175) node [anchor=north west][inner sep=0.75pt]  [font=\footnotesize]  {$)$};
% Text Node
\draw (124,133) node [anchor=north west][inner sep=0.75pt]  [font=\footnotesize]  {$)$};
% Text Node
\draw (83,133) node [anchor=north west][inner sep=0.75pt]  [font=\footnotesize]  {$)$};
% Text Node
\draw (128,133) node [anchor=north west][inner sep=0.75pt]  [font=\footnotesize]  {$)$};
% Text Node
\draw (53,133) node [anchor=north west][inner sep=0.75pt]  [font=\footnotesize]  {$($};
% Text Node
\draw (128,175) node [anchor=north west][inner sep=0.75pt]  [font=\footnotesize]  {$)$};
% Text Node
%\draw (53,175) node [anchor=north west][inner sep=0.75pt]  [font=\footnotesize]  {$($};
% Text Node
\draw (125,278) node [anchor=north west][inner sep=0.75pt]  [font=\footnotesize]  {$)$};
% Text Node
\draw (83,278) node [anchor=north west][inner sep=0.75pt]  [font=\footnotesize]  {$)$};
% Text Node
\draw (103,278) node [anchor=north west][inner sep=0.75pt]  [font=\footnotesize]  {$)$};
% Text Node
%\draw (53,280) node [anchor=north west][inner sep=0.75pt]  [font=\footnotesize]  {$($};
% Text Node
%\draw (46,280) node [anchor=north west][inner sep=0.75pt]  [font=\footnotesize]  {$($};
% Text Node
\draw (124,34) node [anchor=north west][inner sep=0.75pt]  [font=\footnotesize]  {$)))$};
% Text Node
\draw (92,34) node [anchor=north west][inner sep=0.75pt]  [font=\footnotesize]  {$($};
% Text Node
\draw (73,34) node [anchor=north west][inner sep=0.75pt]  [font=\footnotesize]  {$($};
% Text Node
\draw (53,34) node [anchor=north west][inner sep=0.75pt]  [font=\footnotesize]  {$($};

\end{tikzpicture}
\]
\caption{Closure and composition of chord diagrams.}\label{fig:ClosureComp}
\end{figure}
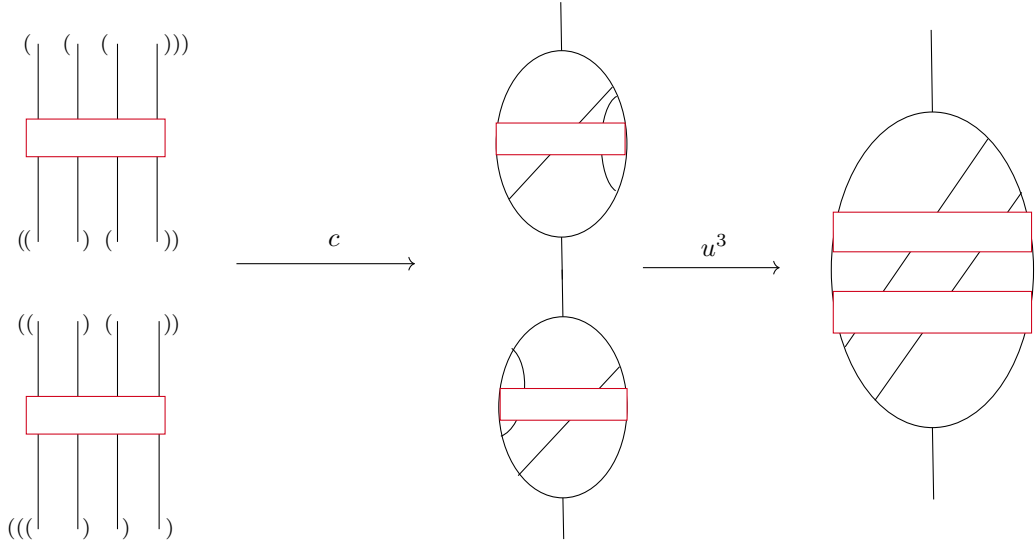

\begin{lemma}\label{lem: pentagon in arrows}
The map  \eqref{eq:ue} respects the pentagon relation. That is, the $\varepsilon$-images of the two sides of the pentagon relation coincide in $\arrows$. 
\end{lemma}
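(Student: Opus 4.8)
The plan is to reduce the statement to the triviality of the canonical associators in $\PaCD$ together with the multiplicativity of $\varepsilon$ established in Lemma~\ref{lem:emultiplicative}. The key observation is that the associator $\Associator\in\Hom_{\PaCD(3)}((12)3,1(23))$ is, at the level of the underlying category $\mathsf{CD}(3)$, the \emph{trivial} (empty) chord diagram; only its source and target parenthesizations distinguish it from an identity. Consequently each coface $d_i\Associator$ is, as a morphism of $\PaCD(4)$, again the empty chord diagram -- the operations $d_i$ merely double a strand or insert an empty one -- so its image $\varepsilon(d_i\Associator)$ is the empty arrow diagram supported on the skeleton closure $c_{p,q}$ determined by the relevant parenthesizations $p,q\in M(4)$. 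First I would record these $\varepsilon$-values explicitly, reading off the binary-tree closures at top and bottom directly from the definition \eqref{linear map PaCD to A} of $\varepsilon$.

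Next I would invoke multiplicativity. By Lemma~\ref{lem:emultiplicative} and the commuting square \eqref{stacking}, the realized composition $\ast$ in $\arrows$ (stacking followed by the $n-1$ unzips that cancel the intermediate vertices) satisfies $\varepsilon(f\cdot g)=\varepsilon(f)\ast\varepsilon(g)$ for composable $f,g$ in $\PaCD(4)$; iterating this gives the same identity for the three-fold composite appearing on the right-hand side of the pentagon. Hence the two $\varepsilon$-images to be compared are precisely $\varepsilon(d_1\Associator\cdot d_3\Associator)$ and $\varepsilon(d_4\Associator\cdot d_2\Associator\cdot d_0\Associator)$. But the pentagon relation \eqref{PaCD pentagon} holds in $\PaCD(4)$: both composites are the canonical (trivial) reassociation from $((12)3)4$ to $1(2(34))$, this being exactly the coherence pentagon satisfied by the canonical associators of the symmetric monoidal structure underlying $\mathsf{CD}$. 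Since $\varepsilon$ is a well-defined map on each $\Hom$-set, applying it to this single equality of morphisms yields $\varepsilon(d_1\Associator\cdot d_3\Associator)=\varepsilon(d_4\Associator\cdot d_2\Associator\cdot d_0\Associator)$, which is the assertion.

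The point requiring care -- rather than a genuine obstacle -- is the bookkeeping of the parenthesization closures. I must check that both composites genuinely land in the same space $\arrows(c_{p_1p_3})$, i.e.\ that the intermediate objects match so that the categorical compositions are defined, and that the unzips in the realized composition cancel the binary trees created at each junction, so that the net skeleton on both sides is the single closure $c_{((12)3)4,\,1(2(34))}$. Associativity of $\ast$, needed so that the three-fold composite on the right is unambiguous, follows from the associativity of categorical composition in $\PaCD$ transported through multiplicativity. Crucially, no arrow-diagram computation beyond these skeletal identifications is required, precisely because every $\varepsilon(d_i\Associator)$ carries the empty arrow graph; this is what makes the pentagon case cleaner than the hexagon cases, where the crossing generators $\virtualcrossing$ and the chord $\chord\mapsto\rarrow+\larrow$ contribute non-trivial arrows.
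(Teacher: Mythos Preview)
Your argument is correct, and in fact more streamlined than the paper's. You rightly observe that each $d_i\Associator$ is the trivial element of $\widehat{U}(\mathfrak{t}_4)$, that the pentagon therefore asserts an equality between two composites which are both the identity morphism in $\Hom_{\PaCD(4)}(((12)3)4,1(2(34)))$, and that applying the already-established multiplicativity from Lemma~\ref{lem:emultiplicative} to this equality gives the result immediately. There is no circularity: $\varepsilon$ is defined directly on each Hom-set via Lemma~\ref{Hopf CD lemma} and skeleton closure, not by generators and relations, so it is legitimate to invoke an equality of morphisms in the source.

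The paper's proof takes a more concrete route. Rather than appeal to multiplicativity and the pentagon in the source, it computes both $\ast$-products of the skeletal elements $d_i\bubble$ directly in $\arrows$: it stacks the bubbles, performs the interior unzips, and observes that the two sides reduce to the same skeleton element (the figures make this explicit). This has the virtue of exhibiting the common value, and of being self-contained without leaning on Lemma~\ref{lem:emultiplicative}, but your approach identifies the real reason the computation works and makes the lemma an immediate corollary of multiplicativity. Your parenthetical about the skeletal bookkeeping---that the intermediate objects match and the unzips cancel the junction trees---is exactly the content the paper displays pictorially, so you have not omitted anything essential.
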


\begin{proof} 
Recall that the pentagon equation \eqref{PaCD pentagon} in $\PaCD$ is 
\begin{equation*} 
d_1\Associator \cdot d_3 \Associator = d_4 \Associator \cdot d_2 \Associator \cdot d_0 \Associator.
\end{equation*}
We apply the closure operation to each of the $d_i\Associator$ to get the corresponding skeleton element $d_i\bubble$, $0\leq i\leq 4$.\footnote{The notation $d_i\bubble$ is only convenient shorthand for the closure of $d_i\Associator$, as the strand doubling operation $d_i$ does not translate directly to a circuit algebra or auxiliary operation, though it can be described in diagrammatic terms as a version of strand doubling.}
We then multiply these elements in $\arrows$ using the operation $u^{n-1}\WD_s$, as in the diagram \eqref{stacking}.
In particular, the image of $d_1\Associator \cdot d_3 \Associator $ in $\arrows$ is the composite of the two ``bubbles'' pictured on the left in Figure~\ref{LHS pentagon}. Completely unzipping the interior of this composite results in the third ``bubble'' on the left in Figure~\ref{LHS pentagon}. Similarly, the element $d_4 \Associator \cdot d_2 \Associator \cdot d_0 \Associator$ corresponds to the stacking of bubbles depicted on the right of Figure~\ref{RHS pentagon}. Completely unzipping the interior results in the third bubble on the right in Figure~\ref{RHS pentagon}. It follows that  \begin{equation}\label{pentagon arrows} d_1\bubble \cdot d_3 \bubble   =d_4 \bubble \cdot d_2 \bubble \cdot d_0 \bubble \end{equation} holds in $\arrows$ as both sides of the equation represent the same skeleton element. 
\end{proof}

\begin{figure}

\begin{subfigure}{0.3\textwidth}
\begin{tikzpicture}[x=0.75pt,y=0.75pt,yscale=-.5,xscale=.5]
%uncomment if require: \path (0,300); %set diagram left start at 0, and has height of 300

%Shape: Ellipse [id:dp9705212490788192] 
\draw   (58,84.34) .. controls (58,58.36) and (72.76,37.29) .. (90.97,37.29) .. controls (109.17,37.29) and (123.93,58.36) .. (123.93,84.34) .. controls (123.93,110.33) and (109.17,131.39) .. (90.97,131.39) .. controls (72.76,131.39) and (58,110.33) .. (58,84.34) -- cycle ;
%Straight Lines [id:da19834077184252819] 
\draw    (64.51,112.27) -- (116.6,55.65) ;
%Straight Lines [id:da36392284483828463] 
\draw [color={rgb, 255:red, 0; green, 0; blue, 0 }  ,draw opacity=1 ]   (90.97,131.39) -- (91.37,152.81) ;
%Straight Lines [id:da1426934041177692] 
\draw    (90.97,37.29) -- (90.56,12.81) ;
%Shape: Ellipse [id:dp9151944948886398] 
\draw   (59.29,217.3) .. controls (59.29,192.06) and (73.77,171.59) .. (91.64,171.59) .. controls (109.51,171.59) and (124,192.06) .. (124,217.3) .. controls (124,242.54) and (109.51,263) .. (91.64,263) .. controls (73.77,263) and (59.29,242.54) .. (59.29,217.3) -- cycle ;
%Straight Lines [id:da0816670594122283] 
\draw    (69.08,251.8) -- (120.21,196.8) ;
%Straight Lines [id:da3102373140503272] 
\draw    (91.64,263) -- (92.04,283.81) ;
%Straight Lines [id:da1941525242794564] 
\draw [color={rgb, 255:red, 0; green, 0; blue, 0 }  ,draw opacity=1 ]   (91.64,171.59) -- (91.24,147.81) ;
%Shape: Arc [id:dp8050668179722553] 
\draw  [draw opacity=0] (118.25,108.03) .. controls (114.08,105.28) and (111,95.71) .. (111,84.34) .. controls (111,72.49) and (114.34,62.61) .. (118.79,60.35) -- (120.72,84.34) -- cycle ; \draw   (118.25,108.03) .. controls (114.08,105.28) and (111,95.71) .. (111,84.34) .. controls (111,72.49) and (114.34,62.61) .. (118.79,60.35) ;
%Shape: Arc [id:dp26344899359820473] 
\draw  [draw opacity=0] (65.81,187.55) .. controls (70.43,191.14) and (73.13,200.05) .. (72.14,210.18) .. controls (71.07,221.06) and (66.1,229.84) .. (60.35,231.9) -- (59.47,209.11) -- cycle ; \draw   (65.81,187.55) .. controls (70.43,191.14) and (73.13,200.05) .. (72.14,210.18) .. controls (71.07,221.06) and (66.1,229.84) .. (60.35,231.9) ;
%Shape: Ellipse [id:dp26329224176773625] 
\draw   (225,150.9) .. controls (225,106.91) and (247.83,71.26) .. (276,71.26) .. controls (304.17,71.26) and (327,106.91) .. (327,150.9) .. controls (327,194.89) and (304.17,230.55) .. (276,230.55) .. controls (247.83,230.55) and (225,194.89) .. (225,150.9) -- cycle ;
%Straight Lines [id:da6561152874504911] 
\draw    (226,139.81) -- (304,84.81) ;
%Straight Lines [id:da7980718252921649] 
\draw    (276,230.55) -- (276.63,266.81) ;
%Straight Lines [id:da09069504828669617] 
\draw    (276,71.26) -- (275.37,29.81) ;
%Straight Lines [id:da28533486638169325] 
\draw    (247,216.81) -- (326,160.81) ;
%Shape: Arc [id:dp8074971199929004] 
\draw  [draw opacity=0] (225,150.9) .. controls (231.14,153.15) and (236.7,164) .. (238.16,177.58) .. controls (239.04,185.76) and (238.26,193.33) .. (236.3,198.86) -- (225.01,179.17) -- cycle ; \draw   (225,150.9) .. controls (231.14,153.15) and (236.7,164) .. (238.16,177.58) .. controls (239.04,185.76) and (238.26,193.33) .. (236.3,198.86) ;
%Shape: Arc [id:dp966977723753057] 
\draw  [draw opacity=0] (327,150.9) .. controls (318.53,146.41) and (311.81,130.77) .. (311.02,111.92) .. controls (310.75,105.53) and (311.19,99.45) .. (312.21,94) -- (330.64,111.1) -- cycle ; \draw   (327,150.9) .. controls (318.53,146.41) and (311.81,130.77) .. (311.02,111.92) .. controls (310.75,105.53) and (311.19,99.45) .. (312.21,94) ;
%Shape: Ellipse [id:dp8871247153647709] 
\draw   (442,147.9) .. controls (442,103.91) and (464.83,68.26) .. (493,68.26) .. controls (521.17,68.26) and (544,103.91) .. (544,147.9) .. controls (544,191.89) and (521.17,227.55) .. (493,227.55) .. controls (464.83,227.55) and (442,191.89) .. (442,147.9) -- cycle ;
%Straight Lines [id:da8205039619918193] 
\draw    (449,186.81) -- (521,81.81) ;
%Straight Lines [id:da4473972601613777] 
\draw    (493,227.55) -- (493.63,263.81) ;
%Straight Lines [id:da9638197140562822] 
\draw    (493,68.26) -- (492.37,26.81) ;
%Straight Lines [id:da813967951082121] 
\draw    (464,213.81) -- (538,108.81) ;
%Straight Lines [id:da8721692967011881] 
\draw [->]   (129,150) -- (198,150) ;
%\draw [shift={(200,150.81)}, rotate = 180] [color={rgb, 255:red, 0; green, 0; blue, 0 }  ][line width=0.75]    (10.93,-3.29) .. controls (6.95,-1.4) and (3.31,-0.3) .. (0,0) .. controls (3.31,0.3) and (6.95,1.4) .. (10.93,3.29)   ;
%Straight Lines [id:da534322488979473] 
\draw[->]    (347,150) -- (416,150) ;
%\draw [shift={(418,146.81)}, rotate = 180] [color={rgb, 255:red, 0; green, 0; blue, 0 }  ][line width=0.75]    (10.93,-3.29) .. controls (6.95,-1.4) and (3.31,-0.3) .. (0,0) .. controls (3.31,0.3) and (6.95,1.4) .. (10.93,3.29)   ;

% Text Node
\draw (156,132) node [anchor=north west][inner sep=0.75pt]    {$u$};
% Text Node
\draw (375,120) node [anchor=north west][inner sep=0.75pt]    {$u^{2}$};
\end{tikzpicture}
\caption{The left hand side of the pentagon equation.}\label{LHS pentagon}
\end{subfigure} 
\hfill
\begin{subfigure}{0.3\textwidth}
\begin{tikzpicture}[x=0.75pt,y=0.5pt,yscale=-.75,xscale=.5]
%uncomment if require: \path (0,300); %set diagram left start at 0, and has height of 300

%Shape: Ellipse [id:dp5059034172230277] 
\draw   (102.33,53.16) .. controls (102.33,34.04) and (112.68,18.54) .. (125.45,18.54) .. controls (138.22,18.54) and (148.57,34.04) .. (148.57,53.16) .. controls (148.57,72.27) and (138.22,87.77) .. (125.45,87.77) .. controls (112.68,87.77) and (102.33,72.27) .. (102.33,53.16) -- cycle ;
%Straight Lines [id:da6394447016338984] 
\draw    (107.18,73.98) -- (143.72,32.33) ;
%Straight Lines [id:da4317130954731442] 
\draw    (125.45,87.77) -- (125.45,106.06) ;
%Straight Lines [id:da5931553149237019] 
\draw    (125.74,18.82) -- (125.45,0.81) ;
%Curve Lines [id:da2151752310701901] 
\draw    (125.45,99.87) .. controls (74.21,99.87) and (75.21,7) .. (126.02,7) ;
%Shape: Ellipse [id:dp8552915656410194] 
\draw   (102.5,244.5) .. controls (102.5,223.43) and (112.84,206.35) .. (125.6,206.35) .. controls (138.36,206.35) and (148.71,223.43) .. (148.71,244.5) .. controls (148.71,265.57) and (138.36,282.65) .. (125.6,282.65) .. controls (112.84,282.65) and (102.5,265.57) .. (102.5,244.5) -- cycle ;
%Straight Lines [id:da8636027024794362] 
\draw    (107.35,267.45) -- (143.86,221.55) ;
%Straight Lines [id:da0879839735838005] 
\draw    (125.6,282.65) -- (125.6,302.81) ;
%Curve Lines [id:da5932701717063198] 
\draw    (125.6,292.73) .. controls (172.38,292.27) and (175.8,196.74) .. (125.75,196.74) ;
%Shape: Ellipse [id:dp7179568794802019] 
\draw   (100.84,152.87) .. controls (100.84,132.78) and (112.07,116.5) .. (125.92,116.5) .. controls (139.77,116.5) and (151,132.78) .. (151,152.87) .. controls (151,172.96) and (139.77,189.25) .. (125.92,189.25) .. controls (112.07,189.25) and (100.84,172.96) .. (100.84,152.87) -- cycle ;
%Straight Lines [id:da8841334643015969] 
\draw    (105.8,174.46) -- (145.43,130.69) ;
%Straight Lines [id:da4881189404748414] 
\draw    (125.92,189.25) -- (126.23,205.81) ;
%Straight Lines [id:da5266607974479384] 
\draw    (125.92,116.5) -- (125.61,97.57) ;
%Shape: Arc [id:dp8379248799826199] 
\draw  [draw opacity=0] (109.52,169.1) .. controls (108.26,163.92) and (110.55,156.05) .. (115.92,148.77) .. controls (123.05,139.1) and (133.01,133.98) .. (138.66,136.91) -- (125.9,155.48) -- cycle ; \draw   (109.52,169.1) .. controls (108.26,163.92) and (110.55,156.05) .. (115.92,148.77) .. controls (123.05,139.1) and (133.01,133.98) .. (138.66,136.91) ;
%Shape: Ellipse [id:dp8765483234673694] 
\draw   (310,151.53) .. controls (310,106.95) and (333.06,70.81) .. (361.5,70.81) .. controls (389.94,70.81) and (413,106.95) .. (413,151.53) .. controls (413,196.11) and (389.94,232.25) .. (361.5,232.25) .. controls (333.06,232.25) and (310,196.11) .. (310,151.53) -- cycle ;
%Straight Lines [id:da3870476884749692] 
\draw    (313,177.81) -- (388,84.81) ;
%Straight Lines [id:da2671453579394938] 
\draw    (361.5,232.25) -- (362.14,269) ;
%Straight Lines [id:da0699224963312649] 
\draw    (361.5,70.81) -- (360.86,28.81) ;
%Straight Lines [id:da655861818750936] 
\draw    (338,223.81) -- (410,125.81) ;
%Straight Lines [id:da15082009820947717] 
\draw[->]    (179,160) -- (297,160) ;
%\draw [shift={(299,162.81)}, rotate = 181.43] [color={rgb, 255:red, 0; green, 0; blue, 0 }  ][line width=0.75]    (10.93,-3.29) .. controls (6.95,-1.4) and (3.31,-0.3) .. (0,0) .. controls (3.31,0.3) and (6.95,1.4) .. (10.93,3.29)   ;

% Text Node
\draw (230,130) node [anchor=north west][inner sep=0.75pt]    {$u^{6}$};
\end{tikzpicture}
\caption{The right hand side of the pentagon equation.}\label{RHS pentagon}
\end{subfigure}
\caption{The two sides of the pentagon equation in $\arrows$.}
\end{figure}

\begin{lemma}\label{lem: hexagon in arrows}
The map  \eqref{eq:ue} respects the hexagon relations. That is, the $\varepsilon$-images of the two sides of each hexagon relation coincide in $\arrows$. 
\end{lemma}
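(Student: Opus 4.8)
The plan is to mirror the structure of the pentagon lemma (Lemma~\ref{lem: pentagon in arrows}) exactly, working ``one skeleton at a time''. Recall the first hexagon relation \eqref{PaCD hexagon}:
\[
d_1\virtualcrossing = \Associator \cdot d_0\virtualcrossing \cdot \bigl((23)(\Associator)^{-1}\bigr)\cdot (23)d_3\virtualcrossing\cdot (321)\Associator.
\]
First I would apply the closure operation $\varepsilon$ of \eqref{linear map PaCD to A} to each generating morphism appearing on both sides. Each factor $\Associator^{\pm 1}$ closes up to a bubble $\bubble^{\pm 1}$ on the appropriate skeleton, while each braiding generator $\virtualcrossing$ closes to a crossing-free skeleton element (recall $\varepsilon$ sends the underlying empty chord diagram to a skeleton permutation, and $\varepsilon(\chord)=\rarrow+\larrow$; the braiding $\virtualcrossing$ is the empty diagram on a transposition skeleton). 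The symmetric group prefactors $(23)$, $(321)$ are themselves skeleton elements in $\arrows$ (Example~\ref{ex: symmetric group}), so the entire right-hand side is a well-defined composite in $\bigcup_{c_{p_ip_j}}\arrows(c_{p_ip_j})$, assembled via the operation $u^{n-1}\WD_s$ as in diagram \eqref{stacking}.

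The key observation, just as in the pentagon case, is that both sides of the relation close up to morphisms on the \emph{same} skeleton $c_{p_1p_2}$ in $\arrows$, determined by the underlying objects $p_1,p_2\in M(3)$. Once this is checked, I would verify that after fully unzipping the interiors of the stacked bubbles (the $n-1$ interior vertices introduced by composition), the two sides produce identical skeleton elements. The essential point making this work is that the hexagon relation is a \emph{categorical} identity in $\PaCD(3)$ — it involves only categorical composition and the strand-doublings $d_i$, not operadic composition. Since Lemma~\ref{lem:emultiplicative} establishes that $\varepsilon$ intertwines categorical composition with the $u^{n-1}\WD_s$ operation, and since the doublings $d_i$ correspond diagrammatically to strand-doubling (unzip-style) operations on skeleta, the $\varepsilon$-image of each side is computed by the same recipe applied to equal inputs. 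The second hexagon \eqref{PaCD hexagon2} is handled by the identical argument with $\virtualcrossing$ replaced by $(\virtualcrossing)^{-1}$ throughout; I would note that orientation-switch compatibility of $\varepsilon$ makes the inverse braiding behave correctly.

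The main obstacle I anticipate is bookkeeping the doubling operations $d_i$ on the \emph{braiding} generators rather than on associators. In the pentagon lemma only $d_i\Associator$ appears, and the doublings act on bubbles in a visually transparent way; here $d_0\virtualcrossing$, $d_3\virtualcrossing$, and $d_1\virtualcrossing$ require checking that doubling a strand of a transposition-skeleton braiding, followed by the closure and interior unzip, matches the combinatorics of the Alekseev--Torossian doubling formula for $t_{j,k}$ recorded before \eqref{PaCD pentagon}. Concretely, I must confirm that the $\varepsilon$-image of $d_i\virtualcrossing$ agrees with the appropriate skeleton braiding so that, after stacking and unzipping, the left-hand single braiding $d_1\virtualcrossing$ equals the fourfold composite on the right. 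I expect this to reduce, via the VI relation and the definition of unzip on arrow diagrams (Figure~\ref{fig: unzip for arrows}), to a purely skeletal identity of permutations with binary-tree closures — at which point both sides manifestly coincide because the underlying permutation identity is exactly the braid-relation content of the hexagon. I would present the verification diagrammatically, as in Figures~\ref{LHS pentagon} and~\ref{RHS pentagon}, rather than algebraically.
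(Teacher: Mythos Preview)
Your proposal is correct and follows essentially the same approach as the paper: close each factor to its skeleton element in $\arrows$ (with $\virtualcrossing$ mapping to the skeleton element $\twist$), compose via $u^{n-1}\WD_s$, and observe that after fully unzipping the interior vertices both sides reduce to the same skeleton element, with the second hexagon handled identically. The only overcomplication is your remark about orientation-switch compatibility for $(\virtualcrossing)^{-1}$---this is unnecessary, since $(\virtualcrossing)^{-1}$ is again an empty chord diagram on a transposition skeleton and closes to a skeleton element just as $\virtualcrossing$ does.
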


\begin{proof} 
Recall that the first hexagon relation \eqref{PaCD hexagon} in $\PaCD$ is 
\begin{equation*} \label{hexagon PaCD} 
d_1\virtualcrossing = \Associator \cdot d_0\virtualcrossing \cdot ((23) \Associator^{-1})\cdot (23)d_3\virtualcrossing\cdot (321)\Associator. 
\end{equation*} 
The element $\virtualcrossing\in\Hom_{\PaCD(2)}((12),(21))$ is the empty chord diagram on the transposition map; $\varepsilon$ maps this to the skeleton element $\twist\in A$. As before, we write $d_i\bubble$ or $d_j\twist$ for the skeleton closures of the morphisms $d_i\Associator$ and $d_j\virtualcrossing$ in $\PaCD$. One can then check, after fully unzipping the composite in Figure~\ref{hexagon pic} that the equation
 \begin{equation}\label{hexagon arrows} d_1\twist = \bubble \cdot d_0\twist \cdot ((23) \bubble^{-1})\cdot (23)d_3\twist \cdot (321)\bubble\end{equation} holds in $\arrows$. The second hexagon equation is similar. 
  \end{proof} 
 
 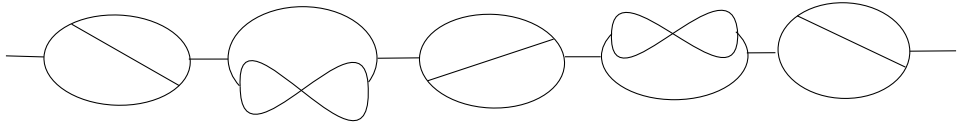
\begin{figure}[h]
\begin{tikzpicture}[x=0.75pt,y=0.75pt,yscale=-.75,xscale=.75]
%uncomment if require: \path (0,300); %set diagram left start at 0, and has height of 300

%Shape: Ellipse [id:dp09441225382015006] 
\draw   (88.31,185.38) .. controls (61.16,184.2) and (39.83,169.67) .. (40.66,152.93) .. controls (41.49,136.19) and (64.17,123.58) .. (91.32,124.76) .. controls (118.47,125.94) and (139.81,140.47) .. (138.98,157.21) .. controls (138.15,173.95) and (115.46,186.56) .. (88.31,185.38) -- cycle ;
%Straight Lines [id:da000084073192904488] 
\draw    (40.66,152.93) -- (15.06,152.19) ;
%Shape: Free Drawing [id:dp532913675539645] 
\draw  [color={rgb, 255:red, 0; green, 0; blue, 0 }  ][line width=0.75] [line join = round][line cap = round] (42.37,163.88) .. controls (42.37,163.88) and (42.37,163.88) .. (42.37,163.88) ;
%Shape: Arc [id:dp1068643263259268] 
\draw  [draw opacity=0] (172.08,172.34) .. controls (167.26,167.02) and (164.46,160.73) .. (164.45,153.97) .. controls (164.4,134.79) and (186.78,119.18) .. (214.44,119.1) .. controls (242.09,119.02) and (264.55,134.51) .. (264.6,153.69) .. controls (264.61,159.52) and (262.55,165.02) .. (258.91,169.85) -- (214.52,153.83) -- cycle ; \draw   (172.08,172.34) .. controls (167.26,167.02) and (164.46,160.73) .. (164.45,153.97) .. controls (164.4,134.79) and (186.78,119.18) .. (214.44,119.1) .. controls (242.09,119.02) and (264.55,134.51) .. (264.6,153.69) .. controls (264.61,159.52) and (262.55,165.02) .. (258.91,169.85) ;
%Curve Lines [id:da2445426306345585] 
\draw    (172.59,172.9) .. controls (168.95,107.9) and (264.16,251.11) .. (258.5,170.4) ;
%Curve Lines [id:da029563620453509243] 
\draw    (172.59,172.9) .. controls (172.83,234.66) and (253.92,118.43) .. (258.5,170.4) ;
%Straight Lines [id:da7234651958469929] 
\draw    (265,153.81) -- (293.18,153.54) ;
%Straight Lines [id:da4348674935965473] 
\draw    (138.04,154.38) -- (164.46,154.36) ;
%Straight Lines [id:da7671000629489272] 
\draw    (59.23,130.86) -- (131.67,172.26) ;
%Shape: Ellipse [id:dp809608693128976] 
\draw   (340.49,187.24) .. controls (313.57,186.01) and (292.39,170.92) .. (293.18,153.54) .. controls (293.98,136.16) and (316.45,123.07) .. (343.37,124.3) .. controls (370.29,125.53) and (391.47,140.62) .. (390.67,158) .. controls (389.88,175.38) and (367.41,188.47) .. (340.49,187.24) -- cycle ;
%Straight Lines [id:da8943774105666804] 
\draw    (391,152.81) -- (415,152.81) ;
%Shape: Free Drawing [id:dp6337621433939373] 
\draw  [color={rgb, 255:red, 0; green, 0; blue, 0 }  ][line width=0.75] [line join = round][line cap = round] (294.89,164.91) .. controls (294.89,164.91) and (294.89,164.91) .. (294.89,164.91) ;
%Straight Lines [id:da11543676853523388] 
\draw    (298.68,168.97) -- (383.36,140.93) ;
%Shape: Arc [id:dp7217983106860808] 
\draw  [draw opacity=0] (505.92,135.98) .. controls (511.03,140.58) and (514.03,146.1) .. (514.06,152.05) .. controls (514.15,168.28) and (492.17,181.56) .. (464.96,181.71) .. controls (437.76,181.86) and (415.63,168.83) .. (415.54,152.6) .. controls (415.5,146.99) and (418.11,141.73) .. (422.65,137.25) -- (464.8,152.32) -- cycle ; \draw   (505.92,135.98) .. controls (511.03,140.58) and (514.03,146.1) .. (514.06,152.05) .. controls (514.15,168.28) and (492.17,181.56) .. (464.96,181.71) .. controls (437.76,181.86) and (415.63,168.83) .. (415.54,152.6) .. controls (415.5,146.99) and (418.11,141.73) .. (422.65,137.25) ;
%Curve Lines [id:da1781861739466345] 
\draw    (422.27,137.63) .. controls (423.63,85.96) and (508.12,191.88) .. (507.15,137.15) ;
%Curve Lines [id:da8356716544063166] 
\draw    (422.27,137.63) .. controls (420.38,193.12) and (503.67,82.54) .. (507.15,137.15) ;
%Straight Lines [id:da7798456779695218] 
\draw    (513.28,150.19) -- (532.52,150.08) ;
%Shape: Ellipse [id:dp1291765988059842] 
\draw   (579.02,180.85) .. controls (554.55,180.63) and (534.53,166.03) .. (534.29,148.24) .. controls (534.05,130.46) and (553.69,116.22) .. (578.15,116.45) .. controls (602.61,116.68) and (622.64,131.28) .. (622.88,149.06) .. controls (623.12,166.85) and (603.48,181.08) .. (579.02,180.85) -- cycle ;
%Straight Lines [id:da1772841086979473] 
\draw    (654,148.81) -- (622.88,149.06) ;
%Shape: Free Drawing [id:dp24996977471990578] 
\draw  [color={rgb, 255:red, 0; green, 0; blue, 0 }  ][line width=0.75] [line join = round][line cap = round] (534.14,158.65) .. controls (534.14,158.65) and (534.14,158.65) .. (534.14,158.65) ;
%Straight Lines [id:da9395562295179093] 
\draw    (547.09,125.19) -- (620,159.81) ;

\end{tikzpicture}
\caption{The right hand side of the image of the hexagon relation.
}\label{hexagon pic}
\end{figure}

{\em Proof of Theorem \ref{inclusion of Hopf groupoids}.} By Lemmas~\ref{lem: pentagon in arrows}~and~\ref{lem: hexagon in arrows}, $\varepsilon$ is well-defined. By definition, every $\Hom_{\PaCD(n)}(p_1, p_2)$ is a (completed, filtered coassocaitive) coalgebra so $\varepsilon$ is a map of (complete filtered, coassociative) coalgebras. Finally, by Lemma~\ref{lem:emultiplicative}, $\varepsilon$ is multiplicative.
\qed

\subsubsection{The image of $\grt_1$ in $\Aut_v(\arrows)$}
Recall that the Alekseev--Torossian map of Lie algebras $\varrho:\mathfrak{grt}_1\rightarrow \mathfrak{krv}_2$ induces a group homomorphism $\tilde{\varrho}$:

\begin{equation}\label{eq:rho}
\begin{tikzcd}
\Aut(\PaCD) \arrow[d, "\cong", swap]\arrow[r, dotted, "\tilde{\varrho}"] & \Aut_v(\arrows) \arrow[d, "\cong"] \\
\grt_1 \arrow[r, hookrightarrow, "\exp(\varrho)"] & \krv. 
 \end{tikzcd}
 \end{equation}
 
 The fact that $\tilde{\varrho}$ is a group homomorphism is automatic since $\tilde{\varrho}$ is a composite of group homomorphisms. Moreover, $\tilde{\varrho}$ is injective because $\varrho$ is injective. The image of $\Aut(\PaCD)$ in $\Aut_{v}(\arrows)$ is a subgroup of $\krv$ isomorphic to $\grt_1$. In Theorem~\ref{thm: GRT as Aut(A)} we give a direct description of $\grt_1$ as automorphisms of arrow diagrams whose value on $\bubble$ satisfies an additional condition. We start with two useful lemmas.

 \begin{lemma}\label{lem:bub}
 	For any $G_1, G_2 \in \Aut_{v}(\arrows)$, if $G_1(\bubble)=G_2(\bubble)$ then $G_1=G_2$.
 \end{lemma}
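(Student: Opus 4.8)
The plan is to show that the single value $G(\bubble)$ already determines the whole automorphism $G$, after which the statement is immediate. First I would recall the structural fact that any $G\in\Aut_v(\arrows)$ is determined by $N=G(\vertex)$ and $\Gamma=G(\upcap)$: indeed $G(\rightarrowdiagram)=\rightarrowdiagram$ by Proposition~\ref{prop:AutAndExpansions}, and $\arrows$ is generated as a circuit algebra with auxiliary operations by $\rightarrowdiagram,\vertex,\upcap$. The key reduction is that $N$ \emph{alone} determines $\Gamma$. Writing $\Upsilon(N)=e^we^n$, the component $n\in\tder_2$ is read off from the (unique) semidirect-product factorisation, and the correspondence $\overline{\Theta}$ of Theorem~\ref{thm: aut(A)=KRV} sends $G$ to $(\alpha,s)=(e^{n},2\gamma)\in\krv$. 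Since in $\krv$ the first component of the pair determines the second uniquely, $\alpha=e^{n}$ determines $s=2\gamma$, hence $\gamma$ and $\Gamma=e^{\kappa^{-1}(\gamma)}$; concretely $w$ is fixed by $n$ through \eqref{U'} via $\mathcal J(e^{n})=e^{-2w}$, and $\gamma$ is fixed through \eqref{C'}. Thus it suffices to prove that $G(\bubble)$ determines $N$.

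For this crux I would recover $N=G(\vertex)$ from $G(\bubble)$ using the auxiliary operations, which $G$ commutes with. The element $\bubble$ is the closure of the associator $\Associator$, which is the empty chord diagram; hence $\bubble$ is the empty arrow diagram supported on a skeleton that is a wiring-diagram composite of four copies of the vertex generator $\vertex$ (the two binary trees for $(12)3$ and $1(23)$), with no caps appearing. Because $G$ is a circuit algebra homomorphism, $G(\bubble)$ is the same composite built from copies of $N$. I would then apply unzips, which eliminate foam vertices (Figure~\ref{fig: unzip for foams}) just as interior unzipping collapses stacked bubbles in the proofs of Lemmas~\ref{lem: pentagon in arrows} and~\ref{lem: hexagon in arrows}: unzipping three of the four vertices, together with orientation switches to correct orientations, reduces $\bubble$ to a single vertex $\vertex$. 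Calling this composite of auxiliary operations $\Phi$, so that $\Phi(\bubble)=\vertex$, commutativity gives $\Phi\bigl(G(\bubble)\bigr)=G\bigl(\Phi(\bubble)\bigr)=G(\vertex)=N$. Hence $G(\bubble)$ determines $N$.

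Combining the two steps, $G(\bubble)$ determines $N$, which determines $\Gamma$, which determines $G$; so $G_1(\bubble)=G_2(\bubble)$ forces $N_1=N_2$ and $\Gamma_1=\Gamma_2$, i.e. $G_1=G_2$ on all generators and therefore as automorphisms. The main obstacle is the combinatorial verification in the second paragraph that unzips and orientation switches reduce the associator-closure skeleton $\bubble$ to a single vertex. Should a clean reduction $\Phi$ prove awkward to exhibit, the fallback is to prove directly that $N\mapsto G(\bubble)$ is injective by a leading-order argument: modulo higher filtration degree, $G(\bubble)-\bubble$ equals the sum of the insertions of $\eta=\log N$ at the four vertices of $\bubble$, and using the VI relation to identify the span of these insertions one shows this map is injective on primitive $\eta$, after which one inducts up the augmentation filtration. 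Either route isolates the single value $N$, which is the essential point.
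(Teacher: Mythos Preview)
Your reduction ``$N$ determines $\Gamma$'' via the Duflo map in $\krv$ is fine and matches what the paper uses implicitly. The problem is the crux step: no composite $\Phi$ of auxiliary operations can satisfy $\Phi(\bubble)=\vertex$. The skeleton of $\bubble$ has exactly two boundary labels (one incoming strand at the bottom, one outgoing at the top), while $\vertex$ has three. Orientation switch and adjoint preserve the boundary label sets; unzip acts on an \emph{internal} edge between two skeleton vertices and leaves the boundary unchanged; deletion removes a long strand and drops the boundary count by two. None of these can take a $2$-label skeleton to a $3$-label skeleton, so the map $\Phi$ you want does not exist. (Relatedly, each unzip removes skeleton vertices in pairs, so the count $4\to 1$ is already obstructed by parity.)

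Your fallback---show that $N\mapsto G(\bubble)$ is injective by a filtration argument---is exactly the route the paper takes, but it needs more than a one-line sketch. The paper first observes (citing \cite{BND:WKO2}) that the $\cyc$-component of $\Upsilon(G(\bubble))$ vanishes, so only the $\TAut_3$-part matters; this is not obvious from the raw insertion picture. Then, writing $\Upsilon(G_i(\vertex))=e^{w_i}e^{n_i}$, one has
\[
\Upsilon(G_i(\bubble))=(e^{-n_i})^{12,3}(e^{-n_i})^{1,2}(e^{n_i})^{2,3}(e^{n_i})^{1,23}\in\TAut_3,
\]
and the equality $G_1(\bubble)=G_2(\bubble)$ is unwound degree by degree to force $n_1=n_2$. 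Your ``sum of four insertions of $\eta$'' is the leading term of this identity, but you should check explicitly that the linearised map $n\mapsto -n^{12,3}-n^{1,2}+n^{2,3}+n^{1,23}$ from $\tder_2$ to $\tder_3$ is injective; this is the actual content of the degree-by-degree step and is not automatic from VI alone.
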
  

\begin{proof} 
By Theorem \ref{thm: aut(A)=KRV}, an automorphism $G\in \Aut_v(\arrows)$ is determined by $G(\vertex)=N$. In particular, $\Upsilon(N)=e^we^n$, and $w$ is uniquely determined by $n$, so $G$ is determined by $n$.

For $i=1,2$, set $\Upsilon(G_i(\vertex))=e^{w_i}e^{n_i}$, and it is enough to prove that $n_1=n_2$. Since the $G_i$ are circuit algebra maps, we have
$$\Upsilon(G_i(\bubble))=\Upsilon((G_i(\vertex)^{12,3})^{-1}(G_i(\vertex)^{1,2})^{-1} G_i(\vertex)^{2,3}G_i(\vertex)^{1,23}).$$
The $\TAut_3$ (``tree'') component of this expression is:
$$(e^{-n_i})^{12,3}(e^{-n_i})^{1,2}) (e^{n_i})^{2,3}(e^{n_i})^{1,23} \in \TAut_3. $$     
Then, expanding the $\TAut_3$ component of $G_1(\bubble)G_2(\bubble)^{-1}=1$ degree by degree, using the above equation, we obtain that $n_1=n_2$, which implies the result.
\end{proof}

Recall that for $x\in \arrows(\uparrow_n)$ we denote $x^*=A_1A_2...A_n(x)$, where $A_i$ is the adjoint (direction switch) operation on strand $i$. The following fact from \cite{BND:WKO2} helps prove equalities between elements in $\arrows(\uparrow_n)$:
 
 \begin{lemma}\label{lem:TreeLemma}
 \cite[Lemma 4.16]{BND:WKO2}
 Two group-like elements $a$ and $b$ in $\arrows(\uparrow_n)$ are equal if and only if their ``tree components'' -- that is, their projections to $\exp(\tder_n \oplus \mathfrak{a}_n)$ -- are equal, and in addition $aa^*=bb^*$.  
 \end{lemma}

\begin{theorem}\label{thm: GRT as Aut(A)}
	The (isomorphic) image of $\grt_1$ in $\Aut_v(\arrows)$ is the subgroup $$\EB=\{\tilde{G} \in \Aut_v(\arrows)~:~\tilde{G}(\bubble)=\varepsilon(G(\Associator)) \text{ for some (unique) }G\in\Aut(\PaCD)\}.$$
\end{theorem}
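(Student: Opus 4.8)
The plan is to prove the equality $\EB=\im(\tilde\varrho)$, where $\im(\tilde\varrho)$ is the image of $\grt_1$ in $\Aut_v(\arrows)$ determined by the commuting square defining $\tilde\varrho$. Both inclusions reduce to the single \emph{intertwining identity}
\begin{equation}\label{eq:plan-intertwine}
\tilde\varrho(G)(\bubble)=\varepsilon(G(\Associator))\qquad\text{for every } G\in\Aut(\PaCD).
\end{equation}
Granting \eqref{eq:plan-intertwine}, the inclusion $\im(\tilde\varrho)\subseteq\EB$ is immediate: \eqref{eq:plan-intertwine} exhibits, for $\tilde G=\tilde\varrho(G)$, the element $G\in\Aut(\PaCD)$ required by the definition of $\EB$. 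Conversely, if $\tilde G\in\EB$ then $\tilde G(\bubble)=\varepsilon(G(\Associator))$ for some $G$, and \eqref{eq:plan-intertwine} gives $\tilde G(\bubble)=\tilde\varrho(G)(\bubble)$; since both lie in $\Aut_v(\arrows)$, Lemma~\ref{lem:bub} forces $\tilde G=\tilde\varrho(G)\in\im(\tilde\varrho)$. The same two facts (injectivity of $\tilde\varrho$ and of $\tilde G\mapsto \tilde G(\bubble)$) give the uniqueness of $G$ asserted in the definition of $\EB$.

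Everything therefore rests on \eqref{eq:plan-intertwine}, which I would prove by translating both sides into $\TAut_3$ via the Hopf isomorphism $\Upsilon$. Writing $N=\tilde\varrho(G)(\vertex)$ with $\Upsilon(N)=e^we^n$, Lemma~\ref{lem:bub} gives
\[
\Upsilon\big(\tilde\varrho(G)(\bubble)\big)=(e^{-n})^{12,3}(e^{-n})^{1,2}(e^{n})^{2,3}(e^{n})^{1,23}\in\TAut_3 .
\]
The commuting square identifies the $\krv$-element $\Theta(\tilde\varrho(G))$ with $\exp(\varrho)(e^{\psi})$, where $G(\Associator)=e^{\psi}(t_{1,2},t_{2,3})$ and $\psi\in\mathfrak{grt}_1$; unwinding the formula for $\Theta$ from Theorem~\ref{thm: aut(A)=KRV} yields $e^{-n}=\exp(\varrho(\psi))$, so the left-hand side is built from coface maps of the single element $\exp(\varrho(\psi))\in\TAut_2$. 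For the right-hand side, the construction of $\varepsilon$ (Lemma~\ref{Hopf CD lemma} together with the closure map \eqref{linear map PaCD to A}) sends $G(\Associator)=e^{\psi}(t_{1,2},t_{2,3})$ to the bubble-closure of $\Upsilon^{-1}\big(e^{\psi}(t^{1,2},t^{2,3})\big)$, where $t^{i,j}\in\tder_3$ is the image of $t_{i,j}$ under $\mathfrak{t}_3\hookrightarrow\tder_3$. Thus \eqref{eq:plan-intertwine} becomes an explicit identity between two elements of $\TAut_3$.

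The main obstacle is the verification of this $\TAut_3$ identity, which is precisely the diagrammatic form of the Alekseev--Torossian compatibility between $\grt_1$ and $\krv$. I would attack it using the explicit formula $\varrho(\psi)=(\psi(-x-y,x),\psi(-x-y,y))$ together with the defining relations \eqref{inversion}, \eqref{hexagon} and \eqref{pentagon} of $\mathfrak{grt}_1$: the inversion and hexagon relations govern how $\varrho(\psi)$ interacts with orientation and with the single-strand coface maps, while the pentagon relation controls the behaviour of the closure under the strand-doublings $(-)^{12,3}$ and $(-)^{1,23}$ and is what equates the two sides. A convenient route is to verify the identity first infinitesimally --- at the level of $\mathfrak{grt}_1$, where the left-hand side linearises to $\varrho(\psi)^{12,3}+\varrho(\psi)^{1,2}-\varrho(\psi)^{2,3}-\varrho(\psi)^{1,23}$ and the right-hand side to the linearised closure of $\psi(t^{1,2},t^{2,3})$ --- and then to upgrade to the group level using that $\varepsilon$ is multiplicative (Lemma~\ref{lem:emultiplicative}) and that $\tilde\varrho$ is a group homomorphism, so that both sides of \eqref{eq:plan-intertwine} are compatible with categorical composition in $\Aut(\PaCD)$. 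I expect the careful bookkeeping of the closure in terms of coface maps, and its interaction with the pentagon relation, to be the genuinely delicate part of the argument; the reduction of the two inclusions to \eqref{eq:plan-intertwine} and the appeals to Lemma~\ref{lem:bub} are then purely formal.
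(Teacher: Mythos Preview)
Your overall architecture is exactly that of the paper: reduce both inclusions to the single intertwining identity \eqref{eq:plan-intertwine}, and deduce the converse inclusion and uniqueness from Lemma~\ref{lem:bub}. The translation of both sides into $\TAut_3$ via $\Upsilon$ is also the same, and you correctly identify the resulting equation
\[
(e^{-n})^{12,3}(e^{-n})^{1,2}(e^{n})^{2,3}(e^{n})^{1,23}=e^{\psi}(t^{1,2},t^{2,3})
\]
as ``the Alekseev--Torossian compatibility''. The paper does not re-derive this: it is precisely \cite[Proposition~9.12]{AT12} (up to the inversion convention noted there), and the paper simply cites it. So the difference between your proposal and the paper's proof is that you plan to reprove a result that is already in \cite{AT12}.

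Your proposed route to that identity --- verify it infinitesimally and then ``upgrade to the group level using that $\varepsilon$ is multiplicative and that $\tilde\varrho$ is a group homomorphism'' --- does not work as stated. Neither $G\mapsto \tilde\varrho(G)(\bubble)$ nor $G\mapsto \varepsilon(G(\Associator))$ is a group homomorphism from $\Aut(\PaCD)$ into $\TAut_3$ with its ordinary multiplication: for composed automorphisms $(G_1\circ G_2)(\Associator)=G_1(G_2(\Associator))$, and the group law this induces on the values $\Psi$ is the twisted (Ihara-type) product on $\grt_1$, not concatenation of the corresponding $\TAut_3$-elements; the same subtlety appears on the $\Aut_v(\arrows)$ side. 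Hence agreement on $\mathfrak{grt}_1$ does not formally propagate to $\grt_1$ by the multiplicativity you invoke. If you want to avoid citing \cite[Proposition~9.12]{AT12}, you must prove the full group-level identity directly (this is what \cite{AT12} does, using the pentagon for $\Psi$ and the explicit form of $\varrho$), rather than bootstrap from the linearised statement.
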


\begin{proof} By the diagram \eqref{eq:rho}, the isomorphic image of $\grt_1$ in $\Aut_v(A)$ is $\tilde{\varrho}(\Aut(\PaCD))$. Therefore, we need to show that $\tilde{\varrho}(\Aut(\PaCD))=\EB$.
	
First, for an element $G \in \Aut(\PaCD)$, we show that $\tilde{G}:=\tilde{\varrho}(G) \in \EB$.
In the proof of Theorem~\ref{thm: GRT is Aut}, the isomorphism $\Aut(\PaCD)\cong \grt_1$ comes from the identification of $G(\Associator)=\Psi(t_{1,2},t_{2,3})$ where $\Psi=e^{\psi}$ for some $\psi\in\mathfrak{grt}_1$.  Applying $\exp(\varrho)$, we get an element $\exp(\varrho(\psi)):=\alpha \in \krv$. By Proposition 9.12 of \cite{AT12}, we have\footnote{Note that there is an inversion compared to \cite{AT12} as we follow the notation of \cite{ATE10}.}
\begin{equation}\label{eq:apsi}  \Psi(t^{1,2},t^{2,3})=\alpha^{12,3}\alpha^{1,2}(\alpha^{2,3})^{-1}(\alpha^{1,23})^{-1}\end{equation}
Recall that the element $\alpha \in \krv$ uniquely determines the corresponding Duflo series $s$. By Theorem~\ref{thm: aut(A)=KRV}, $\tilde{G}=\Theta^{-1}(\alpha)\in\Aut_v(\arrows)$ is given by
\[ 
\tilde{G}(\vertex):=\Upsilon^{-1}((\mathcal{J}(\alpha^{-1}))^{-\frac{1}{2}}\alpha^{-1}) \quad \text{and}\quad \tilde{G}(\upcap):=e^{-\kappa^{-1}(s)/2}.
\] 

Since $\tilde{G}$ is an automorphism of circuit algebras, we have (explained in more detail below):
\begin{equation}\label{eq:Gtilde}
\tilde{G}(\bubble)=(\tilde{G}(\vertex)^{12,3})^{-1}(\tilde{G}(\vertex)^{1,2})^{-1} \tilde{G}(\vertex)^{2,3}\tilde{G}(\vertex)^{1,23}.
\end{equation}
Here the cosimplicial notation on $\tilde{G}(\vertex)$ is the same as in Section~\ref{sec: cosimplicial structure}, keeping in mind that $\tilde{G}(\vertex)\in \arrows(\uparrow_2)$ by Lemma \ref{lemma: VI}. For example, $(\tilde{G}(\vertex)^{12,3})^{-1}$ is the value of the bottom vertex in $\bubble$. It is inverted as it is a vertex of the opposite orientation, see \cite[Proof of Theorem 4.9]{BND:WKO2}. A priori, this value is an arrow diagram on the top two strands of the bottom vertex. However, by the VI relation it can be ``pushed up'' to the middle three strands of the skeleton $\bubble$, this effectively doubles the first strand, hence the superscript ``$12,3$''.

Denote by $a$ the right hand side of Equation~\eqref{eq:Gtilde}, and by $b$  the $\Upsilon^{-1}$ value of the right hand side of equation \eqref{eq:apsi}. We use Lemma~\ref{lem:TreeLemma} to show that $a=b$. Observe that the tree components of $a$ and $b$ are the same (by definition of $\tilde{G}$). Furthermore, $aa^*=1$ by the \eqref{U'} equation. On the other hand, $bb^*=1$, as follows. The isomorphism $\Upsilon^{-1}$ maps $t^{i,j}$ to a sums of two horizontal arrows, and hence $(\Upsilon^{-1}(t^{ij}))^*= - \Upsilon^{-1}(t^{i,j})$. The group-like property $\Psi=e^{\psi}$ then implies that $\Upsilon^{-1}(\Psi(t^{1,2},t^{2,3}))\left(\Upsilon^{-1}(\Psi(t^{1,2},t^{2,3}))\right)^*=bb^*=1$. Therefore, $a=b$. In other words, this means that the ``wheel components'' of the vertex values in Equation~\eqref{eq:Gtilde} all cancel.

Applying $\Upsilon^{-1}$ to Equation \ref{eq:apsi}, note that $\Upsilon^{-1}\Psi(t^{1,2},t^{2,3})=\varepsilon(G(\Associator))$ by the definition of the map $\varepsilon$. Therefore, 
$\tilde{G}(\bubble)=\varepsilon(G(\Associator)),$ so $\tilde{G}\in \EB$.

Conversely, assume $\tilde{G}\in \EB$; we show that $\tilde{G}$ is in the image of $\tilde{\varrho}$. 
The statement $\tilde{G}\in \EB$ means that $\tilde{G} \in \Aut_v(\arrows)$ and $\tilde{G}(\bubble)=\varepsilon(G(\Associator))$ for some $G\in\Aut(\PaCD)$. 

Such a $G$ is unique, because $\varepsilon$ is injective and the value $G(\Associator)$ determines $G$. We will show that $\tilde{G}=\tilde{\varrho}(G)$, so $\tilde{G} \in \tilde{\varrho}(\Aut(\PaCD))$.
By the construction above, we know that $\tilde{\varrho}(G) \in \Aut_v(\arrows)$ satisfies $\tilde{\varrho}(G)(\bubble)=\varepsilon(G(\Associator))$. Therefore,
$$\tilde{\varrho}(G)(\bubble)=\tilde{G}(\bubble).$$
Applying Lemma \ref{lem:bub}, we get $\tilde{G}=\tilde{\varrho}(G)$.
\end{proof}

\bibliographystyle{amsalpha}
\bibliography{kv}

\providecommand{\bysame}{\leavevmode\hbox to3em{\hrulefill}\thinspace}
\providecommand{\MR}{\relax\ifhmode\unskip\space\fi MR }
% \MRhref is called by the amsart/book/proc definition of \MR.
\providecommand{\MRhref}[2]{%
  \href{http://www.ams.org/mathscinet-getitem?mr=#1}{#2}
}
\providecommand{\href}[2]{#2}
\begin{thebibliography}{{Kup}03}

\bibitem[AET10]{ATE10}
A.~Alekseev, B.~Enriquez, and C.~Torossian, \emph{Drinfeld associators, braid
  groups and explicit solutions of the {K}ashiwara-{V}ergne equations}, Publ.
  Math. Inst. Hautes \'{E}tudes Sci. (2010), no.~112, 143--189. \MR{2737979}

\bibitem[AM06]{AM06}
A.~Alekseev and E.~Meinrenken, \emph{On the {K}ashiwara-{V}ergne conjecture},
  Invent. Math. \textbf{164} (2006), no.~3, 615--634. \MR{2221133}

\bibitem[AT12]{AT12}
A.~Alekseev and C.~Torossian, \emph{The {K}ashiwara-{V}ergne conjecture and
  {D}rinfeld's associators}, Ann. of Math. (2) \textbf{175} (2012), no.~2,
  415--463. \MR{2877064}

\bibitem[{Bar}15]{BN:Complete}
D.~{Bar-Natan}, \emph{Expansions and quadraticity for groups}, 2015,
  http://drorbn.net/AcademicPensieve/Projects/ExQu/index.html.

\bibitem[BM07]{bm_resolutions}
C.~{Berger} and I.~{Moerdijk}, \emph{Resolution of coloured operads and
  rectification of homotopy algebras}, 31--58. \MR{2342815}

\bibitem[BN95]{BN_Survey_Knot_Invariants}
D.~Bar-Natan, \emph{On the {V}assiliev knot invariants}, Topology \textbf{34}
  (1995), no.~2, 423--472. \MR{1318886}

\bibitem[BN98]{BNGT}
\bysame, \emph{On associators and the {G}rothendieck-{T}eichmuller group. {I}},
  Selecta Math. (N.S.) \textbf{4} (1998), no.~2, 183--212. \MR{1669949}

\bibitem[BN05]{BN05}
\bysame, \emph{Khovanov's homology for tangles and cobordisms}, Geom. Topol.
  \textbf{9} (2005), 1443--1499. \MR{2174270}

\bibitem[BN13]{BN-Intro}
\bysame, \emph{Introduction to {V}assiliev knot invariants [book review of
  mr2962302]}, Bull. Amer. Math. Soc. (N.S.) \textbf{50} (2013), no.~4,
  685--690. \MR{3090428}

\bibitem[BND16]{BND:WKO1}
D.~Bar-Natan and Z.~Dancso, \emph{Finite-type invariants of w-knotted objects,
  {I}: w-knots and the {A}lexander polynomial}, Algebr. Geom. Topol.
  \textbf{16} (2016), no.~2, 1063--1133. \MR{3493416}

\bibitem[BND17]{BND:WKO2}
\bysame, \emph{Finite type invariants of w-knotted objects ii: tangles, foams
  and the kashiwara-vergne conjecture}, Mathematische Annalen \textbf{367}
  (2017), no.~3-4, 1517--1586.

\bibitem[CDM12]{chmutov_duzhin_mostovoy_2012}
S.~Chmutov, S.~Duzhin, and J.~Mostovoy, \emph{Jacobi diagrams}, p.~115–156,
  Cambridge University Press, 2012.

\bibitem[Dan10]{DancsoKI}
Z.~Dancso, \emph{On the {K}ontsevich integral for knotted trivalent graphs},
  Algebr. Geom. Topol. \textbf{10} (2010), no.~3, 1317--1365. \MR{2661529}

\bibitem[DHR21]{DHR1}
Z.~Dancso, I.~Halacheva, and M.~Robertson, \emph{Circuit algebras are wheeled
  props}, J. Pure Appl. Algebra \textbf{225} (2021), no.~12, 106767.
  \MR{4265709}

\bibitem[DK05]{DK:Virtual}
H.~{Dye} and L.~H. Kauffman, \emph{Virtual knot diagrams and the
  {W}itten--{R}eshetikin--{T}uraev invariant}, Journal of knot theory and its
  ramifications \textbf{14} (2005), no.~8, 1045--1075.

\bibitem[Dri89]{Drin89}
V.~G. Drinfeld, \emph{Quasi-{H}opf algebras}, Algebra i Analiz \textbf{1}
  (1989), no.~6, 114--148. \MR{1047964}

\bibitem[Dri90]{Drin90}
\bysame, \emph{On quasitriangular quasi-{H}opf algebras and on a group that is
  closely connected with {${\rm Gal}(\overline{\bf Q}/{\bf Q})$}}, Algebra i
  Analiz \textbf{2} (1990), no.~4, 149--181. \MR{1080203}

\bibitem[Fre98]{FresseLie}
B.~Fresse, \emph{Lie theory of formal groups over an operad}, J. Algebra
  \textbf{202} (1998), no.~2, 455--511. \MR{1617616}

\bibitem[Fre17a]{FresseVol1}
\bysame, \emph{Homotopy of operads and {G}rothendieck-{T}eichm\"{u}ller groups.
  {P}art 1}, Mathematical Surveys and Monographs, vol. 217, American
  Mathematical Society, Providence, RI, 2017, The algebraic theory and its
  topological background. \MR{3643404}

\bibitem[Fre17b]{FresseVol2}
\bysame, \emph{Homotopy of operads and {G}rothendieck-{T}eichm\"{u}ller groups.
  {P}art 2}, Mathematical Surveys and Monographs, vol. 217, American
  Mathematical Society, Providence, RI, 2017, The applications of (rational)
  homotopy theory methods. \MR{3616816}

\bibitem[HPT]{HPT16}
A.~{Henriques}, D.~Penneys, and J.~Tener, \emph{Planar algebras in braided
  tensor categories, 2016}, arXiv preprint arXiv:1607.06041.

\bibitem[{Jon}99]{Jones:PA}
V.~F.~R. {Jones}, \emph{Planar algebras i}, 1999, arXiv:math.QA/9909027.

\bibitem[Kon93]{MR1237836}
M.~Kontsevich, \emph{Vassiliev's knot invariants}, I. {M}. {G}elfand {S}eminar,
  Adv. Soviet Math., vol.~16, Amer. Math. Soc., Providence, RI, 1993,
  pp.~137--150. \MR{1237836}

\bibitem[{Kup}03]{Kup}
G.~{Kuperberg}, \emph{What is a virtual link?}, Algebraic and Geometric
  Topology \textbf{3} (2003), no.~1, 587--591.

\bibitem[KV78]{KV78}
M.~Kashiwara and M.~Vergne, \emph{The {C}ampbell-{H}ausdorff formula and
  invariant hyperfunctions}, Invent. Math. \textbf{47} (1978), no.~3, 249--272.
  \MR{492078}

\bibitem[LM96]{LM96}
T.~T.~Q. Le and J.~Murakami, \emph{The universal {V}assiliev-{K}ontsevich
  invariant for framed oriented links}, Compositio Math. \textbf{102} (1996),
  no.~1, 41--64. \MR{1394520}

\bibitem[LMO98]{LMO98}
T.~T.~Q. Le, J.~Murakami, and T.~Ohtsuki, \emph{On a universal perturbative
  invariant of {$3$}-manifolds}, Topology \textbf{37} (1998), no.~3, 539--574.
  \MR{1604883}

\bibitem[Mer21]{merkGT}
S.~Merkulov, \emph{Grothendieck-teichmueller group, operads and graph
  complexes: a survey}, 2021, arXiv:1904.13097.

\bibitem[MM65]{MR174052}
John~W. Milnor and John~C. Moore, \emph{On the structure of {H}opf algebras},
  Ann. of Math. (2) \textbf{81} (1965), 211--264. \MR{174052}

\bibitem[Sat00]{Satoh}
S.~Satoh, \emph{Virtual knot presentation of ribbon torus-knots}, J. Knot
  Theory Ramifications \textbf{9} (2000), no.~4, 531--542. \MR{1758871}

\bibitem[Tam98]{tamarkin1998proof}
D.~E. Tamarkin, \emph{Another proof of m. kontsevich formality theorem}, 1998,
  math/9803025.

\end{thebibliography}
\end{document}